\numberwithin{figure}{section}
\numberwithin{equation}{section}
\theoremstyle{plain}
\newtheorem{thm}{Theorem}[section]
\newtheorem{prop}[thm]{Proposition}
\newtheorem{lem}[thm]{Lemma}
\newtheorem{cor}[thm]{Corollary}
\theoremstyle{definition}
\newtheorem{defn}[thm]{Definition}
\newtheorem{example}[thm]{Example}
\newtheorem{problem}[thm]{Problem}
\theoremstyle{remark}
\newtheorem{rem}[thm]{Remark}
\newcommand{\la}{\langle}
\newcommand{\ra}{\rangle}
\newcommand{\into}{\hookrightarrow}
\newcommand{\onto}{\twoheadrightarrow}
\newcommand{\xra}{\xrightarrow}
\newcommand{\wt}{\widetilde}
\newcommand{\om}{\Omega}
\newcommand{\si}{\Sigma}
\newcommand{\lsi}{\Omega\Sigma}
\newcommand{\lsii}{\Omega^2\Sigma^2}
\newcommand{\lsn}{\Omega^n\Sigma^n}
\newcommand{\lsoo}{\Omega^{\infty} \Sigma^{\infty}}
\newcommand{\ms}{\mathscr}
\newcommand{\Z}{\mathbb{Z}}
\newcommand{\id}{\mathrm{id}}
\newcommand{\im}{\mathrm{Im}\,}
\newcommand{\Ker}{\mathrm{Ker}\,}
\newcommand{\Aut}{\mathrm{Aut}}
\begin{document}

\title{Group Operads and Homotopy Theory}
\author{Wenbin ZHANG}
\date{19 Jun 2012}
\address{Department of Mathematics, National University of Singapore}
\email{smile.wenbin@gmail.com}

\begin{abstract}
  We introduce the classical theory of the interplay between group theory and topology into the context of operads and explore some applications to homotopy theory. We first propose a notion of a group operad and then develop a theory of group operads, extending the classical theories of groups, spaces with actions of groups, covering spaces and classifying spaces of groups. In particular, the fundamental groups of a topological operad is naturally a group operad and its higher homotopy groups are naturally operads with actions of its fundamental groups operad, and a topological $K(\pi,1)$ operad is characterized by and can be reconstructed from its fundamental groups operad. Two most important examples of group operads are the symmetric groups operad and the braid groups operad which provide group models for $\Omega^{\infty} \Sigma^{\infty} X$ (due to Barratt and Eccles) and $\Omega^2 \Sigma^2 X$ (due to Fiedorowicz) respectively. We combine the two models together to produce a free group model for the canonical stabilization $\Omega^2 \Sigma^2 X \hookrightarrow \Omega^{\infty} \Sigma^{\infty} X$, in particular a free group model for its homotopy fibre.
\end{abstract}

\maketitle

\tableofcontents

\section{Introduction}
The objective of this paper is to introduce the classical theory of the interplay between group theory and topology into the context of operads and explore applications to homotopy theory. In particular it serves as a tool for the establishment of certain operations on $\ms{C}$-spaces in \cite{Zhang:arXiv2011:OSOAHG}.

In P. May's definition of a symmetric operad \cite{May:1972:GILS}, symmetric groups $S_n$ ($n\geq 0$) play a special role. In addition, Barratt and Eccles used all $S_n$ to construct a model for $\lsoo X$ around 1970 \cite{Bar-Ecc:1974:I, Bar-Ecc:1974:II, Bar-Ecc:1974:III}. In early 1990's, Fiedorowicz \cite{Fiedorowicz:preprint:SBC} observed that symmetric groups can be replaced by braid groups $B_n$, $n\geq 0$, to define braided operads, and used all $B_n$ to construct a model for $\lsii X$. Later on Tillmann (2000) \cite{Tillmann:2000:HGSODILS} proposed an idea of constructing operads from families of groups and her student, Wahl, then gave a more detailed study of this construction and used ribbon braid groups to construct a model for $\lsii (S^1_+ \wedge X)$ in her Ph.D. thesis (2001) \cite{Wahl:2001:PhD}. Observing that all these examples can be treated in a uniform way, it is then natural to ask:
\begin{quote}
  \textbf{Question:} Can these canonical examples lead to a general theory? If yes, is this general theory natural and interesting?
\end{quote}
Motivated by the above mentioned work and an investigation of the fundamental groups of topological operads, a notion of group operads is proposed and a general theory is developed in this paper.

A group operad $\ms{G}= \{G_n\}_{n\geq 0}$ is an operad with a morphism to the symmetric groups operad $\ms{S}= \{S_n\}_{n\geq 0}$ such that 1) each $G_n$ is a group and $G_n\to S_n$ is a homomorphism, 2) the identity of $G_1$ is the unit of the operad $\ms{G}$ and 3) the composition $\gamma$ is a crossed homomorphism, namely
$$\gamma (aa'; b_1b'_1, \ldots, b_kb'_k)= \gamma (a;b_1, \ldots, b_k) \gamma (a'; b'_{a^{-1}(1)}, \ldots, b'_{a^{-1}(k)}).$$
Canonical examples of group operads are the symmetric groups operad $\ms{S}$, the braid groups operad $\ms{B}$ and the ribbon braid groups operad $\ms{R}$.

Group operads play a role like groups. As actions of groups on spaces, actions of a group operad $\ms{G}$ on other operads can be defined and an operad $\ms{C}$ with an action of $\ms{G}$ is called a $\ms{G}$-operad. As such a nonsymmetric operad is an operad with an action of the trivial group operad and a symmetric operad is an operad with an action of the symmetric groups operad. The theory of symmetric operads can then be generalized to $\ms{G}$-operads.

Besides the above canonical examples, a construction has been found to extend any group to a group operad and any $G$-space to a $\ms{G}$-operad, and thus provides countless examples of group operads and $\ms{G}$-operads, cf. Remarks \ref{rem:extending_group_group-operad} and \ref{rem:extending_G-space_G-operad}. This construction will appear elsewhere.

The idea of a group operad is mainly motivated by a few canonical examples, however it turns out that group operads are actually natural by investigating operad structures on the homotopy groups of topological operads. We find that the operad structure of a topological operad naturally induces operad structures on its homotopy groups such that its fundamental groups is a group operad and its higher homotopy groups are operads with actions of its fundamental groups operad.

The classical theory of covering spaces is extended to a theory of covering operads, by which we establish relationships between group operads and topological $K(\pi,1)$ operads analogous to the one between groups and $K(\pi,1)$ spaces. The usual construction of the classifying space of a group can be used to construct a topological $K(\pi,1)$ operad for a group operad $\ms{G}$, thought of as the classifying operad of $\ms{G}$, with $\ms{G}$ realized as its fundamental groups operad. In addition, a nice topological $K(\pi,1)$ operad is characterized by and can be reconstructed from its fundamental groups operad.

Group operads can apply to homotopy theory via the associated monads of their classifying operads and in particular may be used to produce algebraic models for certain canonical objects in homotopy theory. For instance as mentioned at the beginning, the symmetric groups operad and the braid groups operad give algebraic models for $\lsoo X$ (Barratt-Eccles \cite{Bar-Ecc:1974:I}) and $\lsii X$ (Fiedorowicz \cite{Fiedorowicz:preprint:SBC}) respectively. We combine the two models together to produce a free group model for the canonical stabilization $\lsii X\into \lsoo X$, in particular a free group model for the homotopy fibre of this stabilization. A few canonical filtrations of $\lsii X$ can also be constructed. Further applications of these models will be investigated in future.

\medskip
\textbf{Notations and conventions.}
For $\sigma,\tau\in S_n$ where $S_n$ is the symmetric group of $\{1,\ldots,n\}$, their product is defined as
$$\sigma \cdot \tau := \tau \circ \sigma: \{1,\ldots,n\} \xra{\sigma} \{1,\ldots,n\} \xra{\tau} \{1,\ldots,n\}, \quad (\sigma \cdot \tau)(i)= \tau (\sigma(i)).$$
Let $S_k$ act on symmetric operads from the left and on $X^k$ from the right.

Whenever we have a group $G$ with a homomorphism $\pi: G\to S_n$, we shall regard $g(i)= (\pi g)(i)$ for $g\in G$ and $1\leq i\leq n$.

Two different label systems of $\Delta$-sets (simplicial sets) are used here. One is the usual one starting from 0, $X=\{X_n\}_{n\geq 0}$ with $d_i: X_{n+1}\to X_n$ for $0\leq i\leq n+1$ (and $s_i: X_n\to X_{n+1}$ for $0\leq i\leq n$), for $n\geq 0$; and another one shifts 0 to 1, i.e., starting from 1, $X=\{X_n\}_{n\geq 1}$ with $d_i: X_{n+1}\to X_n$ for $1\leq i\leq n+1$ (and $s_i: X_n\to X_{n+1}$ for $1\leq i\leq n$), for $n\geq 1$. The latter is used for operads, like the symmetric groups operad, braid groups operad, etc.

For any symbol $a$, let $a^{(k)}$ denote the $k$-tuple $(a,\ldots,a)$. For any $n$-tuple $(a_1, \ldots, a_n)$, let $(a_1, \ldots, \hat{a}_i, \ldots, a_n)= (a_1, \ldots, a_{i-1}, a_{i+1}, \ldots, a_n)$, i.e., $a_i$ is omitted.

Throughout this paper, all topological spaces are assumed to be compactly generated Hausdorff spaces \cite{Steenrod:1967:CCTS}. For two points $x, x'$ in a space $X$, let $x\sim x'$ denote that $x,x'$ are in the same path-connected component of $X$.

\section{Group Operads and Operads with Actions of Group Operads}
In this section, we define a notion of group operads, study their basic properties, and then discuss topological and simplicial operads with actions of group operads.

\subsection{Operads and Some Basic Aspects}
We recall P. May's definition of operads and symmetric operads, and discuss some basic aspects of operads concerning basepoints and simplicial structure.

\begin{defn}
  A \textbf{topological operad} $\ms{C}$ consists of
  \begin{itemize}
    \item[1)] a sequence of topological spaces $\{\ms{C}(n)\}_{n\geq 0}$ with $\ms{C}(0)=*$,
    \item[2)] a family of maps,
        $$\gamma: \ms{C}(k)\times \ms{C}(m_1)\times \cdots \times \ms{C}(m_k)\to \ms{C}(m), \quad k\geq 1, m_i\geq 0, m=m_1+ \cdots+ m_k,$$
    \item[3)] an element $1\in \ms{C}(1)$ called the \emph{unit},
  \end{itemize}
  satisfying the following two coherence properties: for $a\in \ms{C}(k)$, $b_i\in \ms{C}(m_i)$, and $c_j\in \ms{C}(n_j)$, $n_j\geq 0$,
  \begin{itemize}
    \item[i)] \emph{Associativity}:
        $$\gamma( \gamma( a; b_1, \ldots, b_k); c_1, \ldots, c_m)= \gamma( a; \gamma( b_1; c_1, \ldots, c_{m_1}), \ldots, \gamma( b_k; c_{m_1+ \cdots+ m_{k-1}+1}, \ldots, c_m));$$
    \item[ii)] \emph{Unitality}: $\gamma (1;a)= a$ and $\gamma (a; 1^{(k)})=a$.
  \end{itemize}
  A \textbf{symmetric topological operad} is a topological operad with a left action of $S_n$ on $\ms{C}(n)$ for each $n$, satisfying the following \emph{equivariance} property: for $\sigma \in S_k$, and $\tau_i\in S_{m_i}$,
  $$\gamma (\sigma a; \tau_1 b_1, \ldots, \tau_k b_k)= \gamma (\sigma; \tau_1, \ldots, \tau_k) \gamma (a; b_{\sigma^{-1}(1)}, \ldots, b_{\sigma^{-1}(k)}).$$
\end{defn}

The most important examples of symmetric topological operads are perhaps the little $n$-cubes operads, refer to \cite{May:1972:GILS} for details. A very important symmetric discrete operad is the symmetric groups operad $\ms{S}= \{S_n\}_{n\geq 0}$ introduced in Definition 3.1 (i) of \cite{May:1972:GILS}.

Basepoints of operads are discussed next.

\begin{defn}
  A basepoint of a topological operad $\ms{C}$ is a sequence of points $\{e_k\}_{k\geq 0}$ with $e_1=1\in \ms{C}(1)$, $e_k\in \ms{C}(k)$ such that $\gamma (e_k; e_{m_1}, \ldots, e_{m_k}) \sim e_m$. A strict basepoint is a basepoint such that $\gamma (e_k; e_{m_1}, \ldots, e_{m_k})= e_m$. A symmetric (strict) basepoint of a symmetric operad is a (strict) basepoint such that $e_k\sim \sigma e_k$ for all $\sigma\in S_k$.
\end{defn}

For a discrete operad, a basepoint is obviously strict. The little $n$-cubes operads $\ms{C}_n$, however, does not have any strict basepoint, but does have a basepoint.

\begin{prop}
  $\ms{C}$ has a basepoint iff there exists $a\in \ms{C}(2)$ such that $d_1a\sim d_2a\sim 1\in \ms{C}(1)$ and $\gamma (a;1,a)\sim \gamma (a;a,1)$.
\end{prop}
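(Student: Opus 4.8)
The plan is to prove the two implications separately; the forward one is immediate, while the converse requires manufacturing a basepoint out of $a$.

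For the forward direction, assume $\ms{C}$ carries a basepoint $\{e_k\}_{k\ge 0}$ (so $e_0=*$, the unique point of $\ms{C}(0)$, and $e_1=1$), and take $a=e_2$. Recalling that the face maps of an operad insert the nullary operation, $d_1a=\gamma(a;*,1)$ and $d_2a=\gamma(a;1,*)$, the defining relation $\gamma(e_k;e_{m_1},\dots,e_{m_k})\sim e_m$ of a basepoint specializes at $(k;m_1,m_2)=(2;0,1)$ and $(2;1,0)$ to $d_1a\sim e_1=1$ and $d_2a\sim 1$, and at $(2;1,2)$ and $(2;2,1)$ to $\gamma(a;1,a)\sim e_3\sim\gamma(a;a,1)$. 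Transitivity of $\sim$ finishes this direction.

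For the converse, suppose such an $a$ exists. I set $e_0=*$, $e_1=1$, and $e_{k+1}=\gamma(a;e_k,1)$ for $k\ge 1$, so that $e_2=\gamma(a;1,1)=a$ by unitality. The crux is that the binary operation $x*y:=\gamma(a;x,y)\colon\ms{C}(p)\times\ms{C}(q)\to\ms{C}(p+q)$ is homotopy-associative and homotopy-unital with unit the nullary point $e_0=*$. Operad associativity gives
\[
\gamma(\gamma(a;a,1);x,y,z)=(x*y)*z,\qquad \gamma(\gamma(a;1,a);x,y,z)=x*(y*z),
\]
so the hypothesis $\gamma(a;a,1)\sim\gamma(a;1,a)$ and the fact that $\gamma$, being continuous, preserves $\sim$ give $(x*y)*z\sim x*(y*z)$. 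Likewise associativity yields $\gamma(a;*,y)=\gamma(d_1a;y)$ and $\gamma(a;x,*)=\gamma(d_2a;x)$, so the hypotheses $d_1a\sim1\sim d_2a$ together with the unit law $\gamma(1;w)=w$ give $\gamma(a;*,y)\sim y$ and $\gamma(a;x,*)\sim x$.

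Passing to path components, these three facts say that $M:=\coprod_{n\ge 0}\pi_0\ms{C}(n)$ is a monoid under $*$ with unit $[e_0]$, and the recursion gives $[e_{k+1}]=[e_k]\cdot[e_1]$, hence $[e_k]=[e_1]^k$. The basepoint relation now follows by induction on $k$, the case $k=1$ being the unit law $\gamma(1;e_{m_1})=e_{m_1}$. Unfolding $e_k=\gamma(a;e_{k-1},1)$ and applying operad associativity gives
\[
\gamma(e_k;e_{m_1},\dots,e_{m_k})=\gamma\big(a;\,\gamma(e_{k-1};e_{m_1},\dots,e_{m_{k-1}}),\,e_{m_k}\big),
\]
so in $M$ one has $[\gamma(e_k;e_{m_1},\dots,e_{m_k})]=[\gamma(e_{k-1};e_{m_1},\dots,e_{m_{k-1}})]\cdot[e_{m_k}]$; by the inductive hypothesis and $[e_p]=[e_1]^p$ this equals $[e_1]^{m_1+\dots+m_k}=[e_m]$, which is precisely $\gamma(e_k;e_{m_1},\dots,e_{m_k})\sim e_m$.

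The main obstacle is organizing the converse: one must recognize that the correct unit for $\gamma(a;-,-)$ is the arity-zero point $e_0=*$ rather than $1\in\ms{C}(1)$, and then squeeze homotopy associativity and both unit laws out of the three hypotheses on $a$ by routing them through operad associativity and the identities $\gamma(*;\,)=*$ and $\gamma(1;w)=w$. Once the monoid $\pi_0\ms{C}$ is in hand the remaining induction is bookkeeping; the only care needed is tracking arities when a factor $e_{m_i}$ happens to be $e_0=*$.
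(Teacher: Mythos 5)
Your proof is correct, and its skeleton --- the recursive definition $e_0=*$, $e_1=1$, $e_{k+1}=\gamma(a;e_k,1)$, followed by a final induction on $k$ that unfolds $e_k=\gamma(a;e_{k-1},1)$ through operad associativity --- is exactly the paper's. Where you genuinely diverge is in the intermediate step. The paper proves the binary relation $\gamma(a_2;a_i,a_j)\sim a_{i+j}$ (with $a_2=a$) by a separate induction on $i+j$, split into the cases $j=0$ (handled by $d_2a\sim 1$), $j=1$ (definitional), and $j>1$ (where the hypothesis $\gamma(a;1,a)\sim\gamma(a;a,1)$ enters via continuity of $\gamma$ in its first variable). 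You instead observe once and for all that $x*y=\gamma(a;x,y)$ descends to a monoid structure on $\coprod_{n\geq 0}\pi_0\ms{C}(n)$ with unit $[*]$ --- homotopy associativity and both unit laws being extracted from the three hypotheses by the same continuity trick --- after which the binary statement is just the law of exponents $[e_1]^i\cdot[e_1]^j=[e_1]^{i+j}$ and the paper's case analysis evaporates. The ingredients are identical, but your packaging is cleaner and slightly stronger, since you get homotopy associativity and unitality for arbitrary elements rather than only for the $e_i$; the price is having to recognize that the correct homotopy unit is the arity-zero point $*$, which you handle correctly. You also prove the forward direction (take $a=e_2$ and specialize the basepoint relation), which the paper omits entirely even though the proposition is stated as an equivalence; that half is worth keeping.
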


Thus a basepoint of $\ms{C}$ is determined by such an $a\in \ms{C}(2)$.

\begin{proof}
  Suppose there exists $a\in \ms{C}(2)$ such that $d_1a\sim d_2a\sim 1\in \ms{C}(1)$ and $\gamma (a;1,a)\sim \gamma (a;a,1)$. Let $a_0=*$, $a_1=1$, $a_2=a$, $a_k= \gamma (a; a_{k-1},1)$ for $k\geq 2$. Next check that $\{a_k\}_{k\geq 0}$ is a basepoint of $\ms{C}$.

  First prove $\gamma (a_2; a_i,a_j)\sim a_{i+j}$ by induction on $i+j$. This is evident if $i+j\leq 2$. Assume $\gamma (a_2; a_i,a_j)\sim a_{i+j}$ if $i+j\leq m$. Now suppose $i+j= m+1$. If $j=0$,
  $$\gamma (a_2; a_i,*)= \gamma (d_2a_2; a_i) \sim \gamma (1;a_i)=a_i;$$
  if $j=1$, $\gamma (a_2; a_i, 1)= a_{i+1}$ by definition; if $j>1$,
  \begin{align*}
    \gamma (a_2; a_i,a_j) & = \gamma (a_2; a_i, \gamma (a_2; a_{j-1},1))= \gamma (\gamma (a_2;1,a_2); a_i, a_{j-1},1) \\
    & \sim \gamma (\gamma (a_2;a_2,1); a_i, a_{j-1},1)= \gamma (a_2; \gamma (a_2; a_i, a_{j-1}), 1) \\
    & \sim \gamma (a_2; a_{i+j-1}, 1)= a_{i+j}.
  \end{align*}

  Next prove $\gamma (a_k; a_{m_1}, \ldots, a_{m_k})\sim a_{m_1+ \cdots+ m_k}$ by induction on $k$. Assume this is true for $k\geq 2$. Then for $k+1$,
  \begin{align*}
    \gamma (a_{k+1}; a_{m_1}, \ldots, a_{m_{k+1}}) & = \gamma (\gamma (a_2; a_k,1); a_{m_1}, \ldots, a_{m_{k+1}}) \\
    & = \gamma (a_2; \gamma (a_k; a_{m_1}, \ldots, a_{m_k}), a_{m_{k+1}}) \\
    & \sim \gamma (a_2; a_{m_1+ \cdots+ m_k}, a_{m_{k+1}}) \\
    & \sim a_{m_1+ \cdots+ m_{k+1}}.
  \end{align*}
  So $\{a_k\}_{k\geq 0}$ is a basepoint by definition.
\end{proof}

Note that $a\sim \tau a\in \ms{C}(2)$ can not imply $\gamma (a;a,1)\sim \gamma (a;1,a)$. Thus path-connectivity of $\ms{C}(2)$ is not sufficient for the existence of a basepoint.

For a topological operad $\ms{C}$, define
$$d_i: \ms{C}(k+1)\to \ms{C}(k), \quad d_ia= \gamma (a;1^{i-1},*,1^{k-i})$$
for $1\leq i\leq k+1$. If $\ms{C}$ has a basepoint $\{e_k\}_{k\geq 0}$, define
$$s_i: \ms{C}(k)\to \ms{C}(k+1), \quad s_ia= \gamma (a;1^{i-1},e_2,1^{k-i})$$
for $1\leq i\leq k$. By definition,
$$d_ie_{k+1}= \gamma (e_{k+1}; 1^{i-1}, *, 1^{k-i})\sim e_k, \quad s_ie_k= \gamma (e_k; 1^{i-1}, e_2, 1^{k-i})\sim e_{k+1}$$
for all $i$ and $k$.

\begin{prop}
  Let $\ms{C}$ be a topological operad. Then $\ms{C}$ is a $\Delta$-space. If $\ms{C}$ has a strict basepoint, then $\ms{C}$ is a simplicial space. If $\ms{C}$ has a basepoint, then $\ms{C}$ is a simplicial space up to homotopy, i.e., those simplicial identities hold up to homotopy. \qed
\end{prop}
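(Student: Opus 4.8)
The plan is to check the simplicial identities for the operators $d_i$ and $s_i$ directly from the associativity and unitality axioms, keeping careful track of the slots into which the nullary point $*\in\ms{C}(0)$ and the basepoint element $e_2\in\ms{C}(2)$ are inserted.

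First I would establish the $\Delta$-space structure, for which only the face relations $d_id_j=d_{j-1}d_i$ (for $i<j$, in the shifted indexing) are needed. For $a\in\ms{C}(k+2)$, both $d_id_ja$ and $d_{j-1}d_ia$ are two-stage composites $\gamma(\gamma(a;\ldots);\ldots)$ whose inner slots all carry either the unit $1$ or the point $*$. Collapsing the composite by associativity into a single $\gamma(a;\ldots)$ and then simplifying by unitality ($\gamma(1;c)=c$, with the $*$-slots passing through unchanged), both sides become $a$ composed with $*$ in the two distinguished slots and $1$ elsewhere; matching the slot indices shows they agree on the nose. Since no basepoint element enters this computation, it is valid for every topological operad, which proves the first assertion.

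Next I would handle the degeneracies under the assumption of a strict basepoint. By the same device—expanding a composite by associativity into a single application of $\gamma$ to $a$—each of the remaining identities localizes to a relation among the inner entries built from $1$, $*$ and $e_2$. The unit relations $d_js_j=\id=d_{j+1}s_j$ reduce to $\gamma(e_2;*,1)=\gamma(e_2;1,*)=1$, i.e. to $d_1e_2=d_2e_2=1$; the commuting relations between a face and a disjoint degeneracy reduce to unitality together with the disjointness of the $*$- and $e_2$-slots; and the degeneracy relation $s_is_j=s_{j+1}s_i$ localizes to $\gamma(e_2;e_2,1)=\gamma(e_2;1,e_2)$. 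For a strict basepoint all of these hold exactly: $d_ie_{k+1}=e_k$ and $\gamma(e_2;e_2,e_1)=\gamma(e_2;e_1,e_2)=e_3$ follow from the strict condition $\gamma(e_k;e_{m_1},\ldots,e_{m_k})=e_m$, so $\ms{C}$ is a genuine simplicial space.

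Finally, for a general basepoint the very same expansions apply, but each inner equality weakens to a path-equivalence supplied by the basepoint condition: $d_ie_{k+1}\sim e_k$ and $\gamma(e_2;e_2,1)\sim e_3\sim\gamma(e_2;1,e_2)$. As $\gamma$ is continuous, substituting a path-equivalent inner entry moves $\gamma(a;\ldots)$ within its path component, so every simplicial identity holds up to homotopy, giving the third assertion. I expect the routine but delicate part to be the slot bookkeeping after each associativity collapse, aggravated by the shifted $1$-based labelling; the one conceptually essential input is the coherence $\gamma(e_2;e_2,1)\sim\gamma(e_2;1,e_2)$, which is exactly the defining property of a basepoint and is precisely what forces the degeneracy identity $s_is_j=s_{j+1}s_i$.
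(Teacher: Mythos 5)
Your proposal is correct and is precisely the routine verification that the paper leaves implicit (the proposition carries a \qed{} with no written proof). You identify exactly the points where the basepoint axioms are needed — the unit relations $d_js_j=\id=d_{j+1}s_j$ localize to $d_1e_2=d_2e_2=1$ and the critical degeneracy case $s_js_js=s_{j+1}s_j$ localizes to $\gamma(e_2;e_2,1)=\gamma(e_2;1,e_2)$, all other identities being exact slot bookkeeping from associativity and unitality — and in the non-strict case you correctly upgrade the path-equivalences of inner entries to homotopies of the face/degeneracy maps via continuity of $\gamma$.
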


For a simplicial set, we always have $d_is_i=\id$, thus each $s_i$ is injective. Hence

\begin{cor}
   If $\ms{C}$ is a topological operad with a strict basepoint, then each $s_i: \ms{C}(k)\to \ms{C}(k+1)$ is injective for $k\geq 1$, $1\leq i\leq k$. \qed
\end{cor}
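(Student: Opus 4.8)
The plan is to exhibit, for each $s_i$, an explicit one-sided inverse among the face operators and then deduce injectivity formally. The natural candidate is $d_i$ itself, so I would aim to establish the equality $d_i s_i = \id$ of maps $\ms{C}(k) \to \ms{C}(k)$. Granting this, if $a, a' \in \ms{C}(k)$ satisfy $s_i a = s_i a'$, then $a = d_i s_i a = d_i s_i a' = a'$, so $s_i$ is injective; this is precisely the elementary remark recorded immediately before the statement, that a left inverse forces injectivity.

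It remains to obtain $d_i s_i = \id$ as a strict equality. The shortest route is to invoke the preceding Proposition: since $\ms{C}$ is assumed to have a \emph{strict} basepoint, it is an honest simplicial space, so all the simplicial identities---in particular $d_i s_i = \id$---hold on the nose and not merely up to homotopy. If one instead wants a self-contained verification, I would unwind the definitions: writing $s_i a = \gamma(a; 1^{i-1}, e_2, 1^{k-i})$ and applying the face operator $d_i$, which inserts $*$ in the $i$-th slot, associativity regroups the inner inputs so that each block lying over a unit $1$ contributes $\gamma(1; 1) = 1$, while the single block lying over $e_2$ contributes $\gamma(e_2; *, 1)$. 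Since $* = e_0$ and $1 = e_1$, the strict basepoint equation $\gamma(e_k; e_{m_1}, \ldots, e_{m_k}) = e_m$ collapses this to $\gamma(e_2; e_0, e_1) = e_1 = 1$, whence $d_i s_i a = \gamma(a; 1^{(k)}) = a$ by unitality.

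The one delicate point, and the sole place where strictness is used, is the collapse $\gamma(e_2; *, 1) = 1$. This is a genuine equality exactly because the basepoint is strict; for a mere basepoint one would obtain only $\gamma(e_2; *, 1) \sim 1$ and hence $d_i s_i \simeq \id$, which is far too weak to force $s_i$ to be injective as a map of spaces. Thus I expect no real obstacle here: the entire content of the corollary is the passage from the up-to-homotopy simplicial identities of the general case to the strict identities guaranteed by a strict basepoint, and injectivity then follows by the one-line formal argument above.
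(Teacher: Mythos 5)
Your proof is correct and follows the same route as the paper: the paper's argument is precisely that a strict basepoint makes $\ms{C}$ a genuine simplicial space (by the preceding Proposition), so the simplicial identity $d_i s_i = \id$ holds on the nose and forces $s_i$ to be injective. Your additional hands-on verification that $d_i s_i a = \gamma(a; 1^{(i-1)}, \gamma(e_2; *, 1), 1^{(k-i)}) = \gamma(a; 1^{(k)}) = a$, with strictness used exactly at $\gamma(e_2; *, 1) = 1$, is a correct unwinding of that same identity and correctly isolates where the hypothesis is needed.
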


\subsection{Group Operads}
\begin{defn}
  A \textbf{group operad} is an operad $\ms{G}=\{G_n\}_{n\geq 0}$ together with a morphism $\pi: \ms{G}\to \ms{S}$ of operads such that
\begin{itemize}
  \item[1)] each $G_n$ is a group and $\pi: G_n\to S_n$ is a homomorphism,
  \item[2)] the identity $e_1$ of $G_1$ is the unit of the operad $\ms{G}$, and
  \item[3)] $\gamma$ is a crossed homomorphism, i.e., for $a'\in G_k$, $b_i'\in G_{m_i}$,
      $$\gamma (aa'; b_1b'_1, \ldots, b_kb'_k) = \gamma (a;b_1,\ldots,b_k) \gamma(a'; b'_{a^{-1}(1)}, \ldots, b'_{a^{-1}(k)}).$$
\end{itemize}
$\ms{G}$ is called \textbf{non-crossed} if all $\pi$ are trivial (thus $\gamma$ is a homomorphism), and \textbf{crossed} otherwise. A \textbf{morphism} $\psi: \ms{G}\to \ms{G}'$ of group operads is a sequence of homomorphisms $\psi_n: G_n\to G'_n$ such that $\pi' \circ \psi= \pi$ and $\psi$ commutes with $\gamma$, namely
$$\psi_m (\gamma (a; b_1, \ldots, b_k))= \gamma' (\psi_k(a); \psi_{m_1}(b_1), \ldots, \psi_{m_k}(b_k)).$$
A \textbf{sub group operad} $\ms{H}= \{H_n\}_{n\geq 0}$ of a group operad $\ms{G}= \{G_n\}_{n\geq 0}$, written $\ms{H}\leq \ms{G}$, is a sequence of subgroups $H_n\leq G_n$ closed under $\gamma$, together with the restrictions $\pi= \pi|_{\ms{H}}$ and $\gamma= \gamma|_{\ms{H}}$. A sub group operad $\ms{H}$ of $\ms{G}$ is called \textbf{normal}, written $\ms{H} \unlhd \ms{G}$, if $H_n \unlhd G_n$ for each $n$.
\end{defn}

The symmetric groups operad $\ms{S}$ is clearly a crossed group operad and $\pi: \ms{G}\to \ms{S}$ is a morphism of group operads for any group operad $\ms{G}$. It should be noted that a crossed group operad can NOT be regarded as a non-crossed group operad due to the ``crossed homomorphism'' property, and that there is no morphism from a crossed group operad to a non-crossed group operad, also due to the ``crossed homomorphism'' property and additionally to that $\pi$ is nontrivial but $\pi' \circ \psi$ is trivial. Later we shall see that for a morphism $\pi: \ms{G}\to \ms{S}$ of group operads, $\pi: G_n\to S_n$ can only be either trivial for all $n$ or surjective for all $n$.

The notion of a group operad is equipped with these data and properties because a few canonical examples and the fundamental groups of topological operads have them. One may also consider analogous notions in other appropriate categories. The term ``crossed homomorphism'' is chosen due to its relation with crossed simplicial groups.

For any group operad $\ms{G}$, let $e_n$ denote the identity element of $G_n$. If $\ms{G}$ is crossed, then $\gamma$ is not a homomorphism. However, we have $\gamma(e_k; e_{m_1}, \ldots, e_{m_k})= e_m$, since
$$\gamma(e_k; e_{m_1}, \ldots, e_{m_k})= \gamma(e_k^2; e_{m_1}^2, \ldots, e_{m_k}^2)= \gamma(e_k; e_{m_1}, \ldots, e_{m_k})^2.$$
For $a\in G_k$, $b_i\in G_{m_i}$,
\begin{align*}
  \gamma (a, b_1, \ldots, b_k) & = \gamma (e_k, b_1, \ldots, b_k) \gamma (a, e_{m_1}, \ldots, e_{m_k}) \\
  & = \gamma (a, e_{m_1}, \ldots, e_{m_k}) \gamma (e_k; b_{a^{-1}(1)}, \ldots, b_{a^{-1}(k)});
\end{align*}
$$e_m= \gamma(aa^{-1}; b_1 b_1^{-1}, \ldots, b_k b_k^{-1})= \gamma (a, b_1, \ldots, b_k) \gamma (a^{-1}, b_{a^{-1}(1)}^{-1}, \ldots, b_{a^{-1}(k)}^{-1});$$
hence,
$$\gamma (a,b_1,\ldots, b_k)^{-1}= \gamma (a^{-1}, b_{a^{-1}(1)}^{-1}, \ldots, b_{a^{-1}(k)}^{-1}).$$

\begin{prop}
  $\gamma (a;e_k)$ is in the center of $G_k$ for any $a\in G_1$ and $k\geq 1$.
\end{prop}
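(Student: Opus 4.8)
The plan is to exploit that the outer entry $a$ lies in $G_1$, where the target $S_1$ is trivial, so the ``crossed'' twist disappears and the crossed-homomorphism property reduces to an honest homomorphism. Concretely, since $\pi(a) \in S_1$ is the identity permutation, we have $a^{-1}(1) = 1$, and for $a, a' \in G_1$ and $b, b' \in G_k$ the defining identity specializes to $\gamma(aa'; bb') = \gamma(a; b)\, \gamma(a'; b')$, the one-fold composite with outer arity $1$ and a single inner entry of arity $k$. This single-slot homomorphism property is the engine of the whole argument.

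Next I would feed identity elements into this relation in two complementary ways. Taking $a' = e_1$, left inner entry $e_k$ and right inner entry $b$, and using unitality $\gamma(e_1; b) = b$ together with $a e_1 = a$ and $e_k b = b$, I obtain $\gamma(a; b) = \gamma(a; e_k)\, b$. Symmetrically, taking the outer pair $(e_1, a)$ and the inner pair $(b, e_k)$ gives $\gamma(a; b) = b\, \gamma(a; e_k)$. Here I rely only on unitality and on the homomorphism property just established, so both computations are short and mechanical.

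Comparing the two expressions yields $\gamma(a; e_k)\, b = b\, \gamma(a; e_k)$ for every $b \in G_k$, which is exactly the assertion that $\gamma(a; e_k)$ is central in $G_k$. I do not expect a genuine obstacle here; the only point demanding care is bookkeeping the order of multiplication in $G_1$ and $G_k$ and confirming that the permutation subscript $a^{-1}(1)$ really is trivial because $a \in G_1$ — that observation is precisely what collapses the crossed homomorphism to a plain one and makes the two-sided sandwich possible.
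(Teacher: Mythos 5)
Your proposal is correct and coincides with the paper's own argument: both exploit that $\pi(a)\in S_1$ is trivial so the crossed-homomorphism identity collapses to $\gamma(aa';bb')=\gamma(a;b)\gamma(a';b')$, and both then compare the two factorizations $\gamma(a;b)=\gamma(ae_1;e_kb)=\gamma(a;e_k)\,b$ and $\gamma(a;b)=\gamma(e_1a;be_k)=b\,\gamma(a;e_k)$ to conclude centrality. No gaps; this is the paper's proof in slightly more explicit form.
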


\begin{proof}
  For $a\in G_1$, $b\in G_k$, $\gamma (a;b)= \gamma (ae_1; e_kb)= \gamma (a;e_k) \gamma (e_1;b)= \gamma (a;e_k)b$, $\gamma (a;b)= \gamma (e_1a; be_k)= \gamma (e_1;b) \gamma (a;e_k)= b\gamma (a;e_k)$, thus $\gamma (a;e_k)b= b\gamma (a;e_k)$, i.e., $\gamma (a;e_k)$ is in the center of $G_k$.
\end{proof}

\begin{cor}
  $G_1$ is abelian.
\end{cor}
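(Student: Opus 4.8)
The plan is to derive this immediately from the preceding Proposition by specializing to $k=1$. That Proposition asserts that $\gamma(a; e_k)$ lies in the center of $G_k$ for every $a\in G_1$ and every $k\geq 1$. The key observation I would make is that when $k=1$ the element $\gamma(a; e_1)$ is nothing other than $a$ itself, so the centrality conclusion applies to an arbitrary element of $G_1$.

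First I would invoke the Proposition with $k=1$: for each $a\in G_1$, the element $\gamma(a; e_1)$ is central in $G_1$. Next I would identify this element. By condition 2) in the definition of a group operad, the identity $e_1$ of $G_1$ equals the operad unit $1\in G_1$, and by the unitality axiom (ii) applied with a single input, $\gamma(a; 1^{(1)})= a$. Hence $\gamma(a; e_1)= a$. Combining these two facts shows that every $a\in G_1$ lies in the center of $G_1$, which is precisely the statement that $G_1$ is abelian.

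There is no real obstacle here; the only point requiring a moment's care is the identification $\gamma(a; e_1)=a$, which rests on simultaneously using the group-operad axiom that $e_1$ is the operad unit and the operad unitality axiom. Once these are matched up, the Corollary is immediate and requires no further computation.
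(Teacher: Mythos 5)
Your proposal is correct and is essentially the paper's own proof: the paper likewise specializes the preceding Proposition to $k=1$ and uses the identification $a=\gamma(a;e_1)$ (via unitality and the fact that $e_1$ is the operad unit) to conclude every element of $G_1$ is central. Your write-up merely makes explicit the justification of $\gamma(a;e_1)=a$ that the paper leaves implicit.
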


\begin{proof}
  For any $a\in G_1$, $a= \gamma (a;e_1)$ is in the center of $G_1$, thus $G_1$ is abelian.
\end{proof}

\begin{example}\label{example:trivial_group_operad}
  The simplest group operad is the trivial group operad $\ms{J}$ with each group the trivial group. This notation $\ms{J}$ is chosen due to the relation between the trivial group operad and James' construction \cite{James:1955:RPS} which is usually denoted $JX$.
\end{example}

\begin{example}
  The \textbf{braid groups operad} is $\ms{B}= \{B_n\}_{n\geq 0}$ together with
  $$\gamma: B_k \times B_{m_1} \times \cdots \times B_{m_k} \to B_m, \quad k\geq 1, m_i\geq 0,$$
  sending $(a;b_1,\ldots,b_k)$ where $a\in B_k$ and $b_i\in B_{m_i}$, to a braid in $B_m$ obtained by replacing the $i$th strand of $a$ by the braid $b_i$. If $m_i=0$, then the $i$th strand of $c$ is deleted. $\ms{B}$ is a crossed group operad.
\end{example}

Since $\gamma (a;b_1,\ldots,b_k)\in P_m$ if $a\in P_k$, $b_i\in P_{m_i}$, by restricting $\gamma$ to the pure braid groups, we have a normal sub group operad of $\ms{B}$.

\begin{example}
  The \textbf{pure braid groups operad} is $\ms{P}= \{P_n\}_{n\geq 0}$ where $P_0$ is trivial, together with the restriction of $\gamma$ of the braid groups operad to pure braid groups. $\ms{P}$ is non-crossed.
\end{example}

\begin{example}
  The \textbf{ribbon braid groups operad} is $\ms{R}= \{R_n\}_{n\geq 0}$ with $R_n= B_n \ltimes \Z^n$ \cite{Tillmann:2000:HGSODILS, Wahl:2001:PhD}. A ribbon braid is a braid with each strand a ribbon. $R_n$ is the braid group on $n$ ribbons with each ribbon twisted a multiple of the full twist. The $\gamma$ of $\ms{R}$ is almost the same as the one of $\ms{B}$. We just need to emphasize that for $(k,\alpha)\in R_1\times R_n= \Z\times R_n$, $\gamma (k;\alpha)\in R_n$ is the ribbon braid obtained by fully twisting $\alpha$ $k$ times.
\end{example}

\begin{example}
  For any abelian group $G$, $\{G^n\}_{n\geq 0}$ is a non-crossed group operad with $\gamma: G^k \times G^{m_1} \times \cdots \times G^{m_k}\to G^m$, $\gamma((a_1,\ldots,a_k); (b_{11}, \ldots, b_{1m_1}), \ldots)= (a_1+ b_{11}, \ldots, a_1+ b_{1m_1}, \ldots, a_i+ b_{ij}, \ldots, a_k+ b_{km})$.
\end{example}

\begin{rem}\label{rem:extending_group_group-operad}
  For any group $G$ with a homomorphism $\pi: G\to S_2$, a construction has been found to extend $G$ to a group operad $\ms{F}(G,\pi)$ with $\ms{F}(G,\pi)(1)=1$ and $\ms{F}(G,\pi)(2)=G$, such that $\ms{F}(G,\pi)$ is non-crossed if $\pi$ is trivial and crossed if $\pi$ nontrivial. For example, for $S_2$ and the identity $\id_{S_2}: S_2\to S_2$, $\ms{F}(S_2, \id_{S^2})$ is isomorphic to the symmetric groups operad $\ms{S}$. Thus the construction $\ms{F}(G,\pi)$ seems interesting and provides countless examples of group operads. Details of this construction will appear elsewhere.
\end{rem}

\subsection{Sub Group Operads and Quotients}
For any group operad $\ms{G}= \{G_n\}_{n\geq 0}$, the trivial group operad $\ms{J}= \{e_n\}_{n\geq 0}$ and $\ms{G}$ are clearly sub group operads. A sub group operad $\ms{H}\leq \ms{G}$ is called \textbf{proper} if it is not $\ms{J}$ nor $\ms{G}$. For a morphism of group operads $\psi: \ms{G}\to \ms{G}'$, $\Ker \psi$ is a normal sub group operad of $\ms{G}$, and $\im \psi$ is a sub group operad of $\ms{G}'$. Clearly $\Ker \psi$ is non-crossed and $\im \psi$ has the same type than $\ms{G}$, i.e., it is non-crossed if $\ms{G}$ is non-crossed, and crossed if $\ms{G}$ is crossed (then $\ms{G}'$ has to be crossed too).

The sub group operad $\ms{H}$ of $\ms{G}$ generated by $A=\{A_n\}_{n\geq 0}$ with each $A_n$ a subset of $G_n$ is the intersection of all sub group operads of $\ms{G}$ containing $A$.

\begin{lem}
  The symmetric groups operad $\ms{S}$ is generated by $(1,2)\in S_2$ since for the generators of $S_n$ ($n\geq 3$), $(1,2)= \gamma (e_2; (1,2), e_{n-2})$, $(n-1,n)= \gamma (e_2; e_{n-2}, (1,2))$ and $(i,i+1)= \gamma (e_3; e_{i-1}, (1,2), e_{n-i-1}) $ for $2\leq i\leq n-2$. \qed
\end{lem}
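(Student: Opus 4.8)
The plan is to show that the sub group operad $\ms{H} \leq \ms{S}$ generated by the single transposition $(1,2) \in S_2$ is all of $\ms{S}$, by exhibiting in each $S_n$ a standard generating set of the group as $\gamma$-composites of elements already known to lie in $\ms{H}$. First I would record the elements that are automatically available: since each $H_n$ is by definition a subgroup of $S_n$, it contains the identity $e_n$, so every $e_k$ lies in $\ms{H}$, and of course $(1,2) \in H_2$ by hypothesis. The cases $n = 0, 1$ are trivial since $S_0, S_1$ are trivial groups, and $H_2 = S_2$ because $S_2 = \{e_2, (1,2)\}$.

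Next I would treat $n \geq 3$ by producing every adjacent transposition $(i, i+1)$, $1 \leq i \leq n-1$, inside $H_n$. The formulas in the statement do exactly this: $\gamma(e_2; (1,2), e_{n-2})$ and $\gamma(e_2; e_{n-2}, (1,2))$ yield $(1,2)$ and $(n-1, n)$, while $\gamma(e_3; e_{i-1}, (1,2), e_{n-i-1})$ yields $(i, i+1)$ for $2 \leq i \leq n-2$, so that all values $1 \leq i \leq n-1$ are covered. Each right-hand side is a $\gamma$-composite whose outer entry ($e_2$ or $e_3$) and all inner entries (identities and a single copy of $(1,2)$) lie in $\ms{H}$; since $\ms{H}$ is closed under $\gamma$, each left-hand side lies in $H_n$.

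The one computation to confirm is that these composites really equal the claimed transpositions. Because the outer permutation is an identity ($e_2$ or $e_3$), no block is permuted, and the composite acts as the identity on every block except the one carrying $(1,2)$, on which it swaps the two entries occupying the positions $\{i, i+1\}$; this is a direct unwinding of the block-composition description of $\gamma$ for $\ms{S}$. Once each adjacent transposition is shown to lie in the subgroup $H_n$, the standard fact that the $(i,i+1)$ generate $S_n$ gives $H_n = S_n$ for every $n$, hence $\ms{H} = \ms{S}$.

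The main, and essentially only, obstacle is the bookkeeping needed to verify the three $\gamma$-formulas against the paper's chosen product and action conventions for $\ms{S}$; having the outer entry be an identity is precisely what keeps this verification painless, since it removes any block-permutation contribution. Everything else reduces to the generation of $S_n$ by its adjacent transpositions together with the closure properties of a sub group operad.
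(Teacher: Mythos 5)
Your proposal is correct and follows exactly the paper's own argument: the lemma's embedded formulas expressing $(1,2)$, $(n-1,n)$, and $(i,i+1)$ as $\gamma$-composites of $(1,2)$ with identities are the entire proof (hence the \qed with no separate proof environment), combined with the standard fact that adjacent transpositions generate $S_n$. You have merely spelled out the routine details (identities lie in any sub group operad, closure under $\gamma$, the block-composition computation), which the paper leaves implicit.
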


\begin{prop}
  The symmetric groups operad $\ms{S}$ has no proper sub group operad; namely $\ms{S}$ has only two sub group operads, the trivial one $\ms{J}$ and $\ms{S}$ itself.
\end{prop}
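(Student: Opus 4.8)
The plan is to reduce everything to the preceding Lemma, which asserts that the sub group operad generated by the transposition $(1,2)\in S_2$ is all of $\ms{S}$. It therefore suffices to prove that every sub group operad $\ms{H}=\{H_n\}\leq\ms{S}$ with $\ms{H}\neq\ms{J}$ already contains $(1,2)\in S_2$: for then $\ms{H}$ contains the sub group operad generated by $(1,2)$, which is $\ms{S}$, forcing $\ms{H}=\ms{S}$. So the whole problem becomes: manufacture the generator $(1,2)$ out of an arbitrary nontrivial element of $\ms{H}$.

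First I would record two preliminaries. Since $S_0$ and $S_1$ are trivial, $H_0$ and $H_1$ are automatically trivial, so $\ms{H}\neq\ms{J}$ supplies some $n\geq 2$ and some $\sigma\in H_n$ with $\sigma\neq e_n$. Next, $\ms{H}$ is automatically closed under the face maps $d_i$: by definition $d_i\sigma=\gamma(\sigma;1^{i-1},*,1^{k-i})$, which inserts the unit $1=e_1\in H_1$ into every slot but the $i$-th and the basepoint $*=e_0\in H_0$ into the $i$-th, so closure of $\ms{H}$ under $\gamma$ yields $d_i\sigma\in H_k$ whenever $\sigma\in H_{k+1}$.

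The crux is to extract $(1,2)$ from the nontrivial $\sigma\in H_n$ by repeatedly applying face maps. Using the block-permutation description of $\gamma$ for $\ms{S}$, one sees that $d_i\colon S_{k+1}\to S_k$ is exactly the deletion of the $i$-th strand followed by an order-preserving reindexing of the remaining strands; equivalently, $d_i\sigma$ is the permutation induced by $\sigma$ on $\{1,\ldots,k+1\}\setminus\{i\}$. Since $\sigma\neq e_n$, it has an inversion, i.e.\ there are indices $p<q$ with $\sigma(p)>\sigma(q)$, whose strands cross in the diagram of $\sigma$. Deleting any strand other than $p$ or $q$ preserves the relative order of these two strands and of their images, hence preserves the fact that they cross. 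Deleting all $n-2$ other strands one at a time thus leaves a permutation in $S_2$ whose two surviving strands still cross, namely $(1,2)$. By the closure observed above, this $(1,2)$ lies in $H_2$, and the Lemma then gives $\ms{H}=\ms{S}$.

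The step I expect to demand the most care is the strand-deletion argument: checking directly from the definition of $\gamma$ on $\ms{S}$ that $d_i$ is the induced-subpermutation map, and expressing ``delete every strand except $p$ and $q$'' as an explicit composite of face maps with the index shifts handled consistently under the paper's convention $\sigma\cdot\tau=\tau\circ\sigma$. None of this is deep, but it is where the bookkeeping lives; the geometric picture of permutations as strand diagrams makes the invariance of the crossing pair under deletion, and hence the emergence of $(1,2)$, transparent.
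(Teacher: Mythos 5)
Your proposal is correct, and its skeleton is the same as the paper's: both reduce to the preceding Lemma by extracting $(1,2)\in H_2$ from an arbitrary nonidentity $\sigma\in H_n$ via strand deletion, using that a sub group operad contains $*\in H_0$ and $e_1\in H_1$ and is closed under $\gamma$. The difference lies in \emph{which} two strands are kept, and here your version is the more careful one. The paper picks $i$ with $\sigma(i)>i$ and applies $\gamma$ in one shot, keeping strands $i$ and $\sigma(i)$; the result is the permutation induced by $\sigma$ on $\{i,\sigma(i)\}$, which equals $(1,2)$ only when $\sigma(\sigma(i))<\sigma(i)$. That condition can fail: for the $3$-cycle $\sigma(1)=2$, $\sigma(2)=3$, $\sigma(3)=1$ with $i=1$, the two kept strands do not cross and the paper's formula yields $e_2$, not $(1,2)$. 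Your choice of an inversion pair $p<q$ with $\sigma(p)>\sigma(q)$ (which exists for every $\sigma\neq e_n$) is exactly what guarantees the crossing survives all deletions, since the renumbering of the surviving strands is order-preserving on both source and target; so your argument actually repairs this slip rather than merely reproducing the paper's proof. The only cosmetic difference beyond that is that you delete strands one at a time via face maps $d_i$ instead of using a single application of $\gamma$ with $n-2$ copies of $*$; both are legitimate, and plugging your inversion pair into the paper's one-shot formula, $\gamma\left(\sigma; *^{(p-1)}, e_1, *^{(q-p-1)}, e_1, *^{(n-q)}\right)=(1,2)$, gives the shortest fully correct proof.
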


\begin{proof}
  Let $\ms{G}$ be a sub group operad of $\ms{S}$ with a nonidentity permutation $\sigma\in G_n \leq S_n$, then there is $i$ such that $\sigma(i)>i$. Note that
  $$\gamma (\sigma; *^{(i-1)}, e_1, *^{(\sigma(i)-i-1)}, e_1, *^{(n- \sigma(i))})= (1,2)\in S_2,$$
  where $*\in S_0$, thus $(1,2)\in G_2$. Hence $\ms{G}= \ms{S}$ by the lemma.
\end{proof}

\begin{cor}
  If $\ms{G}$ is crossed, then all $\pi: G_n\to S_n$ are epimorphisms. \qed
\end{cor}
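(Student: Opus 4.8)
The plan is to read the statement off from the preceding Proposition, using the general fact recorded earlier in this subsection that the image of a morphism of group operads is a sub group operad of the target. Concretely, I would apply that fact to the defining morphism $\pi: \ms{G}\to \ms{S}$ itself, so that $\im \pi = \{\im(\pi: G_n\to S_n)\}_{n\geq 0}$ is a sub group operad of $\ms{S}$.

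Next I would bring in the hypothesis that $\ms{G}$ is crossed. By definition this means that $\pi: G_n\to S_n$ fails to be trivial for at least one $n$, so $\im \pi$ contains a nonidentity permutation and is therefore \emph{not} the trivial sub group operad $\ms{J}$. Since the Proposition above asserts that $\ms{S}$ has no proper sub group operad, its only sub group operads being $\ms{J}$ and $\ms{S}$, I conclude $\im \pi = \ms{S}$. Comparing this equality level by level gives $\im(\pi: G_n\to S_n)= S_n$ for every $n$, i.e. each $\pi: G_n\to S_n$ is an epimorphism, as claimed.

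There is essentially no obstacle here: the entire weight of the argument is carried by the ``simplicity'' Proposition for $\ms{S}$, and the only thing to check independently is that $\im \pi \neq \ms{J}$, which is immediate from the definition of ``crossed.'' The one point worth stating carefully is that $\im \pi$ is genuinely a sub group operad (closed under $\gamma$ and containing the units), but this is exactly the earlier remark about images of morphisms of group operads, which I am free to invoke. Thus the corollary follows in a couple of lines, and it delivers one half of the earlier-announced dichotomy that $\pi$ is either trivial for all $n$ or surjective for all $n$.
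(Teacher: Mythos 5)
Your proposal is correct and is exactly the argument the paper intends: the corollary is stated with a \qed immediately after the Proposition that $\ms{S}$ has only the sub group operads $\ms{J}$ and $\ms{S}$, and the route is precisely yours --- $\im \pi$ is a sub group operad of $\ms{S}$ (as noted earlier for images of morphisms, with $\pi: \ms{G}\to \ms{S}$ itself a morphism of group operads), it is not $\ms{J}$ since ``crossed'' means some $\pi: G_n\to S_n$ is nontrivial, hence $\im \pi = \ms{S}$ and every level is surjective.
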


Unlike the symmetric groups operad, the braid groups operad $\ms{B}$ has many proper sub group operads. The pure braid groups operad $\ms{P}$ is a canonical one. The ribbon braid groups operad has even more sub group operads. In the following we construct a sequence of sub group operads of $\ms{B}$. Let $\ms{B}^{(k)}= \{B_n^{(k)}\}_{n\geq 0}$ be the sub group operad of $\ms{B}$ generated by $\sigma_i^k$, $i\geq 1$, where $\sigma_1, \ldots, \sigma_{n-1}$ are the standard generators of $B_n$ and the two $\sigma_i$'s of $B_n$ and $B_m$ ($1\leq i<n,m$) are regarded as the same. Obviously $\ms{B}^{(1)}= \ms{B}$, $\ms{B}^{(2)}< \ms{P}$.

First we describe how $\ms{B}^{(k)}$ is generated. Let $A_{n,1}^k= \{\sigma_1^k, \ldots, \sigma_{n-1}^k\}$ and $A_{n,1}'^k$ the subgroup of $B_n$ generated by $A_{n,1}^k$. If $A^k_i= \{A_{n,i}^k\}_{k\geq 0}$ and $A'^k_i= \{A_{n,i}^k\}_{k\geq 0}$ have been defined for $i<m$, let $A^k_m= \{A_{n,m}^k\}_{k\geq 0}$ be the sequence of subsets of $\ms{B}$ generated by $A'^k_{m-1}= \{A_{n,m-1}^k\}_{k\geq 0}$ under $\gamma$, and $A'^k_m= \{A_{n,m}'^k\}_{k\geq 0}$ the sequence of subgroups of $\ms{B}$ generated by $A_{n,m}^k$. Clearly $A'^k_i\subseteq A'^k_{i+1}$ and $\ms{B}^{(k)}= \bigcup_{i\geq 0} A'^k_i$.

Let $\phi: B_n\onto B_n/[B_n,B_n] \cong \Z$ be the canonical homomorphism.

\begin{lem}
  For $n\geq 2$, $m\geq 1$, if $a\in A_{n,m}^k$ or $a\in A_{n,m}'^k$, then $k| \phi \gamma (a; e_{m_1}, \ldots, e_{m_n})$ for any $m_i\geq 0$.
\end{lem}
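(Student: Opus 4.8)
The plan is to isolate a single numerical invariant and show it behaves well under the two operations that build up $\ms{B}^{(k)}$. For $a\in B_n$ write
$$\Phi_a(m_1, \ldots, m_n) := \phi\gamma(a; e_{m_1}, \ldots, e_{m_n}) \in \Z,$$
so that the assertion to be proved is exactly the property
$$P(a): \quad k \mid \Phi_a(m_1, \ldots, m_n) \text{ for every choice of } m_i \geq 0.$$
(Conceptually $\Phi_a$ is the diagonal-free quadratic form $\sum_{p<q} \ell_{pq}(a)\, m_p m_q$, where $\ell_{pq}(a)$ is the signed linking number of the strands $p,q$ of $a$, so $P(a)$ just says $k$ divides every pairwise linking number; but I will not need this, working algebraically instead.) Since $A_{n,m}^k \subseteq A_{n,m}'^k$, it suffices to establish $P(a)$ for the subgroups, and I would induct on $m$, which fits the recursion: $A^k_m$ is built from $A'^k_{m-1}$ by $\gamma$, and $A'^k_m$ is built from $A^k_m$ by products and inverses.

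First I would record three computational facts. (I) \emph{Base}: cabling the crossing $\sigma_i$ replaces the two strands by blocks of $m_i$ and $m_{i+1}$ parallel strands, the first block passing over the second, so $\Phi_{\sigma_i}(\vec m) = m_i m_{i+1}$; cabling $\sigma_i^k$ makes the two blocks cross $k$ times, whence $\Phi_{\sigma_i^k}(\vec m) = k\, m_i m_{i+1}$, divisible by $k$. (II) \emph{Products}: putting $b_i = b_i' = e_{m_i}$ in the crossed-homomorphism identity gives
$$\gamma(aa'; e_{m_1}, \ldots, e_{m_n}) = \gamma(a; e_{m_1}, \ldots, e_{m_n})\, \gamma(a'; e_{m_{a^{-1}(1)}}, \ldots, e_{m_{a^{-1}(n)}}),$$
so applying the homomorphism $\phi$ yields $\Phi_{aa'}(\vec m) = \Phi_a(\vec m) + \Phi_{a'}(m_{a^{-1}(1)}, \ldots, m_{a^{-1}(n)})$; together with the identity $\gamma(a; e_{\vec m})^{-1} = \gamma(a^{-1}; e_{m_{a^{-1}(1)}}, \ldots, e_{m_{a^{-1}(n)}})$ already derived in the excerpt, this shows $P$ is closed under products and inverses, since a permuted tuple is again an arbitrary tuple. (III) \emph{Composition}: for $c \in B_l$ and $d_j \in B_{n_j}$, with $N = \sum_j n_j$, associativity rewrites $\gamma(\gamma(c; d_1, \ldots, d_l); e_{m_1}, \ldots, e_{m_N})$ as $\gamma(c; f_1, \ldots, f_l)$ where $f_j = \gamma(d_j; \ldots)$ is the cabling of $d_j$ by the $j$th block of $(m_1, \ldots, m_N)$; then the crossed-homomorphism splitting $\gamma(c; f_1, \ldots, f_l) = \gamma(c; e_{M_1}, \ldots, e_{M_l})\, \gamma(e_l; f_{c^{-1}(1)}, \ldots, f_{c^{-1}(l)})$, with $M_j$ the total of the $j$th block, together with $\phi$ gives
$$\Phi_{\gamma(c; d_1, \ldots, d_l)}(\vec m) = \Phi_c(M_1, \ldots, M_l) + \sum_{j=1}^l \Phi_{d_j}\big(\text{$j$th block of }\vec m\big),$$
using that $\gamma(e_l; -)$ juxtaposes braids side by side so that $\phi$ is additive on it.

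With these in hand the induction is immediate: (I) gives the base case $m=1$ for the generators $\sigma_i^k$, and (II) upgrades it to all of $A'^k_{n,1}$. Assuming $P$ for every element of $A'^k_{n,m-1}$, fact (III) shows that $P$ is preserved by $\gamma$ — each summand on the right is divisible by $k$ by the inductive hypothesis applied to $c$ and to the $d_j$ — hence $P$ holds for every element of $A^k_{n,m}$, and (II) again passes $P$ to the generated subgroups $A'^k_{n,m}$. The step I expect to be the real work is (III): one must track how the index set $\{1, \ldots, N\}$ of the outer cabling is partitioned into the $l$ blocks fed to $d_1, \ldots, d_l$, how the permutation underlying $c$ reshuffles these blocks in the splitting, and that the quantifier ``for all $m_i$'' in the inductive hypothesis is precisely what licenses evaluating $\Phi_c$ at the block sizes $M_j$ and each $\Phi_{d_j}$ at its sub-tuple. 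The crossed-homomorphism and associativity axioms carry out all the algebra; the only genuinely geometric input is the elementary observation (I) that a cabled positive crossing contributes $m_i m_{i+1}$ to the writhe.
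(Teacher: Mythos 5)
Your proof is correct and takes essentially the same route as the paper's: the same induction on $m$, with the crossed-homomorphism identity (applied with identity entries) giving closure under products and the key composition step obtained from associativity plus the splitting $\gamma(c; f_1, \ldots, f_l) = \gamma(c; e_{M_1}, \ldots, e_{M_l})\, \gamma(e_l; f_{c^{-1}(1)}, \ldots, f_{c^{-1}(l)})$ and additivity of $\phi$ on $\gamma(e_l; -)$ --- the paper performs the identical splitting, merely factoring in the opposite order. Your explicit handling of inverses and the geometric (writhe) justification of the base case for $\sigma_i^k$ are minor refinements of points the paper treats implicitly or by iterating the crossed-homomorphism identity.
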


\begin{proof}
  Note that $\phi \gamma (\sigma_1, e_i, e_j)= ij$, $\phi \gamma (\sigma_1^{-1}, e_i, e_j)= -ij$, $\phi \gamma (\sigma_i, e_{m_1}, \ldots, e_{m_n})= m_i m_{i+1}$, $\phi \gamma (\sigma_i^{-1}, e_{m_1}, \ldots, e_{m_n})= -m_i m_{i+1}$, and $\phi \gamma (e_n, b_1, \ldots, b_n)= \phi b_1+ \cdots+ \phi b_n$. Since $\gamma$ is a crossed homomorphism, $$\phi \gamma (\sigma_1^k, e_i, e_j)= \phi \gamma (\sigma_1, e_i, e_j)+ \phi \gamma (\sigma_1^{k-1}, e_j, e_i)= \cdots= kij;$$
  similarly $\phi \gamma (\sigma_i^k, e_{m_1}, \ldots, e_{m_n})= km_i m_{i+1}$ and
  $$\phi \gamma (\sigma_i^k \sigma_j^k, e_{m_1}, \ldots, e_{m_k})= km_i m_{i+1}+ km_{\sigma_i^{-k}(j)} m_{\sigma_i^{-k}(j+1)}.$$
  So the assertion holds for $m=1$. Suppose it holds for $i<m$. For $a\in A_{n,m}^k$, $a= \gamma (a'; a'_1, \ldots, a'_{n'})$ for some $a'\in A_{n',m-1}'^k$, $a'_i\in A_{?,m-1}'^k$. Then
  \begin{align*}
    &\ \phi \gamma (a; e_{m_1}, \ldots, e_{m_n}) \\
    = &\ \phi \gamma (\gamma (a'; a'_1, \ldots, a'_{k'}); e_{m_1}, \ldots, e_{m_n}) \\
    = &\ \phi \gamma (a'; \gamma (a'_1; e_{m_1}, \ldots), \ldots, \gamma (a'_{k'}; \ldots, e_{m_n})) \\
    = &\ \phi \gamma (e_{n'}; \gamma (a'_1; e_{m_1}, \ldots), \ldots, \gamma (a'_{k'}; \ldots, e_{m_n}))+ \phi \gamma (a'; e_?, \ldots, e_?) \\
    = &\ \phi \gamma (a'_1; e_{m_1}, \ldots)+ \cdots+ \phi \gamma (a'_{k'}; \ldots, e_{m_n})+ \phi \gamma (a'; e_?, \ldots, e_?).
  \end{align*}
  (Some subscripts of $e$ are omitted for convenience but it should be clear what they are.) Thus by the induction assumption, $k| \phi \gamma (a; e_{m_1}, \ldots, e_{m_n})$. For $a,b\in A_{n,m}^k$,
  $$\phi \gamma (ab; e_{m_1}, \ldots, e_{m_n})= \phi \gamma (a; e_{m_1}, \ldots, e_{m_n})+ \phi \gamma (b; e_{m_{a^{-1}(1)}}, \ldots, e_{m_{a^{-1}(n)}}),$$
  thus the assertion holds for $c\in A_{n,m}'^k$ as well.
\end{proof}

\begin{lem}
  For $n\geq 2$ and $m\geq 1$, $\phi A_{n,m}^k= k\Z$, $\phi A_{n,m}'^k= k\Z$.
\end{lem}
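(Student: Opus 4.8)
The plan is to prove each of the two equalities by establishing the inclusions $\subseteq$ and $\supseteq$ separately, disposing of the subgroups $A_{n,m}'^k$ first and then deducing the statement for the subsets $A_{n,m}^k$. For the inclusion into $k\Z$ I would invoke the preceding lemma directly: specializing all $m_i=1$ and using unitality $\gamma(a; e_1^{(n)})=a$ gives $\phi a= \phi\gamma(a; e_1, \ldots, e_1)\in k\Z$ for every $a\in A_{n,m}^k$. Since $\phi$ is a homomorphism into the abelian group $\Z$ and $A_{n,m}'^k$ is generated by $A_{n,m}^k$, this simultaneously forces $\phi A_{n,m}^k\subseteq k\Z$ and $\phi A_{n,m}'^k\subseteq k\Z$, so only the reverse inclusions remain.

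For the reverse inclusion I would show that the value $k$ is already attained and then let the group structure do the rest. Because $\sigma_1\in B_n$ ($n\geq 2$) has $\phi\sigma_1=1$, the element $\sigma_1^k\in A_{n,1}^k$ satisfies $\phi\sigma_1^k=k$; using the nesting $A_{n,i}'^k\subseteq A_{n,i+1}'^k$ recorded in the construction, $\sigma_1^k\in A_{n,1}'^k\subseteq A_{n,m}'^k$ for every $m\geq 1$. As $A_{n,m}'^k$ is a subgroup, its image is a subgroup of $\Z$ containing $k$, hence contains $k\Z$; combined with the upper bound this gives $\phi A_{n,m}'^k= k\Z$ for all $n\geq 2$, $m\geq 1$. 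To transfer this to the subsets for $m\geq 2$ I would use a unitality trick: since $e_1\in A_{1,m-1}'^k$ and $\gamma(e_1; a)=a$, every $a\in A_{n,m-1}'^k$ appears among the $\gamma$-composites defining $A_{n,m}^k$, so $A_{n,m-1}'^k\subseteq A_{n,m}^k$ and therefore $\phi A_{n,m}^k\supseteq \phi A_{n,m-1}'^k= k\Z$; together with the upper bound this yields $\phi A_{n,m}^k= k\Z$.

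I expect the only delicate point to be the base case $m=1$ of the subset claim, where $A_{n,1}^k=\{\sigma_1^k, \ldots, \sigma_{n-1}^k\}$ literally has image $\{k\}$; the equality $\phi A_{n,1}^k=k\Z$ is to be understood as the assertion that these generators have $\phi$-values divisible by exactly $k$, so that the subgroup they generate is precisely $k\Z$, which is what the computation for $A_{n,1}'^k$ records. The residual work is essentially bookkeeping: confirming that the displayed identities of the previous lemma (such as $\phi\gamma(\sigma_i^k; e_{m_1}, \ldots, e_{m_n})=km_im_{i+1}$) pin the divisor down to exactly $k$ rather than a proper multiple, and verifying that the containment $A_{n,m-1}'^k\subseteq A_{n,m}^k$ genuinely follows from the inductive description of $A_{n,m}^k$ as the set of $\gamma$-composites of elements of $A'^k_{m-1}$.
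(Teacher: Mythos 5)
Your proof is correct, but it reaches the key inclusion $\phi A_{n,m}^k\subseteq k\Z$ by a genuinely different (and shorter) route than the paper. The paper runs a fresh induction on $m$: assuming $\phi A_{n,i}^k=\phi A_{n,i}'^k=k\Z$ for $i<m$, it decomposes a typical element of $A_{n,m}^k$ via the crossed-homomorphism identity, $\phi\gamma(a;b_1,\ldots,b_n)=\phi b_1+\cdots+\phi b_n+\phi\gamma(a;e_?,\ldots,e_?)$, bounds the last term by the preceding lemma and the other terms by the inductive hypothesis, and declares the reverse inclusions obvious. You instead observe that the preceding lemma, specialized at $m_1=\cdots=m_n=1$ and combined with operadic unitality $\gamma(a;e_1^{(n)})=a$, already gives $k\mid\phi a$ for every $a\in A_{n,m}^k$ and every $a\in A_{n,m}'^k$ at once --- no new induction is needed, so the lemma becomes essentially a formal corollary of the previous one; this works precisely because that lemma is quantified over all $m_i\geq 0$, including $m_i=1$. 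What the paper's approach buys is a proof whose inductive bookkeeping mirrors the previous lemma's and never leans on the unitality specialization; what yours buys is brevity and a clean separation of the two inclusions. Your handling of $\supseteq$ (the image of the subgroup $A_{n,m}'^k$ is a subgroup of $\Z$ containing $\phi\sigma_1^k=k$, the nesting $A_{n,i}'^k\subseteq A_{n,i+1}'^k$, and the containment $A_{n,m-1}'^k\subseteq A_{n,m}^k$ via $\gamma(e_1;a)=a$ with $e_1\in A_{1,m-1}'^k$) is exactly the content hiding behind the paper's ``it is obvious,'' and you are right to flag the $m=1$ anomaly: literally $\phi A_{n,1}^k=\{k\}$, so the subset statement at $m=1$ only holds under the generated-subgroup reading, a point the paper glosses over entirely.
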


\begin{proof}
  It is obvious that $\phi A_{n,1}^k= k\Z$, $\phi A_{n,1}'^k= k\Z$, $\phi A_{n,m}^k \supseteq k\Z$, $\phi A_{n,m}'^k \supseteq k\Z$ for $m\geq 1$. Suppose $\phi A_{n,i}^k= k\Z$, $\phi A_{n,i}'^k= k\Z$ for $i<m$.
  \begin{align*}
    \phi \gamma (a; b_1, \ldots, b_n) & = \phi \gamma (e_n; b_1, \ldots, b_n)+ \phi \gamma (a; e_?, \ldots, e_?) \\
    & = \phi b_1+ \cdots+ \phi b_n+ \phi \gamma (a; e_?, \ldots, e_?).
  \end{align*}
  So the assertion holds by the previous lemma.
\end{proof}

\begin{prop}
  $\ms{B}^{(k)}$ is a proper subgroup operad of $\ms{B}^{(k')}$ if $k'|k$. $\ms{B}^{(2k)}$ is non-crossed while $\ms{B}^{(2k+1)}$ is crossed.
\end{prop}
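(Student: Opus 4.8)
The plan is to prove the two assertions separately, leaning on the two preceding lemmas for the containment statement and on a parity computation on $\pi$ for the crossed/non-crossed dichotomy.

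For the containment, first I would observe that if $k'\mid k$, say $k=k't$, then each generator $\sigma_i^k=(\sigma_i^{k'})^t$ of $\ms{B}^{(k)}$ already lies in $\ms{B}^{(k')}$; since $\ms{B}^{(k)}$ is by definition the smallest sub group operad containing the $\sigma_i^k$, this immediately gives $\ms{B}^{(k)}\leq \ms{B}^{(k')}$. To see the containment is strict when $k'<k$, I would exploit the previous lemma: taking all $m_i=1$ and using unitality $\gamma(a;e_1^{(n)})=a$, every $a\in B_n^{(k)}=\bigcup_m A_{n,m}'^k$ satisfies $\phi(a)=\phi\gamma(a;e_1^{(n)})\in k\Z$. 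On the other hand $\sigma_1^{k'}\in B_2^{(k')}$ has $\phi(\sigma_1^{k'})=k'$, and $0<k'<k$ forces $k'\notin k\Z$; hence $\sigma_1^{k'}\in \ms{B}^{(k')}\setminus \ms{B}^{(k)}$ and the inclusion is proper. (When $k'=k$ the two operads coincide, so properness presumes $k'$ a proper divisor; note also $\ms{B}^{(k)}\neq\ms{J}$ since it contains $\sigma_1^k\neq e$.)

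For the dichotomy, the key observation is $\pi(\sigma_i^{\ell})=(i,i+1)^{\ell}$, which equals the identity when $\ell$ is even and the transposition $(i,i+1)$ when $\ell$ is odd. When $\ell=2k$, all generators $\sigma_i^{2k}$ of $\ms{B}^{(2k)}$ map to the identity under $\pi$. Since $\pi:\ms{B}\to\ms{S}$ is a morphism of group operads, $\pi(\ms{B}^{(2k)})$ is the sub group operad of $\ms{S}$ generated by $\{\pi(\sigma_i^{2k})\}=\{e\}$; because $\gamma(e_k;e_{m_1},\ldots,e_{m_k})=e_m$, this generated operad is the trivial operad $\ms{J}$, so every $\pi:B_n^{(2k)}\to S_n$ is trivial and $\ms{B}^{(2k)}$ is non-crossed. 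Concretely one checks this by induction on the generation levels $A_{n,m}^{2k}$, $A_{n,m}'^{2k}$, using that $\pi$ commutes with $\gamma$ and with the group operations. When $\ell=2k+1$, instead $\pi(\sigma_1^{2k+1})=(1,2)\neq e$ with $\sigma_1^{2k+1}\in B_2^{(2k+1)}$, so $\pi:B_2^{(2k+1)}\to S_2$ is nontrivial and $\ms{B}^{(2k+1)}$ is crossed.

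The routine computations are precisely the two preceding lemmas, which are already in hand, so most of the work is discharged. The only point requiring a little care is the propagation of triviality of $\pi$ through the operadic generation in the even case; this is where I expect the only genuine (though mild) obstacle, and it is handled cleanly by the fact that a morphism of group operads carries a generated sub group operad onto the sub group operad generated by the images, together with the identity $\gamma(e_k;e_{m_1},\ldots,e_{m_k})=e_m$ established earlier.
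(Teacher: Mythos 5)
Your proof is correct and takes essentially the same route as the paper: the paper's (very terse) proof likewise gets the containment $\ms{B}^{(k)}\leq \ms{B}^{(k')}$ from the generators and derives properness from the preceding $\phi$-lemmas, treating the crossed/non-crossed dichotomy as immediate from the images $\pi(\sigma_i^{\ell})=(i,i+1)^{\ell}$. Your write-up simply fills in details the paper leaves implicit, namely the unitality trick $\phi(a)=\phi\gamma(a;e_1^{(n)})\in k\Z$ giving $\phi(B_n^{(k)})\subseteq k\Z$, the witness $\sigma_1^{k'}$ with $\phi(\sigma_1^{k'})=k'\notin k\Z$, and the propagation of $\pi$-triviality through the operadic generation in the even case.
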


\begin{proof}
  It is clear that $\ms{B}^{(k)}\leq \ms{B}^{(k')}$ if $k'|k$. The proposition holds by the previous lemma.
\end{proof}

We next discuss quotient group operads. For a subgroup $H$ of a group $G$, let $G/H= \{gH \mid g\in G\}$ and $H\backslash G= \{Hg \mid g\in G\}$.

Let $\ms{G}$ be a group operad. If $\ms{H}$ is a sub group operad of $\ms{G}$, then the $\gamma$ of $\ms{G}$ induces a $\gamma$ on $\ms{G}/ \ms{H}= \{G_k/H_k\}_{k\geq 0}$,
$$\xymatrix{
  G_k \times G_{m_1} \times \cdots \times G_{m_k} \ar[d] \ar[r]^-{\gamma}  & G_m \ar[d]  \\
  G_k/H_k \times G_{m_1}/ H_{m_1} \times \cdots \times G_{m_k}/ H_{m_k} \ar@{-->}[r]  & G_m/ H_m       }$$
since
$$\gamma (gh; g_1h_1, \ldots)= \gamma (g;g_1, \ldots) \gamma (h; h_{g^{-1}(1)}, \ldots) \in \gamma (g;g_1, \ldots) H_m.$$
$G_k$ acts on $G_k/H_k$ by $g\cdot aH_k= (ga)H_k$. If $\ms{H}$ is normal and noncrossed, then $G_k\to S_k$ factors through $G_k/H_k\to S_k$ and $\ms{G}/ \ms{H}$ is a group operad. If $\ms{H}$ is normal and crossed, there is no natural nontrivial $G_k/H_k\to S_k$; even if we let $G_k/H_k\to S_k$ be trivial, $\ms{G}/ \ms{H}$ is still not a group operad since $\gamma$ is not a homomorphism.

If $\ms{H}$ is a non-crossed sub group operad, the $\gamma$ of $\ms{G}$ also induces a $\gamma$ on $\ms{H} \backslash \ms{G}= \{H_k \backslash G_k\}_{k\geq 0}$,
$$\xymatrix{
  G_k \times G_{m_1} \times \cdots \times G_{m_k} \ar[d] \ar[r]^-{\gamma}  & G_m \ar[d]  \\
  H_k \backslash G_k \times H_{m_1} \backslash G_{m_1} \times \cdots \times H_{m_k} \backslash G_{m_k} \ar@{-->}[r]  & H_m  \backslash G_m       }$$
since $\gamma (hg; h_1g_1, \ldots)= \gamma (h;h_1, \ldots) \gamma (g; g_{h^{-1}(1)}, \ldots)= \gamma (h;h_1, \ldots) \gamma (g;g_1, \ldots) \in H_m \gamma (g;g_1, \ldots)$. If $\ms{H}$ is crossed, however, we may not have the above one, but the following one
$$\xymatrix{
  G_k \times G_{m_1} \times \cdots \times G_{m_k} \ar[d] \ar[r]^-{\gamma}  & G_m \ar[d]  \\
  G_k \times H_{m_1} \backslash G_{m_1} \times \cdots \times H_{m_k} \backslash G_{m_k} \ar@{-->}[r]  & H_m  \backslash G_m  }$$

\subsection{Simplicial Structure of Group Operads}
It is well known that both $\ms{S}$ and $\ms{B}$ are crossed simplicial groups. In fact, the operad structure makes a non-crossed group operad a simplicial group and a crossed group operad a crossed simplicial group.

Let $[n]= \{1, \ldots, n\}$ for $n\geq 1$. Define for $1\leq i\leq n$, $d^i: [n-1] \to [n]$, $j\mapsto j$ if $j<i$ and $j\mapsto j+1$ if $j\geq i$, and for $1\leq i\leq n-1$, $s^i: [n] \to [n-1]$, $j\mapsto j$ if $j\leq i$ and $j\mapsto j-1$ if $j>i$.

\begin{defn}
  A \textbf{crossed simplicial group} is a simplicial set $\{G_n\}_{n\geq 1}$ where all $G_n$ are groups, together with group homomorphisms $\pi_n: G_n \to S_n$, $n\geq 1$, such that
  \begin{itemize}
    \item[i)] $d_i(ab)= (d_ia) (d_{a(i)}b)$, $s_i(ab)= (s_ia) (s_{a(i)}b)$, and
    \item[ii)] the following diagrams are commutative:
        $$\begin{diagram}
          [n-1]  &\rTo^{d^i}  &[n]-\{i\} \\
          \dTo<{d_ia}  &  &\dTo>a \\
          [n-1]  &\rTo^{d^{a(i)}}  &[n]-\{a(i)\},
        \end{diagram} \qquad \qquad
        \begin{diagram}
          [n+1]  &\rTo^{s^i}  &[n] \\
          \dTo<{s_ia}  &  &\dTo>a \\
          [n+1]  &\rTo^{s^{a(i)}}  &[n].
        \end{diagram}$$
  \end{itemize}
  A crossed $\Delta$-group is defined in the same way but without degeneracies $s_i$.
\end{defn}

Note that a group operad $\ms{G}= \{G_n\}_{n\geq 0}$ is an operad with a strict basepoint. Thus $\ms{G}$ is a simplicial set with
\begin{align*}
  d_i= \gamma (-; e_1^{(i-1)}, e_0, e_1^{(n+1-i)}) & : G_{n+1} \to G_{n}, \quad 1\leq i\leq n+1, \\
  s_i= \gamma (-; e_1^{(i-1)}, e_2, e_1^{(n-i)}) & : G_n \to G_{n+1}, \quad 1\leq i\leq n.
\end{align*}

\begin{lem}\label{lem:pi_group-operad_simplicial-map}
  For a group operad $\ms{G}= \{G_n\}_{n\geq 0}$ with $\pi: \ms{G}\to \ms{S}$, $\pi: \ms{G}\to \ms{S}$ is a simplicial map and $d_i(ab)= (d_ia) (d_{a(i)}b)$ and $s_i(ab)= (s_ia) (s_{a(i)}b)$.
\end{lem}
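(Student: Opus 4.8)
The plan is to derive all three assertions from the crossed homomorphism property of $\gamma$, using the single observation that every element inserted in the definitions of $d_i$ and $s_i$ is an identity and hence idempotent: $e_1 = e_1 e_1$, $e_0 = e_0 e_0$, and $e_2 = e_2 e_2$. First I would settle the simplicial-map claim, which uses only that $\pi$ is a morphism of operads, and then obtain the two twisted product formulas by one direct application of the crossed homomorphism each.

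For the simplicial-map claim, I would note that $\pi: \ms{G}\to \ms{S}$ commutes with $\gamma$ by definition of a morphism of operads, and that each $\pi: G_n\to S_n$, being a group homomorphism, carries each identity $e_k$ to the identity of $S_k$. Since the face and degeneracy operators of $\ms{S}$ are given by the very same formulas as those of $\ms{G}$, namely $\gamma(-; e_1^{(i-1)}, e_0, e_1^{(n+1-i)})$ and $\gamma(-; e_1^{(i-1)}, e_2, e_1^{(n-i)})$, applying $\pi$ to $d_i a$ or $s_i a$ and pulling it inside $\gamma$ returns $d_i(\pi a)$ or $s_i(\pi a)$. Thus $\pi$ commutes with every $d_i$ and $s_i$, which is precisely the assertion that $\pi$ is simplicial.

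For the face formula, I would take $a, b\in G_{n+1}$, expand
$$d_i(ab) = \gamma\bigl(ab;\, (e_1 e_1)^{(i-1)},\, e_0 e_0,\, (e_1 e_1)^{(n+1-i)}\bigr),$$
and apply the crossed homomorphism with $a$ as left factor and $b$ as right factor. The left factor recovers $\gamma(a; e_1^{(i-1)}, e_0, e_1^{(n+1-i)}) = d_i a$ directly. The right factor is $\gamma(b; e'_{a^{-1}(1)}, \ldots, e'_{a^{-1}(n+1)})$, where $e'_j = e_0$ when $j = i$ and $e'_j = e_1$ otherwise; the reindexing $b'_{a^{-1}(j)}$ prescribed by the crossed homomorphism moves this single $e_0$ from slot $i$ to the slot $j$ with $a^{-1}(j) = i$, that is, to slot $a(i)$, so the right factor equals $d_{a(i)} b$. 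This gives $d_i(ab) = (d_i a)(d_{a(i)} b)$, and repeating the computation verbatim with $e_2$ in place of $e_0$ (and $a, b\in G_n$) gives $s_i(ab) = (s_i a)(s_{a(i)} b)$.

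The only step needing care is the index bookkeeping in the right-hand factor: confirming that the distinguished insertion, which occupies slot $i$ in the untwisted tuple, is carried to slot $a(i)$ by the permutation built into the crossed homomorphism, where $a(i)$ abbreviates $(\pi a)(i)$. I would also record the routine range check that $a(i)$ lies in the admissible range of the target operator, which is immediate since $\pi a$ permutes $\{1, \ldots, n+1\}$ for faces and $\{1, \ldots, n\}$ for degeneracies. Beyond this purely combinatorial point the argument is a mechanical substitution, so I anticipate no genuine obstacle.
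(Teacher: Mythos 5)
Your proposal is correct and follows essentially the same route as the paper's proof: the paper also gets the simplicial-map claim from $\pi$ commuting with $\gamma$, and derives both twisted product formulas by writing the inserted identities $e_1, e_0, e_2$ as idempotent products, applying the crossed homomorphism once, and observing that the distinguished slot $i$ is carried to slot $a(i)$ by the reindexing $c_{a^{-1}(j)}$.
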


\begin{proof}
  $\pi$ is a simplicial map as it commutes with $\gamma$. Let $c_j=e_1$ for $j\neq i$ and $c_i=e_0$. Since $\gamma$ is a crossed homomorphism,
  \begin{align*}
    d_i(ab) & = \gamma (ab; e_1^{(i-1)}, e_0, e_1^{(n-i)})= \gamma (ab; c_1, \ldots, c_n)\\
    &= \gamma (a; c_1, \ldots, c_n) \gamma (b; c_{a^{-1}(1)}, \ldots, c_{a^{-1}(a(i))}, \ldots, c_{a^{-1}(n)}) \\
    &= (d_ia) \gamma (b; e_1^{(a(i)-1)}, e_0, e_1, \ldots)= (d_ia) (d_{a(i)}b).
  \end{align*}
  Similarly $s_i(ab)= \gamma (ab; e_1^{(i-1)}, e_2, e_1^{(n-i)})= (s_ia) \gamma (b; e_1^{(a(i)-1)}, e_2, \ldots)= (s_ia) (s_{a(i)}b)$.
\end{proof}

\begin{prop}
  Any non-crossed group operad is a simplicial group and any crossed group operad is a crossed simplicial group. Any morphism of non-crossed group operads is a morphism of simplicial groups and any morphism of crossed group operads is a morphism of crossed simplicial groups. \qed
\end{prop}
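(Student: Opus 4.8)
The plan is to obtain the whole statement by reading off the simplicial structure already present on a group operad and reducing the only substantive coherence condition to the symmetric groups operad $\ms{S}$. First I would note that a group operad $\ms{G}= \{G_n\}_{n\geq 0}$ carries the strict basepoint $\{e_n\}_{n\geq 0}$ (viewing a discrete operad as a topological operad with the discrete topology), so by the earlier proposition on strict basepoints $\ms{G}$ is a simplicial set with the operators $d_i, s_i$ introduced just before Lemma \ref{lem:pi_group-operad_simplicial-map}. Each $G_n$ is a group and each $\pi_n: G_n\to S_n$ a homomorphism by definition, and condition (i) in the definition of a crossed simplicial group, $d_i(ab)= (d_ia)(d_{a(i)}b)$ and $s_i(ab)= (s_ia)(s_{a(i)}b)$, is exactly the content of Lemma \ref{lem:pi_group-operad_simplicial-map}. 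Hence for the crossed case only the coherence (ii) remains to be verified.

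The key point for (ii) is that the two vertical maps of each square are the permutations attached to elements of $\ms{G}$: the right-hand map is $\pi(a)$ and the left-hand map is $\pi(d_i a)$ (respectively $\pi(s_i a)$). Because $\pi: \ms{G}\to \ms{S}$ is a simplicial map (Lemma \ref{lem:pi_group-operad_simplicial-map}), we have $\pi(d_i a)= d_i(\pi a)$ and $\pi(s_i a)= s_i(\pi a)$, so each square for $\ms{G}$ at an element $a$ is literally the corresponding square for $\ms{S}$ at the permutation $\pi a$. Thus (ii) for an arbitrary group operad follows once it is checked for $\ms{S}$ alone. For $\ms{S}$ the operad face $d_i\sigma= \gamma(\sigma; e_1^{(i-1)}, e_0, e_1^{(n-i)})$ is the permutation of $[n-1]$ obtained from $\sigma\in S_n$ by deleting the input $i$ together with the output $\sigma(i)$ and relabelling order-preservingly; by its very construction this satisfies $\sigma\circ d^i= d^{\sigma(i)}\circ(d_i\sigma)$, which is precisely the face square, and the degeneracy square is the analogous strand-doubling identity $\sigma\circ s^i= s^{\sigma(i)}\circ(s_i\sigma)$ coming from $s_i\sigma= \gamma(\sigma; e_1^{(i-1)}, e_2, e_1^{(n-i)})$. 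This is the well-known fact that $\ms{S}$ is a crossed simplicial group. Therefore every crossed group operad is a crossed simplicial group. For a non-crossed $\ms{G}$ all $\pi_n$ are trivial, so $a(i)= i$ throughout: condition (i) then says each $d_i$ and $s_i$ is a homomorphism and condition (ii) is vacuous, so $\ms{G}$ is a simplicial group.

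For a morphism $\psi: \ms{G}\to \ms{G}'$ of group operads, each $\psi_n$ is a group homomorphism sending $e_n$ to $e'_n$ and $\psi$ commutes with $\gamma$; since $d_i$ and $s_i$ are defined by plugging the basepoint elements $e_0$ and $e_2$ into $\gamma$, commuting with $\gamma$ forces $\psi$ to commute with all $d_i$ and $s_i$, so $\psi$ is a simplicial map and hence a morphism of simplicial groups. In the crossed case the identity $\pi'\circ\psi= \pi$ makes the two systems of permutations agree, so $\psi$ is also a morphism of crossed simplicial groups. I expect the one genuinely nontrivial step to be the coherence (ii); the reduction to $\ms{S}$ through the simplicial homomorphism $\pi$ is what makes it painless, and everything else is a direct unwinding of the definitions together with Lemma \ref{lem:pi_group-operad_simplicial-map}.
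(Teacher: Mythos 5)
Your proof is correct and takes essentially the same approach the paper intends: the proposition is stated without proof precisely because it follows from the preceding results, namely the strict basepoint giving the simplicial set structure, Lemma \ref{lem:pi_group-operad_simplicial-map} giving condition (i) together with the simpliciality of $\pi$, and coherence (ii) reducing along the simplicial map $\pi$ to the well-known crossed simplicial group structure on $\ms{S}$. Your write-up simply makes this implicit argument explicit, including the routine check for morphisms.
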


\begin{rem}
  The definition of a crossed simplicial group ($\Delta$-group) is defined in this way to make it consistent with the operad structure of a group operad defined in this paper. It is different from, but equivalent to the one given by Proposition 1.7 of \cite{Fie-Lod:1991:CSGTAH}. It should be noted that the $\{\pi_n\}_{n\geq 1}$ of a crossed simplicial group need not be a simplicial map (Note that Lemma \ref{lem:pi_group-operad_simplicial-map} is about group operads, not about crossed simplicial groups; two examples of crossed simplicial groups which are not group operads are given at the end of this subsection). From the first diagram in ii), $\{\pi_n\}_{n\geq 1}$ commutes with faces $d_i$ and thus is a $\Delta$-map. However, from the second diagram, we have
$$\{(s_ia)(i), (s_ia)(i+1)\}= \{a(i), a(i)+1\},$$
but may not have $(s_ia)(i)= a(i)$, $(s_ia)(i+1)= a(i)+1$. Namely $\{\pi_n\}_{n\geq 1}$ needn't commute with degeneracies $s_i$. For instance, the sequence of hyperoctahedral groups satisfies that $(s_ia)(i)= a(i)+1$ and $(s_ia)(i+1)= a(i)$ for certain $a$ as discussed in Section 3 of \cite{Fie-Lod:1991:CSGTAH}. Crossed simplicial groups are also defined in Subsection 3.1 of \cite{Ber-Coh-Won-Wu:2006:CBHG} where $\{\pi_n\}_{n\geq 1}$ is furthermore required to commute with degeneracies. Thus the definition given in \cite{Fie-Lod:1991:CSGTAH} includes the one given in \cite{Ber-Coh-Won-Wu:2006:CBHG}, but the latter does not include the former. Therefore the claim right after the definition of a crossed simplicial group in \cite{Ber-Coh-Won-Wu:2006:CBHG} that they are equivalent is wrong.
\end{rem}

We next construct a wreath product of crossed $\Delta$-groups and of crossed simplicial groups following a wreath product of the symmetric groups given in \cite{Fie-Lod:1991:CSGTAH}. Let $A$ be a group and $\phi: S_n\to \Aut (A^n)$, $\sigma \mapsto \phi_{\sigma}$ where $\phi_{\sigma} (a_1, \ldots, a_n)= (a_{\sigma (1)}, \ldots, a_{\sigma (n)})$. Let $\ms{H}= \{H_n\}_{n\geq 0}$ be a crossed $\Delta$-group with $\phi: H_n \to S_n \to \Aut (A^n)$. Recall that the wreath product $A\wr H_n$ of $H_n$ by $A$ is defined as the semidirect product $A^n \rtimes H_n$. Define the wreath product of $\ms{H}$ by $A$ as
$$A\wr \ms{H}= \{A\wr H_n\}_{n\geq 0},$$
with $d_i (a_1, \ldots, a_n; h)= (a_1, \ldots, \hat{a}_i, \ldots, a_n; d_ih)$ and $\pi: A\wr H_n= A^n \rtimes H_n \onto H_n \to S_n$. Clearly $A\wr \ms{H}= \{A^n\}_{n\geq 0} \times \ms{H}$ as $\Delta$-sets. If $\ms{H}$ is moreover a crossed simplicial group, define $s_i (a,\ldots, a_n; h)= (a_1, \ldots, a_i, a_i, \ldots, a_n; s_i h)$; then $A\wr \ms{H}= \{A^n\}_{n\geq 0} \times \ms{H}$ as simplicial sets.

\begin{prop}
  $A\wr \ms{H}$ is a crossed $\Delta$-group if $\ms{H}$ is a crossed $\Delta$-group, and a crossed simplicial group if $\ms{H}$ is a crossed simplicial group.
\end{prop}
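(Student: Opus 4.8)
The plan is to verify the four pieces of the definition of a crossed $\Delta$-group (resp.\ crossed simplicial group) directly for $A\wr \ms{H}$, using the decomposition $A\wr \ms{H}= \{A^n\}_{n\geq 0} \times \ms{H}$ and reducing each condition to the corresponding property already established for $\ms{H}$.

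First I would record the underlying combinatorial structure. The sequence $\{A^n\}_{n\geq 0}$, with $d_i$ deleting the $i$th coordinate (and $s_i$ repeating it), is itself a $\Delta$-set (resp.\ simplicial set): the simplicial identities are immediate from the bookkeeping of which coordinate is removed or duplicated. Since $A\wr \ms{H}= \{A^n\}_{n\geq 0} \times \ms{H}$ as $\Delta$-sets (resp.\ simplicial sets), the product is again a $\Delta$-set (resp.\ simplicial set). That each $A\wr H_n= A^n \rtimes H_n$ is a group is the definition of the semidirect product, and $\pi_n: A\wr H_n \onto H_n \to S_n$ is a homomorphism as a composite of homomorphisms; so the purely group-theoretic data of the definition are in place.

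The heart of the argument is condition i), the crossed-homomorphism identity $d_i(xy)= (d_i x)(d_{x(i)} y)$ for $x=(a;h)$, $y=(a';h')$ in $A\wr H_n$ (and likewise for $s_i$). Expanding the semidirect product, the left-hand side deletes the $i$th coordinate of the product tuple $a\cdot \phi_h(a')$ and applies $d_i$ to $hh'$; on the $H$-factor this is exactly $(d_i h)(d_{h(i)} h')$ by condition i) for $\ms{H}$. On the $A$-factor I would compare, coordinate by coordinate, the tuple obtained by first deleting and then acting by $\phi_{d_i h}$ against the tuple obtained by first multiplying and then deleting. The two agree precisely when $h\circ d^i= d^{h(i)} \circ (d_i h)$ as maps $[n-1]\to [n]$, which is exactly the content of the first commutative diagram in condition ii) for $\ms{H}$. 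Thus condition i) for $A\wr \ms{H}$ follows from conditions i) and ii) for $\ms{H}$ together; the degeneracy case $s_i$ is identical, invoking the second diagram instead.

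Finally, condition ii) for $A\wr \ms{H}$ is essentially free: since $\pi_n$ factors through the projection onto $H_n$, the action of $(a;h)$ on $[n]$ is just $h$ and the $H$-component of $d_i(a;h)$ is $d_i h$, so the required diagrams for $A\wr \ms{H}$ reduce verbatim to those for $\ms{H}$. I expect the main obstacle to be the coordinate comparison in condition i): one must track carefully how deleting (or repeating) a coordinate reindexes the remaining entries under the permutation $h$, and recognize that the necessary reindexing identity is precisely the diagram of condition ii). Once that identification is made, the remaining verification is a routine calculation.
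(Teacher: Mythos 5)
Your proposal is correct and takes essentially the same route as the paper's own proof: both arguments split off the $H$-component via condition i) for $\ms{H}$ and reduce the whole verification to the reindexing identity $d_i \phi_h (\bar{a})= \phi_{d_ih} (d_{h(i)} \bar{a})$, i.e.\ to $h\circ d^i = d^{h(i)}\circ (d_ih)$. The only difference is cosmetic: the paper checks this identity by an explicit coordinate-by-coordinate case analysis ($j<i$ or $j>i$, $h(j)<h(i)$ or $h(j)>h(i)$), which is exactly the first diagram of condition ii) for $\ms{H}$ unfolded, whereas you cite that diagram directly.
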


\begin{proof}
  If $\ms{H}$ is a crossed $\Delta$-group, it suffices to verify that for $h\in H_n$ and $\bar{a}= (a_1, \ldots, a_n)\in A^n$,
  $$d_i ((1;h) (\bar{a};1))= (d_i(1;h)) (d_{(1;h)(i)} (\bar{a};1))= (1;d_ih) (d_{h(i)} \bar{a};1).$$
  Note $d_i ((1;h) (\bar{a};1))= d_i (\phi_h (\bar{a});h)$, $(1;d_ih) (d_{h(i)} \bar{a};1)= (\phi_{d_ih} (d_{h(i)} \bar{a}); d_ih)$. Thus it is sufficient to check $d_i \phi_h (\bar{a})= \phi_{d_ih} (d_{h(i)} \bar{a})$, namely
  $$d_i \phi_h (a_1, \ldots, a_n)= \phi_{d_ih} (a_1, \ldots, \hat{a}_{h(i)}, \ldots, a_n).$$
  Note $d_i \phi_h (a_1, \ldots, a_n)= (a_{h(1)}, \ldots, \hat{a}_{h(i)}, \ldots, a_{h(n)})$. Let $b_j=a_j$ for $j<h(i)$ and $b_j= a_{j+1}$ for $j\geq h(i)$. Then
  $$\phi_{d_ih} (a_1, \ldots, \hat{a}_{h(i)}, \ldots, a_n)= \phi_{d_ih} (b_1, \ldots, b_{n-1})= (b_{(d_ih)(1)}, \ldots, b_{(d_ih)(n-1)}).$$
  If $j<i$ and $h(j)< h(i)$, then $(d_ih)(j)= h(j)$, $b_{(d_ih)(j)}= a_{h(j)}$; if $j<i$ and $h(j)>h(i)$, then $(d_ih)(j)= h(j)-1$, $b_{(d_ih)(j)}= b_{h(j)-1}= a_{h(j)}$. Thus
  $$(a_{h(1)}, \ldots, a_{h(i-1)})= (b_{(d_ih)(1)}, \ldots, b_{(d_ih)(i-1)}).$$
  If $j>i$ and $h(j)< h(i)$, then $(d_ih)(j-1)=h(j)$, $b_{(d_ih)(j-1)}= b_{h(j)}= a_{h(j)}$; if $j>i$ and $h(j)>h(i)$, then $(d_ih)(j-1)=h(j)-1$, $b_{(d_ih)(j-1)}= b_{h(j)-1}= a_{h(j)}$. Thus
  $$(a_{h(i+1)}, \ldots, a_{h(n)})= (b_{(d_ih)(i)}, \ldots, b_{(d_ih)(n-1)}).$$
  Hence $d_i \phi_h (\bar{a})= \phi_{d_ih} (d_{h(i)} \bar{a})$.

  If $\ms{H}$ is a crossed simplicial group, $s_i ((1;h) (\bar{a};1))= (s_i(1;h)) (s_{(1;h)(i)} (\bar{a};1))$ can be verified similarly.
\end{proof}

Similar to the semidirect product of groups, we have

\begin{prop}
  If $\ms{G}= A\wr \ms{H}$ where $\ms{H}$ is a crossed $\Delta$-group (resp. crossed simplicial group), then there is a split short exact sequence of crossed $\Delta$-groups (resp. crossed simplicial groups)
$$1\to \{A^n\}_{n\geq 0}\to \ms{G}\to \ms{H} \to 1$$
with the canonical morphism of crossed $\Delta$-groups (resp. crossed simplicial groups) $\ms{H} \to \ms{G}$ as the section. Conversely, if the above short exact sequence splits with a morphism of crossed $\Delta$-groups (resp. crossed simplicial groups) $\ms{H} \to \ms{G}$ as a section, then $\ms{G}\cong A\wr \ms{H}$ as crossed $\Delta$-groups (resp. crossed simplicial groups). \qed
\end{prop}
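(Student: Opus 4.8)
The plan is to reduce the statement to the classical recognition theorem for split extensions applied in each simplicial degree, and then to promote the degreewise isomorphisms to the whole $\Delta$-structure (resp.\ simplicial structure) and to $\pi$. The essential extra input beyond ordinary group theory is the product description $A\wr \ms{H}= \{A^n\}_{n\geq 0} \times \ms{H}$ as $\Delta$-sets (resp.\ simplicial sets) recorded just before the preceding proposition, together with the fact, established there, that $A\wr \ms{H}$ is a crossed $\Delta$-group (resp.\ crossed simplicial group).

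For the forward direction I would exhibit the three degreewise maps of the ordinary semidirect product: the inclusion $\iota_n\colon A^n \into A^n \rtimes H_n$, $\bar{a}\mapsto (\bar{a};1)$, the projection $p_n\colon A^n \rtimes H_n \onto H_n$, $(\bar{a};h)\mapsto h$, and the section $s_n\colon H_n \into A^n \rtimes H_n$, $h\mapsto (1;h)$. Each is a group homomorphism in every degree and each commutes with $\pi$: this is immediate for $p$ and $s$ from the definition $\pi = \big(A^n \rtimes H_n \onto H_n \to S_n\big)$, and for $\iota$ it holds because $\pi\iota_n(\bar{a})= 1$, the trivial permutation, which is exactly the (trivial) $\pi$ of $\{A^n\}_{n\geq 0}$. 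Since $A\wr \ms{H}= \{A^n\} \times \ms{H}$ as $\Delta$-sets (resp.\ simplicial sets), all three maps commute with every $d_i$ (and every $s_i$), so they are morphisms of crossed $\Delta$-groups (resp.\ crossed simplicial groups); exactness and $p_n s_n=\id$ then hold degreewise, which finishes this half.

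For the converse, write $\iota$ for the given inclusion of $\{A^n\}_{n\geq 0}$ and $s$ for the section, and define degreewise $\Phi_n\colon A^n \rtimes H_n \to G_n$, $(\bar{a};h)\mapsto \iota_n(\bar{a})\, s_n(h)$. In each degree this is the classical recognition isomorphism for a split extension of groups, the semidirect product action being conjugation by the section, which is the permutation action $\phi$ defining $A\wr \ms{H}$; and it commutes with $\pi$ because $\iota$, $p$ and $s$ do. What remains, and what I expect to be the only genuine obstacle, is that $\Phi= \{\Phi_n\}$ commute with the faces $d_i$ (and, in the simplicial case, the degeneracies $s_i$). This is not formal, because in a crossed $\Delta$-group $d_i$ is not a homomorphism but obeys $d_i(xy)= (d_ix)(d_{x(i)}y)$.

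The resolution, and the point where the ``crossed'' structure must be watched, is that $x= \iota_n(\bar{a})$ carries the trivial permutation: since $\iota$ commutes with $\pi$ and $\{A^n\}_{n\geq 0}$ has trivial $\pi$, we get $x(i)= i$, so the crossed correction collapses and
$$d_i\big(\iota_n(\bar{a})\, s_n(h)\big)= \big(d_i\iota_n(\bar{a})\big)\big(d_i s_n(h)\big)= \iota_{n-1}(d_i\bar{a})\, s_{n-1}(d_ih),$$
using that $\iota$ and $s$ commute with $d_i$. The right-hand side is exactly $\Phi_{n-1}(d_i\bar{a};d_ih)= \Phi_{n-1}\big(d_i(\bar{a};h)\big)$, the last equality being the product face map of $A\wr \ms{H}$. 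The degeneracy case is verbatim the same with $s_i$ replacing $d_i$. Hence $\Phi$ is an isomorphism of crossed $\Delta$-groups (resp.\ crossed simplicial groups), completing the argument.
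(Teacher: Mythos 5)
Your forward direction is fine, and your handling of the faces in the converse is also fine as far as it goes: the crossed correction in $d_i(xy)=(d_ix)(d_{x(i)}y)$ does collapse when $x$ lies in the image of $\iota$, since $\iota$ commutes with $\pi$ and $\{A^n\}_{n\geq 0}$ has trivial $\pi$. The genuine gap sits one step earlier, in the clause ``the semidirect product action being conjugation by the section, which is the permutation action $\phi$ defining $A\wr \ms{H}$''. Classical group theory identifies $G_n$ with $A^n\rtimes_c H_n$ only for the conjugation action $c_h(\bar{a})=\iota_n^{-1}\bigl(s_n(h)\,\iota_n(\bar{a})\,s_n(h)^{-1}\bigr)$, and the identity $c=\phi$ is exactly what has to be proved: multiplicativity of your $\Phi_n$ is equivalent to it. You assert it without argument, and it does not follow from the stated hypotheses. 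Concretely, let $\ms{H}$ be the constant crossed simplicial group with $H_n=\Z/2$, identity faces and degeneracies, and trivial $\pi$; let $A=\Z/3$; let $G_n=A^n\rtimes\Z/2$ with the generator acting by coordinatewise inversion, with faces and degeneracies the standard ones on the $A^n$ factor and the identity on the $\Z/2$ factor, and all $\pi_n$ trivial. Then $1\to \{A^n\}_{n\geq 0}\to \ms{G}\to \ms{H}\to 1$ is a split short exact sequence of crossed simplicial groups whose section $h\mapsto (1;h)$ is a morphism, but each $G_n$ ($n\geq 1$) is nonabelian while $A\wr H_n=A^n\times\Z/2$ is abelian, so $\ms{G}\not\cong A\wr\ms{H}$. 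Thus the step cannot be repaired formally; read literally, the converse requires an additional hypothesis.

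The missing ingredient, which any correct proof must supply, is control of $c$ in low degree. The crossed rule gives the recursion $d_j\bigl(c_h(\bar{a})\bigr)=c_{d_jh}\bigl(d_{h(j)}\bar{a}\bigr)$ (use $d_{g(j)}(g^{-1})=(d_jg)^{-1}$ with $g=s_n(h)$, and that kernel elements carry the trivial permutation), while $\phi$ satisfies the identical recursion $d_j\bigl(\phi_h(\bar{a})\bigr)=\phi_{d_jh}\bigl(d_{h(j)}\bar{a}\bigr)$; the latter is precisely the computation in the paper's proof that $A\wr\ms{H}$ is a crossed $\Delta$-group. Since the faces $d_1,\ldots,d_n$ are jointly injective on $A^n$ for $n\geq 2$, any action compatible with this recursion is determined by its degree-one part; hence $c=\phi$ in all degrees if and only if $s_1(H_1)$ centralizes $\iota_1(A)$ in $G_1$, the permutation action being trivial in degree one. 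That condition is automatic when $H_1=1$, which covers the situations the paper actually uses ($\ms{H}=\ms{B}$ and $\ms{H}=\ms{S}$), but not the general statement, as the example above shows. So you should either add the hypothesis that conjugation by the section is the permutation action (equivalently, the degree-one centralizing condition), prove $c=\phi$ by the induction just sketched, and only then run your recognition map $\Phi$ together with your face and $\pi$ checks; or else restrict the converse to $\ms{H}$ with $H_1$ trivial.
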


\begin{example}
  For the ribbon braid groups operad $\ms{R}$, its degeneracy $s_i$ of $\ms{R}$ is obtained by cutting the $i$th ribbon along the middle into two subribbons; thus if the $i$th ribbon is twisted, then the two subribbons are twisted with each other. In addition, we have the following split short exact sequence of crossed $\Delta$-groups
  $$1\to \{\Z^n\}_{n\geq 0}\to \ms{R}\to \ms{B}\to 1,$$
  namely $\ms{R}\cong \Z\wr \ms{B}$ as crossed $\Delta$-groups. It should be mentioned that this is not an isomorphism of simplicial sets thus not of crossed simplicial groups. In addition, we have the following obvious short exact sequence of crossed simplicial groups
  $$1\to \ms{P}\to \ms{B} \to \ms{S}\to 1$$
  which is not split.
\end{example}

We next give two natural examples which are crossed simplicial groups but not group operads.

\begin{example}
  The sequence of hyperoctahedral groups $\wt{\ms{S}}= \{\wt{S}_n\}$ with $\wt{S}_n= (\Z/2)\wr S_n$ is naturally a crossed simplicial group (cf. \cite{Fie-Lod:1991:CSGTAH}, Theorem 3.3), but not a group operad. Let $\tau$ be the generator of $\Z/2$. First it is obvious to define $\gamma (\tau;e_k)$ as the half twist. Then there are two natural ways to define $\gamma (\tau;a)$: $\gamma (\tau;a):= \gamma (\tau;e_k)a$ or $\gamma (\tau;a)= a\gamma (\tau;e_k)$; but they do not coincide since $\gamma (\tau;e_k)$ is not in the center for $k\geq 3$, so that they do not behave well. In addition, we have the following split short exact sequence of crossed $\Delta$-groups
$$1\to \{(\Z/2)^n\}_{n\geq 0}\to \wt{\ms{S}}\to \ms{S}\to 1,$$
namely $\wt{\ms{S}}\cong (\Z/2)\wr \ms{S}$ as crossed $\Delta$-groups. However this is not an isomorphism of simplicial sets (cf. \cite{Fie-Lod:1991:CSGTAH}, Remark 3.11) thus not of crossed simplicial groups.
\end{example}

\begin{example}
  Let $R'_n$ be the braid group on $n$ ribbons with each ribbon twisted a multiple of the half twist. $\ms{R}'= \{R'_n\}_{n\geq 0}$ is naturally a crossed simplicial group but not a group operad. The reason is the same as the one for the sequence of hyperoctahedral groups; namely the half twist is not in the center so that the two natural ways of defining $\gamma$ do not coincide. In addition, we have the following short exact sequence of crossed $\Delta$-groups
$$1\to \{(2\Z)^n\}_{n\geq 0} \times \ms{P}\to \ms{R}'\to \wt{\ms{S}}\to 1$$
which is not split.
\end{example}

\subsection{Topological and Simplicial $\ms{G}$-Operads}
Compared to general operads, group operads play a special role like groups. Namely, we can talk about actions of group operads on operads just like actions of groups on spaces. We call an operad with an action of a group operad $\ms{G}$ a $\ms{G}$-operad. Then a nonsymmetric operad is an operad with an action of the trivial group operad and a symmetric operad is an operad with an action of the symmetric groups operad. The theory of symmetric operads can be generalized to $\ms{G}$-operads. Here we shall only deal with topological and simplicial $\ms{G}$-operads.

\begin{defn}\label{defn:operad}
  An \textbf{action} of a group operad on a topological operad $\ms{C}= \{\ms{C}(n)\}_{n\geq 0}$ is a sequence of left actions of $G_n$ on $\ms{C}(n)$ satisfying the following \emph{equivariance} property: for $a\in \ms{C}(k)$, $b_i\in \ms{C}(m_i)$ and $\sigma \in G_k$, $\tau_i\in G_{m_i}$,
$$\gamma (\sigma a; \tau_1 b_1, \ldots, \tau_k b_k)= \gamma (\sigma; \tau_1, \ldots, \tau_k) \gamma (a; b_{\sigma^{-1}(1)}, \ldots, b_{\sigma^{-1}(k)}).$$
A \textbf{topological} $\ms{G}$\textbf{-operad} is an operad with an action of $\ms{G}$. A \textbf{morphism} $\psi: \ms{C}\to \ms{C}'$ of $\ms{G}$-operads is a sequence of $G_n$-equivariant maps $\psi_n: \ms{C}(n)\to \ms{C}'(n)$ such that $\psi_1(1)= 1$ and
$$\psi_m (\gamma (a; b_1, \ldots, b_k))= \gamma' (\psi_k(a); \psi_{m_1}(b_1), \ldots, \psi_{m_k}(b_k)).$$
The action of $\ms{G}$ on $\ms{C}$ is called a covering action and $\ms{C}$ is called a \textbf{covering $\ms{G}$-operad} if the action of $G_n$ on $\ms{C}(n)$ is a covering action for each $n$. $\ms{C}$ is called a (topological) \textbf{universal $\ms{G}$-operad} if $\ms{C}$ is a covering $\ms{G}$-operad and each $\ms{C}(n)$ is contractible.

  Let $\ms{A}(n)$ be a subspace of $\ms{C}(n)$ for each $n$ and $1\in \ms{A}(1)$. If $\ms{A}= \{\ms{A}(n)\}$ is closed under $\gamma$ and the action of $\ms{G}$, then $\ms{A}$ is called a (topological) \textbf{$\ms{G}$-suboperad} of $\ms{C}$.
\end{defn}

For a $\ms{G}$-operad $\ms{C}$, a $\ms{C}$-space can be defined just as the case $\ms{G}= \ms{S}$ (\cite{May:1972:GILS}, Section 1) but replacing $S_k$ by $G_k$ for all $k$. When dealing with a $\ms{G}$-operad $\ms{C}$ and $\ms{C}$-spaces, one should pay attention to the group operad $\ms{G}$ as an operad may admit actions of different group operads which may cause different results. For example, if the trivial group operad $\ms{J}$ is thought of as a $\ms{J}$-operad, i.e. a nonsymmetric operad, then a $\ms{J}$-space is a topological monoid; but if it is thought of as a $\ms{S}$-operad, i.e. a symmetric operad, then a $\ms{J}$-space is a commutative topological monoid.

Note that a group operad $\ms{G}$ is itself a $\ms{G}$-operad. Any $\ms{G}$-operad can also be regarded as an $\ms{H}$-operad for any sub group operad $\ms{H}\leq \ms{G}$. In particular, any $\ms{G}$-operad can be regarded as a nonsymmetric operad, but a $\ms{G}$-operad usually can not be regarded as a symmetric operad even if $\ms{G}$ is crossed.

We say a topological $\ms{G}$-operad $\ms{C}$ is path-connected (resp. locally path-connected, semilocally simply-connected, etc.) if $\ms{C}(k)$ is path-connected (resp. locally path-connected, semilocally simply-connected (\cite{Hatcher:2002:AT}, page 63), etc.) for each $k$. We also say $\ms{C}$ is $K(\pi,n)$ if $\ms{C}(k)$ is $K(\pi,n)$ for each $k$ ($\pi$ not necessary to be the same for different $k$).

\begin{prop}
  For a topological operad $\ms{C}$, $\ms{C}(1)$ is an associative $H$-space with a strict identity.
\end{prop}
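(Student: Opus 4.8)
The plan is to extract the $H$-space structure directly from the operad composition $\gamma$. Specializing the structure map to $k=1$ and $m_1=1$ (so that $m=1$) yields a continuous map
$$\mu: \ms{C}(1) \times \ms{C}(1) \to \ms{C}(1), \quad \mu(a,b)= \gamma(a;b),$$
which is continuous because $\gamma$ is. This $\mu$ will serve as the multiplication, and the operad unit $1\in \ms{C}(1)$ will serve as the identity element.

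First I would verify that $1$ is a strict two-sided identity. The unitality axiom gives $\gamma(1;a)=a$ for every $a\in \ms{C}(1)$, which says $\mu(1,a)=a$; and the second half of unitality, applied with $k=1$, gives $\gamma(a;1^{(1)})= \gamma(a;1)=a$, which says $\mu(a,1)=a$. Hence both $\mu(1,-)$ and $\mu(-,1)$ are \emph{equal} to the identity map of $\ms{C}(1)$, not merely homotopic to it, so the identity is strict.

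Next I would establish associativity by specializing the operad associativity axiom. Taking $k=1$, $m_1=1$ (so $m=1$), with $b_1=b$ and $c_1=c$ all in $\ms{C}(1)$, the axiom reads
$$\gamma(\gamma(a;b);c)= \gamma(a;\gamma(b;c)),$$
which is precisely $\mu(\mu(a,b),c)= \mu(a,\mu(b,c))$. Thus $\mu$ is strictly associative.

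These observations show that $(\ms{C}(1), \mu, 1)$ is in fact a topological monoid, and in particular an associative $H$-space with a strict identity, as claimed. There is no real obstacle here: the statement is an immediate specialization of the operad axioms to the arities $k=m_1=1$. The only point worth emphasizing is that both the identity and the associativity hold strictly (on the nose) rather than merely up to homotopy, which is stronger than what the bare definition of an $H$-space requires.
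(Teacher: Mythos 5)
Your proposal is correct and follows the same route as the paper's own proof: the multiplication is $\gamma: \ms{C}(1)\times \ms{C}(1)\to \ms{C}(1)$, with strict identity and associativity coming directly from the unitality and associativity axioms of the operad. You simply spell out the specialization to $k=m_1=1$ in more detail (and correctly note the structure is in fact a topological monoid), which the paper leaves implicit.
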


\begin{proof}
  $\gamma: \ms{C}(1)\times \ms{C}(1)\to \ms{C}(1)$ gives a product on $\ms{C}(1)$. This product is associative and has a strict identity follows from the associativity and the unitality of $\gamma$, respectively.
\end{proof}

In addition, $\ms{C}(1)$ acts on $\ms{C}(k)$ for $k\geq 1$ just like a group action.

\begin{rem}\label{rem:extending_G-space_G-operad}
  For a space $X$ with a free action of $S_2$ and $X\stackrel{S_2}{\simeq} S^{n-1}$, Fiedorowicz introduces a construction (cf.\cite{Fiedorowicz:1999:CEnO}, Theorem 2) to extend $X$ to an $E_n$ operad (i.e. equivalent to the little $n$-cubes operad $\ms{C}_n$). Fiedorowicz's construction has been generalized to any spaces with actions of groups. For any group $G$ with a homomorphism $G\to S_2$ and any space $X$ with an action $\rho: G\times X\to X$, there is a construction extending $X$ to a $\ms{F}(G,\pi)$-operad $\ms{F}(X,\rho)$ with $\ms{F}(X,\rho)(1)=\{1\}$ and $\ms{F}(X,\rho)(2)=X$, where $\ms{F}(G,\pi)$ is a group operad extended from $G$ mentioned in Remark \ref{rem:extending_group_group-operad}. This construction has the property that, if the action of $G$ on $X$ is free, then so is the action of $\ms{F}(G,\pi)(k)$ on $\ms{F}(X,\rho)(k)$. Moreover Fiedorowicz's construction of $E_n$ operads from a space $X\stackrel{S_2}{\simeq} S^{n-1}$ is a special case. Thus the construction $\ms{F}(X,\rho)$ seems interesting and provides countless examples of $\ms{G}$-operads. In addition it may be helpful to find appropriate models of $E_n$ operads or other interesting operads for particular purposes. Details of this construction will appear elsewhere.
\end{rem}

We next define equivalence between topological $\ms{G}$-operads.

\begin{defn}
  A morphism $\psi: \ms{C}\to \ms{C}'$ of topological $\ms{G}$-operads is called an \textbf{equivalence}, if each $\psi: \ms{C}(k)\to \ms{C}'(k)$ is (1) a $G_k$-equivariant homotopy equivalence, or (2) a homotopy equivalence and the actions of $G_k$ on $\ms{C}(k)$, $\ms{C}'(k)$ are covering actions for all $k$. Two topological $\ms{G}$-operads are equivalent if there is a chain of equivalences connecting them.
\end{defn}

This definition is a slight modification of P. May's Definition 3.3 in \cite{May:1972:GILS}. It should be mentioned that an equivalence $\psi: \ms{C}\to \ms{C}'$ need not have an inverse morphism.

A simplicial $\ms{G}$-operad is defined in the obvious way, so we shall omit the details. Note that a simplicial $\ms{G}$-operad $\ms{C}$ is a bisimplicial set; within the simplicial structure of $\ms{C}(n)$, the action of $G_n$ is a simplicial action, but within the simplicial structure of $\ms{C}$ induced by $\gamma$, the action of $\ms{G}$ is a crossed simplicial action.

\begin{prop}
  The geometric realization of a simplicial $\ms{G}$-operad is a topological $\ms{G}$-operad. \qed
\end{prop}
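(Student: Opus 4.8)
The plan is to realize each simplicial set $\ms{C}(n)$ separately and to transport the operad composition, the unit, and the $G_n$-actions along the geometric realization functor, using that realization commutes with finite products. Throughout, the realization is taken with respect to the internal simplicial structure of each $\ms{C}(n)$ (the one for which $\gamma$ is a simplicial map and $G_n$ acts simplicially); the external simplicial structure on $\ms{C}$ induced by $\gamma$ is then automatically recovered on the resulting topological operad via its own faces and degeneracies, so it needs no separate treatment.

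First I would recall that geometric realization $|{-}|$ is a functor from simplicial sets to compactly generated Hausdorff spaces, and invoke Milnor's theorem that the natural map $|X\times Y|\to |X|\times |Y|$ is a homeomorphism in this category; by induction this yields a natural homeomorphism $|X_1\times \cdots \times X_r|\cong |X_1|\times \cdots \times |X_r|$ for any finite product. This is precisely the point where the standing assumption that all spaces are compactly generated Hausdorff is used, and I expect it to be the main technical input of the argument, the remainder being formal. Since $\ms{C}(0)=*$ is the one-point simplicial set, its realization is a point.

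Next I would define the structure on $\{|\ms{C}(n)|\}_{n\geq 0}$. The composition $\gamma\colon \ms{C}(k)\times \ms{C}(m_1)\times \cdots \times \ms{C}(m_k)\to \ms{C}(m)$ is a map of simplicial sets, so applying $|{-}|$ and precomposing with the inverse of the product homeomorphism gives a continuous map
$$|\gamma|\colon |\ms{C}(k)|\times |\ms{C}(m_1)|\times \cdots \times |\ms{C}(m_k)|\to |\ms{C}(m)|.$$
The unit $1\in \ms{C}(1)$, viewed as a vertex, realizes to a point of $|\ms{C}(1)|$ serving as the unit. Each $g\in G_n$ acts on $\ms{C}(n)$ as a simplicial automorphism, so $|g|$ is a self-homeomorphism of $|\ms{C}(n)|$; by functoriality $g\mapsto |g|$ is a homomorphism, and since $G_n$ is discrete this assembles into a continuous left action of $G_n$ on $|\ms{C}(n)|$.

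Finally I would verify the axioms. Each of the operad associativity and unitality identities, the equivariance property of Definition \ref{defn:operad}, and the group-action axioms is an equality of maps out of finite products of the $\ms{C}(n)$, i.e. a commutative diagram of simplicial maps. Because $|{-}|$ is a product-preserving functor, it sends each such commutative diagram to a commutative diagram of continuous maps, with all the iterated product identifications matching up by naturality of the Milnor homeomorphism (this coherence is the only subtle point in the diagram chase, arising for the nested products in associativity and equivariance). Reading off the resulting identities shows that $\{|\ms{C}(n)|\}$ is a topological operad whose $G_n$-actions satisfy equivariance, hence a topological $\ms{G}$-operad. The single nonformal step, as noted, is the finite-product-preservation of realization; everything else is a diagram chase through the functor $|{-}|$.
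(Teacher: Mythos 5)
Your proof is correct and is precisely the argument the paper intends: the paper gives no proof at all (it states the proposition with a \qed and remarks only that ``the proof is routine but tedious''), and your levelwise realization, Milnor's finite-product-preservation theorem in the compactly generated Hausdorff category, and the transport of the operad, unit, and $G_n$-action axioms by functoriality are the standard way to fill in that omission. The one delicate point — coherence of the iterated product identifications in the associativity and equivariance diagrams — is exactly the ``tedious'' part, and you dispose of it correctly via naturality of the comparison homeomorphism.
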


The proof is routine but tedious.

$\ms{G}$-operads can be converted into nonsymmetric or symmetric operads by taking quotients. Here we shall only consider quotients of topological operads and just mention that the following discussion and results are also valid for simplicial operads.

Let $\ms{C}$ a topological $\ms{G}$-operad. Note that $G_n$ acts on $\ms{C}(n)$ on the left. For $H_n\leq G_n$, it is natural to use $H_n \backslash \ms{C}(n)$ to denote the quotient of $\ms{C}(n)$ modulo the left action of $H_n$, but we would like to use $\ms{C}(n)/ H_n$ instead even though the latter may cause ambiguity in case $\ms{C}= \ms{G}$. Namely $G_n/H_n$ denotes $\{gH_n \mid g\in G_n\}$ if $\ms{G}$ is regarded as a group operad, and denotes $H_n \backslash G_n= \{H_ng \mid g\in G_n\}$ if $\ms{G}$ is regarded as a $\ms{G}$-operad.

\begin{prop}
  If $\ms{H}$ is a non-crossed sub group operad of $\ms{G}$, then $\ms{C}/\ms{H}$ is a nonsymmetric operad and a morphism $\phi: \ms{C}\to \ms{C}'$ of $\ms{G}$-operads induces a morphism $\phi: \ms{C}/\ms{H} \to \ms{C}'/\ms{H}$ of nonsymmetric operads.
\end{prop}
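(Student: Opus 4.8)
The plan is to verify three things in turn: that the operad composition $\gamma$ descends to a well-defined map on the quotient spaces $\ms{C}(n)/H_n$, that the resulting structure satisfies the associativity and unitality axioms of a nonsymmetric operad, and that a morphism of $\ms{G}$-operads respects these quotients. The key hypothesis is that $\ms{H}$ is non-crossed, meaning each $H_n \to S_n$ is trivial, so that each $h\in H_n$ acts on $\ms{C}(n)$ in a way compatible with the identity permutation. Recall that $\ms{C}(n)/H_n$ here denotes the quotient by the left action of $H_n$, i.e. the orbit space with orbits $H_n \cdot x$.

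First I would show $\gamma$ descends. Given $a\in \ms{C}(k)$, $b_i\in \ms{C}(m_i)$, and elements $h\in H_k$, $h_i\in H_{m_i}$, the equivariance property for a $\ms{G}$-operad (Definition \ref{defn:operad}) gives
$$\gamma (h a; h_1 b_1, \ldots, h_k b_k)= \gamma (h; h_1, \ldots, h_k)\, \gamma (a; b_{h^{-1}(1)}, \ldots, b_{h^{-1}(k)}).$$
Because $\ms{H}$ is non-crossed, $h^{-1}$ acts trivially on indices, so $b_{h^{-1}(i)}=b_i$ and the second factor is simply $\gamma(a;b_1,\ldots,b_k)$. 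Since $\ms{H}$ is closed under $\gamma$, the first factor $\gamma(h;h_1,\ldots,h_k)$ lies in $H_m$. Hence $\gamma(ha;h_1b_1,\ldots,h_kb_k)$ lies in the orbit $H_m\cdot \gamma(a;b_1,\ldots,b_k)$, which shows the induced map on orbit spaces is well-defined. This is exactly the phenomenon already displayed for the quotients of group operads in Subsection 2.3: the identity $\gamma(hg;h_1g_1,\ldots)=\gamma(h;h_1,\ldots)\gamma(g;g_{h^{-1}(1)},\ldots)\in H_m\gamma(g;g_1,\ldots)$ requires precisely that $\ms{H}$ be non-crossed so the reindexing vanishes.

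Next I would check the axioms. The associativity and unitality identities (i) and (ii) from the operad definition hold on $\ms{C}$ before passing to quotients; since the quotient maps $\ms{C}(n)\onto \ms{C}(n)/H_n$ commute with $\gamma$ by the previous step, and $1\in\ms{C}(1)$ descends to a unit (its orbit $H_1\cdot 1$, which works because $\gamma(1;-)$ and $\gamma(-;1^{(k)})$ are respected), these identities pass to representatives and hence hold in $\ms{C}/\ms{H}$. The result is a nonsymmetric operad rather than a symmetric one precisely because we have only quotiented by $\ms{H}$, not recorded any residual $S_n$-action. For the morphism statement, a $\ms{G}$-operad morphism $\phi:\ms{C}\to\ms{C}'$ is $G_n$-equivariant, hence $H_n$-equivariant, so $\phi$ carries $H_n$-orbits to $H_n$-orbits and therefore induces $\phi:\ms{C}(n)/H_n\to\ms{C}'(n)/H_n$; compatibility with $\gamma$ and with the unit is inherited from the original morphism.

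The only genuinely delicate point is the descent of $\gamma$, and within it the main obstacle is confirming that the non-crossed hypothesis is exactly what is needed: without it, the reindexed factor $\gamma(a;b_{h^{-1}(1)},\ldots,b_{h^{-1}(k)})$ would differ from $\gamma(a;b_1,\ldots,b_k)$ and would not in general lie in the same $H_m$-orbit, so the map would fail to be well-defined. Everything else is a routine transfer of the operad axioms through surjections that commute with the structure maps, so I would state those verifications briefly and concentrate the argument on the well-definedness computation above.
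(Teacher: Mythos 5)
Your proposal is correct and takes essentially the same approach as the paper: the paper's proof likewise rests on the single equivariance computation $\gamma(ha;h_1a_1,\ldots,h_ka_k)=\gamma(h;h_1,\ldots,h_k)\gamma(a;a_1,\ldots,a_k)$ (valid because $\ms{H}$ is non-crossed, so no reindexing occurs, and closed under $\gamma$, so the first factor lies in $H_m$), and then dismisses the remaining verifications as routine. Your write-up simply spells out those routine checks (axioms descending along quotient maps, $H_n$-equivariance of $\phi$) in more detail than the paper does.
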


\begin{proof}
  $\gamma: \ms{C}(k) \times \ms{C}(m_1) \times \cdots \times \ms{C}(m_k) \to \ms{C}(m)$ induces
  $$\gamma: \ms{C}(k)/H_k \times \ms{C}(m_1)/ H_{m_1} \times \cdots \times \ms{C}(m_k)/ H_{m_k} \to \ms{C}(m)/ H_m$$
  since $\gamma (ha; h_1a_1, \ldots, h_ka_k)= \gamma (h;h_1, \ldots, h_k) \gamma (a;a_1, \ldots, a_k)$. It is then easy to check that $\ms{C}/\ms{H}$ is a nonsymmetric operad and the rest of the assertion.
\end{proof}

\begin{prop}
  If $\ms{H}$ is a non-crossed normal sub group operad of $\ms{G}$, then $\ms{C}/\ms{H}$ is a $\ms{G}/ \ms{H}$-operad, $(\ms{C}/ \ms{H})/ (\ms{G}/ \ms{H})= \ms{C}/ \ms{G}$, and a morphism $\phi: \ms{C}\to \ms{C}'$ of $\ms{G}$-operads induces a morphism $\phi: \ms{C}/\ms{H} \to \ms{C}'/\ms{H}$ of $\ms{G}/ \ms{H}$-operads.
\end{prop}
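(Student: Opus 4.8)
The plan is to build on the previous proposition, which already establishes that $\ms{C}/\ms{H}$ is a nonsymmetric operad whenever $\ms{H}$ is a non-crossed sub group operad of $\ms{G}$. The present statement has three parts: (1) that $\ms{C}/\ms{H}$ carries a natural action of the quotient group operad $\ms{G}/\ms{H}$, (2) that iterating the quotient construction satisfies $(\ms{C}/\ms{H})/(\ms{G}/\ms{H}) = \ms{C}/\ms{G}$, and (3) that a morphism of $\ms{G}$-operads descends to a morphism of $\ms{G}/\ms{H}$-operads. The essential point is that $\ms{H}$ being normal and non-crossed is exactly what makes $\ms{G}/\ms{H}$ a group operad (as discussed in the subsection on quotients), so the target of the action is a legitimate group operad.

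First I would define the action of $G_n/H_n$ on $\ms{C}(n)/H_n$ by $(gH_n)\cdot (H_n x) := H_n(gx)$, where I am using the left action of $G_n$ on $\ms{C}(n)$ and writing the quotient $\ms{C}(n)/H_n$ as the orbit space $H_n\backslash \ms{C}(n)$ following the notational convention stated just before the earlier proposition. This is well-defined precisely because $\ms{H}$ is normal: replacing $g$ by $gh$ and $x$ by $h'x$ with $h,h'\in H_n$ gives $H_n(gh h' x) = H_n(g x)$ after absorbing $hh'$ into the left coset and using normality to move the $H_n$ factor past $g$. Here I would spell out that normality of $H_n$ in $G_n$ guarantees $g H_n = H_n g$, so the two quotient conventions interact correctly.

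Next I would verify the equivariance property for this action. Using the crossed-homomorphism identity for $\gamma$ in $\ms{G}$ together with the equivariance of the original $\ms{G}$-action on $\ms{C}$, I would check that for cosets $\bar\sigma \in G_k/H_k$, $\bar\tau_i \in G_{m_i}/H_{m_i}$ and classes $\bar a, \bar b_i$ in the relevant quotients, the identity
\begin{equation*}
\gamma(\bar\sigma \bar a; \bar\tau_1 \bar b_1, \ldots, \bar\tau_k \bar b_k) = \gamma(\bar\sigma; \bar\tau_1, \ldots, \bar\tau_k)\,\gamma(\bar a; \bar b_{\sigma^{-1}(1)}, \ldots, \bar b_{\sigma^{-1}(k)})
\end{equation*}
holds in $\ms{C}(m)/H_m$. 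The key observation is that the permutation $\sigma^{-1}$ acting on the indices is determined only by the image of $\sigma$ in $S_k$, and since $\ms{H}$ is non-crossed, $\pi$ factors through $G_k/H_k$, so $\bar\sigma$ acts on indices unambiguously. This is where one must be careful, but it follows from Lemma-level facts already in place.

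For part (2) I would use the standard third-isomorphism-type identification: since $\ms{H}$ is normal in $\ms{G}$, the orbit space of $\ms{C}/\ms{H}$ under $\ms{G}/\ms{H}$ is computed level-wise as $(G_n/H_n)\backslash (H_n\backslash \ms{C}(n))$, which is canonically homeomorphic to $G_n\backslash \ms{C}(n) = \ms{C}(n)/G_n$; these homeomorphisms are compatible with $\gamma$ and hence assemble into an isomorphism of nonsymmetric operads. For part (3), I would note that a morphism $\phi$ of $\ms{G}$-operads is $G_n$-equivariant, hence carries $H_n$-orbits to $H_n$-orbits and so descends to $\ms{C}/\ms{H}\to \ms{C}'/\ms{H}$; the descended maps are then $G_n/H_n$-equivariant by the same orbit-space argument, and compatibility with $\gamma$ is inherited from $\phi$. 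I expect the main obstacle to be the bookkeeping in the equivariance verification of part (2)—specifically, confirming that the index permutation is well-defined on cosets and that the two competing quotient conventions (left cosets for the group-operad reading versus right orbit spaces for the $\ms{G}$-operad reading) line up consistently; once that is pinned down, the remaining checks are routine diagram-chases inherited from the earlier propositions.
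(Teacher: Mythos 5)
Your proposal is correct and takes essentially the same route as the paper: the paper's entire proof is to define the action $gH_k \cdot H_ka = H_k(ga)$ and declare the remaining verifications routine, and those verifications are exactly what you spell out (well-definedness from normality, equivariance from non-crossedness of $\ms{H}$ making the index permutation $\sigma^{-1}$ well-defined on cosets, and the levelwise identifications for the iterated quotient and the induced morphism). The only quibble is a labeling slip at the end, where you refer to the equivariance bookkeeping as ``part (2)'' when it belongs to part (1).
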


\begin{proof}
  If $\ms{H}$ is normal, $\ms{G}/ \ms{H}$ naturally acts on $\ms{C}/\ms{H}$ by $gH_k \cdot H_ka= H_k(ga)$. It is then easy to check that $\ms{C}/\ms{H}$ is a $\ms{G}/ \ms{H}$-operad and the rest of the assertion.
\end{proof}

If $\ms{H}$ is not normal, $\ms{G}$ does not naturally act on $\ms{C}/ \ms{H}$ since there is no natural definition for $g\cdot H_ka$.

\begin{cor}
  Let $\psi: \ms{G}\onto \ms{G}'$ be an epimorphism of group operads. If $\ms{C}$ is a $\ms{G}$-operad, then $\ms{C}/ \Ker \psi$ is a $\ms{G}'$-operad. In particular, if $\ms{G}$ is non-crossed, then $\ms{C}/ \ms{G}$ is a nonsymmetric operad; if $\ms{G}$ is crossed with $\pi: \ms{G}\onto \ms{S}$, then $\ms{C}/ \Ker \pi$ is a symmetric operad. \qed
\end{cor}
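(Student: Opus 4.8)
The plan is to reduce the statement to the immediately preceding proposition by recognizing $\Ker\psi$ as a non-crossed normal sub group operad and then transporting the resulting action along the first isomorphism theorem. First I would recall the two facts already recorded at the start of the subsection on sub group operads and quotients: for any morphism $\psi\colon \ms{G}\to\ms{G}'$ of group operads, $\Ker\psi$ is a normal sub group operad of $\ms{G}$, and it is automatically non-crossed. Thus the hypotheses of the proposition on quotients by non-crossed normal sub group operads are satisfied with $\ms{H}=\Ker\psi$, and that proposition already yields that $\ms{C}/\Ker\psi$ is a $\ms{G}/\Ker\psi$-operad.

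Next I would invoke the first isomorphism theorem in the category of group operads: since $\psi$ is an epimorphism, the canonical morphism $\ms{G}/\Ker\psi\to\ms{G}'$ induced by $\psi$ is an isomorphism of group operads (in particular $\ms{G}/\Ker\psi$ is a group operad, because $\Ker\psi$ is non-crossed and normal, as noted in the quotient discussion). Because the $\ms{G}/\Ker\psi$-operad structure on $\ms{C}/\Ker\psi$ pulls back along an isomorphism of group operads to a $\ms{G}'$-operad structure, with the twisted equivariance identity preserved verbatim since the isomorphism intertwines the respective compositions $\gamma$ and commutes with the projections to $\ms{S}$, it follows that $\ms{C}/\Ker\psi$ is a $\ms{G}'$-operad. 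This establishes the main assertion.

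For the two special cases I would specialize the choice of epimorphism. When $\ms{G}$ is non-crossed, the trivial map $\ms{G}\onto\ms{J}$ is an epimorphism of group operads (it respects the projections to $\ms{S}$ because both $\pi$ and the projection of $\ms{J}$ are trivial), its kernel is all of $\ms{G}$, and a $\ms{J}$-operad is by definition a nonsymmetric operad; hence $\ms{C}/\ms{G}$ is a nonsymmetric operad. When $\ms{G}$ is crossed, the earlier corollary guarantees that $\pi\colon\ms{G}\to\ms{S}$ is an epimorphism, so applying the main assertion to $\psi=\pi$ gives that $\ms{C}/\Ker\pi$ is an $\ms{S}$-operad, i.e.\ a symmetric operad.

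The only step demanding genuine verification is the first isomorphism theorem for group operads together with the transport of the action across the resulting isomorphism; everything else is a direct citation of preceding results, which is why the statement is marked immediate. I expect this to be the main (though routine) obstacle, since one must confirm that the induced bijections $G_n/(\Ker\psi)_n\to G'_n$ assemble into a morphism compatible with the crossed-homomorphism composition $\gamma$ and with the maps to $\ms{S}$, so that pulling back the action preserves the twisted equivariance identity. Once this compatibility is checked, the corollary follows at once.
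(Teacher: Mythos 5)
Your proposal is correct and takes essentially the approach the paper intends: the corollary is left as immediate (hence the \qed) because the paper has already recorded that $\Ker \psi$ is a non-crossed normal sub group operad, so the preceding proposition gives that $\ms{C}/ \Ker \psi$ is a $\ms{G}/ \Ker \psi$-operad, and the identification $\ms{G}/ \Ker \psi \cong \ms{G}'$ together with the earlier corollary that $\pi: \ms{G}\onto \ms{S}$ is epi in the crossed case yields both special cases exactly as you argue. Your explicit attention to the first isomorphism theorem for group operads and to transporting the action along that isomorphism just fills in the routine details the paper suppresses.
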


\section{Homotopy Groups of Topological Operads}
It is known that the homology of a nonsymmetric (resp. symmetric) topological operad is a nonsymmetric (resp. symmetric) algebraic operad. It seems, however, that operad structure on the homotopy groups of topological operads has not been considered carefully, possibly because of the lack of motivation or the difficulty of dealing with basepoints. If an operad $\ms{C}$ has a strict basepoint, then the $\gamma$ of $\ms{C}$ obviously induces a natural $\gamma$ on homotopy groups. It is, however, impossible to find a strict basepoint for some important topological operads, such as the little $n$-cubes operads. The lack of a strict basepoint makes the situation complicated. Nevertheless, once the difficulty of basepoints is overcome, it turns out that a strict basepoint is indeed not necessary and that the homotopy groups of topological operads provide examples of group operads and discrete operads with actions of group operads. The basic idea of the proof is similar to the one for that the fundamental groups of configuration spaces of manifolds are (crossed) simplicial groups \cite{Ber-Coh-Won-Wu:2006:CBHG}, but the situation here is more complicated. We are dealing with operad structure, not only (crossed) simplicial group structure.

To deal with the homotopy groups of topological operads, we introduce a nonsymmetric topological operad which will play the role of a good basepoint for topological operads. Consider the little 1-cubes (intervals) operad $\ms{C}_1$. Let $c_0=*\in \ms{C}_1(0)=*$ and $c_k=([0,\frac{1}{k}], [\frac{1}{k}, \frac{2}{k}], \ldots, [\frac{k-1}{k},1]) \in \ms{C}_1(k)$. Let $\ms{C}_1(k)_0 \subseteq \ms{C}_1(k)$ be the path-connected component of $c_k$. Then $(\ms{C}_1)_0= \{\ms{C}_1(k)_0\}_{k\geq 0}$ is naturally a nonsymmetric suboperad of $\ms{C}_1$. Note that for $c,c'\in \ms{C}_1(k)_0$ and $t\in [0,1]$, $(1-t)c+tc'$ can be canonically defined and $(1-t)c+tc'\in \ms{C}_1(k)_0$. Call $\delta: I\to \ms{C}_1(k)_0$, $t\mapsto (1-t)c+tc'$ the linear path from $c$ to $c'$, denoted $c\xra{\delta} c'$.

\begin{lem}
  Any two paths $f$ and $g: I\to \ms{C}_1(k)_0$ with $f(0)=g(0)$ and $f(1)=g(1)$ are linearly homotopic rel $\{0,1\}$, namely, $H(s,t)= (1-t)f(s)+ tg(s)$ is a linear homotopy from $f$ to $g$. \qed
\end{lem}

Suppose $\ms{C}$ is a topological $\ms{G}$-operad admitting a morphism of nonsymmetric operads $\eta: (\ms{C}_1)_0\to \ms{C}$. For instance, if $\ms{C}$ has a strict basepoint, then $\eta: (\ms{C}_1)_0 \to *\into \ms{C}$; for $\ms{C}_n$, there is a canonical inclusion $\eta: (\ms{C}_1)_0 \into \ms{C}_n$. Call $\{\eta(c_k)\}_{k\geq 0}$ a \textbf{good basepoint} of $\ms{C}$ and also say $\ms{C}$ is \textbf{well pointed}. For two well pointed topological operads $\ms{C}$ with $\eta: (\ms{C}_1)_0 \to \ms{C}$ and $\ms{C}'$ with $\eta': (\ms{C}_1)_0 \to \ms{C}'$, a morphism $\psi: \ms{C}\to \ms{C}'$ is called a morphism of well pointed topological operads if $\eta'= \psi\circ \eta$.

Clearly $c\xra{\delta} c'$ in $\ms{C}_1(k)_0$ gives a path $\eta \circ \delta$ from $\eta(c)$ to $\eta(c')$ in $\ms{C}(k)$, denoted $\eta(c) \xra{\eta \circ \delta} \eta(c')$ and called the linear path from $\eta(c)$ to $\eta(c')$ for convenience. We shall omit $\eta \circ \delta$ and simply write $\eta(c) \to \eta(c')$ from now on.

\begin{lem}
  Suppose $f,g: I\to \ms{C}(k)_0$ are two paths with $f(0)=g(0)$ and $f(1)=g(1)$. If they can be pulled back to two paths in $\ms{C}_1(k)_0$, i.e., if there are $f',g': I\to \ms{C}_1(k)_0$ with $f'(0)=g'(0)$ and $f'(1)=g'(1)$ such that $f= \eta\circ f'$ and $g= \eta\circ g'$, then $f\simeq g$ rel $\{0,1\}$. \qed
\end{lem}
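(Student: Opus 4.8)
The plan is to reduce everything to the preceding lemma on linear homotopies in $\ms{C}_1(k)_0$ and then transport the resulting homotopy forward along $\eta$. Since $\eta$ is continuous and that lemma supplies an \emph{explicit} homotopy, no real analytic input is needed beyond functoriality and the connectedness of $I\times I$.

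First I would invoke the previous lemma for the pulled-back paths $f',g': I\to \ms{C}_1(k)_0$. By hypothesis $f'(0)=g'(0)$ and $f'(1)=g'(1)$, so that lemma provides the linear homotopy $H'(s,t)= (1-t)f'(s)+ tg'(s)$, a homotopy $H': I\times I\to \ms{C}_1(k)_0$ from $f'$ to $g'$ rel $\{0,1\}$. The crucial feature is that the vertical edges of $H'$ are constant: using $f'(0)=g'(0)$ one gets $H'(0,t)= (1-t)f'(0)+ tg'(0)= f'(0)$ for all $t$, and similarly $H'(1,t)= f'(1)$. Next I would set $H= \eta\circ H': I\times I\to \ms{C}(k)$. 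Continuity of $\eta$ makes $H$ a homotopy, and since the map $s\mapsto H(s,0)$ equals $\eta\circ f'= f$, which already lands in the path component $\ms{C}(k)_0$, while $I\times I$ is connected, the whole image of $H$ lands in $\ms{C}(k)_0$. It then remains to read off the boundary conditions: $H(s,0)= \eta(f'(s))= f(s)$ and $H(s,1)= \eta(g'(s))= g(s)$, while $H(0,t)= \eta(f'(0))= f(0)$ and $H(1,t)= \eta(f'(1))= f(1)$ are constant because the vertical edges of $H'$ are. Hence $H$ is a homotopy rel $\{0,1\}$ from $f$ to $g$, giving $f\simeq g$ rel $\{0,1\}$.

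There is essentially no hard step here: all the content sits in the previous lemma (the convexity of $\ms{C}_1(k)_0$ under $(1-t)c+tc'$) together with the standing hypothesis that $f,g$ admit pullbacks $f',g'$ with common endpoints in $\ms{C}_1(k)_0$. The one point worth flagging is that the argument genuinely uses the equalities $f'(0)=g'(0)$ and $f'(1)=g'(1)$ at the level of $\ms{C}_1(k)_0$, not merely $f(0)=g(0)$ and $f(1)=g(1)$ downstairs in $\ms{C}(k)_0$; it is precisely this stronger hypothesis that forces the vertical edges of $H'$ to be constant and thereby makes the pushed-forward homotopy rel $\{0,1\}$ rather than a free homotopy.
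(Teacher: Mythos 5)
Your proof is correct and is exactly the argument the paper intends (the lemma is stated with a \qed precisely because it reduces immediately to the preceding linear-homotopy lemma): pull back, take the linear homotopy $H'(s,t)=(1-t)f'(s)+tg'(s)$, and push forward along $\eta$. Your observation that the equalities $f'(0)=g'(0)$ and $f'(1)=g'(1)$ must hold upstairs in $\ms{C}_1(k)_0$ — so that the vertical edges of $H'$, and hence of $\eta\circ H'$, are constant — is the one genuine point of the argument, and you have it right.
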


For two paths $f,g: I\to X$ with $f(1)= g(0)$, let $f\cdot g$ be the usual product of $f$ and $g$. Throughout this section, assume that $\ms{C}$ is a path-connected topological $\ms{G}$-operad with a good basepoint. Let $e_k= \eta(c_k)$. The linear path $e_m \to \gamma (e_k; e_{m_1}, \ldots, e_{m_k})$ and its inverse will be helpful when changing basepoint. Let $e_k$ also denote the constant loop $S^1 \to e_k\into \ms{C}(k)$ (or $I \to e_k\into \ms{C}(k)$).

\subsection{Fundamental Groups of Nonsymmetric Operads}
Let $\ms{C}$ be a nonsymmetric topological operad. Define
$$\gamma: \pi_1 (\ms{C}(k), e_k) \times \prod_{i=1}^k \pi_1 (\ms{C}(m_i), e_{m_i}) \xra{\gamma_*} \pi_1 (\ms{C}(m), \gamma (e_k; e_{m_1}, \ldots)) \xra{\delta_*} \pi_1 (\ms{C}(m), e_m)$$
where $\delta_*$ is the isomorphism induced by $e_m \xra{\delta} \gamma (e_k; e_{m_1}, \ldots)$. Namely, for $[f]\in \pi_1 (\ms{C}(k), e_k)$ and $[f_i]\in \pi_1 (\ms{C}(m_i), e_{m_i})$,
$$\gamma ([f]; [g_1], \ldots, [g_k])= [\delta] [\gamma (f; g_1, \ldots, g_k)] [\delta]^{-1}.$$
Thus $\gamma ([f]; [g_1], \ldots, [g_k])$ is represented by the following path
$$e_m \xra{\delta} \gamma (e_k; e_{m_1}, \ldots) \xra{\gamma (f; g_1, \ldots)} \gamma (e_k; e_{m_1}, \ldots) \xra{\delta^{-1}} e_m.$$
$\delta$ and $\delta^{-1}$ will be omitted from now on.

\begin{thm}
  If $\ms{C}$ is a path-connected nonsymmetric operad with a good basepoint, then $\pi_1\ms{C}$ with $\gamma$ defined above is a non-crossed group operad.
\end{thm}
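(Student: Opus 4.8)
The plan is to verify the three operad axioms together with the defining properties of a non-crossed group operad, treating the group structure on each $\pi_1(\ms{C}(k),e_k)$ as automatic. Since $\ms{C}$ is nonsymmetric there is no symmetric structure to record, so I take $\pi\colon \pi_1\ms{C}\to\ms{S}$ to be the trivial morphism; this makes $\pi_1\ms{C}$ non-crossed, and the crossed-homomorphism identity in condition 3) collapses (because $a^{-1}(i)=i$) to the plain statement that $\gamma$ is a homomorphism. The entire subtlety lies in the change-of-basepoint paths $\delta$ built into the definition of $\gamma$ on $\pi_1$: every computation must track the conjugating path $e_m\to\gamma(e_k;e_{m_1},\ldots,e_{m_k})$, and the recurring point is that these paths and their $\gamma$-images lift to $(\ms{C}_1)_0$, so that the second Lemma above (paths pulling back to $\ms{C}_1(k)_0$ with equal endpoints are homotopic rel $\{0,1\}$) applies.

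For the homomorphism property I would argue as follows. Because $\gamma$ of $\ms{C}$ is a continuous map of products, the induced map $\gamma_*$ on fundamental groups is a homomorphism, i.e. $\gamma(f\cdot f';g_1\cdot g_1',\ldots,g_k\cdot g_k')\simeq \gamma(f;g_1,\ldots,g_k)\cdot\gamma(f';g_1',\ldots,g_k')$ rel $\{0,1\}$, each pair concatenated componentwise. The conjugating path $\delta$ from $e_m$ to $\gamma(e_k;e_{m_1},\ldots,e_{m_k})$ depends only on the basepoints, hence is \emph{identical} for $(aa';b_1b_1',\ldots)$, for $(a;b_1,\ldots)$, and for $(a';b_1',\ldots)$; inserting $[\delta]^{-1}[\delta]=1$ then yields $\gamma([f][f'];[g_1][g_1'],\ldots)=\gamma([f];[g_1],\ldots)\,\gamma([f'];[g_1'],\ldots)$, which is condition 3). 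For unitality I would first note that $e_1=\eta(c_1)=1\in\ms{C}(1)$, since $\eta$ is a morphism of nonsymmetric operads and $c_1$ is the unit of $\ms{C}_1$. Then $\gamma(e_1;e_k)=e_k$ and $\gamma(e_k;e_1^{(k)})=e_k$ by the unitality of $\ms{C}$, so the relevant $\delta$ is constant, and $\gamma([e_1];[f])=[f]$, $\gamma([f];[e_1]^{(k)})=[f]$ follow immediately from $\gamma(e_1;f)=f$ and $\gamma(f;e_1^{(k)})=f$. Thus the identity element of $\pi_1(\ms{C}(1),e_1)$ is the operad unit.

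The hard part is associativity, which amounts to careful bookkeeping of conjugating paths. Writing $a=[f]$, $b_i=[g_i]$, $c_j=[h_j]$, I would expand both sides of
$$\gamma(\gamma(a;b_1,\ldots,b_k);c_1,\ldots,c_m)=\gamma(a;\gamma(b_1;c_1,\ldots,c_{m_1}),\ldots,\gamma(b_k;\ldots,c_m))$$
into loops based at $e_n$. Using the homomorphism property to feed constant loops $e_{n_j}$ into the conjugating portions, each side becomes a product
$$(\text{conjugating path } e_n\to p)\cdot(\text{core loop at }p)\cdot(\text{inverse conjugating path}),$$
where $p=\gamma(\gamma(e_k;e_{m_1},\ldots,e_{m_k});e_{n_1},\ldots,e_{n_m})=\gamma(e_k;\gamma(e_{m_1};\ldots),\ldots,\gamma(e_{m_k};\ldots))$, the two descriptions of $p$ agreeing by the associativity of $\gamma$ in $\ms{C}$. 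The two core loops are then literally equal, being $t\mapsto\gamma(\gamma(f(t);g_1(t),\ldots);h_1(t),\ldots)$ on the nose, again by associativity in $\ms{C}$. It remains to compare the two conjugating paths from $e_n$ to $p$: schematically $\delta_m\cdot\gamma(\delta_k;e_{n_1},\ldots)$ on the left and $\delta'\cdot\gamma(e_k;\delta^{(1)},\ldots)$ on the right, where $\gamma(\delta_k;e_{n_1},\ldots)$ denotes $t\mapsto\gamma(\delta_k(t);e_{n_1},\ldots,e_{n_m})$ and similarly on the right. Both are concatenations of $\eta$-images of linear paths and of $\gamma$ applied to such, hence both pull back to paths in $(\ms{C}_1)_0$ with the same endpoints; by the Lemma they are homotopic rel $\{0,1\}$. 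Consequently the two sides agree in $\pi_1(\ms{C}(n),e_n)$, establishing associativity and completing the proof that $\pi_1\ms{C}$ is a non-crossed group operad. The delicate point to get right is precisely that every conjugating path arising here is the $\eta$-image of a path in the strict suboperad $(\ms{C}_1)_0$, so that path-uniqueness may legitimately be invoked.
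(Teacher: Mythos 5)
Your proposal is correct and follows essentially the same route as the paper's own proof: unitality via $e_1=1$, the homomorphism property by composing the induced homomorphism $\gamma_*$ with the conjugation isomorphism $\delta_*$ (the paper phrases this as cancellation of the middle $\delta^{-1}\cdot\delta$ in explicit path representatives), and associativity by observing that the core loops coincide on the nose while the conjugating paths pull back to $(\ms{C}_1)_0$ and are therefore homotopic rel $\{0,1\}$ by the cited lemma. The only difference is presentational, not mathematical.
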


\begin{proof}
  Let $[f], [f']\in \pi_1 (\ms{C}(k), e_k)$, $[g_i], [g'_i]\in \pi_1 (\ms{C}(m_i), e_{m_i})$, $[h_j]\in \pi_1 (\ms{C}(n_j), e_{n_j})$.

  Unitality. $\gamma (e_1; f)(z)= \gamma (e_1; f(z))= f(z)$, $\gamma (f; e_1^{(k)}) (z)= \gamma (f(z); e_1^{(k)})= f(z)$ for $z\in S^1$. Thus $\gamma ([e_1];[f])= [f]$ and $\gamma ([f]; [e_1]^{(k)})= [f]$.

  Next we check that $\gamma$ is a homomorphism. $\gamma ([f][f']; [g_1][g'_1], \ldots, [g_k][g'_k])$ is represented by the following path
  $$e_m \to \gamma (e_k; e_{m_1}, \ldots) \xra{\gamma (f; g_1, \ldots)} \gamma (e_k; e_{m_1}, \ldots) \xra{\gamma (f'; g'_1, \ldots)} \gamma (e_k; e_{m_1}, \ldots) \to e_m,$$
  while $\gamma ([f]; [g_1], \ldots, [g_k]) \gamma ([f']; [g'_1], \ldots, [g'_k])$ is represented by
  \begin{align*}
    e_m & \to \gamma (e_k; e_{m_1}, \ldots) \xra{\gamma (f; g_1, \ldots)} \gamma (e_k; e_{m_1}, \ldots) \to e_m \\
    & \to \gamma (e_k; e_{m_1}, \ldots) \xra{\gamma (f'; g'_1, \ldots)} \gamma (e_k; e_{m_1}, \ldots) \to e_m
  \end{align*}
  So $\gamma ([f][f']; [g_1][g'_1], \ldots, [g_k][g'_k])= \gamma ([f]; [g_1], \ldots, [g_k]) \gamma ([f']; [g'_1], \ldots, [g'_k])$.

  Associativity. $\gamma (\gamma ([f]; [g_1], \ldots, [g_k]); [h_1], \ldots, [h_m])$ is represented by
  \begin{align*}
    e_n & \to \gamma (e_m; e_{n_1}, \ldots) \to \gamma (\gamma (e_k; e_{m_1}, \ldots); e_{n_1}, \ldots) \\
    & \xra{\gamma (\gamma (f; g_1, \ldots); h_1, \ldots)} \gamma (\gamma (e_k; e_{m_1}, \ldots); e_{n_1}, \ldots) \to \gamma (e_m; e_{n_1}, \ldots) \to e_n
  \end{align*}
  which is of the form $\delta_1 \cdot \gamma (\gamma (f; g_1, \ldots); h_1, \ldots) \cdot \delta_2$ where $\delta_1, \delta_2$ are two linear paths. $\gamma ([f]; \gamma ([g_1]; [h_1], \ldots), \ldots, \gamma ([g_k]; \ldots, [h_m]))$ is represented by
  \begin{align*}
    e_n & \to \gamma (e_k; e_{n_1+ \cdots+ n_{m_1}}, \ldots) \to \gamma (e_k; \gamma (e_{m_1}; e_{n_1}, \ldots), \ldots) \\
    & \xra{\gamma (f; \gamma (g_1; h_1, \ldots), \ldots)} \gamma (e_k; \gamma (e_{m_1}; e_{n_1}, \ldots), \ldots) \to \gamma (e_k; e_{n_1+ \cdots+ n_{m_1}}, \ldots) \to e_n
  \end{align*}
  which is of the form $\delta'_1 \cdot \gamma (f; \gamma (g_1; h_1, \ldots), \ldots) \cdot \delta'_2$ where $\delta'_1, \delta'_2$ are two linear paths. From the associativity of the $\gamma$ of $\ms{C}$,
  $$\gamma (\gamma (f; g_1, \ldots, g_k); h_1, \ldots, h_m)= \gamma (f; \gamma (g_1; h_1, \ldots), \ldots, \gamma (g_k; \ldots, h_m)).$$
  Moreover, $\delta_1$ and $\delta'_1$, $\delta_2$ and $\delta'_2$ are homotopic rel $\{0,1\}$ since they can be pulled back to $\ms{C}_1(k)_0$. Thus the associativity holds.
\end{proof}

\subsection{Fundamental Groups of Symmetric Operads}
Given a topological space $X$ and two subspaces $A,B$, let
$$\pi_1 (X;A,B)= \{[f] \mid f:I\to X, f(0)\in A, f(1)\in B\}.$$

Let $\ms{C}$ be a covering symmetric operad. Then
$$S_k\to \ms{C}(k)\onto \ms{C}(k)/S_k$$
is a fibration. Thus there is a long exact sequence
$$\cdots\to \pi_lS_k\to \pi_l \ms{C}(k)\to \pi_l (\ms{C}(k)/S_k)\to \pi_{l-1}S_k\to \cdots.$$
Since $\pi_0S_k=S_k$ and $\pi_lS_k=1$ for $l>0$, there is a short exact sequence
$$1\to \pi_1\ms{C}(k)\to \pi_1 (\ms{C}(k)/S_k) \xra{\pi} S_k\to 1$$
and $\pi_l \ms{C}(k)\cong \pi_l (\ms{C}(k)/S_k)$ for $l>1$.

$\pi_1 (\ms{C}(k)/S_k, S_ka)$ can be identified with $\pi_1 (\ms{C}(k); a, S_ka)$ as follows. For any $a\in \ms{C}(k)$, the natural function
$$\pi_1 (\ms{C}(k); a, S_ka)\to \pi_1 (\ms{C}(k)/S_k, S_ka)$$
is a bijection. Notice that $[f]\in \pi_1 (\ms{C}(k)/S_k, S_ka)$ is lifted to a path from $a$ to $(\pi[f])a$. The multiplication in $\pi_1 (\ms{C}(k)/S_k, S_ka)$ induces a multiplication in $\pi_1 (\ms{C}(k); a, S_ka)$:
$$f*g:= f\cdot (\pi[f])g, \quad [f]*[g]:= [f*g]= [f]\cdot [(\pi[f])g]$$
for $[f],[g]\in \pi_1 (\ms{C}(k); a, S_ka)$, where
$$\pi: \pi_1 (\ms{C}(k); a, S_ka)\to \pi_1 (\ms{C}(k)/S_k, S_ka) \xra{\pi} S_k,$$
noting that $f\cdot (\pi[f])g$ is a path from $a$ to $\pi[f]a$ then to $\pi[f] (\pi[g]a)$. With this multiplication, $\pi_1 (\ms{C}(k); a, S_ka)\to \pi_1 (\ms{C}(k)/S_k, S_ka)$ is an isomorphism. Let $\pi f= \pi[f]\in S_k$ and $[f](i)= (\pi[f]) (i)$. Clearly $\pi ([f]*[g])= \pi[f] \pi[g]$ and $\pi (f*g)= \pi f \pi g$.

Define $\gamma$ to be the following composite
\begin{align*}
  \gamma: &\ \pi_1 (\ms{C}(k); e_k, S_ke_k) \times \prod_{i=1}^k \pi_1 (\ms{C}(m_i); e_{m_i}, S_{m_i} e_{m_i}) \\
  \to &\ \pi_1 (\ms{C}(m); e'_m, \{\gamma (\sigma; \tau_{m_1}, \ldots, \tau_{m_k}) e_m^{\sigma} \mid \sigma \in S_k, \tau_i\in S_{m_i}\}) \\
  \to &\ \pi_1 (\ms{C}(m); e_m, \{\gamma (\sigma; \tau_{m_1}, \ldots, \tau_{m_k}) e_m \mid \sigma \in S_k, \tau_i\in S_{m_i}\}) \\
  \into &\ \pi_1 (\ms{C}(m); e_m, S_m e_m),
\end{align*}
where $e'_m= \gamma (e_k; e_{m_1}, \ldots, e_{m_k})$ and $e_m^{\sigma}= \gamma (e_k; e_{m_{\sigma^{-1}(1)}}, \ldots, e_{m_{\sigma^{-1}(k)}})$; explicitly,
\begin{align*}
  \gamma ([f]; [g_1], \ldots, [g_k]) & = [\delta_1] [\gamma (f; g_1, \ldots, g_k)]* [(\delta_2)^{-1}]\\
  & = [\delta_1] [\gamma (f; g_1, \ldots, g_k)] [\gamma (\pi[f]; \pi[g_1], \ldots, \pi[g_k]) (\delta_2)^{-1}]
\end{align*}
for $[f]\in \pi_1 (\ms{C}(k); e_k, S_ke_k)$ and $[g_i]\in \pi_i (\ms{C}(m_i); e_{m_i}, S_{m_i} e_{m_i})$, where $e_m \xra{\delta_1} e'_m$ and $e_m \xra{\delta_2} \gamma (e_k; e_{m_{(\pi f)^{-1}(1)}}, \ldots, e_{m_{(\pi f)^{-1}(k)}})$. $\gamma ([f]; [g_1], \ldots, [g_k])$ is represented by the following path
\begin{align*}
  e_m & \xra{\delta_1} \gamma (e_k; e_{m_1}, \ldots) \\
  & \xra{\gamma (f; g_1, \ldots)} \gamma ((\pi f) e_k; (\pi g_1) e_{m_1}, \ldots) = \gamma (\pi f; \pi g_1, \ldots) \gamma (e_k; e_{m_{(\pi f)^{-1} (1)}}, \ldots) \\
  & \xra{\gamma (\pi f; \pi g_1, \ldots) \delta_2^{-1}} \gamma (\pi f; \pi g_1, \ldots) e_m.
\end{align*}
$\delta_1$ and $\gamma (\pi f; \pi g_1, \ldots) \delta_2^{-1}$ will be omitted from now on.

Let $\pi_1(\ms{C}/ \ms{S})= \{\pi_1 (\ms{C}(k)/ S_k; S_ke_k)\}_{k\geq 0}$ and $\pi: \pi_1(\ms{C}/ \ms{S}) \to \ms{S}$ denote the sequence of boundary homomorphisms $\pi_1 (\ms{C}(k)/S_k, S_ke_k) \xra{\pi} S_k$, $k\geq 0$.

\begin{thm}
  If $\ms{C}$ is a path-connected covering symmetric operad with a good basepoint, then $\pi_1 (\ms{C}/\ms{S})$ is a crossed group operad with $\gamma$ and $\pi: \pi_1 (\ms{C}/\ms{S}) \to \ms{S}$ given above.
\end{thm}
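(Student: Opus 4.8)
The strategy is to follow the template of the preceding theorem on nonsymmetric operads, adding the bookkeeping forced by the symmetric group actions. The group structure on each $\pi_1(\ms{C}(k)/S_k, S_ke_k)$ and the fact that $\pi\colon \pi_1(\ms{C}(k)/S_k, S_ke_k)\to S_k$ is a homomorphism are already supplied by the fibration $S_k\to \ms{C}(k)\onto \ms{C}(k)/S_k$ and its long exact sequence, so what remains is to verify that the $\gamma$ defined above is well defined, that it makes $\pi_1(\ms{C}/\ms{S})$ an operad (associativity and unitality), that $\pi$ commutes with $\gamma$ so as to be a morphism of operads to $\ms{S}$, and finally that $\gamma$ is a crossed homomorphism. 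Crossedness, i.e. that $\pi$ is nontrivial, is immediate from the surjectivity of each $\pi$ in the short exact sequence for $k\geq 2$.

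First I would check well-definedness: a homotopy rel $\{0,1\}$ of any $f$ or $g_i$ induces, through the continuous map $\gamma$ of $\ms{C}$ together with the fixed linear paths $\delta_1,\delta_2$, a homotopy rel $\{0,1\}$ of the representing path, so $\gamma$ descends to homotopy classes. Reading off the terminal point of the representing path, namely $\gamma(\pi f;\pi g_1,\ldots,\pi g_k)\,e_m$ with $\gamma(\pi f;\pi g_1,\ldots,\pi g_k)\in S_m$, shows both that the class lands in $\pi_1(\ms{C}(m);e_m,S_me_m)$ and that $\pi\gamma([f];[g_1],\ldots,[g_k])= \gamma(\pi f;\pi g_1,\ldots,\pi g_k)$, so that $\pi$ is a morphism of operads. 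Unitality and associativity then run exactly as in the nonsymmetric case: unitality uses the unitality of $\gamma$ of $\ms{C}$ together with $\pi e_1=\id$, so that the twisting and the paths $\delta$ collapse; associativity compares the two representing paths, invokes the associativity of $\gamma$ of $\ms{C}$ on the common middle segment, and uses the linear-homotopy lemma to identify the two families of boundary paths $\delta_i$, $\delta'_i$ after pulling them back to $\ms{C}_1(k)_0$. The permutation labels appearing in these paths are controlled automatically because $\ms{S}$ is itself a crossed group operad.

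The heart of the proof, and the main obstacle, is the crossed homomorphism identity. Here I would expand $[f]*[f']=[f\cdot(\pi f)f']$ and $[g_i]*[g'_i]=[g_i\cdot(\pi g_i)g'_i]$ as twisted concatenations and feed them into $\gamma$. Splitting at the midpoint, the first half is literally $\gamma(f;g_1,\ldots,g_k)$, while the second half is the path $s\mapsto \gamma((\pi f)f'(s); (\pi g_1)g'_1(s),\ldots,(\pi g_k)g'_k(s))$. Applying the equivariance property of $\ms{C}$ pointwise in $s$ rewrites this second half as $\gamma(\pi f;\pi g_1,\ldots,\pi g_k)$ acting on the path $\gamma(f'; g'_{(\pi f)^{-1}(1)},\ldots,g'_{(\pi f)^{-1}(k)})$. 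Comparing with the right-hand side $\gamma([f];[g_1],\ldots,[g_k])*\gamma([f'];[g'_{(\pi f)^{-1}(1)}],\ldots,[g'_{(\pi f)^{-1}(k)}])$ and unwinding the definition of $*$, using $\pi\gamma([f];[g_1],\ldots)=\gamma(\pi f;\pi g_1,\ldots)$ from the previous step, the two representing paths agree up to the interposed linear paths $\delta$, which I would again dispose of by the linear-homotopy lemma. The delicate point throughout is keeping the reindexing by $(\pi f)^{-1}$ consistent between the equivariance rewriting and the definition of the twisted product $*$, and confirming that the basepoint-changing paths $\delta_1,\delta_2$ on the two sides are genuinely homotopic rel endpoints rather than merely equal; it is precisely the equivariance of $\ms{C}$ that converts the naive homomorphism computation of the nonsymmetric case into the crossed one.
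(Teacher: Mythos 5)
Your proposal is correct and follows essentially the same route as the paper's own proof: both verify that $\pi$ commutes with $\gamma$ by reading off the endpoint of the representing path, both reduce unitality and associativity to the space-level identities plus the linear-homotopy lemma for the boundary paths, and both prove the crossed homomorphism identity by splitting the representing path of $\gamma([f]*[f'];[g_1]*[g'_1],\ldots)$ at the midpoint and applying the equivariance of $\ms{C}$ pointwise to rewrite the second half as $\gamma(\pi f;\pi g_1,\ldots)$ acting on $\gamma(f';g'_{(\pi f)^{-1}(1)},\ldots)$, which is exactly the paper's pair of displayed equivariance relations. The only additions (well-definedness of $\gamma$ on classes, and crossedness via surjectivity of $\pi$ from the short exact sequence) are routine points the paper leaves implicit, having already established that sequence before the theorem.
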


\begin{proof}
  Identify $\pi_1 (\ms{C}(k)/S_k, S_ka)$ with $\pi_1 (\ms{C}(k); a, S_ka)$. Unitality is obvious. Since $\gamma ([f]; [g_1], \ldots, [g_k])$ is represented by a path from $e_m$ to $\gamma (\pi[f]; \pi[g_1], \ldots, \pi[g_k]) e_m$, we have
  $$\pi \gamma ([f]; [g_1], \ldots, [g_k])= \gamma (\pi[f]; \pi[g_1], \ldots, \pi[g_k]);$$
  namely $\gamma$ commutes with $\pi$.

  Next we check that $\gamma$ is a crossed homomorphism.
  $\gamma ([f]*[f']; [g_1]*[g'_1], \ldots, [g_k]*[g'_k])$ is represented by
  \begin{align*}
    e_m & \to \gamma (e_k; e_{m_1}, \ldots) \xra{\gamma (f; g_1, \ldots)} \gamma ((\pi f) e_k; (\pi g_1) e_{m_1}, \ldots) \\
    & \xra{\gamma ((\pi f)f'; (\pi g_1) g'_1, \ldots)} \gamma ((\pi f) (\pi f') e_k; (\pi g_1) (\pi g'_1) e_{m_1}, \ldots) \\
    & = \gamma (\pi f \pi f'; \pi g_1 \pi g'_1, \ldots) \gamma (e_k; e_{m_{(\pi f \pi f')^{-1} (1)}}, \ldots) \\
    & \to \gamma (\pi f \pi f'; \pi g_1 \pi g'_1, \ldots) e_m
  \end{align*}
  which is of the form $\zeta_1 \cdot \gamma (f;g_1, \ldots) \cdot \gamma ((\pi f)f'; (\pi g_1) g'_1, \ldots) \cdot \zeta_2$ where $\zeta_1, \zeta_2$ are two linear paths. $\gamma ([f]; [g_1], \ldots, [g_k])* \gamma ([f']; [g'_{[f]^{-1}(1)}], \ldots, [g'_{[f]^{-1}(k)}])$ is represented by
  \begin{align*}
    e_m & \to \gamma (e_k; e_{m_1}, \ldots) \\
    & \xra{\gamma (f; g_1, \ldots)} \gamma ((\pi f) e_k; (\pi g_1) e_{m_1}, \ldots) = \gamma (\pi f; \pi g_1, \ldots) \gamma (e_k; e_{m_{(\pi f)^{-1} (1)}}, \ldots) \\
    & \xra{\gamma (\pi f; \pi g_1, \ldots) \gamma (f'; g'_{(\pi f)^{-1}(1)}, \ldots)} \gamma (\pi f; \pi g_1, \ldots) \gamma ((\pi f') e_k; (\pi g'_{(\pi f)^{-1}(1)}) e_{m_{(\pi f)^{-1}(1)}}, \ldots) \\
    & = \gamma (\pi f; \pi g_1, \ldots) \gamma (\pi f'; \pi g'_{(\pi f)^{-1}(1)}, \ldots) \gamma (e_k; e_{m_{(\pi f)^{-1}((\pi f')^{-1}(1))}}, \ldots) \\
    & \to \gamma (\pi f; \pi g_1, \ldots) \gamma (\pi f'; \pi g'_{(\pi f)^{-1}(1)}, \ldots) e_m
  \end{align*}
  which is of the form $\zeta'_1 \cdot \gamma (f;g_1, \ldots) \cdot \gamma (\pi f; \pi g_1, \ldots) \gamma (f'; g'_{(\pi f)^{-1}(1)}, \ldots) \cdot \zeta'_2$ where $\zeta'_1, \zeta'_2$ are two linear paths. Note that
  \begin{align*}
    \gamma ((\pi f)f'; (\pi g_1) g'_1, \ldots) & = \gamma (\pi f; \pi g_1, \ldots) \gamma (f'; g'_{(\pi f)^{-1}(1)}, \ldots), \\
    \gamma (\pi f \pi f'; \pi g_1 \pi g'_1, \ldots) & = \gamma (\pi f; \pi g_1, \ldots) \gamma (\pi f'; \pi g'_{(\pi f)^{-1}(1)}, \ldots).
  \end{align*}
  Hence $\gamma ([f]*[f']; [g_1]*[g'_1], \ldots)= \gamma ([f]; [g_1], \ldots)* \gamma ([f']; [g'_{[f]^{-1}(1)}], \ldots)$.

  Associativity. $\gamma (\gamma ([f]; [g_1], \ldots, [g_k]); [h_1], \ldots, [h_m])$ is represented by
  \begin{align*}
    e_n & \to \gamma (e_m; e_{n_1}, \ldots) \to \gamma (\gamma (e_k; e_{m_1}, \ldots); e_{n_1}, \ldots) \\
    & \xra{\gamma (\gamma (f;g_1, \ldots); h_1, \ldots)} \gamma (\gamma ((\pi f) e_k; (\pi g_1) e_{m_1}, \ldots); (\pi h_1) e_{n_1}, \ldots) \\
    &= \gamma (\gamma (\pi f; \pi g_1, \ldots) \gamma (e_k;  e_{m_{(\pi f)^{-1} (1)}}, \ldots); (\pi h_1) e_{n_1}, \ldots) \\
    & \to \gamma ((\gamma (\pi f; \pi g_1, \ldots)) e_m; (\pi h_1) e_{n_1}, \ldots) \\
    & = \gamma (\gamma (\pi f; \pi g_1, \ldots); \pi h_1, \ldots) \gamma (e_m; e_{n_{\gamma (\pi f; \pi g_1, \ldots)^{-1}(1)}}, \ldots) \\
    & \to \gamma (\gamma (\pi f; \pi g_1, \ldots); \pi h_1, \ldots) e_n
  \end{align*}
  which is of the form $\zeta_1 \cdot \gamma (\gamma (f;g_1, \ldots); h_1, \ldots) \cdot \zeta_2$ where $\zeta_1, \zeta_2$ are two linear paths. $\gamma ([f]; \gamma ([g_1]; [h_1], \ldots), \ldots, \gamma ([g_k]; \ldots, [h_m]))$ is represented by
  \begin{align*}
    e_n & \to \gamma (e_k; e_{N_1}, \ldots) \to \gamma (e_k; \gamma (e_{m_1}; e_{n_1}, \ldots), \ldots) \\
    & \xra{\gamma (f; \gamma (g_1; h_1, \ldots), \ldots)} \gamma ((\pi f) e_k; \gamma ((\pi g_1) e_{m_1}; (\pi h_1) e_{n_1}, \ldots), \ldots) \\
    & = \gamma ((\pi f) e_k; \gamma (\pi g_1; \pi h_1, \ldots) \gamma (e_{m_1}; e_{n_{(\pi g_1)^{-1} (1)}}, \ldots), \ldots) \\
    & = \gamma ((\pi f) e_k; \gamma (\pi g_1; \pi h_1, \ldots) e_{N_1}, \ldots) \\
    & = \gamma (\pi f; \gamma (\pi g_1; \pi h_1, \ldots), \ldots) \gamma (e_k; e_{N_{(\pi f)^{-1} (1)}}, \ldots) \\
    & \to \gamma (\pi f; \gamma (\pi g_1; \pi h_1, \ldots), \ldots) e_n
  \end{align*}
  which is of the form $\zeta'_1 \cdot \gamma (f; \gamma (g_1; h_1, \ldots), \ldots) \cdot \zeta'_2$ where $\zeta'_1, \zeta'_2$ are two linear paths. Note that
  \begin{align*}
    \gamma (\gamma (f;g_1, \ldots); h_1, \ldots) & = \gamma (f; \gamma (g_1; h_1, \ldots), \ldots), \\
    \gamma (\gamma (\pi f; \pi g_1, \ldots); \pi h_1, \ldots) & = \gamma (\pi f; \gamma (\pi g_1; \pi h_1, \ldots), \ldots).
  \end{align*}
  Hence the associativity holds.
\end{proof}

\subsection{Higher Homotopy Groups of Nonsymmetric Operads}
For convenience, we here make the following conventions: use $+$ to denote the product on $\pi_l$ for $l\geq 1$; $I\vee' I:= (I,1) \vee (I,0)$, $I\vee' S^l := (I,1)\vee (S^l, \textrm{southern pole})$, $S^l\vee' I := (S^l, \textrm{northern pole}) \vee (I,0)$, $S^l\vee' S^l := (S^l, \textrm{northern pole}) \vee (S^l, \textrm{southern pole})$; the wedge $\ms{C}(k) \vee \ms{C}(k')$ is modified accordingly which will not be indicated.

Let $\ms{C}$ be nonsymmetric and $l\geq 1$. Define
$$\gamma: \pi_l (\ms{C}(k), e_k) \times \prod_{i=1}^k \pi_l (\ms{C}(m_i), e_{m_i}) \xra{\gamma_*} \pi_l (\ms{C}(m), \gamma (e_k; e_{m_1}, \ldots)) \xra{\delta_*} \pi_l (\ms{C}(m), e_m)$$
where $\delta_*$ is the isomorphism induced by $e_m \xra{\delta} \gamma (e_k; e_{m_1}, \ldots)$. Namely, for $[f]\in \pi_l (\ms{C}(k), e_k)$ and $[f_i]\in \pi_l (\ms{C}(m_i), e_{m_i})$, $\gamma ([f]; [g_1], \ldots, [g_k])$ is represented by
\begin{align*}
  S^l & \to S^l \vee' S^l \to I\vee' S^l \to I\vee' (S^l)^{1+k} \\
  & \xra{\delta \vee (f\times g_1 \times \cdots \times g_k)} \ms{C}(m) \vee (\ms{C}(k) \times \ms{C}(m_1) \times \cdots \times \ms{C}(m_k)) \\
  & \xra{\id \vee \gamma} \ms{C}(m) \vee \ms{C}(m) \to \ms{C}(m),
\end{align*}
where $S^l \to S^l \vee' S^l$ is the usual projection collapsing the equator to the northern pole and the southern pole of the two $S^l$'s  of $S^l\vee' S^l$ respectively, $S^l \to I$ is the latitude projection with the southern pole mapped to 0 and the northern pole mapped to 1, and
$$\ms{C}(m) \vee \ms{C}(m)= (\ms{C}(m), \gamma (e_k; e_{m_1}, \ldots)) \vee (\ms{C}(m), \gamma (e_k; e_{m_1}, \ldots)) \to \ms{C}(m)$$
is the folding map. If $\{e_k\}_{k\geq 0}$ is a strick basepoint, then $\delta$ is the constant path at $e_m$ and the above composite is homotopic to
$$S^l \to (S^l)^{1+k} \to \ms{C}(k) \times \ms{C}(m_1) \times \cdots \times \ms{C}(m_k) \xra{f\times g_1\times \cdots \times g_k} \ms{C}(m).$$
Next recall that the action of $\pi_1 (\ms{C}(k), e_k)$ on $\pi_l (\ms{C}(k), e_k)$ is given as follows. For $[a]\in \pi_1 (\ms{C}(k), e_k)$, $[a]\star [f]\in \pi_l (\ms{C}(k), e_k)$ is represented by
$$a \star f: S^l \to S^l \vee' S^l \to I\vee' S^l \xra{a\vee f} \ms{C}(k) \vee \ms{C}(k) \to \ms{C}(k).$$
Note that if $l=1$, this action is the conjugation action of $\pi_1 (\ms{C}(k), e_k)$ on itself.

\begin{thm}
  If $\ms{C}$ is a path-connected nonsymmetric operad with a good basepoint, then $\pi_l \ms{C}$ ($l\geq 1$) is a $\pi_1 \ms{C}$-operad.
\end{thm}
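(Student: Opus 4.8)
The plan is to verify the three ingredients packaged in Definition~\ref{defn:operad}: first, that $\pi_l\ms{C}=\{\pi_l(\ms{C}(n),e_n)\}_{n\geq 0}$ is an operad under the $\gamma$ defined above, with unit the trivial class in $\pi_l(\ms{C}(1),e_1)$; second, that each $\pi_1(\ms{C}(n),e_n)$ acts on the left on $\pi_l(\ms{C}(n),e_n)$ by the action $\star$ recalled above; and third, the equivariance identity relating $\gamma$, $\star$, and the $\gamma$ of the group operad $\pi_1\ms{C}$. Since $\pi_1\ms{C}$ is a \emph{non-crossed} group operad by the preceding theorem, the homomorphism $\pi$ is trivial, so the reindexing $\sigma^{-1}(i)$ appearing in the equivariance relation of Definition~\ref{defn:operad} is the identity; thus the identity to be proved reads
$$\gamma(\sigma\star a;\tau_1\star b_1,\ldots,\tau_k\star b_k)=\gamma(\sigma;\tau_1,\ldots,\tau_k)\star\gamma(a;b_1,\ldots,b_k)$$
for $\sigma\in\pi_1(\ms{C}(k))$, $\tau_i\in\pi_1(\ms{C}(m_i))$, $a\in\pi_l(\ms{C}(k))$, $b_i\in\pi_l(\ms{C}(m_i))$.

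The operad axioms for $\pi_l\ms{C}$ I would prove by the same method as the nonsymmetric fundamental group theorem above, with $S^1$ replaced by $S^l$. Unitality is immediate from $\gamma(e_1;f)=f$ and $\gamma(f;e_1^{(k)})=f$ at the level of representatives. For associativity, one writes representatives of $\gamma(\gamma(a;b_1,\ldots);c_1,\ldots)$ and $\gamma(a;\gamma(b_1;c_1,\ldots),\ldots)$ as maps $S^l\to\ms{C}(n)$ of the form $\delta_1\cdot(\mathrm{core})\cdot\delta_2$ and $\delta_1'\cdot(\mathrm{core}')\cdot\delta_2'$, where the collar pieces are the linear basepoint-change paths $\delta_j,\delta_j'$ and the cores are built from the $\gamma$ of $\ms{C}$; the two cores agree by the associativity of the $\gamma$ of $\ms{C}$, while $\delta_j$ and $\delta_j'$ are homotopic rel $\{0,1\}$ because they pull back to $\ms{C}_1(n)_0$, by the linear-path lemma. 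That $\star$ is a genuine left action of the group $\pi_1(\ms{C}(n))$ on $\pi_l(\ms{C}(n))$ is the classical fact about the action of the fundamental group on higher homotopy groups, so nothing new is needed there.

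The substance is the equivariance identity, which I would establish by writing both sides as explicit maps $S^l\to\ms{C}(m)$ and comparing them under a \emph{collar/core} decomposition of $S^l$. Every operation in sight is built from the same template: collapse an equator of $S^l$ to split off a collar (an interval factor) from a core (a sphere factor), map the collar by a loop and the core by a based sphere, and combine via the $\gamma$ of $\ms{C}$ together with the linear paths $\delta$. On the left side, applying the actions $\sigma\star a$ and $\tau_i\star b_i$ inserts the loops $\sigma,\tau_1,\ldots,\tau_k$ into the collars of the inputs and the spheres $a,b_1,\ldots,b_k$ into their cores; applying the operadic $\gamma$ of $\pi_l\ms{C}$ then folds these together through the $\gamma$ of $\ms{C}$. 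I would choose the equator-collapses so that all the input collars align into one common collar and all the input cores into one common core. The associativity of the $\gamma$ of $\ms{C}$ then shows that, after this alignment, the core part is exactly the representative of $\gamma(a;b_1,\ldots,b_k)$, while the collar part is exactly the representative of $\gamma(\sigma;\tau_1,\ldots,\tau_k)$ used in defining the $\gamma$ of $\pi_1\ms{C}$; this is precisely the map representing $\gamma(\sigma;\tau_1,\ldots,\tau_k)\star\gamma(a;b_1,\ldots,b_k)$, the right side.

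The main obstacle is the \emph{interchange}: showing that collapsing the individual input collars and then applying the operadic $\gamma$ yields the same map, up to homotopy rel basepoint, as first combining cores and collars separately and then assembling a single collar and core. Concretely, I would construct an explicit homotopy $S^l\times I\to\ms{C}(m)$ that slides the per-input collars carrying $\sigma,\tau_i$ out past the folding and $\gamma$-step and merges them into the single collar carrying $\gamma(\sigma;\tau_1,\ldots,\tau_k)$; this uses only the continuity and associativity of the $\gamma$ of $\ms{C}$, equivalently the associative $H$-space structure on $\ms{C}(1)$ and its action on $\ms{C}(m)$. Throughout, the numerous basepoint-change paths $\delta$ introduced by the non-strictness of the good basepoint are kept under control by the linear-path lemma, exactly as in the associativity argument, so that all of them may be reshuffled freely up to homotopy rel endpoints. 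Once this shuffle homotopy is in place, the equivariance identity follows and $\pi_l\ms{C}$ is a $\pi_1\ms{C}$-operad.
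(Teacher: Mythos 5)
Your proposal is correct and follows essentially the same route as the paper: write representatives of both sides of each axiom via collar/core (wedge) decompositions of $S^l$, identify the core with the space-level $\gamma$ applied to the spheres and the collar with the path representing the $\pi_1$-level $\gamma$, and control all basepoint-change paths $\delta$ with the linear-path lemma. The only remark worth making is that your ``interchange'' obstacle is milder than you anticipate: because the pinch onto the product is the diagonal, the per-input collars are automatically simultaneous, so the collar part of the left side is literally the loop $t\mapsto\gamma(\sigma(t);\tau_1(t),\ldots,\tau_k(t))$ and no sliding of loops past one another (and no appeal to the $H$-space structure of $\ms{C}(1)$) is needed --- exactly as in the paper's proof, which reduces the comparison to cancelling $\delta^{-1}\vee\delta$ and a reparametrization.
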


\begin{proof}
  Let $[f], [f']\in \pi_l (\ms{C}(k), e_k)$, $[g_i], [g'_i]\in \pi_l (\ms{C}(m_i), e_{m_i})$, $[h_j]\in \pi_l (\ms{C}(n_j), e_{n_j})$, $[a]\in \pi_1 (\ms{C}(k), e_k)$, $[b_i]\in (\ms{C}(m_i), e_{m_i})$.

  Unitality is obvious.

  Associativity. $\gamma (\gamma ([f]; [g_1], \ldots, [g_k]); [h_1], \ldots, [h_m])$ is represented by
  \begin{align*}
    S^l & \to S^l \vee' S^l \to I\vee' S^l \to I\vee' (S^l \times (S^l)^m) \to I\vee' ((S^l\vee' S^l) \times (S^l)^m) \\
    & \to I\vee' ((I\vee' S^l) \times (S^l)^m) \to I\vee' ((I\vee' (S^l)^{1+k}) \times (S^l)^m) \\
    & \to \ms{C}(n) \vee ((\ms{C}(m)\vee (\ms{C}(k)\times \ms{C}(m_1)\times \cdots \times \ms{C}(m_k))) \times \ms{C}(n_1) \times \cdots \times \ms{C}(n_m)) \\
    & \to \ms{C}(n) \vee ((\ms{C}(m) \vee \ms{C}(m)) \times \ms{C}(n_1) \times \cdots \times \ms{C}(n_m)) \\
    & \to \ms{C}(n) \vee (\ms{C}(m)\times \ms{C}(n_1)\times \cdots \times \ms{C}(n_m)) \to \ms{C}(n) \vee \ms{C}(n) \to \ms{C}(n),
  \end{align*}
  which is homotopic to
  \begin{align*}
    S^l & \to S^l \vee' S^l \vee' S^l \to I\vee' I\vee' S^l \to I\vee' I^{1+m} \vee' (S^l)^{1+k+m} \\
    & \to \ms{C}(n) \vee (\ms{C}(m) \times \ms{C}(n_1) \times \cdots) \vee ((\ms{C}(k)\times \ms{C}(m_1) \times \cdots) \times \ms{C}(n_1) \times \cdots) \\
    & \to \ms{C}(n) \vee \ms{C}(n) \vee (\ms{C}(m)\times \ms{C}(n_1) \times \cdots) \\
    & \to \ms{C}(n) \vee \ms{C}(n) \vee \ms{C}(n) \to \ms{C}(n) \vee \ms{C}(n)\vee \ms{C}(n) \to \ms{C}(n).
  \end{align*}
  $\gamma ([f]; \gamma ([g_1]; [h_1], \ldots), \ldots, \gamma ([g_k]; \ldots, [h_m]))$ is represented by
  \begin{align*}
    S^l & \to S^l\vee' S^l \to I\vee' S^l \to I\vee' (S^l\times (S^l)^k) \to I\vee' (S^l\times (S^l\vee' S^l)^k) \\
    & \to I\vee' (S^l\times (I\vee' S^l)^k) \to I\vee' (S^l\times (I\vee' (S^l)^{1+m_1}) \times \cdots) \\
    & \to \ms{C}(n) \vee (\ms{C}(k) \times (\ms{C}(n_1+ \cdots+ n_{m_1}) \vee (\ms{C}(m_1) \times \ms{C}(n_1) \times \cdots)) \times \cdots) \\
    & \to \ms{C}(n) \vee (\ms{C}(k) \times (\ms{C}(n_1+ \cdots+ n_{m_1}) \vee \ms{C}(n_1+ \cdots+ n_{m_1})) \times \cdots) \\
    & \to \ms{C}(n) \vee (\ms{C}(k) \times \ms{C}(n_{n_1+ \cdots+ n_{m_1}}) \times \cdots) \to \ms{C}(n) \vee \ms{C}(n) \to \ms{C}(n),
  \end{align*}
  which is homotopic to
  \begin{align*}
    S^l & \to S^l \vee' S^l \vee' S^l \to I\vee' I\vee' S^l \to I\vee' I^{1+k} \vee' (S^l)^{1+k+m} \\
    & \to \ms{C}(n) \vee (\ms{C}(k) \times \ms{C}(n_1+ \cdots+ n_{m_1}) \times \cdots) \vee (\ms{C}(k)\times (\ms{C}(m_1) \times \ms{C}(n_1) \times \cdots ) \times \cdots) \\
    & \to \ms{C}(n) \vee \ms{C}(n) \vee (\ms{C}(k)\times \ms{C}(n_1+ \cdots+ n_{m_1}) \times \cdots) \\
    & \to \ms{C}(n) \vee \ms{C}(n) \vee \ms{C}(n) \to \ms{C}(n) \vee \ms{C}(n)\vee \ms{C}(n) \to \ms{C}(n).
  \end{align*}
  The associativity hence holds by the one on the space level.

  Equivariance. $\gamma ([a]\star [f]; [b_1]\star [g_1], \ldots, [b_k]\star [g_k])$ is represented by
  \begin{align*}
    S^l & \to S^l \vee' S^l \to I\vee' S^l \to I\vee' (S^l)^{1+k} \to I\vee' (S^l\vee' S^l)^{1+k} \to I\vee' (I\vee' S^l)^{1+k} \\
    & \to \ms{C}(m)\vee ((\ms{C}(k)\vee \ms{C}(k)) \times (\ms{C}(m_1) \vee \ms{C}(m_1)) \times \cdots) \\
    & \to \ms{C}(m) \vee (\ms{C}(k)\times \ms{C}(m_1)\times \cdots) \to \ms{C}(m)\vee \ms{C}(m) \to \ms{C}(m).
  \end{align*}
  Note that $\gamma ([a]; [b_1], \ldots, [b_k])$ is represented by
  \begin{align*}
    c: I &= [0,\frac{1}{3}] \cup [\frac{1}{3}, \frac{2}{3}] \cup [\frac{2}{3},1] \to I\vee' I\vee' I \to I\vee' (I^{1+k})\vee' I \\
    &\xra{\delta \vee (a\times b_1\times \cdots) \vee \delta^{-1}} \ms{C}(m) \vee (\ms{C}(k) \times \ms{C}(m_1) \times \cdots) \vee \ms{C}(m) \\
    &\to \ms{C}(m) \vee \ms{C}(m) \vee \ms{C}(m) \to \ms{C}(m).
  \end{align*}
  Thus $\gamma ([a]; [b_1], \ldots, [b_k]) \star \gamma ([f]; [g_1], \ldots, [g_k])$ is represented by
  \begin{align*}
    S^l &\to S^l\vee' S^l \to I\vee' S^l \to I\vee' (S^l\vee' S^l) \to I\vee' I\vee' (S^l)^{1+k} \\
    &\xra{c\vee \delta\vee (f\times g_1\times \cdots)} \ms{C}(m) \vee \ms{C}(m)\vee (\ms{C}(k)\times \ms{C}(m_1)\times \cdots) \\
    &\to \ms{C}(m)\vee \ms{C}(m)\vee \ms{C}(m) \to \ms{C}(m),
  \end{align*}
  which is homotopic to
  \begin{align*}
    S^l &\to S^l\vee' S^l \to I\vee' S^l \to (I\vee' I)\vee' S^l \to I\vee' (I^{1+k}) \vee' (S^l)^{1+k} \\
    &\to \ms{C}(m) \vee (\ms{C}(k)\times \ms{C}(m_1) \times \cdots) \vee (\ms{C}(k)\times \ms{C}(m_1) \times \cdots) \\
    &\to \ms{C}(m) \vee \ms{C}(m) \vee \ms{C}(m) \to \ms{C}(m),
  \end{align*}
  by noting that $\delta^{-1}\vee \delta: I\vee' I \to \ms{C}(m) \vee \ms{C}(m)$ is null homotopic. Then it is easy to check that the two representing maps are homotopic. The equivariance hence holds.
\end{proof}

It should be noted that $\gamma$ is a homomorphism if $l=1$ but may not be a homomorphism if $l>1$.

\subsection{Higher Homotopy Groups of Symmetric Operads}
Let $\ms{C}$ be a covering symmetric operad and $l>1$. From the short exact sequence
$$1=\pi_l S_k \to \pi_l \ms{C}(k) \to \pi_l (\ms{C}(k)/ S_k) \to \pi_{l-1} S_k=1,$$
$\pi_l (\ms{C}(k)/ S_k)$ can be identified with $\pi_l \ms{C}(k)$. Thus $\pi_l (\ms{C}/ \ms{S})$ inherits the $\gamma$ of $\pi_l \ms{C}$. The action of $\pi_1 (\ms{C}(k)/S_k, S_ke_k)$ on $\pi_l (\ms{C}(k)/ S_k, S_ke_k)= \pi_l (\ms{C}(k),e_k)$ is described as follows. For $[a]\in \pi_1 (\ms{C}(k)/S_k, S_ke_k)$ and $[f]\in \pi_l (\ms{C}(k), e_k)$, $[a]\star [f]\in \pi_l (\ms{C}(k), e_k)$ is represented by
$$a \star f: S^l \to S^l \vee' S^l \to I\vee' S^l \xra{a\vee ((\pi a)f)} \ms{C}(k) \vee \ms{C}(k) \to \ms{C}(k),$$
noting that $a$ is a path from $e_k$ to $(\pi a) e_k$ and $(\pi a)f$ maps the southern pole of $S^l$ to $(\pi a) e_k$.

\begin{thm}
  If $\ms{C}$ is a path-connected covering symmetric operad with a good basepoint, then $\pi_l (\ms{C}/ \ms{S})$ ($l> 1$) is a $\pi_1 (\ms{C}/ \ms{S})$-operad.
\end{thm}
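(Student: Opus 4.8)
The plan is to obtain this theorem by combining the two immediately preceding results rather than redoing all the work: the operad structure on $\pi_l(\ms{C}/\ms{S})$ will be inherited wholesale from $\pi_l\ms{C}$, and only the equivariance axiom of Definition~\ref{defn:operad}, which now carries the crossed permutation data of $\pi_1(\ms{C}/\ms{S})$, will demand genuinely new work. First I would record that under the identification $\pi_l(\ms{C}(k)/S_k, S_ke_k)= \pi_l(\ms{C}(k), e_k)$ coming from the long exact sequence (valid since $\pi_lS_k= \pi_{l-1}S_k=1$ for $l>1$), the composition $\gamma$ on $\pi_l(\ms{C}/\ms{S})$ is literally the $\gamma$ of $\pi_l\ms{C}$. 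Hence unitality and associativity hold verbatim by the nonsymmetric $\pi_l$ theorem, so $\pi_l(\ms{C}/\ms{S})$ is already an operad. What remains is to confirm that the $\star$-action described above is a well-defined action of $\pi_1(\ms{C}/\ms{S})$ and that it satisfies equivariance.

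For well-definedness I would note that the $\star$-action is just the usual action of the fundamental group $\pi_1(\ms{C}(k)/S_k, S_ke_k)$ on $\pi_l(\ms{C}(k)/S_k, S_ke_k)$, transported to $\pi_l\ms{C}(k)$ under the above identification; the explicit formula $a\star f$, in which $a$ runs from $e_k$ to $(\pi a)e_k$ and $(\pi a)f$ is the translated representative, is exactly this action written on representatives, and its compatibility with the twisted multiplication of $\pi_1(\ms{C}/\ms{S})$ is the standard fact that $\pi_l$ is a module over $\pi_1$.

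The substantive step is equivariance. I would construct explicit representing maps for both sides of
$$\gamma([a]\star[f]; [b_1]\star[g_1], \ldots, [b_k]\star[g_k]) = \gamma([a]; [b_1], \ldots, [b_k])\star \gamma([f]; [g_{[a]^{-1}(1)}], \ldots, [g_{[a]^{-1}(k)}])$$
and homotope them, following the template of the nonsymmetric $\pi_l$ equivariance argument but inserting the permutation bookkeeping of the crossed $\pi_1$ theorem. Unwinding the left side, each $\star$ replaces $f$ by $(\pi a)f$ and each $g_i$ by $(\pi b_i)g_i$; applying the space-level equivariance of the symmetric operad $\ms{C}$ converts the basepoint term $\gamma((\pi a)e_k; (\pi b_1)e_{m_1}, \ldots)$ into $\gamma(\pi a; \pi b_1, \ldots)\,\gamma(e_k; e_{m_{(\pi a)^{-1}(1)}}, \ldots)$, which is precisely the reindexing that turns the $g_i$ into $g_{[a]^{-1}(i)}$ on the right and produces the crossed group-operad factor $\gamma([a]; [b_1], \ldots, [b_k])$. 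Exactly as in the nonsymmetric case, the spurious linear paths $\delta$ and $\delta^{-1}$ introduced by the basepoint changes cancel because $\delta^{-1}\vee \delta$ is null homotopic, after which the two representing maps become manifestly homotopic.

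The hard part will be this permutation bookkeeping: one must check that the reindexing forced by the space-level equivariance of $\ms{C}$ agrees slot for slot with the reindexing $[g_{[a]^{-1}(i)}]$ prescribed by the action axiom, and that the twist $(\pi a)$ carried by the $\star$-action is absorbed exactly by the factor $\gamma(\pi a; \pi b_1, \ldots)$. This is the same permutation calculus that governs the crossed-homomorphism identity in the $\pi_1(\ms{C}/\ms{S})$ theorem, now applied one dimension up; once it is matched correctly, the homotopy of the two representing maps is routine and the equivariance, hence the theorem, follows.
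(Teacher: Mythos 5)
Your proposal is correct and follows essentially the same route as the paper: identify $\pi_l(\ms{C}(k)/S_k)$ with $\pi_l\ms{C}(k)$ via the long exact sequence so that unitality and associativity are inherited from the nonsymmetric $\pi_l$ theorem, then verify equivariance by explicit representing maps, using the space-level equivariance of $\ms{C}$ to produce the factor $\gamma(\pi a;\pi b_1,\ldots)$ and the reindexing $g_{[a]^{-1}(i)}$, and cancelling the translated linear paths by a null homotopy. This matches the paper's proof step for step (the paper phrases the cancellation as $\gamma(\pi a;\pi b_1,\ldots)\zeta^{-1}\vee\gamma(\pi a;\pi b_1,\ldots)\zeta$ being null homotopic, which is your observation applied to the translated paths).
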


\begin{proof}
  The unitality and associativity of $\pi_l (\ms{C}/ \ms{S})$ follow from those of $\pi_l \ms{C}$. The equivariance is checked as follows. $\gamma ([a]\star [f]; [b_1]\star [g_1], \ldots, [b_k]\star [g_k])$ is represented by
  \begin{align*}
    S^l & \to S^l \vee' S^l \to I\vee' S^l \to I\vee' (S^l)^{1+k} \to I\vee' (S^l\vee' S^l)^{1+k} \to I\vee' (I\vee' S^l)^{1+k} \\
    & \xra{\delta \vee ((a\vee (\pi a)f) \times (b_1\vee (\pi b_1)g_1) \times \cdots)} \ms{C}(m)\vee ((\ms{C}(k)\vee \ms{C}(k)) \times (\ms{C}(m_1) \vee \ms{C}(m_1)) \times \cdots) \\
    & \to \ms{C}(m) \vee (\ms{C}(k)\times \ms{C}(m_1)\times \cdots) \to \ms{C}(m)\vee \ms{C}(m) \to \ms{C}(m).
  \end{align*}
  Note that $\gamma ([a]; [b_1], \ldots, [b_k])$ is represented by
  \begin{align*}
    c: I &= [0,\frac{1}{3}] \cup [\frac{1}{3}, \frac{2}{3}] \cup [\frac{2}{3},1] \to I\vee' I\vee' I \to I\vee' (I^{1+k})\vee' I \\
    &\xra{\delta \vee (a\times b_1\times \cdots) \vee \gamma (\pi a; \pi b_1, \ldots) \zeta^{-1}} \ms{C}(m) \vee (\ms{C}(k) \times \ms{C}(m_1) \times \cdots) \vee \ms{C}(m) \\
    &\to \ms{C}(m) \vee \ms{C}(m) \vee \ms{C}(m) \to \ms{C}(m).
  \end{align*}
  Thus $\gamma ([a]; [b_1], \ldots, [b_k]) \star \gamma ([f]; [g_1], \ldots, [g_k])$ is represented by
  \begin{align*}
    S^l &\to S^l\vee' S^l \to I\vee' S^l \to I\vee' (S^l\vee' S^l) \to I\vee' I\vee' (S^l)^{1+k} \\
    &\xra{c\vee \gamma (\pi a; \pi b_1, \ldots) \zeta \vee (f\times g_{a^{-1}(1)} \times \cdots)} \ms{C}(m) \vee \ms{C}(m)\vee (\ms{C}(k)\times \ms{C}(m_1)\times \cdots) \\
    &\to \ms{C}(m)\vee \ms{C}(m)\vee \ms{C}(m) \xra{\id \vee \id \vee \gamma (\pi a; \pi b_1, \ldots)} \ms{C}(m)\vee \ms{C}(m)\vee \ms{C}(m) \to \ms{C}(m),
  \end{align*}
  which is homotopic to
  \begin{align*}
    S^l &\to S^l\vee' S^l \to I\vee' S^l \to (I\vee' I)\vee' S^l \to I\vee' (I^{1+k}) \vee' (S^l)^{1+k} \\
    &\xra{\delta \vee (a\times b_1 \times \cdots) \vee (f\times g_{a^{-1}(1)} \times \cdots)} \ms{C}(m) \vee (\ms{C}(k) \times \ms{C}(m_1) \times \cdots) \vee (\ms{C}(k) \times \ms{C}(m_1) \times \cdots) \\
    &\to \ms{C}(m) \vee \ms{C}(m) \vee \ms{C}(m) \xra{\id \vee \id \vee \gamma (\pi a; \pi b_1, \ldots)} \ms{C}(m)\vee \ms{C}(m)\vee \ms{C}(m) \to \ms{C}(m),
  \end{align*}
  by noting that $\gamma (\pi a; \pi b_1, \ldots) \zeta^{-1} \vee \gamma (\pi a; \pi b_1, \ldots) \zeta: I\vee' I \to \ms{C}(m) \vee \ms{C}(m)$ is null homotopic and by the equivariance of $\gamma$ on the space level. Then it is easy to check that the two representing maps are homotopic. The equivariance hence holds.
\end{proof}

\begin{rem}
  If $l=1$, $a\star b= aba^{-1}$ for $a,b\in \pi_1 (\ms{C}(k)/S_k, S_k e_k)$, i.e., $\star$ is the conjugation action. Observe that
  \begin{align*}
    \gamma (a\star a'; b_1\star b'_1, \ldots) &= \gamma (aa'a^{-1}; b_1b'_1b_1^{-1}, \ldots) \\
    &= \gamma (a;b_1, \ldots) \gamma (a';b'_1, \ldots) \gamma (a; b_{(aa'a^{-1})^{-1}(1)}, \ldots)^{-1}, \\
    \gamma (a; b_1, \ldots) \star \gamma (a'; b'_1, \ldots) &= \gamma (a; b_1, \ldots) \gamma (a'; b'_1, \ldots) \gamma (a; b_1, \ldots)^{-1}.
  \end{align*}
  Note that in general $(aa'a^{-1})^{-1}(i) \neq i$, thus
  $$\gamma (a\star a'; b_1\star b'_1, \ldots) \neq \gamma (a; b_1, \ldots) \star \gamma (a'; b'_1, \ldots).$$
  So the equivariance does not hold. Namely, $\pi_1 (\ms{C}/ \ms{S})$ does not act on itself by the conjugation action.
\end{rem}

\subsection{Homotopy Groups of $\ms{G}$-Operads}
A minor modification of the above discussion about the homotopy groups of symmetric operads gives the following result for general $\ms{G}$-operads.

Let $\ms{G}$ be a group operad, $\ms{H}$ a non-crossed normal sub group operad of $\ms{G}$ and $\ms{C}$ a $\ms{G}$-operad. Recall that $\ms{C}/ \ms{H}$ is $\ms{G}/ \ms{H}$-operad.

\begin{thm}\label{thm:homotopy_group_G-operads}
  If $\ms{C}/ \ms{H}$ is a path-connected covering $\ms{G}/ \ms{H}$-operad with a good basepoint, then $\pi_1 (\ms{C}/ \ms{H})$ is a non-crossed group operad, $\pi_1 (\ms{C}/ \ms{G})= \pi_1 ((\ms{C}/ \ms{H})/ (\ms{G}/ \ms{H}))$ is a group operad with a short exact sequence of group operads
  $$1\to \pi_1 (\ms{C}/ \ms{H})\to \pi_1 (\ms{C}/ \ms{G})\to \ms{G}/ \ms{H}\to 1,$$
  and $\pi_l (\ms{C}/ \ms{G})$ is a $\pi_1 (\ms{C}/ \ms{G})$-operad for $l\geq 1$ if $\ms{G}$ non-crossed and for $l>1$ if $\ms{G}$ crossed. \qed
\end{thm}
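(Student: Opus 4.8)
The plan is to deduce every clause from the theorems already proved in this section for nonsymmetric and for symmetric operads, systematically substituting the group operad $\ms{G}/ \ms{H}$ for the symmetric groups operad $\ms{S}$. Recall from the earlier discussion that, $\ms{H}$ being non-crossed and normal, $\ms{G}/ \ms{H}$ is a group operad, that $\ms{C}/ \ms{H}$ is a $\ms{G}/ \ms{H}$-operad, and that $(\ms{C}/ \ms{H})/ (\ms{G}/ \ms{H})= \ms{C}/ \ms{G}$. For the first assertion I would simply forget the $\ms{G}/ \ms{H}$-action and regard $\ms{C}/ \ms{H}$ as a nonsymmetric operad; by hypothesis it is path-connected and well pointed, so the theorem on the fundamental groups of nonsymmetric operads applies verbatim and shows that $\pi_1(\ms{C}/ \ms{H})$ is a non-crossed group operad.

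For the second assertion, the hypothesis that $\ms{G}/ \ms{H}$ acts on $\ms{C}/ \ms{H}$ by a covering action means that for each $k$ the discrete group $G_k/ H_k$ acts on $(\ms{C}/ \ms{H})(k)$ so that
$$(G_k/ H_k)\to (\ms{C}/ \ms{H})(k)\onto (\ms{C}/ \ms{G})(k)$$
is a covering projection, hence a fibration with discrete fibre. Since $(\ms{C}/ \ms{H})(k)$ is path-connected, the associated long exact sequence collapses to a short exact sequence
$$1\to \pi_1((\ms{C}/ \ms{H})(k))\to \pi_1((\ms{C}/ \ms{G})(k)) \xra{\partial} G_k/ H_k\to 1$$
together with isomorphisms $\pi_l((\ms{C}/ \ms{H})(k))\cong \pi_l((\ms{C}/ \ms{G})(k))$ for $l>1$. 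I would then define $\gamma$ on $\pi_1(\ms{C}/ \ms{G})$ and the morphism $\pi\colon \pi_1(\ms{C}/ \ms{G})\to \ms{S}$, as the composite of $\partial$ with $\ms{G}/ \ms{H}\to \ms{S}$, by exactly the formulas used in the proof of the symmetric-operad theorem, reading $G_k/ H_k$ for $S_k$ and the $\gamma$ of $\ms{G}/ \ms{H}$ for the $\gamma$ of $\ms{S}$. The checks that $\gamma$ is a crossed homomorphism, is associative and commutes with $\pi$ are line-for-line those of that proof, the only features of $S_k$ used there being that it is the discrete fibre of the covering and that its $\gamma$ is a crossed homomorphism, both of which hold for $\ms{G}/ \ms{H}$. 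When $\partial$ is trivial the formula for $\gamma$ reduces to the nonsymmetric one, so the restriction of $\gamma$ to the kernel agrees with the structure found in the first assertion; naturality of $\partial$ and of the inclusion then assembles the above sequences, over all $k$, into the asserted short exact sequence of group operads, with $\pi_1(\ms{C}/ \ms{G})$ crossed precisely when $\ms{G}/ \ms{H}$, equivalently $\ms{G}$, is crossed.

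For the third assertion I would split into two cases. When $l>1$, the isomorphisms $\pi_l(\ms{C}/ \ms{H})\cong \pi_l(\ms{C}/ \ms{G})$ transport the $\gamma$ of $\pi_l(\ms{C}/ \ms{H})$, and the action of $\pi_1(\ms{C}/ \ms{G})$ together with its equivariance is verified exactly as in the proof of the higher-homotopy symmetric-operad theorem, again with $\ms{G}/ \ms{H}$ replacing $\ms{S}$. When $\ms{G}$, hence $\ms{G}/ \ms{H}$, is non-crossed, $\ms{C}/ \ms{G}$ is a genuine nonsymmetric operad, still path-connected and well pointed, so the nonsymmetric higher-homotopy theorem directly yields the $\pi_1$-operad structure for every $l\geq 1$, the case $l=1$ included.

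The only genuine input beyond transcription is the homotopy-theoretic step of confirming that the covering hypothesis supplies the fibration with discrete fibre $G_k/ H_k$, and thence the short exact sequence in $\pi_1$; this is exactly where path-connectivity and the covering action enter. The one place where transcription genuinely fails is the crossed case with $l=1$: as in the remark following the symmetric higher-homotopy theorem, the prospective action of $\pi_1(\ms{C}/ \ms{G})$ on itself would be conjugation, and $\gamma$ is not equivariant for it because $(aa'a^{-1})^{-1}(i)\neq i$ in general. Recognizing this failure, which is precisely why the statement restricts to $l>1$ when $\ms{G}$ is crossed, is the main conceptual point rather than a computational obstacle.
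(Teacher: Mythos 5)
Your proposal is correct and takes essentially the same approach as the paper: the paper gives no written proof, stating only that the theorem follows by ``a minor modification of the above discussion'' of the symmetric case, and your systematic substitution of $\ms{G}/\ms{H}$ for $\ms{S}$ (the covering action giving the fibration $G_k/H_k \to (\ms{C}/\ms{H})(k) \to (\ms{C}/\ms{G})(k)$ and hence the short exact sequence, the nonsymmetric theorems handling $\pi_1(\ms{C}/\ms{H})$ and the non-crossed case for all $l\geq 1$, and the transcription of the symmetric proofs for the rest) is precisely that modification spelled out. Your closing observation that the crossed case must exclude $l=1$ because conjugation fails equivariance matches the paper's own remark following the symmetric higher-homotopy theorem.
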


Thus $\pi_1 (\ms{C}/\ms{G})$ is non-crossed if $\ms{G}$ is non-crossed, and crossed if $\ms{G}$ is crossed.

$\pi_l (\ms{C}/ \ms{G})$ ($l>1$) should be a simplicial group though $\gamma$ is not multilinear nor linear. It should be possible to generalize this theorem to $\la A, \ms{C}\ra$ where $A$ is a connected CW complex with a vertex as the basepoint.

\begin{rem}
  If $\ms{G}$ non-crossed, without the requirement $\ms{C}/\ms{H}$ a covering $\ms{G}/ \ms{H}$-operad, $\pi_1 (\ms{C}/ \ms{G})$ is still a group operad since $\ms{C}/ \ms{G}$ is a nonsymmetric operad. However, there may not be the short exact sequence $1\to \pi_1 (\ms{C}/ \ms{H})\to \pi_1 (\ms{C}/ \ms{G})\to \ms{G}/ \ms{H}\to 1$, since $G_k/H_k\to \ms{C}(k)/H_k\to \ms{C}(k)/G_k$ may not be a fibration. If $\ms{G}$ crossed, without the requirement $\ms{C}/\ms{H}$ a covering $\ms{G}/ \ms{H}$-operad, $\pi_1 (\ms{C}/ \ms{G})$ may not be a group operad due to the ``crossed homomorphism'' condition.
\end{rem}

We make the convention that the homotopy groups operads of a topological $\ms{G}$-operad $\ms{C}$ refer to $\pi_l (\ms{C}/\ms{G})$, $l\geq 1$. In particular, the fundamental groups operad of a topological $\ms{G}$-operad $\ms{C}$ refers to $\pi_1 (\ms{C}/\ms{G})$.

$\pi_l (\ms{C}/\ms{G})$ ($l\geq 1$) may be regarded as functors. We next consider their naturality. Let $\ms{C}$, $\ms{C}'$ be two $\ms{G}$-operads such that both $\ms{C}/ \ms{H}$, $\ms{C}'/ \ms{H}$ are path-connected covering $\ms{G}/ \ms{H}$-operads with good basepoints.

\begin{prop}
  A pointed morphism $\psi: \ms{C}\to \ms{C}'$ of $\ms{G}$-operads induces morphisms $\psi_*: \pi_1 (\ms{C}/ \ms{H})\to \pi_1 (\ms{C}'/ \ms{H})$ and $\psi_*: \pi_1 (\ms{C}/ \ms{G})\to \pi_1 (\ms{C}'/ \ms{G})$ of group operads such that the following diagram is commutative
  $$\xymatrix{
    1 \ar[r] & \pi_1 (\ms{C}/\ms{H}) \ar[d]_-{\psi_*} \ar[r] & \pi_1 (\ms{C}/\ms{G}) \ar[d]_-{\psi_*} \ar[r] & \ms{G}/\ms{H} \ar@{=}[d] \ar[r] & 1 \\
    1 \ar[r] & \pi_1 (\ms{C}'/\ms{H}) \ar[r] & \pi_1 (\ms{C}'/\ms{G}) \ar[r] & \ms{G}/\ms{H} \ar[r] & 1 }$$
  and morphisms $\psi_*: \pi_l (\ms{C}/ \ms{S})\to \pi_l (\ms{C}'/ \ms{S})$ of $\pi_1 (\ms{C}/ \ms{S})$-operads. If $\psi: \ms{C}/\ms{H} \to \ms{C}'/\ms{H}$ is an equivalence, then these induced morphisms are isomorphisms. Conversely, if all $\psi_*: \pi_l (\ms{C}/ \ms{H})\to \pi_l (\ms{C}'/ \ms{H})$, $l\geq 1$, are isomorphisms, then $\psi: \ms{C}/\ms{H} \to \ms{C}'/\ms{H}$ is an equivalence.
\end{prop}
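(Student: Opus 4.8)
The plan is to exploit that $\psi$ being a \emph{pointed} morphism means $\eta' = \psi \circ \eta$, so that $\psi_k(e_k) = e'_k$ for every $k$ and, crucially, $\psi$ carries each linear path $e_m \to \gamma(e_k; e_{m_1}, \ldots)$ to the corresponding linear path $e'_m \to \gamma(e'_k; e'_{m_1}, \ldots)$, since both are the images under $\eta$ (resp. $\eta'$) of one linear path in $(\ms{C}_1)_0$. First I would note that because each $\psi_n: \ms{C}(n) \to \ms{C}'(n)$ is $G_n$-equivariant it descends to basepoint-preserving maps $\ms{C}(n)/H_n \to \ms{C}'(n)/H_n$ and $\ms{C}(n)/G_n \to \ms{C}'(n)/G_n$, and functoriality of $\pi_l$ yields the homomorphisms $\psi_*$ on every level.

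The first substantive step is to check that $\psi_*$ commutes with $\gamma$ and with $\pi$, i.e. that it is a morphism of group operads. Here I would apply $\psi$ directly to the explicit path representative of $\gamma([f]; [g_1], \ldots, [g_k])$ built in the proofs of the theorems on $\pi_1(\ms{C})$ and $\pi_1(\ms{C}/\ms{S})$. Since $\psi$ commutes with $\gamma$ on the space level, sends $\gamma(e_k; \ldots)$ to $\gamma(e'_k; \ldots)$, and sends the linear paths $\delta, \delta_1, \delta_2$ to the corresponding linear paths in $\ms{C}'$, the image path represents exactly $\gamma(\psi_*[f]; \psi_*[g_1], \ldots)$; the same bookkeeping gives $\pi' \circ \psi_* = \pi$. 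Applying the identical argument to the sphere-level representatives of the higher-homotopy theorems shows $\psi_*$ is equivariant for the $\pi_1$-actions and commutes with $\gamma$, hence is a morphism of $\pi_1(\ms{C}/\ms{G})$-operads (for $l\geq 1$ if $\ms{G}$ is non-crossed, $l>1$ if crossed).

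For the commutative diagram of short exact sequences I would observe that $\psi$ induces a map of the fibrations $G_k/H_k \to \ms{C}(k)/H_k \to \ms{C}(k)/G_k$, which are fibrations precisely because $\ms{C}/\ms{H}$ and $\ms{C}'/\ms{H}$ are covering $\ms{G}/\ms{H}$-operads; naturality of the long exact homotopy sequence then produces the two squares. The right-hand vertical is the identity on $\ms{G}/\ms{H}$: the fibre over $G_k e_k$ is the orbit $\{g H_k e_k\}$, and $\psi$ sends $g H_k e_k = H_k(g e_k)$ to $H_k(g e'_k) = g H_k e'_k$ by equivariance and $\psi(e_k)=e'_k$, so it induces the identity on $\pi_0(G_k/H_k) = G_k/H_k$.

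Finally, for the equivalence statements: if $\psi: \ms{C}/\ms{H} \to \ms{C}'/\ms{H}$ is an equivalence then each $\psi: \ms{C}(k)/H_k \to \ms{C}'(k)/H_k$ is in particular a homotopy equivalence, hence induces isomorphisms on all $\pi_l$; the isomorphism on $\pi_1(\ms{C}/\ms{G})$ then follows from the five lemma applied to the diagram above, and on $\pi_l(\ms{C}/\ms{G})$ for $l>1$ from the identifications $\pi_l(\ms{C}/\ms{G}) \cong \pi_l(\ms{C}/\ms{H}) \cong \pi_l \ms{C}(k)$ coming from the discrete fibres. For the converse, isomorphisms on all $\pi_l(\ms{C}/\ms{H})$ together with path-connectedness make each $\psi: \ms{C}(k)/H_k \to \ms{C}'(k)/H_k$ a weak homotopy equivalence; I expect the main obstacle to be upgrading this weak equivalence to an honest homotopy equivalence, which forces one to invoke Whitehead's theorem and hence to assume the operad spaces have the homotopy type of CW complexes. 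Granting that, the covering-action hypothesis supplies the remaining data of an equivalence in sense (2) of the definition.
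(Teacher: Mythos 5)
Your proposal is correct and follows essentially the same route as the paper's proof: the commutative diagram is obtained from the morphism of fibrations $\ms{G}/\ms{H} \to \ms{C}/\ms{H} \to \ms{C}/\ms{G}$ (resp. $\ms{C}'$), the group-operad and $\pi_1$-operad morphism claims are verified on the explicit path and sphere representatives (which the paper dismisses with ``the rest can be easily verified''), and the converse direction uses the covering-action clause (2) of the definition of equivalence. You are in fact more careful than the paper at one point: the paper passes from isomorphisms on all $\pi_l(\ms{C}/\ms{H})$ directly to ``all $\psi: \ms{C}(k)/H_k \to \ms{C}'(k)/H_k$ are homotopy equivalences,'' silently upgrading a weak equivalence via Whitehead's theorem, and you correctly flag that this needs a CW-homotopy-type hypothesis on the operad spaces.
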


\begin{proof}
  Commutativity of the diagram follows from the following morphism of fibrations
  $$\xymatrix{
    \ms{G}/\ms{H} \ar@{=}[d] \ar[r] & \ms{C}/\ms{H} \ar[d] \ar[r] & \ms{C}/\ms{G} \ar[d] \\
    \ms{G}/\ms{H} \ar[r] & \ms{C}'/\ms{H} \ar[r] & \ms{C}'/\ms{G} }$$
  If all $\psi_*: \pi_1 (\ms{C}/ \ms{H})\to \pi_1 (\ms{C}'/ \ms{H})$, $l\geq 1$, are isomorphisms, all $\psi: \ms{C}(k)/H_k\to \ms{C}'(k)/H_k$ are homotopy equivalences. Then $\psi: \ms{C}/\ms{H} \to \ms{C}'/\ms{H}$ is an equivalence since both $\ms{C}/\ms{H}$, $\ms{C}'/\ms{H}$ are covering $\ms{G}/\ms{H}$-operads. The rest can be easily verified.
\end{proof}

Recall from Corollary 4.5 of \cite{May:1972:GILS}, for $n\geq 3$,
$$\pi_l \ms{C}_n(k)= \bigoplus_{i=1}^{k-1} \pi_l \bigvee^i S^{n-1}.$$
Thus for $n\geq 3$,
$$\left\{\bigoplus_{i=1}^{k-1} \pi_l \bigvee^i S^{n-1}\right\}_{k\geq 0}$$
admits a natural symmetric operad structure. It may be interesting to investigate this symmetric operad structure.

\section{Covering Operads}
In this section, we study covering operads which is an analogue of covering spaces. It should be possible to develop for covering operads an analogous theory of covering spaces. However we shall only consider universal cover of $\ms{G}$-operads and a canonical construction of a universal $\ms{G}$-operad $E\ms{G}$ for a group operad $\ms{G}$, which are analogues of the universal cover of spaces and the canonical construction $EG\to BG$ for a group $G$, respectively. We then show that any group operad $\ms{G}$ can be realized as the fundamental groups operad of $E\ms{G}$. In the last subsection, we shall apply this theory to give an algebraic characterization of $K(\pi,1)$ operads by their fundamental groups operads, and then reconstruct them from their fundamental groups operads.

Let $\ms{G}' \onto \ms{G}$ be an epimorphism of group operads, $\ms{C}'$ a $\ms{G}'$-operad and $\ms{C}$ a $\ms{G}$-operad. Regard $\ms{C}$ as a $\ms{G}'$-operad via $\ms{G}'\onto \ms{G}$.

\begin{defn}
  A morphism $\psi: \ms{C}' \to \ms{C}$ of $\ms{G}'$-operads is called a \textbf{covering morphism} and $\ms{C}'$ together with $\psi$ is called a \textbf{covering operad} of $\ms{C}$, if each $\psi: \ms{C}'(k)\to \ms{C}(k)$ is a covering map. A covering operad $(\ms{C}', \psi)$ of $\ms{C}$ is called a \textbf{universal cover} of $\ms{C}$, if each $\psi: \ms{C}'(k)\to \ms{C}(k)$ is a universal cover.
\end{defn}

Quotients of operads provide examples of covering operads. If $\ms{H}$ is a non-crossed normal sub group operad of $\ms{G}$ and its action on $\ms{C}$ is a covering action, then the quotient morphism $\ms{C}\to \ms{C}/ \ms{H}$ is a covering morphism and $\ms{C}$ with this quotient morphism is a covering operad of $\ms{C}/ \ms{H}$.

Recall that $\ms{G}$-operads can be converted into nonsymmetric or symmetric operads by restricting the action of $\ms{G}$ to the trivial group operad or by taking quotients. Conversely, nonsymmetric and symmetric operads may also be converted into $\ms{G}$-operads by taking their covering operads.

\subsection{Universal Cover of $\ms{G}$-Operads}
We shall give a construction of universal cover of $\ms{G}$-operads in the following. Part of the idea of the construction is motivated by Fiedorowicz's construction \cite{Fiedorowicz:preprint:SBC} of a universal cover of the little 2-cubes operad $\ms{C}_2$.

If $\ms{C}$ is a path-connected covering topological $\ms{G}$-operad with a good basepoint, by definition, a universal cover of $\ms{C}$ is a topological $\pi_1 (\ms{C}/ \ms{G})$-operad $\wt{\ms{C}}$ together with a morphism $\wt{\ms{C}}\to \ms{C}$ of $\pi_1 (\ms{C}/ \ms{G})$-operads such that each $\wt{\ms{C}}(k) \to \ms{C}(k)$ is a universal cover, where $\ms{C}$ is regarded as a $\pi_1 (\ms{C}/ \ms{G})$-operad via $\pi_1 (\ms{C}/ \ms{G})\to \ms{G}$.

To equip a natural action of group operad on the universal cover of $\ms{G}$-operads, let us first discuss group action on the universal cover of $G$-spaces.

Suppose $(X,e)$ is path-connected, locally path-connected and semilocally simply-connected and admits a covering action of a discrete group $G$. Then $(X,e)$ has a universal cover $(\wt{X}, \wt{e})$ (cf. \cite{Hatcher:2002:AT}, page 64) which can be chosen as
$$\wt{X}= \{[\alpha] \mid \alpha \textrm{ is a path in } X \textrm{ starting at } e\},$$
and $\wt{e}=[e]$ the homotopy class of the constant path at $e$. There is a natural action of $\pi_1 (X/G)$ on $\wt{X}$ described as follows. Since $G\to X\to X/G$ is a fibration, there is the short exact sequence
$$1\to \pi_1X \to \pi_1 (X/G) \xra{\pi} G\to 1.$$
Identify $\pi_1 (X/G)$ with $\pi_1 (X; e, Ge)$ and also denote $\pi: \pi_1 (X; e, Ge)= \pi_1 (X/G) \xra{\pi} G$. Each homotopy class of loops in $\pi_1 (X/G)$ can be represented by certain path $[f]\in \pi_1 (X; e, Ge)$ from $e$ to $(\pi f)e$ where $\pi f= \pi [f]\in G$. $\pi_1 (X/G)$ acts on $\wt{X}$ by
$$[f] \cdot [\alpha]:= [f\cdot (\pi f) \alpha]= [e\xra{f} (\pi f)e \xra{(\pi f) \alpha} (\pi f) \alpha(1)]$$
where $[f]\in \pi_1(X/G)= \pi_1 (X; e, Ge)$, $[\alpha]\in \wt{X}$ and $(\pi f) \alpha$ is the translation of $\alpha$ by $\pi f\in G$ which is a path from $(\pi f)e$ to $(\pi f) \alpha(1)$. The action of $\pi_1 (X/G)$ on $\wt{X}$ is a covering action. With this action, the projection $p: \wt{X} \to X$, $[\alpha] \mapsto \alpha(1)$, is a $\pi_1 (X/G)$-equivariant map, where $\pi_1 (X/G)$ acts on $X$ via $\pi: \pi_1 (X/G) \to G$. Moreover, $p: \wt{X} \to X$ factors through $\wt{X}/ \pi_1X$ such that $\wt{X}/ \pi_1X \to X$ is a $G$-homeomorphism, i.e.,
$$\xymatrix{
  \wt{X} \ar[d] \ar[r]^p & X  \\
  \wt{X}/ \pi_1 X \ar@{-->}[ur]_{\cong}}$$

$\wt{X}$ and the action of $\pi_1 (X/G)$ on $\wt{X}$ are natural with respect to $G$-maps as follows. Suppose $(X',e')$ as well is path-connected, has a universal cover $(\wt{X}', \wt{e}')$ and admits a covering action of $G$, and $\phi: X\to X'$ is a pointed $G$-map. $\phi$ gives rise to a morphism of fibrations
$$\xymatrix{
  G \ar@{=}[d] \ar[r] & X \ar[d]_{\phi} \ar[r] & X/G \ar[d]^{\phi} \\
  G \ar[r] & X' \ar[r] & X'/G   }$$
inducing the following commutative diagram of short exact sequences
$$\xymatrix{
  1 \ar[r] & \pi_1 X \ar[d]_{\phi_*} \ar[r] & \pi_1 (X/G) \ar[d]_{\phi_*} \ar[r]^-{\pi} & G \ar@{=}[d] \ar[r] & 1 \\
  1 \ar[r] & \pi_1 X' \ar[r] & \pi_1 (X'/G) \ar[r]^-{\pi} & G \ar[r] &1  }$$
Thus $\pi f= \pi (\phi_*[f])= \pi (\phi\circ f)$. Then
$$\wt{\phi}: \wt{X}\to \wt{X}', \quad [\alpha] \mapsto [\phi \circ \alpha]$$
is equivariant in the sense that $\wt{\phi} ([f][\alpha])= (\phi_*[f]) \wt{\phi} [\alpha]$, and the following diagram is commutative
$$\xymatrix{
  \wt{X} \ar[d] \ar[r]^{\wt{\phi}} & \wt{X}' \ar[d] \\
  X \ar[r]^{\phi} & X'   }$$
$\wt{\phi}$ is equivariant since
\begin{align*}
  \wt{\phi} ([f][\alpha]) &= \wt{\phi} [f\cdot (\pi f)\alpha]= [\phi \circ (f\cdot (\pi f)\alpha)]= [(\phi \circ f) \cdot (\phi \circ (\pi f) \alpha)] \\
  &= [(\phi \circ f) \cdot (\pi f) (\phi \circ \alpha)]= [(\phi \circ f) \cdot (\pi (\phi\circ f)) (\phi \circ \alpha)] \\
  &= [\phi \circ f] [\phi \circ \alpha]= (\phi_*[f]) (\wt{\phi} [\alpha]).
\end{align*}
Moreover, if $\phi$ is a homotopy equivalence, then so is $\wt{\phi}$; if $\phi$ is an equivariant homotopy equivalence, then so is $\wt{\phi}$.

Let $\ms{C}$ be a path-connected, locally path-connected and semilocally simply-connected covering $\ms{G}$-operad with a good basepoint $\{e_k\}_{k\geq 0}$. Then each $\ms{C}(k)$ has a universal cover $p: (\wt{\ms{C}}(k), [e_k])\to (\ms{C}(k),e_k)$ chosen as above and there is a covering action of $\pi_1 (\ms{C}(k)/G_k)$ on $\wt{\ms{C}}(k)$ such that $p$ is $\pi_1 (\ms{C}(k)/G_k)$-equivariant for each $k$. Define $\wt{\gamma}$ as the unique lifting of $\gamma \circ p$ in the following diagram
$$\xymatrix{
\wt{\ms{C}}(k) \times \wt{\ms{C}}(m_1) \times \cdots \times \wt{\ms{C}}(m_k) \ar[d]_{p} \ar@{-->}[r]^-{\wt{\gamma}} &\wt{\ms{C}}(m) \ar[d]^{p}  \\
\ms{C}(k) \times \ms{C}(m_1) \times \cdots \times \ms{C}(m_k) \ar[r]^-{\gamma} & \ms{C}(m) }$$
such that $\wt{\gamma} ([e_k]; [e_{m_1}], \ldots, [e_{m_k}])= [e_m\to \gamma (e_k; e_{m_1}, \ldots, e_{m_k})]$ the homotopy class of the linear path from $e_m$ to $\gamma (e_k; e_{m_1}, \ldots, e_{m_k})$. Explicitly,
$$\wt{\gamma} ([\alpha]; [\beta_1], \ldots)= [e_m\to \gamma (e_k; e_{m_1}, \ldots) \xra{\gamma (\alpha; \beta_1, \ldots)} \gamma (\alpha(1); \beta_1(1), \ldots)],$$
for $[\alpha]\in \wt{\ms{C}}(k)$ and $[\beta_i]\in \wt{\ms{C}}(m_i)$.

\begin{thm}
  If $\ms{C}$ is a path-connected, locally path-connected and semilocally simply-connected covering $\ms{G}$-operad with a good basepoint, then $\wt{\ms{C}}$ is a universal cover of $\ms{C}$ and is a covering $\pi_1 (\ms{C}/ \ms{G})$-operad. Moreover, the projection $p: \wt{\ms{C}}\to \ms{C}$ factors through $\wt{\ms{C}}/ \pi_1 \ms{C}$ such that $\wt{\ms{C}}/ \pi_1 \ms{C}\to \ms{C}$ is an isomorphism of $\ms{G}$-operads, i.e.,
  $$\xymatrix{
  \wt{\ms{C}} \ar[d] \ar[r]^p & \ms{C}  \\
  \wt{\ms{C}}/ \pi_1 \ms{C} \ar@{-->}[ur]_{\cong}}$$
\end{thm}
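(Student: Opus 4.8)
The plan is to reduce every operad identity for $\wt{\gamma}$ to a check at a single basepoint by exploiting the unique lifting property of covering maps, and then to assemble the space-level facts recorded in the discussion preceding the theorem. First I would note that, for each $k$, those space-level results (applied to $X=\ms{C}(k)$, $G=G_k$) say that $\wt{\ms{C}}(k)\to\ms{C}(k)$ is a universal cover, that $\wt{\ms{C}}(k)$ is path-connected and locally path-connected, that $\pi_1(\ms{C}(k)/G_k)$ acts on it by a covering action, and that $p$ is equivariant over $\pi\colon\pi_1(\ms{C}/\ms{G})\to\ms{G}$. The domain $\wt{\ms{C}}(k)\times\prod_i\wt{\ms{C}}(m_i)$ is a finite product of universal covers, hence simply connected, path-connected and locally path-connected, so the lifting criterion produces the lift $\gamma\circ p$ through $p\colon\wt{\ms{C}}(m)\to\ms{C}(m)$, unique once its basepoint value is pinned to $[e_m\to\gamma(e_k;e_{m_1},\ldots)]$; this is precisely $\wt{\gamma}$. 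Being a lift, $\wt{\gamma}$ is continuous and satisfies $p\circ\wt{\gamma}=\gamma\circ(p\times\cdots\times p)$, so $p$ commutes with $\gamma$; since $p[e_1]=e_1=1$ it is a pointed operad morphism, a universal cover at each level.

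For unitality and associativity I would invoke the principle that two continuous maps out of a connected space which lift the same map to a covering and agree at one point must coincide. The two associativity composites $\wt{\gamma}(\wt{\gamma}(-;\cdots);\cdots)$ and $\wt{\gamma}(-;\wt{\gamma}(-;\cdots),\cdots)$ both project under $p$ to the (equal) two sides of the associativity identity for $\gamma$ in $\ms{C}$; their common domain is connected, so it suffices to verify the identity at the basepoint $([e_k];[e_{m_i}];[e_{n_j}])$. There both sides are explicit concatenations of linear paths with images of $\gamma$, and they agree rel endpoints because the linear portions pull back to $(\ms{C}_1)_0$ (hence are linearly homotopic by the lemma on paths in $\ms{C}_1(k)_0$) while the $\gamma$-portions coincide by associativity of $\gamma$ in $\ms{C}$ --- exactly the bookkeeping already performed in the fundamental-group theorems. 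Unitality is the same argument with the unit $[e_1]$.

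The main obstacle is the equivariance of the $\pi_1(\ms{C}/\ms{G})$-action, because of the crossed twisting. Fixing $\sigma=[f]\in\pi_1(\ms{C}(k)/G_k)$ and $\tau_i=[f_i]$, I would compare, as maps in $[\alpha],[\beta_i]$, the two sides of
$$\wt{\gamma}(\sigma[\alpha];\tau_1[\beta_1],\ldots)=\gamma'(\sigma;\tau_1,\ldots)\,\wt{\gamma}([\alpha];[\beta_{\sigma^{-1}(1)}],\ldots),$$
where $\gamma'$ is the multiplication of the group operad $\pi_1(\ms{C}/\ms{G})$. Both sides are continuous (the actions are by homeomorphisms), and both lift the same map under $p$: using the space-level equivariance $p(\sigma[\alpha])=(\pi f)\,\alpha(1)$ with $\pi f\in G_k$, the fact that $\pi\colon\pi_1(\ms{C}/\ms{G})\to\ms{G}$ commutes with $\gamma$, and the $\ms{G}$-operad equivariance of $\gamma$ in $\ms{C}$, one computes that both composites project to
$$\gamma((\pi f)\alpha(1);(\pi f_1)\beta_1(1),\ldots)=\gamma(\pi f;\pi f_1,\ldots)\,\gamma(\alpha(1);\beta_{(\pi f)^{-1}(1)}(1),\ldots).$$
By unique lifting on the connected domain it then suffices to check the identity at $([e_k];[e_{m_i}])$, again a path computation settled by pulling linear paths back to $(\ms{C}_1)_0$ together with the crossed-homomorphism property of $\gamma'$. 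Since each $\pi_1(\ms{C}(n)/G_n)$ acts on $\wt{\ms{C}}(n)$ by a covering action, this exhibits $\wt{\ms{C}}$ as a covering $\pi_1(\ms{C}/\ms{G})$-operad and a universal cover of $\ms{C}$.

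Finally, by Theorem \ref{thm:homotopy_group_G-operads} (taking the normal subgroup operad to be trivial) the sequence $1\to\pi_1\ms{C}\to\pi_1(\ms{C}/\ms{G})\to\ms{G}\to1$ shows that $\pi_1\ms{C}$ is a non-crossed normal sub group operad of $\pi_1(\ms{C}/\ms{G})$ with quotient $\ms{G}$, so the quotient proposition makes $\wt{\ms{C}}/\pi_1\ms{C}$ a $\ms{G}$-operad. The space-level factorization gives, for each $k$, a $G_k$-homeomorphism $q_k\colon\wt{\ms{C}}(k)/\pi_1\ms{C}(k)\to\ms{C}(k)$ with $q_k([\alpha]\bmod\pi_1\ms{C}(k))=p[\alpha]$. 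It remains only to see that $q=\{q_k\}$ is a morphism of $\ms{G}$-operads: it commutes with $\gamma$ because $p$ does and the quotient maps intertwine $\wt{\gamma}$ with its descent, and it is $\ms{G}$-equivariant because each $q_k$ is $G_k$-equivariant; as each $q_k$ is a homeomorphism, $q$ is an isomorphism of $\ms{G}$-operads, completing the proof.
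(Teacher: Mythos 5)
Your proposal is correct and takes essentially the same approach as the paper's proof: the paper likewise treats $\wt{\gamma}$ as the unique basepoint-pinned lift of $\gamma\circ p$, reduces associativity and equivariance to checks at the basepoint tuple via the unique lifting property on the connected domain, settles those checks by comparing linear paths (homotopic rel endpoints since they pull back to $(\ms{C}_1)_0$), and reads the universal-cover, covering-action and factorization claims off the space-level discussion preceding the theorem. The only difference is expository: you spell out the lifting criterion, the downstairs projection computation for equivariance, and the $\ms{G}$-operad structure on $\wt{\ms{C}}/\pi_1\ms{C}$, which the paper leaves implicit.
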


\begin{proof}
  It suffices to check that $\wt{\ms{C}}$ is a $\pi_1 (\ms{C}/ \ms{G})$-operad. It is evident that $[e_1]$ is the unit of $\wt{\gamma}$. Associativity is checked next. Note that $\wt{\gamma} (\wt{\gamma} (-);-)$ is the lifting of $\gamma (\gamma (-);-)$ such that
  $$\wt{\gamma} (\wt{\gamma} ([e_k]; [e_{m_1}], \ldots); [e_{n_1}], \ldots)= \wt{\gamma} ([e_m \to \gamma (e_k; e_{m_1}, \ldots)]; [e_{n_1}], \ldots)$$
  which is represented by the linear path
  $$e_n\to \gamma (e_m; e_{n_1}, \ldots) \to \gamma (\gamma (e_k; e_{m_1}, \ldots); e_{n_1}, \ldots),$$
  and $\wt{\gamma} (-; \wt{\gamma} (-), \ldots)$ is the lifting of $\gamma (-; \gamma (-), \ldots)$ such that
  $$\wt{\gamma} ([e_k]; \wt{\gamma} ([e_{m_1}]; [e_{n_1}] , \ldots), \ldots)= \wt{\gamma} ([e_k]; [e_{n_1+ \cdots+ n_{m_1}}\to \gamma (e_{m_1}; e_{n_1} , \ldots)], \ldots)$$
  which is represented by the linear path
  $$e_n\to \gamma (e_k; e_{n_1+ \cdots+ n_{m_1}}, \ldots) \to \gamma (e_k; \gamma (e_{m_1}; e_{n_1}, \ldots), \ldots).$$
  The two linear paths are homotopic relative to the endpoints since
  $$\gamma (\gamma (e_k; e_{m_1}, \ldots); e_{n_1}, \ldots)= \gamma (e_k; \gamma (e_{m_1}; e_{n_1}, \ldots), \ldots).$$
  Hence the associativity of $\wt{\gamma}$ holds. To check the equivariance, it suffices by the unique lifting property to show
  $$\gamma ([f][e_k]; [g_1][e_{m_1}], \ldots)= \gamma ([f]; [g_1], \ldots) \gamma ([e_k]; [e_{m_{(\pi f)^{-1}(1)}}], \ldots)$$
  for $[f]\in \pi_1 (\ms{C}(k)/G_k)$ and $[g_i]\in \pi_1 (\ms{C}(m_i)/G_{m_i})$. $\gamma ([f][e_k]; [g_1][e_{m_1}], \ldots)$ is represented by
  \begin{align*}
    e_m &\to \gamma (e_k; e_{m_1}, \ldots) \\
    &\xra{\gamma (f; g_1, \ldots)} \gamma ((\pi f)e_k; (\pi g_1) e_{m_1}, \ldots)= \gamma (\pi f; \pi g_1, \ldots) \gamma (e_k; e_{m_{(\pi f)^{-1}(1)}}, \ldots).
  \end{align*}
  Note that $\gamma ([f]; [g_1], \ldots)$ is represented by
  \begin{align*}
    e_m &\to \gamma (e_k; e_{m_1}, \ldots) \\
    &\xra{\gamma (f; g_1, \ldots)} \gamma ((\pi f)e_k; (\pi g_1) e_{m_1}, \ldots)= \gamma (\pi f; \pi g_1, \ldots) \gamma (e_k; e_{m_{(\pi f)^{-1}(1)}}, \ldots) \\
    &\to \gamma (\pi f; \pi g_1, \ldots) e_m,
  \end{align*}
  and $\gamma ([e_k]; [e_{m_{(\pi f)^{-1}(1)}}], \ldots)$ is represented by
  $$e_m \to \gamma (e_k; e_{m_{(\pi f)^{-1}(1)}}, \ldots).$$
  Thus $\gamma ([f]; [g_1], \ldots) \gamma ([e_k]; [e_{m_{(\pi f)^{-1}(1)}}], \ldots)$ is represented by
  \begin{align*}
    e_m &\to \gamma (e_k; e_{m_1}, \ldots) \\
    &\xra{\gamma (f; g_1, \ldots)} \gamma ((\pi f)e_k; (\pi g_1) e_{m_1}, \ldots)= \gamma (\pi f; \pi g_1, \ldots) \gamma (e_k; e_{m_{(\pi f)^{-1}(1)}}, \ldots) \\
    &\to \gamma (\pi f; \pi g_1, \ldots) e_m \\
    &\to \gamma (\pi f; \pi g_1, \ldots) \gamma (e_k; e_{m_{(\pi f)^{-1}(1)}}, \ldots),
  \end{align*}
  where the latter two arrows can be canceled obviously. Hence the equivariance holds.
\end{proof}

This construction of universal cover is natural with respect to morphisms of $\ms{G}$-operads in the sense described as follows. Let $\ms{C}$ and $\ms{C}'$ be two $\ms{G}$-operads satisfying the condition in the theorem, and $\psi: \ms{C}\to \ms{C}'$ a morphism of $\ms{G}$-operads. $\wt{\ms{C}'}$ can be regarded as a $\pi_1 (\ms{C}/\ms{G})$-operad via $\psi_*: \pi_1 (\ms{C}/\ms{G})\to \pi_1 (\ms{C}'/\ms{G})$. Then $\wt{\psi}: \wt{\ms{C}} \to \wt{\ms{C}'}$ is a morphism of $\pi_1 (\ms{C}/\ms{G})$-operads and the following diagram is commutative
$$\xymatrix{
  \wt{\ms{C}} \ar[d] \ar[r]^{\wt{\psi}} & \wt{\ms{C}}' \ar[d] \\
  \ms{C} \ar[r]^{\psi} & \ms{C}'   }$$
Consequently, if $\psi$ is an equivalence then so is $\wt{\psi}$; thus if $\ms{C}$ and $\ms{C}'$ are equivalent, then so are $\wt{\ms{C}}$ and $\wt{\ms{C}}'$.

As a special case, if $\ms{C}$ is $K(\pi,1)$, then its universal cover $\wt{\ms{C}}$ is a universal $\pi_1 (\ms{C}/ \ms{G})$-operad. For instance, the little 2-cubes operad $\ms{C}_2$ is a $K(\pi,1)$ symmetric operad and thus its universal cover $\wt{\ms{C}}_2$ is a universal $\pi_1 (\ms{C}/ \ms{G})$-operad, i.e., a universal $\ms{B}$-operad, since $\pi_1 (\ms{C}/ \ms{G}) \cong \ms{B}$. $\wt{\ms{C}}_2$ is first constructed in \cite{Fiedorowicz:preprint:SBC}. There is another example related to the ribbon braid groups operad $\ms{R}$ discussed in Wahl's Ph.D. thesis \cite{Wahl:2001:PhD}.

\subsection{Universal $\ms{G}$-Operads}
We here give a construction of a universal $\ms{G}$-operad $E\ms{G}$ for any group operad $\ms{G}$, using the canonical construction $EG\to BG$ for a group $G$, and then show that the fundamental groups operad of $E\ms{G}$ is exactly $\ms{G}$. Constructions for the cases $\ms{G}= \ms{S}$, $\ms{G}= \ms{B}$ and $\ms{G}= \ms{R}$ are already studied in \cite{Bar-Ecc:1974:I}, \cite{Fiedorowicz:preprint:SBC} and \cite{Wahl:2001:PhD} respectively.

Given a group $G$, let $EG= \{(EG)_k\}_{k\geq 0}$ be the simplicial group with $(EG)_k= G^{k+1}$ and faces $d_i$, degeneracies $s_i$ defined as
$$d_i (a_0,\ldots,a_k)= (a_0,\ldots, \hat{a}_i, \ldots, a_k), \quad s_i (a_0, \ldots, a_k)= (a_0, \ldots, a_i, a_i, \ldots, a_k).$$
The geometric realization $|EG|$ of $EG$ is contractible, and $G$ acts on $EG$ by
$$b \cdot (a_0, \ldots, a_k)= (b a_0, \ldots, b a_k),$$
and thus on $|EG|$ by
$$b \cdot [a_0, \ldots, a_k;t]= [b a_0, \ldots, b a_k;t]$$
for $t\in \Delta^k$. The actions of $G$ on $EG$ and $|EG|$ are free and moreover the one on $|EG|$ is a covering action.

\begin{prop}
  For a group operad $\ms{G}= \{G_n\}_{n\geq 0}$, $E\ms{G}= \{EG_n\}_{n\geq 0}$ is a simplicial $\ms{G}$-operad.
\end{prop}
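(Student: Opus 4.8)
The plan is to transport both the operad composition and the group action of $\ms{G}$ onto $E\ms{G}$ \emph{diagonally} in the simplicial degree of the $E$-construction, and then to deduce every required identity coordinatewise from the corresponding axiom of the group operad $\ms{G}$. Concretely, writing a $q$-simplex of $EG_k$ as $(a_0,\dots,a_q)\in G_k^{q+1}$, I would define
$$\gamma\big((a_0,\dots,a_q);(b_{1,0},\dots,b_{1,q}),\dots,(b_{k,0},\dots,b_{k,q})\big)=\big(\gamma(a_0;b_{1,0},\dots,b_{k,0}),\dots,\gamma(a_q;b_{1,q},\dots,b_{k,q})\big),$$
where the $\gamma$ on the right is the composition of $\ms{G}$; I would take the unit $1\in EG_1$ to be the iterated degeneracy of the identity $e_1\in G_1$, and let $G_n$ act on $EG_n$ by $b\cdot(a_0,\dots,a_q)=(ba_0,\dots,ba_q)$, exactly as in the formula given for $|EG|$.

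First I would check that these assignments are simplicial. The faces $d_i$ and degeneracies $s_i$ of $EG_n$ only delete or repeat a coordinate index $j\in\{0,\dots,q\}$, whereas $\gamma$ and the $G_n$-action leave this index untouched and merely operate within each fixed $j$; hence both commute with all $d_i$ and $s_i$, so $\gamma$ is a map of (product) simplicial sets and each $G_n$ acts by simplicial automorphisms. Unitality and associativity of $E\ms{G}$ as an operad then reduce, coordinate by coordinate, to the unitality and associativity axioms of $\ms{G}$: for instance the $j$th coordinate of $\gamma(a;1^{(k)})$ is $\gamma(a_j;e_1,\dots,e_1)=a_j$, and likewise the two sides of the associativity law agree in each coordinate because they do so in $\ms{G}$.

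The essential point --- and the only place where the group-operad structure is genuinely used --- is the equivariance property of Definition~\ref{defn:operad}. Fix $\sigma\in G_k$, $\tau_i\in G_{m_i}$, $a=(a_0,\dots,a_q)\in EG_k$, and $b_i=(b_{i,0},\dots,b_{i,q})\in EG_{m_i}$. The $j$th coordinate of $\gamma(\sigma a;\tau_1 b_1,\dots,\tau_k b_k)$ is $\gamma(\sigma a_j;\tau_1 b_{1,j},\dots,\tau_k b_{k,j})$, and applying the crossed-homomorphism property of $\ms{G}$ (with $\sigma,a_j$ and $\tau_i,b_{i,j}$ in the roles of $a,a'$ and $b_i,b_i'$) rewrites this as
$$\gamma(\sigma;\tau_1,\dots,\tau_k)\,\gamma(a_j;b_{\sigma^{-1}(1),j},\dots,b_{\sigma^{-1}(k),j}).$$
Since $\gamma(\sigma;\tau_1,\dots,\tau_k)\in G_m$ acts on $EG_m$ coordinatewise, this is exactly the $j$th coordinate of $\gamma(\sigma;\tau_1,\dots,\tau_k)\,\gamma(a;b_{\sigma^{-1}(1)},\dots,b_{\sigma^{-1}(k)})$, the reindexing $\sigma^{-1}(i)=(\pi\sigma)^{-1}(i)$ permuting the whole simplices $b_1,\dots,b_k$ simultaneously. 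Thus equivariance on $E\ms{G}$ is precisely the coordinatewise image of the crossed-homomorphism axiom, which is why no hypothesis beyond $\ms{G}$ being a group operad is required.

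Finally I would remark that $E\ms{G}$ carries the two advertised simplicial directions: the internal $E$-construction direction just used, in which each $G_n$ acts simplicially, and the operad-induced direction (from the strict basepoint $\{e_n\}$) in which $\ms{G}$ acts as a crossed simplicial group. The main obstacle here is conceptual rather than computational: one must observe that the crossed-homomorphism identity, read off in each simplicial coordinate, is exactly the equivariance condition demanded of a $\ms{G}$-action; once this is recognized, all remaining verifications are routine coordinatewise translations of the operad axioms for $\ms{G}$.
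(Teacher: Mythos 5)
Your proposal is correct and follows essentially the same route as the paper: the paper likewise defines $E\gamma$ diagonally in the simplicial degree, $\gamma(c;a_1,\dots,a_k)=(\gamma(c_0;a_{10},\dots,a_{k0}),\dots,\gamma(c_l;a_{1l},\dots,a_{kl}))$, with the coordinatewise $G_n$-action, and then leaves the axiom-checking as routine. Your write-up simply supplies the verification the paper omits, correctly identifying that the equivariance condition of Definition~\ref{defn:operad} is exactly the crossed-homomorphism axiom of $\ms{G}$ read off in each coordinate.
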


\begin{proof}
  The composition $\gamma$ of $\ms{G}$ induces a simplicial map
  \begin{align*}
    \gamma= E\gamma & : EG_k \times EG_{m_1} \times \cdots \times EG_{m_k} \to EG_m, \\
    \gamma (c; a_1, \ldots, a_k) & = (\gamma (c_0;a_{10},\ldots,a_{k0}), \ldots, \gamma (c_l;a_{1l},\ldots,a_{kl})),
  \end{align*}
  where $c= (c_0,\ldots,c_l)$, $a_i= (a_{i0},\ldots,a_{il})$. With this map, it is easy to check that $E\ms{G}$ is a simplicial $\ms{G}$-operad.
\end{proof}

\begin{prop}
  $|E\ms{G}|= \{|EG_n|\}_{n\geq 0}$ is a universal $\ms{G}$-operad. \qed
\end{prop}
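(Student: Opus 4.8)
The plan is to assemble the statement from three facts that are already available, so that almost no new argument is needed. First, the preceding proposition shows that $E\ms{G}= \{EG_n\}_{n\geq 0}$ is a simplicial $\ms{G}$-operad; combining this with the earlier proposition that the geometric realization of a simplicial $\ms{G}$-operad is a topological $\ms{G}$-operad yields at once that $|E\ms{G}|= \{|EG_n|\}_{n\geq 0}$ is a topological $\ms{G}$-operad, with operadic composition $|E\gamma|$ and with $G_n$ acting on $|EG_n|$ by the realization of the simplicial action $b\cdot (a_0,\ldots,a_k)= (ba_0,\ldots,ba_k)$. Here one uses that geometric realization commutes with the finite products appearing in $\gamma$, which is legitimate because we work throughout in compactly generated Hausdorff spaces.

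Second, it remains only to check the two defining conditions of a universal $\ms{G}$-operad: that each $|EG_n|$ is contractible and that the action of $G_n$ on $|EG_n|$ is a covering action. Both are exactly the classical properties of the standard construction $EG$ recalled just before the previous proposition, namely that $|EG_n|$ is contractible and that $G_n$ acts on $|EG_n|$ freely and as a covering action. Hence $|E\ms{G}|$ is a covering $\ms{G}$-operad each of whose spaces is contractible, which is precisely the definition of a universal $\ms{G}$-operad.

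The one point requiring any care is the compatibility of these two pieces of structure: the $G_n$-action entering the covering condition must coincide with the $G_n$-action that is part of the topological $\ms{G}$-operad structure on $|E\ms{G}|$. Both are the geometric realization of the same free simplicial $G_n$-action on $EG_n$, so they do agree. I expect this bookkeeping, together with the interchange of realization and products, to be the main (and only mild) obstacle; the contractibility and the covering-action property are entirely standard and demand no fresh computation. This routineness is also why the statement is marked \qed.
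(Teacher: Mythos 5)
Your proposal is correct and matches the paper's intended argument: the paper marks this proposition \qed precisely because it follows by assembling the preceding proposition ($E\ms{G}$ is a simplicial $\ms{G}$-operad), the earlier proposition that realization of a simplicial $\ms{G}$-operad is a topological $\ms{G}$-operad, and the standard facts recalled just before (contractibility of $|EG_n|$ and the covering property of the $G_n$-action). Your extra care about the two $G_n$-actions coinciding is a reasonable remark, but it is the same realization of the same simplicial action, exactly as the paper sets it up.
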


For any subgroup $H\leq G$, let $EG/H= (EG)/H$ be the quotient of $EG$ modulo the action of $H$ with
$$(EG/H)_k= (EG)_k/H= G^{k+1}/H= \{H(a_0, \ldots, a_k) \mid a_i\in G\}.$$
The projection $|EG|\to |EG/H|$ factors through $|EG|/H \xra{\cong} |EG/H|$ which is indeed a homeomorphism. Thus we shall identify $|EG|/H$ and $|EG/H|$.

\begin{prop}
  If $\ms{H}$ is a non-crossed normal sub group operad of $\ms{G}$, then $E\ms{G}/ \ms{H}$ is a simplicial $\ms{G}/ \ms{H}$-operad and $|E\ms{G}|/ \ms{H}$ is a topological $\ms{G}/ \ms{H}$-operad.
\end{prop}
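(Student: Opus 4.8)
The plan is to deduce this proposition from the quotient and realization machinery already established, since $E\ms{G}$ and $|E\ms{G}|$ have just been shown to be a simplicial and a topological $\ms{G}$-operad respectively. Recall that for a non-crossed normal sub group operad $\ms{H}\unlhd\ms{G}$ the quotient $\ms{C}/\ms{H}$ of any $\ms{G}$-operad $\ms{C}$ is a $\ms{G}/\ms{H}$-operad (the quotient proposition of Subsection 2.5), and that this quotient result was remarked to hold verbatim in the simplicial setting. The one algebraic input is that, since $\ms{H}$ is non-crossed, for $h\in H_k$ and $h_i\in H_{m_i}$ one has $h^{-1}(i)=i$, so the equivariance of the $\ms{G}$-action collapses to $\gamma(ha; h_1a_1,\ldots,h_ka_k)=\gamma(h;h_1,\ldots,h_k)\,\gamma(a;a_1,\ldots,a_k)$ with $\gamma(h;h_1,\ldots,h_k)\in H_m$ (using that $\ms{H}$ is closed under $\gamma$); this is exactly what makes $\gamma$ descend to the levelwise left quotients $\ms{C}(n)/H_n$.

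First I would treat the simplicial statement. Here $E\ms{G}=\{EG_n\}$ is a simplicial $\ms{G}$-operad by the preceding proposition, and $H_n$ acts on $EG_n$ through the diagonal action $b\cdot(a_0,\ldots,a_k)=(ba_0,\ldots,ba_k)$, whose orbits are the simplices $H_n(a_0,\ldots,a_k)$ of $EG_n/H_n$. The internal faces $d_i$ and degeneracies $s_i$ of $EG_n$ plainly commute with this diagonal action, so they pass to $EG_n/H_n$; the operadic composition $E\gamma$ descends by the displayed identity above. Applying the simplicial version of the quotient proposition to $\ms{C}=E\ms{G}$ then gives that $E\ms{G}/\ms{H}$ is a simplicial $\ms{G}/\ms{H}$-operad, and the $\ms{G}/\ms{H}$-action is the one induced by $gH_k\cdot H_k a=H_k(ga)$.

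For the topological statement I would proceed in either of two equivalent ways. The direct route observes that $|E\ms{G}|=\{|EG_n|\}$ is a topological $\ms{G}$-operad (indeed a universal one, by the preceding proposition), so the topological quotient proposition applied to $\ms{C}=|E\ms{G}|$ immediately yields that $|E\ms{G}|/\ms{H}$ is a topological $\ms{G}/\ms{H}$-operad. The second route realizes the simplicial result just obtained: by the proposition that the geometric realization of a simplicial $\ms{G}/\ms{H}$-operad is a topological $\ms{G}/\ms{H}$-operad, $|E\ms{G}/\ms{H}|$ carries such a structure, and one transports it across the levelwise homeomorphism $|EG_n|/H_n\cong|EG_n/H_n|$ recorded just before the statement. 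To see that these two routes produce the same structure — and in particular that the resulting $\ms{G}/\ms{H}$-action on $|E\ms{G}|/\ms{H}$ is continuous — one uses that geometric realization commutes with finite products in the compactly generated category and with the quotient by the levelwise $\ms{H}$-action.

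The genuinely substantive point, and the one I expect to require the most care, is precisely this compatibility of geometric realization with passage to the quotient by $\ms{H}$: that the homeomorphism $|EG|/H\cong|EG/H|$ is natural enough to intertwine the two operadic compositions and the two $\ms{G}/\ms{H}$-actions. Everything else — the descent of $d_i$, $s_i$ and $\gamma$, and the verification of unitality, associativity and equivariance for the $\ms{G}/\ms{H}$-operad structure — is a direct appeal to the already established quotient and realization propositions and requires no new computation.
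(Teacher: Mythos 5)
Your proposal is correct, but it is organized differently from the paper's proof. The paper proves the simplicial statement by direct computation: it writes down the induced composition $\gamma(H_k c; H_{m_1}a_1,\ldots,H_{m_k}a_k)= H_m\gamma(c;a_1,\ldots,a_k)$, checks well-definedness using exactly the non-crossedness identity you isolate, and then verifies unitality, associativity, and equivariance of the action $(H_k b)\cdot H_k(a_0,\ldots,a_l)=H_k(ba_0,\ldots,ba_l)$ by hand; the topological half is left implicit, resting on the realization proposition and the homeomorphism $|EG|/H\cong|EG/H|$ recorded just before the statement. You instead delegate everything to the general quotient proposition of Subsection 2.5, applied to $\ms{C}=E\ms{G}$ (simplicial version) and to $\ms{C}=|E\ms{G}|$ (topological version). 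This is legitimate and more economical, with one caveat: the simplicial version of that quotient proposition is only asserted in the paper (``the following discussion and results are also valid for simplicial operads''), not proved, so a fully rigorous account along your lines must still supply the levelwise verification --- which is precisely the computation the paper carries out here, and presumably why the author chose the hands-on route while remarking that the assertion holds for more general simplicial $\ms{G}$-operads. On the topological half, note that your route (1), quotienting the topological $\ms{G}$-operad $|E\ms{G}|$ directly, already yields the stated conclusion on its own; the compatibility of geometric realization with the quotient, which you flag as the substantive difficulty, is needed only if one insists that routes (1) and (2) produce the same $\ms{G}/\ms{H}$-operad structure, not for the proposition itself.
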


This assertion is actually valid for more general simplicial $\ms{G}$-operads.

\begin{proof}
  The $\gamma$ of $E\ms{G}$ induces a $\gamma$ for $E\ms{G}/ \ms{H}$,
  $$\gamma (H_kc; H_{m_1}a_1, \ldots, H_{m_k}a_k)= H_m \gamma (c; a_1, \ldots, a_k),$$
  where $c=(c_0,\ldots,c_l)\in (EG_k)_l$, $a_i=(a_{i0},\ldots,a_{il})\in (EG_{m_i})_l$, and there is a commutative diagram of simplicial maps
  \begin{diagram}
    EG_k \times EG_{m_1} \times \cdots \times EG_{m_k}  &\rTo^{\gamma}  &EG_m \\
    \dTo  &  &\dTo \\
    EG_k/H_k \times EG_{m_1}/H_{m_1} \times \cdots \times EG_{m_k}/H_{m_k}  &\rTo^{\gamma}  &EG_m/H_m,
  \end{diagram}
  since $\ms{H}$ is non-crossed and thus for $b\in H_k$, $b_i\in H_{m_i}$,
  \begin{align*}
    &\ \gamma (bc; b_1a_1, \ldots, b_ka_k) \\
    =&\ \gamma (b(c_0,\ldots,c_l); b_1(a_{10},\ldots,a_{1l}), \ldots, b_k(a_{k0},\ldots,a_{kl})) \\
    = &\ (\gamma (bc_0; b_1a_{10}, \ldots, b_ka_{k0}), \ldots, \gamma (bc_l; b_1 a_{1l}, \ldots, b_k a_{kl})) \\
    = &\ (\gamma(b; b_1, \ldots, b_k) \gamma (c_0;a_{10},\ldots,a_{k0}), \ldots, \gamma(b; b_1, \ldots, b_k) \gamma (c_l;a_{1l},\ldots,a_{kl})) \\
    = &\ \gamma(b;b_1,\ldots,b_k) (\gamma (c_0;a_{10},\ldots,a_{k0}), \ldots, \gamma (c_l;a_{1l},\ldots,a_{kl})) \\
    =&\ \gamma (b,b_1,\ldots,b_k) \gamma (c;a_1,\ldots,a_k).
  \end{align*}
  Associativity obviously holds, as well as unitality since
  $$\gamma (H_1e_1^{(l)}; H_n(a_0,\ldots,a_l))= H_n\gamma (e_1^{(l)}; a_0,\ldots,a_l)= H_n(a_0,\ldots,a_l),$$
  $$\gamma (H_n(a_0,\ldots,a_l); H_1e_1^{(l)}, \ldots, H_1e_1^{(l)})= H_n\gamma ((a_0,\ldots,a_l); e_1^{(l)}, \ldots, e_1^{(l)})= H_n(a_0,\ldots,a_l).$$
  $G_n/H_n$ acts on $EG_n/H_n$ by $(H_nb) \cdot H_n(a_0,\ldots,a_k)= H_n(ba_0, \ldots, ba_k)$. This action satisfies the equivariance, since for $b\in G_k$, $b_i\in G_{m_i}$,
  \begin{align*}
    &\ \gamma (H_kb \cdot H_kc; H_{m_1}b_1 \cdot H_{m_1}a_1, \ldots, H_{m_k}b_k \cdot H_{m_k}a_k) \\
    = &\ H_m\gamma (b\cdot c; b_1\cdot a_1, \ldots, b_k \cdot a_k) \\
    = &\ H_m\gamma ((bc_0,\ldots,bc_l); (b_1 a_{10}, \ldots, b_1 a_{1l}), \ldots, (b_1 a_{k0}, \ldots, b_1 a_{kl})) \\
    = &\ H_m(\gamma (bc_0; b_1 a_{10}, \ldots, b_k a_{k0}), \ldots, \gamma (bc_0; b_1 a_{1l}, \ldots, b_k a_{kl})) \\
  = &\ H_m(\gamma (b; b_1, \ldots, b_k) \gamma (c_0; a_{b^{-1}(1)0}, \ldots, a_{b^{-1}(k)0}), \ldots, \gamma (c_0; a_{b^{-1}(1)l}, \ldots, a_{b^{-1}(k)l})) \\
  = &\ H_m\gamma (b;b_1,\ldots,b_k) \cdot H_m\gamma (c;a_{b^{-1}(1)},\ldots, a_{b^{-1}(k)}) \\
  = &\ \gamma (H_kb; H_{m_1}b_1, \ldots, H_{m_k}b_k) \cdot \gamma (H_kc; H_{m_1}a_1, \ldots, H_{m_k}a_k).
  \end{align*}
  Hence $E\ms{G}/ \ms{H}$ is a simplicial $\ms{G}/ \ms{H}$-operad.
\end{proof}

Any morphism $\psi: \ms{G} \to \ms{G}'$ of group operads induces homomorphisms of simplicial groups $E\psi_n: EG_n \to EG'_n$ which commute with $\gamma$ and are equivariant. $\Ker \psi_n$ is a non-crossed normal subgroup of $G_n$ and acts on $EG_n$. Then $E\ms{G}/\Ker \psi= \{EG_n/ \Ker \psi_n\}_{n\geq 0}$ is a simplicial group and a simplicial $\ms{G}/ \Ker \psi$-operad.

We shall realize any group operad as the fundamental groups operad of its universal operad in the following.

Note that $(EG/H)_0= G/H$, $(EG/H)_1= (G\times G)/H= \{H(a,b) \mid a, b\in G\}$. $H(a,b)$ is a spherical element iff $d_0 H(a,b)= Hb=H$ and $d_1 H(a,b)=Ha=H$, i.e., both $a,b\in H$. Each spherical element $H(a,b)$ has a unique representative $(e,a^{-1}b)$ where $e$ is the identity of $G$. Thus the set of spherical elements is
$$\{H(e,a) \mid a\in H\} \subseteq (EG/H)_1$$
which is actually a group with product $H(e,a) \cdot H(e,b)= H(e,ab)$. Obviously
$$H\to \{H(e,a) \mid a\in H\}, \quad a\mapsto H(e,a)$$
is an isomorphism and natural with respect to homomorphisms $(G,H)\to (G',H')$ of pairs of groups where $H\leq G$, $H'\leq G'$. Recall that $\pi_1(EG/H)$ can be identified with the group of spherical elements $\{H(e,a) \mid a\in H\}$. Hence,

\begin{lem}
  There is a natural isomorphism $H\xra{\cong} \pi_1 (EG/H)$ for $H\leq G$. \qed
\end{lem}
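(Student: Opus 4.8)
The plan is to realize the desired isomorphism as the composite of two maps, both essentially supplied by the preceding discussion, and then to verify that this composite respects the group structures and is functorial in the pair $(G,H)$. Concretely, I would take the explicit map $a\mapsto H(e,a)$, which the discussion already exhibits as a group isomorphism from $H$ onto the group of spherical elements $\{H(e,a)\mid a\in H\}\subseteq (EG/H)_1$, and follow it by the identification of this group of spherical elements with $\pi_1(EG/H)$. The resulting composite $H\xra{\cong}\pi_1(EG/H)$ is the claimed isomorphism.

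The first map is handled by the discussion: the spherical $1$-simplices based at the vertex $H=He\in(EG/H)_0$ are exactly the cosets $H(e,a)$ with $a\in H$, each having $(e,a)$ as its canonical representative, and $H(e,a)\cdot H(e,b)=H(e,ab)$, so $a\mapsto H(e,a)$ is an isomorphism of groups, natural with respect to homomorphisms of pairs $(G,H)\to(G',H')$. The second map requires knowing that $\pi_1(EG/H)$ is computed by these spherical $1$-simplices and that its product agrees with $H(e,a)\cdot H(e,b)=H(e,ab)$. For this I would note that $EG$, being a simplicial group, is a Kan complex on which $H$ acts freely and simplicially, so $EG/H$ is again Kan and its fundamental group at $H$ is represented by spherical $1$-simplices modulo the relation imposed by spherical $2$-simplices; a direct check that the loop-concatenation product of $H(e,a)$ and $H(e,b)$ equals $H(e,ab)$ then identifies this group with $\{H(e,a)\mid a\in H\}$. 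Alternatively, and perhaps more transparently, one can argue geometrically: since $|EG|/H\cong|EG/H|$ and $|EG|$ is contractible with a free covering action of the discrete group $H$, the space $|EG/H|$ is a $K(H,1)$, whence $\pi_1(EG/H)\cong H$.

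Finally, naturality of the composite follows from the naturality of the first map together with the functoriality of $\pi_1$: a homomorphism $f\colon G\to G'$ with $f(H)\subseteq H'$ induces the simplicial map $EG/H\to EG'/H'$ sending $H(a_0,\ldots,a_k)$ to $H'(fa_0,\ldots,fa_k)$, hence sends the spherical element $H(e,a)$ to $H'(e,f(a))$, which is exactly the image of $f|_H(a)$ under the corresponding isomorphism for $(G',H')$. The main obstacle is the second identification $\{H(e,a)\mid a\in H\}\cong\pi_1(EG/H)$, i.e.\ matching the concretely defined product $H(e,ab)$ with the intrinsic $\pi_1$-product; once one invokes the standard theory of Kan complexes (or equivalently the $K(H,1)$ identification above), everything else is the bookkeeping already carried out in the discussion preceding the lemma.
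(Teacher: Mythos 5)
Your proposal is correct and takes essentially the same route as the paper: the paper's proof is exactly the discussion preceding the lemma, which identifies $H$ with the group of spherical elements $\{H(e,a)\mid a\in H\}$ via $a\mapsto H(e,a)$ and then invokes the identification of $\pi_1(EG/H)$ with that group of spherical elements, with naturality read off from the explicit formula. Your additional justification of the step the paper merely cites as known (``Recall that $\pi_1(EG/H)$ can be identified with the group of spherical elements''), whether by Kan-complex theory or by the $K(H,1)$ covering-space argument using $|EG|/H\cong|EG/H|$, simply fills in that standard fact and does not change the structure of the argument.
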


Let $\ms{H}$ be a sub group operad of $\ms{G}$. Note that $\pi_1 (EG_k/H_k)$ can be identified with $\{H_k(e_k,a) \mid a\in H_k\}$. Moreover the fundamental groups operad $\pi_1 (E\ms{G}/ \ms{H})$ can also be identified with the group operad $\{\{H_k(e_k,a) \mid a\in H_k\}\}_{k\geq 0}$ with
$$\gamma (H_k(e_k,a); H_{m_1}(e_{m_1},b_{m_1}), \ldots)= H_m(e_m, \gamma (a; b_{m_1}, \ldots))$$
and $\pi: \{H_k(e_k,a) \mid a\in H_k\}\to S_k$, $\pi H_k(e_k,a)= \pi a$. It is evident that $H_k\cong \{H_k(e_k,a) \mid a\in H_k\}$ and $\ms{H}\cong \{\{H_k(e_k,a) \mid a\in H_k\}\}_{k\geq 0}$.

\begin{thm}
  There is an isomorphism $\ms{H} \cong \pi_1 (E\ms{G}/ \ms{H}) \cong \pi_1 (|E\ms{G}|/ \ms{H})$ of group operads for any sub group operad $\ms{H}\leq \ms{G}$, natural with respect to morphisms of pairs of group operads. In particular $\ms{G}\cong \pi_1 (E\ms{G}/ \ms{G}) \cong \pi_1 (|E\ms{G}|/ \ms{G})$ as group operads. \qed
\end{thm}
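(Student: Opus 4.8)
The plan is to promote the levelwise group isomorphism of the preceding lemma, namely $H_k \xra{\cong} \pi_1(EG_k/H_k)$ sending $a$ to the spherical element $H_k(e_k,a)$, to an isomorphism of group operads, and then transport the result across geometric realization. First I would observe that, since $\ms{G}$ is a group operad, its units satisfy $\gamma(e_k; e_{m_1}, \ldots, e_{m_k})= e_m$; hence the constant simplices $\{e_k\}_{k\geq 0}$ constitute a strict basepoint of the simplicial $\ms{G}$-operad $E\ms{G}$, and therefore also of $|E\ms{G}|$ and of the quotients by $\ms{H}$. The point of recording this is that a strict basepoint forces the linear-path corrections $\delta$ in the Section~3 definition of $\gamma$ on fundamental groups to be constant, so that $\gamma$ on $\pi_1$ is induced directly by the operad composition of the representing loops.

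Next I would carry out the operad-level compatibility check on spherical representatives. The element $H_k(e_k,a)$ with $a\in H_k$ is represented by the $1$-simplex $(e_k,a)$ of $EG_k$, which projects to a loop at the basepoint because $d_1(e_k,a)= e_k$ and $d_0(e_k,a)= a\in H_k$. Since the composition $E\gamma$ acts coordinatewise on the group entries, $\gamma((e_k,a); (e_{m_1},b_1), \ldots, (e_{m_k},b_k))= (e_m, \gamma(a; b_1, \ldots, b_k))$, and $\gamma(a; b_1, \ldots, b_k)\in H_m$ because $\ms{H}$ is closed under $\gamma$. Thus the induced $\gamma$ on $\pi_1(E\ms{G}/\ms{H})$ sends $(H_k(e_k,a); H_{m_1}(e_{m_1},b_1), \ldots)$ to $H_m(e_m, \gamma(a; b_1, \ldots))$, exactly the image of $\gamma(a; b_1, \ldots)$ under $a\mapsto H_k(e_k,a)$; likewise $\pi H_k(e_k,a)= \pi a$ matches the $\pi$ of $\ms{H}$. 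The levelwise isomorphisms then assemble into an isomorphism $\ms{H}\cong \pi_1(E\ms{G}/\ms{H})$ of group operads, with unitality and associativity inherited from those of $\ms{H}$.

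For the topological incarnation I would use that geometric realization commutes with the quotient, giving $|E\ms{G}|/\ms{H}\cong |E\ms{G}/\ms{H}|$, and that it carries the simplicial operad structure to a topological one by the realization proposition of Section~2. Because $\pi_1$ of a simplicial set agrees with $\pi_1$ of its realization, and the strict basepoint makes the two descriptions of $\gamma$ on $\pi_1$ correspond under realization, this yields $\pi_1(E\ms{G}/\ms{H})\cong \pi_1(|E\ms{G}|/\ms{H})$ as group operads. Naturality with respect to a morphism of pairs $(\ms{G},\ms{H})\to (\ms{G}',\ms{H}')$ follows from the naturality already contained in the lemma together with the functoriality of $E(-)$, of the quotient, of realization, and of $\pi_1$, using that a morphism of group operads commutes with $\gamma$ and $\pi$; specializing to $\ms{H}= \ms{G}$ gives the final assertion.

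The step that will demand the most care is the claim that, restricted to the spherical representatives, the Section~3 composition $\gamma$ -- which is defined with basepoint-change paths and, in the crossed case, with the permutation-twisted reindexing $b_i\mapsto b_{(\pi a)^{-1}(i)}$ -- reduces to the coordinatewise simplicial composition above. The strict basepoint of $E\ms{G}$ trivializes the path corrections, and I would check that the lift of $\gamma(f; g_1, \ldots)$ terminates at $\gamma(a; b_1, \ldots, b_k)\cdot e_m$: by equivariance of the $\ms{G}$-action on $|E\ms{G}|$ the endpoint equals $\gamma(a; b_1, \ldots, b_k)\,\gamma(e_k; e_{m_{a^{-1}(1)}}, \ldots)$, and the second factor is again $e_m$ since the index sum $\sum_i m_{a^{-1}(i)}= m$ is permutation-invariant. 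Thus the crossed reindexing is absorbed and no discrepancy survives, which is precisely the bookkeeping the argument hinges on.
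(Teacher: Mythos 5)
Your proposal is correct and follows essentially the same route as the paper: the paper likewise identifies $\pi_1(EG_k/H_k)$ with the group of spherical elements $\{H_k(e_k,a)\mid a\in H_k\}$ via the preceding lemma, observes that the induced $\gamma$ on these representatives is the coordinatewise one, $\gamma(H_k(e_k,a); H_{m_1}(e_{m_1},b_1),\ldots)= H_m(e_m,\gamma(a;b_1,\ldots))$ with $\pi H_k(e_k,a)=\pi a$, and transports the identification to $|E\ms{G}|/\ms{H}$. The details you spell out -- the strict basepoint $\gamma(e_k;e_{m_1},\ldots,e_{m_k})=e_m$ killing the path corrections $\delta$, and the crossed reindexing being absorbed because $\gamma(e_k;e_{m_{a^{-1}(1)}},\ldots)=e_m$ -- are exactly the bookkeeping the paper leaves implicit in marking the theorem with a \qed.
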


\subsection{Characterization and Reconstruction of $K(\pi,1)$ Operads}
\begin{lem}
  Any two universal $\ms{G}$-operads are equivalent.
\end{lem}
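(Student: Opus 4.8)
The plan is to connect two universal $\ms{G}$-operads $\ms{C}$ and $\ms{C}'$ through their product, exactly as one compares two $E_\infty$ operads in the classical theory. First I would form the product $\ms{G}$-operad $\ms{C}\times \ms{C}'$ with $(\ms{C}\times \ms{C}')(k)= \ms{C}(k)\times \ms{C}'(k)$, unit $(1,1)$, structure map $\gamma$ defined coordinatewise, and with $G_k$ acting diagonally by $g\cdot (x,y)= (gx,gy)$. That $\gamma$ satisfies associativity and unitality coordinatewise is immediate, and the equivariance property for the diagonal action follows at once from the equivariance of $\gamma$ in each factor; hence $\ms{C}\times \ms{C}'$ is a $\ms{G}$-operad.

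Next I would check that $\ms{C}\times \ms{C}'$ is again a universal $\ms{G}$-operad. Each $\ms{C}(k)\times \ms{C}'(k)$ is contractible as a product of contractible spaces. The essential point is that the diagonal action of $G_k$ is a covering action: given $x\in \ms{C}(k)$ with a neighborhood $U$ such that the translates $gU$ are pairwise disjoint (which exists since the action on $\ms{C}(k)$ is a covering action), the sets $g\cdot (U\times \ms{C}'(k))= gU\times g\ms{C}'(k)$ are pairwise disjoint because their first coordinates already are. Thus $U\times \ms{C}'(k)$ is a neighborhood witnessing the covering condition for the diagonal action, and $\ms{C}\times \ms{C}'$ is a covering $\ms{G}$-operad, hence universal.

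Then I would observe that the two projections $p:\ms{C}\times \ms{C}'\to \ms{C}$ and $p':\ms{C}\times \ms{C}'\to \ms{C}'$ are morphisms of $\ms{G}$-operads: they send the unit to the unit, commute with $\gamma$ coordinatewise, and are $G_k$-equivariant by construction of the diagonal action. On each level $p_k:\ms{C}(k)\times \ms{C}'(k)\to \ms{C}(k)$ is a homotopy equivalence because $\ms{C}'(k)$ is contractible, and both the source and target carry covering actions of $G_k$. Hence each $p_k$ satisfies condition (2) in the definition of an equivalence, so $p$ is an equivalence of $\ms{G}$-operads, and symmetrically so is $p'$. The chain $\ms{C}\xleftarrow{p}\ms{C}\times \ms{C}'\xra{p'}\ms{C}'$ then exhibits $\ms{C}$ and $\ms{C}'$ as equivalent.

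The only step requiring genuine care is the verification that the diagonal action is a covering action; everything else is a coordinatewise formality. I expect no real obstacle here, since disjointness of translates in one factor already forces disjointness in the product, so the covering-action condition descends automatically from either factor.
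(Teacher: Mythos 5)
Your proposal is correct and follows exactly the paper's own argument (which is P.~May's product trick): form $\ms{C}\times\ms{C}'$ with the diagonal action, note it is again a universal $\ms{G}$-operad, and use the two projections, each an equivalence by condition (2), as the connecting chain. The only difference is that you spell out the verification that the diagonal action is a covering action and that the projections satisfy the definition of equivalence, details the paper leaves implicit.
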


This kind of equivalence of universal operads is first observed by P. May \cite{May:1972:GILS} for the case $\ms{G}= \ms{S}$. The cases $\ms{G}= \ms{B}$, $\ms{R}$ are considered in \cite{Fiedorowicz:preprint:SBC}, \cite{Wahl:2001:PhD}, respectively. We shall prove this lemma following P. May's clever idea (cf. \cite{May:1972:GILS}, pages 24--26).

\begin{proof}
  Let $\ms{C}$ and $\ms{C}'$ be two universal $\ms{G}$-operads. Since $\ms{C}$ and $\ms{C}'$ both are contractible, $\ms{C}\times \ms{C}'$ is also contractible. Then $\ms{C}\times \ms{C}'$ is also a universal $\ms{G}$-operad and the two projections $\ms{C} \leftarrow \ms{C} \times \ms{C}' \to \ms{C}'$ are morphisms of $\ms{G}$-operads. So $\ms{C}$ and $\ms{C}'$ are equivalent by definition.
\end{proof}

An algebraic classification of $K(\pi,1)$ topological $\ms{G}$-operads is given as follows.

\begin{thm}
  Let $\ms{C}$, $\ms{C}'$ be two path-connected, locally path-connected and semilocally simply-connected $K(\pi,1)$ covering $\ms{G}$-operads with good basepoints. Then $\ms{C}$, $\ms{C}'$ are equivalent iff $\pi_1 (\ms{C}/ \ms{G}) \cong \pi_1 (\ms{C}'/ \ms{G})$.
\end{thm}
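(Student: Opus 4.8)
The plan is to reduce the statement to the reconstruction results already established, namely that the universal cover of a $K(\pi,1)$ operad is a universal operad and that any two universal $\ms{G}$-operads are equivalent. The forward implication is the easy half: if $\ms{C}$ and $\ms{C}'$ are equivalent they are joined by a chain of equivalences of $\ms{G}$-operads, and by the naturality proposition established above each such equivalence induces an isomorphism of fundamental groups operads $\pi_1(\ms{C}/\ms{G}) \cong \pi_1(\ms{C}'/\ms{G})$; composing these along the chain produces the desired isomorphism.

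For the converse, first I would set $\pi = \pi_1(\ms{C}/\ms{G})$ and $\pi' = \pi_1(\ms{C}'/\ms{G})$, with $\theta: \pi \xra{\cong} \pi'$ the given isomorphism. Since $\ms{C}$ is $K(\pi,1)$, its universal cover $\wt{\ms{C}}$ is contractible and hence a universal $\pi$-operad; likewise $\wt{\ms{C}'}$ is a universal $\pi'$-operad. Using $\theta$ to regard $\wt{\ms{C}'}$ as a $\pi$-operad, both $\wt{\ms{C}}$ and $\wt{\ms{C}'}$ become universal $\pi$-operads, so by the lemma that any two universal $\ms{G}$-operads are equivalent they are joined by a chain of equivalences of $\pi$-operads. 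Finally I would descend this chain along the quotient by $\pi_1\ms{C} = \Ker(\pi \to \ms{G})$: by the universal cover theorem $\wt{\ms{C}}/\pi_1\ms{C} \cong \ms{C}$ and $\wt{\ms{C}'}/\pi_1\ms{C}' \cong \ms{C}'$ as $\ms{G}$-operads, so quotienting the chain should produce a chain of equivalences $\ms{C} \simeq \ms{C}'$.

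Two points require care, and the second is the main obstacle. First, one must check that quotienting a $\pi$-equivariant (or covering) equivalence by the non-crossed normal sub group operad $\pi_1\ms{C}$ again yields an equivalence of $\ms{G}$-operads (here $\ms{G} \cong \pi/\pi_1\ms{C}$); this follows because $\pi_1\ms{C}$ acts through covering actions, as $\ms{C}$ is a covering $\ms{G}$-operad, so equivariant homotopy inverses and homotopies descend to the quotients. Second, and more seriously, the descent requires $\theta$ to carry $\pi_1\ms{C} = \Ker(\pi \to \ms{G})$ onto $\pi_1\ms{C}' = \Ker(\pi' \to \ms{G})$. An abstract isomorphism of group operads only commutes with the structure maps to $\ms{S}$, and since $\ms{G} \to \ms{S}$ need not be injective (as for $\ms{G} = \ms{B}$), this compatibility is not automatic; the hard part is therefore that the hypothesis must be read as an isomorphism of group operads \emph{over} $\ms{G}$, equivalently one respecting the canonical surjections in the short exact sequence $1 \to \pi_1\ms{C} \to \pi_1(\ms{C}/\ms{G}) \to \ms{G} \to 1$. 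With that compatibility in hand, $\theta$ identifies the two kernels and the quotient step goes through.
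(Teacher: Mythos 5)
Your argument is essentially the paper's own proof: the forward direction via the naturality proposition, and the converse by passing to universal covers (which are universal $\pi_1(\ms{C}/\ms{G})$-operads), invoking the lemma that any two universal $\ms{G}$-operads are equivalent via the chain $\wt{\ms{C}} \leftarrow \wt{\ms{C}} \times \wt{\ms{C}}' \to \wt{\ms{C}}'$, and then descending by taking quotients by $\pi_1\ms{C}$. Your second caveat is genuine and is left implicit in the paper: its diagram ends with an isomorphism $\wt{\ms{C}}'/\pi_1\ms{C} \xra{\cong} \ms{C}'$, which presupposes that the given isomorphism carries $\pi_1\ms{C}$ onto $\pi_1\ms{C}'$ and induces the identity on $\ms{G}$; this is automatic when $\ms{G}\to\ms{S}$ is injective (e.g.\ $\ms{G}=\ms{S}$), since then $\pi_1\ms{C}= \Ker(\pi_1(\ms{C}/\ms{G})\to \ms{S})$ is preserved by any morphism of group operads, but for $\ms{G}=\ms{B}$ the hypothesis must indeed be read as an isomorphism over $\ms{G}$, exactly as you conclude.
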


\begin{proof}
  Suppose $\pi_1 (\ms{C}/ \ms{G}) \cong \pi_1 (\ms{C}'/ \ms{G})$. Then the two universal covers $\wt{\ms{C}}$ and $\wt{\ms{C}}'$ of $\ms{C}$ and $\ms{C}'$, respectively, are universal $\pi_1 (\ms{C}/ \ms{G})$-operads, thus equivalent. From the following commutative diagram
  $$\xymatrix{
    & \wt{\ms{C}} \ar[d] & \wt{\ms{C}} \times \wt{\ms{C}}' \ar[d] \ar[l] \ar[r] & \wt{\ms{C}}' \ar[d] \\
    \ms{C} & \wt{\ms{C}}/ \pi_1 \ms{C} \ar[l]_-{\cong} & (\wt{\ms{C}} \times \wt{\ms{C}}')/ \pi_1 \ms{C} \ar[l] \ar[r] & \wt{\ms{C}}'/ \pi_1 \ms{C} \ar[r]^-{\cong} & \ms{C}'   }$$
  $\ms{C}$ and $\ms{C}'$ are equivalent.
\end{proof}

Consequently, a $K(\pi,1)$ operad can be reconstructed from its fundamental groups operad.

\begin{thm}
  Let $\ms{C}$ be a path-connected, locally path-connected and semilocally simply-connected covering $\ms{G}$-operad with a good basepoint. If $\ms{C}$ is $K(\pi,1)$, then $\ms{C}\sim |E\pi_1 (\ms{C}/\ms{G})|/ \pi_1 \ms{C}$.
\end{thm}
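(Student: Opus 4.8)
The plan is to identify the right comparison object and then invoke the preceding characterization theorem. Write $\Pi= \pi_1(\ms{C}/ \ms{G})$ for the fundamental groups operad. By Theorem \ref{thm:homotopy_group_G-operads} (applied with $\ms{H}= \ms{J}$, so that $\ms{C}/ \ms{J}= \ms{C}$) there is a short exact sequence $1\to \pi_1\ms{C}\to \Pi\to \ms{G}\to 1$; hence $\pi_1\ms{C}$ is a non-crossed normal sub group operad of $\Pi$ with $\Pi/ \pi_1\ms{C}\cong \ms{G}$, and in particular the quotient $|E\Pi|/ \pi_1\ms{C}$ is defined. Set $\ms{C}':= |E\Pi|/ \pi_1\ms{C}$. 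The strategy is to show that $\ms{C}'$ meets the hypotheses of the previous theorem and that $\pi_1(\ms{C}'/ \ms{G})\cong \pi_1(\ms{C}/ \ms{G})$, whence $\ms{C}\sim \ms{C}'$.

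First I would check that $\ms{C}'$ is a $K(\pi,1)$ covering $\ms{G}$-operad of the required kind. Since $|E\Pi|$ is a universal $\Pi$-operad and $\pi_1\ms{C}\unlhd \Pi$ is non-crossed, the quotient $\ms{C}'$ is a $\Pi/ \pi_1\ms{C}= \ms{G}$-operad by the quotient proposition. Each space $\ms{C}'(k)= |E\Pi_k|/ \pi_1\ms{C}(k)$ is the quotient of a contractible space by the free covering action of $\pi_1\ms{C}(k)$, hence a $K(\pi_1\ms{C}(k),1)$; being a geometric realization it is a CW complex, so it is path-connected, locally path-connected and semilocally simply-connected, and $\ms{C}'$ is $K(\pi,1)$. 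The induced action of $G_k= \Pi_k/ \pi_1\ms{C}(k)$ on $\ms{C}'(k)$ is a covering action, since the factorization $|E\Pi_k|\to |E\Pi_k|/ \pi_1\ms{C}(k)\to |E\Pi_k|/ \Pi_k$ exhibits $\ms{C}'(k)\to \ms{C}'(k)/G_k$ as the regular covering with deck group $G_k$. Finally a good basepoint is obtained from the strict basepoint of $|E\Pi|$ given by the vertices $\bar{e}_k\in |E\Pi_k|$ corresponding to the identities $e_k\in \Pi_k= (E\Pi_k)_0$, which satisfy $\gamma(\bar{e}_k; \bar{e}_{m_1}, \ldots, \bar{e}_{m_k})= \bar{e}_m$ and descend to a strict (hence good) basepoint of $\ms{C}'$.

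Next I would compute the fundamental groups operad of $\ms{C}'$. Using the identity $(\ms{C}/ \ms{H})/(\ms{G}/ \ms{H})= \ms{C}/ \ms{G}$ with the group operad $\Pi$ and sub group operad $\pi_1\ms{C}$ gives $\ms{C}'/ \ms{G}= (|E\Pi|/ \pi_1\ms{C})/(\Pi/ \pi_1\ms{C})= |E\Pi|/ \Pi$. Applying the realization theorem $\ms{H}\cong \pi_1(|E\ms{H}|/ \ms{H})$ with $\ms{H}= \Pi$ then yields $\pi_1(\ms{C}'/ \ms{G})= \pi_1(|E\Pi|/ \Pi)\cong \Pi= \pi_1(\ms{C}/ \ms{G})$.

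Having verified both hypotheses, I would conclude by applying the previous theorem to the two $K(\pi,1)$ covering $\ms{G}$-operads $\ms{C}$ and $\ms{C}'$: since $\pi_1(\ms{C}/ \ms{G})\cong \pi_1(\ms{C}'/ \ms{G})$, they are equivalent, that is $\ms{C}\sim |E\pi_1(\ms{C}/ \ms{G})|/ \pi_1\ms{C}$. The essential comparison of universal operads is already packaged in the preceding theorem, so the main obstacle is the point-set bookkeeping of the second paragraph—specifically confirming that the $G_k$-action on $\ms{C}'(k)$ is genuinely a covering action and that $\ms{C}'$ is well pointed—rather than any new homotopy-theoretic input.
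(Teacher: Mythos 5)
Your proposal is correct and follows exactly the paper's route: extract the short exact sequence $1\to \pi_1\ms{C}\to \pi_1(\ms{C}/\ms{G})\to \ms{G}\to 1$ to see that $\pi_1\ms{C}$ is a non-crossed normal sub group operad, conclude that $|E\pi_1(\ms{C}/\ms{G})|/\pi_1\ms{C}$ is a $\ms{G}$-operad since $\pi_1(\ms{C}/\ms{G})/\pi_1\ms{C}\cong \ms{G}$, and then invoke the algebraic characterization theorem. The only difference is that you explicitly verify the hypotheses of that characterization (covering action, $K(\pi,1)$, local conditions, good basepoint, and the computation $\pi_1(\ms{C}'/\ms{G})\cong \pi_1(\ms{C}/\ms{G})$ via $\ms{C}'/\ms{G}=|E\pi_1(\ms{C}/\ms{G})|/\pi_1(\ms{C}/\ms{G})$), which the paper's terse proof leaves implicit; these verifications are accurate and consistent with the paper's earlier results.
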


\begin{proof}
  $\pi_1 \ms{C}$ is a non-crossed normal sub group operad of $\pi_1 (\ms{C}/\ms{G})$ from the short exact sequence
  $$1\to \pi_1 \ms{C}\to \pi_1 (\ms{C}/ \ms{G})\to \ms{G} \to 1.$$
  Thus $|E\pi_1 (\ms{C}/\ms{G})|/ \pi_1 \ms{C}$ is a $\ms{G}$-operad since $\pi_1 (\ms{C}/\ms{G})/ \pi_1 \ms{C}\cong \ms{G}$. The equivalence then follows from the algebraic characterization.
\end{proof}

\begin{example}
  The little $\infty$-cubes operad $\ms{C}_{\infty}$ is equivalent to $|E\ms{S}|$ \cite{Bar-Ecc:1974:I, May:1972:GILS}; the little 2-cubes operad $\ms{C}_2$ is equivalent to $|E\ms{B}|/ \ms{P}$ \cite{Fiedorowicz:preprint:SBC}. The framed 2-discs operad is equivalent to $|E\ms{R}|/ \Ker \pi$ where $\pi: \ms{R}\to \ms{S}$ is the canonical projection of the ribbon braid groups operad $\ms{R}$ onto $\ms{S}$ \cite{Wahl:2001:PhD}.
\end{example}

\begin{problem}
  Is there analogous characterization and reconstruction of general covering $\ms{G}$-operads? By taking universal cover, it is equivalent to consider only simply connected $\ms{G}$-operads. The case of the little $n$-cubes operads $\ms{C}_n$ for $n\geq 3$ would be of particular interest. Are two path-connected covering $\ms{G}$-operads with good basepoints equivalent if all their homotopy groups operads are isomorphic? Only all $\pi_l$ may not be enough generally since $\pi_l$ are not connected for distinct $l$. So relative homotopy groups or similar objects might be necessary.
\end{problem}

\section{Applications of Group Operads to Homotopy Theory}
For each group operad, there is an associated monad, which is a functor from the category of pointed compactly generated Hausdorff spaces to itself. Via this associated monad, some applications of group operads to homotopy theory are investigated in this section and the main result is a free group model for the canonical stabilization $\lsii X\into \lsoo X$ and particularly a free group model for the homotopy fibre of this stabilization.

\subsection{The Associated Monad of an Operad}
Any topological $\ms{G}$-operad $\ms{C}$ determines a monad from the category of pointed compactly generated Hausdorff spaces to itself. Let $X$ be a pointed space and $G_k$ acts on $X^k$ from the right via $\pi: G_k \to S_k$. Let
$$\ms{C}X = \bigsqcup_{k\geq 0} \ms{C}(k) \times_{G_k} X^k \Big/ \sim$$
with the weak topology, where the equivalence relation is generated by
$$[d_i c,x]\sim [c, d^ix], \textrm{ for } c\in \ms{C}(k), 1\leq i\leq k, x= (x_1,\ldots, x_{k-1}) \in X^{k-1}.$$
This construction $\ms{C}X$ is based on the $\Delta$-set structure on $\ms{C}$ and is similar to the geometric realization of a $\Delta$-set. The two natural maps $\ms{C} (\ms{C}X)\to \ms{C}X$ and $X\to \ms{C}X$ are defined as Construction 2.4 of \cite{May:1972:GILS}.

When studying the associated monads of $\ms{G}$-operads, it is enough to consider only nonsymmetric and symmetric operads. This is because if $\ms{G}$ is non-crossed, $\ms{C}(k) \times_{G_k} X^k= \ms{C}(k)/G_k \times X^k$, thus $\ms{C}X = (\ms{C}/\ms{G})X$; if $\ms{G}$ is crossed with $\pi: \ms{G}\onto \ms{S}$, $\ms{C}(k) \times_{G_k} X^k= \ms{C}(k)/ \Ker \pi \times_{S_k} X^k$, thus $\ms{C}X = (\ms{C}/ \Ker \pi)X$. Refer to \cite{May:1972:GILS} for a detailed theory of the associated monads of operads.

\begin{example}
  Any group operad $\ms{G}= \{G_k\}_{k\geq 0}$ is itself a discrete $\ms{G}$-operad. Thus we have the construction
  $$\ms{G}X= \bigsqcup_{k\geq 0} G_k \times_{G_k} X^k \Big/ \sim.$$
  We claim that this is just the James construction $\ms{J}X$ since $G_k \times_{G_k} X^k= X^k$. As such, we reserve the notation $\ms{G}X$ for another functor defined later.
\end{example}

The following proposition can be verified as the case $\ms{G}= \ms{S}$ (cf. \cite{May:1972:GILS}, Section 2).

\begin{prop}
  For a $\ms{G}$-operad $\ms{C}$, $\ms{C}X$ is a $\ms{C}$-space. \qed
\end{prop}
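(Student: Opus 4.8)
The plan is to give $\ms{C}X$ its $\ms{C}$-action through the operadic composition $\gamma$ of $\ms{C}$, exactly as in May's Construction~2.4, and then verify the three axioms of a $\ms{C}$-space by reducing each to the corresponding axiom of the $\ms{G}$-operad $\ms{C}$. Writing a typical element of $\ms{C}X$ as a class $[c;\mathbf{x}]$ with $c\in\ms{C}(k)$ and $\mathbf{x}\in X^k$, I would define the structure maps
$$\theta_j:\ms{C}(j)\times(\ms{C}X)^j\to\ms{C}X,\quad \theta_j(d;[c_1;\mathbf{x}_1],\ldots,[c_j;\mathbf{x}_j])=[\gamma(d;c_1,\ldots,c_j);(\mathbf{x}_1,\ldots,\mathbf{x}_j)],$$
with $\ms{C}(0)=*$ supplying the basepoint of $\ms{C}X$. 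The first task is well-definedness, and this is where the group-operad structure really enters.

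First I would check that $\theta_j$ respects the balanced-product relation $[gc_i;\mathbf{x}_i]=[c_i;\mathbf{x}_ig]$ in each input slot. Replacing $c_i$ by $gc_i$ with $g\in G_{k_i}$ and applying the equivariance property of Definition~\ref{defn:operad} with outer permutation $e_j$ and inner data $\tau_i=g$, $\tau_l=e_{k_l}$ for $l\neq i$, one obtains $\gamma(d;\ldots,gc_i,\ldots)=\Sigma_i\,\gamma(d;\ldots,c_i,\ldots)$, where $\Sigma_i=\gamma(e_j;e_{k_1},\ldots,g,\ldots,e_{k_j})\in G_m$ acts on $X^m$ (via $\pi$) as $g$ on the $i$-th block of coordinates and trivially elsewhere. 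Moving $\Sigma_i$ across the balanced product then converts $\mathbf{x}_i$ into $\mathbf{x}_ig$, so the two sides agree. Second, I would check compatibility with the basepoint relation $[d_\ell c_i';\mathbf{x}_i]=[c_i';d^\ell\mathbf{x}_i]$: since $d_\ell$ is composition with the nullary operation $*\in\ms{C}(0)$ in the $\ell$-th slot, associativity of $\gamma$ rewrites $\gamma(d;\ldots,d_\ell c_i',\ldots)$ as the operadic face deleting the global coordinate corresponding to the pair $(i,\ell)$, and the defining relation of $\ms{C}X$ absorbs this into the insertion $d^\ell\mathbf{x}_i$. Continuity is immediate from the quotient topology.

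With $\theta_j$ well defined, the three axioms follow formally. Unitality $\theta_1(1;[c;\mathbf{x}])=[c;\mathbf{x}]$ is immediate from $\gamma(1;c)=c$; associativity of $\theta$ is a direct consequence of the associativity of $\gamma$, matching nested compositions on the nose; and the equivariance axiom $\theta_j(\sigma d;\mathbf{w})=\theta_j(d;\mathbf{w}\sigma)$ for $\sigma\in G_j$ is proved by the same computation as the balanced-product step, now applied to the outer permutation: equivariance gives $\gamma(\sigma d;c_1,\ldots,c_j)=\gamma(\sigma;e_{k_1},\ldots,e_{k_j})\,\gamma(d;c_{\sigma^{-1}(1)},\ldots,c_{\sigma^{-1}(j)})$, and pushing the block permutation $\gamma(\sigma;e_{k_1},\ldots,e_{k_j})$ onto the $X$-coordinates reorders the blocks into $\mathbf{w}\sigma$.

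The main obstacle, and the only place where anything beyond bookkeeping occurs, is the well-definedness with respect to the $G_{k_i}$-quotients together with the correct tracking of the block-permutation reindexing; everything rests on the equivariance property of the $\ms{G}$-operad, which has precisely the same shape as the symmetric-operad equivariance used by May. Since $G_k$ enters only through this identically-shaped axiom, the verification is word-for-word that of the case $\ms{G}=\ms{S}$ in \cite{May:1972:GILS}, Section~2, with $S_k$ replaced by $G_k$ throughout.
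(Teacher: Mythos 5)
Your proposal is correct and takes essentially the same approach as the paper: the paper's entire proof is the remark that the verification of May's Construction 2.4 (\cite{May:1972:GILS}, Section 2) goes through with $S_k$ replaced by $G_k$, which is precisely the reduction you carry out. The only difference is that you spell out the well-definedness checks (the balanced-product relation via the block element $\gamma(e_j;e_{k_1},\ldots,g,\ldots,e_{k_j})$ and the basepoint relation via associativity) that the paper leaves implicit.
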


\begin{thm}[May \cite{May:1972:GILS}, Theorem 2.7]
  $\ms{C}_nX\simeq \lsn X$ for connected pointed CW-complexes.
\end{thm}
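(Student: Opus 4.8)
The construction $\ms{C}_n X$ above, for the symmetric little $n$-cubes operad, coincides with May's monad $C_n$ evaluated at $X$: the coend $\ms{C}_n(k)\times_{S_k}X^k$ is May's $k$-th summand and the $\Delta$-set relation $[d_ic,x]\sim[c,d^ix]$ is exactly his basepoint identification, so the assertion is the classical approximation theorem and I would follow May. First I construct the natural evaluation map $\alpha_n\colon\ms{C}_n X\to\lsn X$. Identifying $\si^n X$ with $(I^n/\partial I^n)\wedge X$ and a point of $\ms{C}_n X$ with $[c_1,\dots,c_k;y_1,\dots,y_k]$, where the $c_i\colon I^n\into I^n$ are little cubes with pairwise disjoint interiors and $y_i\in X$, send it to the based map $S^n\to\si^n X$ carrying $t\in\mathrm{int}\,c_i(I^n)$ to $(c_i^{-1}(t),y_i)$ and all other $t$ to the basepoint. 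Disjointness makes this well defined, and the relation $[d_ic,x]\sim[c,d^ix]$ (a cube whose label is the basepoint of $X$ collapses) shows it descends to $\ms{C}_n X$; the map $\alpha_n$ is clearly natural in $X$, and since juxtaposition of cubes corresponds to concatenation of loops it is a map of $H$-spaces.

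The plan then is to show $\alpha_n$ is a weak homotopy equivalence, and the key structural observation is that for connected $X$ both $\ms{C}_n X$ and $\lsn X$ are connected $H$-spaces, hence simple, hence nilpotent. For such spaces a map inducing an isomorphism on integral homology is a weak equivalence, so it suffices to prove that $\alpha_n$ is a homology isomorphism. The base case $n=1$ is James's theorem: $\ms{C}_1 X$ is homotopy equivalent to the James construction (the free monoid model attached to the non-crossed/trivial structure, cf.\ Example \ref{example:trivial_group_operad}), and $\alpha_1$ realises $JX\simeq\lsi X$ for connected $X$.

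For $n\ge 2$ I would establish the homology isomorphism by comparing $\ms{C}_n X$ and $\lsn X$ through their natural filtrations: on the source the May filtration has subquotients $F(\R^n,k)_+\wedge_{S_k}X^{\wedge k}$ (configuration spaces), and on the target one uses the known description of $H_*(\lsn X)$ arising from the homology of iterated loop spaces; $\alpha_n$ is designed to match generators, and a spectral-sequence (or inductive scanning) comparison shows it is an isomorphism. The main obstacle is precisely this homology comparison: the scanning map used to set up the induction is only a quasifibration, so one must verify the Dold--Thom criterion to obtain the expected long exact sequence, and it is here that connectedness of $X$ is indispensable --- for disconnected $X$ the fibre inclusion ceases to be a homology equivalence and the correct statement becomes a group completion rather than an equivalence. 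Since the result is due to May, in the paper I would simply invoke \cite{May:1972:GILS}, Theorem 2.7, after recording the identification of $\ms{C}_n X$ with his monad.
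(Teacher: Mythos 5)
Your proposal matches the paper exactly: the paper offers no proof of this statement, citing it as May's approximation theorem (\cite{May:1972:GILS}, Theorem 2.7), and your closing move --- identifying $\ms{C}_nX$ with May's monad $C_nX$ and invoking that theorem --- is precisely what the paper does. Your preliminary sketch of May's argument (evaluation map, quasifibration/Dold--Thom input, connectedness needed to avoid group completion) is a fair summary of the cited proof, but none of it is reproduced in the paper itself.
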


A morphism of topological $\ms{G}$-operads $\psi: \ms{C}\to \ms{C}'$ induces a natural map $\psi: \ms{C}X \to \ms{C}'X$ defined by $\psi[c,x]= [\psi(c),x]$. If $\psi$ is an equivalence, there is the following very useful result \cite{May:1972:GILS}.

\begin{lem}[May \cite{May:1972:GILS}, Proposition 3.4]
  If $\psi: \ms{C}\to \ms{C}'$ is an equivalence of path-connected topological $\ms{G}$-operads, then $\psi: \ms{C}X\to \ms{C}'X$ is a homotopy equivalence for connected pointed CW-complexes $X$. \qed
\end{lem}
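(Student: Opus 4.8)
The plan is to follow May's filtration argument (\cite{May:1972:GILS}, the discussion around Proposition~3.4), now carried out over $G_k$ in place of $S_k$ and accommodating the two kinds of equivalence permitted by the definition. First I would equip $\ms{C}X$ with the word-length filtration $F_k\ms{C}X$, the image of $\bigsqcup_{j\leq k}\ms{C}(j)\times_{G_j} X^j$. Since $\psi$ is a morphism of $\ms{G}$-operads it is $G_k$-equivariant in each arity and commutes with the face relations $[d_ic,x]\sim[c,d^ix]$, so $\psi$ preserves this filtration and it suffices to control its effect on the successive quotients $F_k\ms{C}X/F_{k-1}\ms{C}X$.

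The key computation is that, because the defining relation identifies $[c,d^ix]$ (a basepoint inserted in the $i$-th coordinate of $x\in X^{k}$) with the lower-filtration element $[d_ic,x]$, a class $[c,y]$ with $c\in\ms{C}(k)$ lies in $F_{k-1}\ms{C}X$ exactly when some coordinate of $y$ is the basepoint. Hence there is a natural homeomorphism
$$F_k\ms{C}X / F_{k-1}\ms{C}X \;\cong\; \ms{C}(k)_+ \wedge_{G_k} X^{\wedge k},$$
and similarly for $\ms{C}'$, under which $\psi$ becomes $\psi_+\wedge_{G_k}\id$. For this identification, and for the colimit step below, I use that $X$ is a pointed CW-complex, so that $*\into X$ is a cofibration, the fat wedge includes into $X^k$ as a $G_k$-cofibration, and each $F_{k-1}\ms{C}X\into F_k\ms{C}X$ is a cofibration.

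Next I would show that $\psi$ induces a homotopy equivalence on each quotient $\ms{C}(k)_+\wedge_{G_k}X^{\wedge k}$, separating the two cases in the definition of an equivalence. In case (1), $\psi\colon\ms{C}(k)\to\ms{C}'(k)$ is a $G_k$-equivariant homotopy equivalence; since $X^{\wedge k}$ is a $G_k$-CW complex via $\pi\colon G_k\to S_k$, an equivariant inverse and homotopies descend to the balanced smash product, giving the required equivalence. In case (2), the actions are covering actions, so $\ms{C}(k)\to\ms{C}(k)/G_k$ and $\ms{C}'(k)\to\ms{C}'(k)/G_k$ are principal $G_k$-coverings; the equivariant homotopy equivalence $\psi$ induces, via the five lemma applied to the exact homotopy sequences of these coverings, a homotopy equivalence of base spaces $\ms{C}(k)/G_k\to\ms{C}'(k)/G_k$, and since $\ms{C}(k)_+\wedge_{G_k}X^{\wedge k}$ is the associated fibre bundle with fibre $X^{\wedge k}$ over that base, the fibrewise map $\psi_+\wedge_{G_k}\id$ is an equivalence.

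Finally I would assemble these. By induction on $k$, using the cofibrations $F_{k-1}\into F_k$ together with the gluing lemma and the equivalence on the quotients (equivalently, the five lemma on the long exact sequences of the pairs), $\psi\colon F_k\ms{C}X\to F_k\ms{C}'X$ is a homotopy equivalence for every $k$; passing to the colimit along the cofibrations $F_k\into F_{k+1}$ yields that $\psi\colon\ms{C}X\to\ms{C}'X$ is a homotopy equivalence, with connectivity of $X$ ensuring both sides are connected so that Whitehead's theorem upgrades the weak equivalence to a genuine one. I expect the main obstacle to be case (2): tracking the bundle structure on the balanced smash product and confirming that the equivariant homotopy equivalence upstairs descends to an honest homotopy equivalence of base spaces, so that the fibrewise comparison is indeed an equivalence.
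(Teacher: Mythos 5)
Your overall scheme is the right one: the paper itself gives no argument for this lemma (it is quoted from May with a \qed), and the proof that citation points to is precisely the word-length filtration of $\ms{C}X$, the identification of the subquotients, an arity-wise comparison, and an induction-plus-colimit assembly. Your splitting of the arity-wise comparison into the two clauses of the paper's definition of equivalence (equivariant inverse descends; covering actions give a bundle over $\ms{C}(k)/G_k$ compared by the five lemma for fibrations) is also the correct adaptation of May's $\ms{S}$-case to $\ms{G}$-operads.

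However, the inductive assembly fails as written. You propose to conclude that $F_k\ms{C}X\to F_k\ms{C}'X$ is an equivalence from the induction hypothesis on $F_{k-1}$ plus the equivalence on the quotients $F_k/F_{k-1}\cong \ms{C}(k)_+\wedge_{G_k}X^{\wedge k}$, via the gluing lemma ``equivalently, the five lemma on the long exact sequences of the pairs.'' Neither inference is valid: the relative groups $\pi_*(F_k,F_{k-1})$ are not the homotopy groups of the quotient, and a map of cofibred pairs which is an equivalence on the subspace and on the cofibre is in general only a \emph{homology} equivalence of total spaces. Concretely, take $A=A'=S^1$, $X=X'=S^1\vee S^2$, and $f$ equal to the identity on $S^1$ and on $S^2$ the class $2\iota-t\iota\in\pi_2(S^1\vee S^2)\cong\Z[t,t^{-1}]$: then $f|_A$ is the identity and the induced self-map of $X/A=S^2$ has degree $1$, yet $\pi_2(f)$ is multiplication by $2-t$, which is not an isomorphism, so $f$ is not an equivalence. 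Since the spaces occurring here (e.g.\ $\ms{C}(k)/G_k$ for the braid-type operads, which are $K(\pi,1)$'s) are exactly of this non-simply-connected kind, this is a genuine gap, not a technicality. The repair is standard and within reach of your own tools: use the pushout presentation $F_k\ms{C}X=F_{k-1}\ms{C}X\cup_{\ms{C}(k)\times_{G_k}W_k}\bigl(\ms{C}(k)\times_{G_k}X^k\bigr)$, where $W_k\subseteq X^k$ is the fat wedge, and feed the gluing lemma equivalences on the two \emph{corners} $\ms{C}(k)\times_{G_k}X^k$ and $\ms{C}(k)\times_{G_k}W_k$ rather than on the smash quotient. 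Both of your case arguments apply verbatim with $X^{\wedge k}$ replaced by an arbitrary nice $G_k$-space $Y$ (an equivariant inverse descends to $\ms{C}(k)\times_{G_k}Y$; in the covering case $\ms{C}(k)\times_{G_k}Y\to\ms{C}(k)/G_k$ is a bundle and the fibration five lemma applies), so state them at that level of generality. Note also that in case (2) this yields only weak equivalences unless CW-type hypotheses are imposed, so the induction should be run with weak equivalences (the gluing lemma holds for these along closed cofibrations) and upgraded at the very end; the upgrade is where Whitehead's theorem needs CW homotopy type -- connectivity of $X$ is not what does that job, it rather ensures $\ms{C}X$ and $\ms{C}'X$ are connected.
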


Proposition 3.4 of \cite{May:1972:GILS} has also an analogous result for non path-connected topological operads with a condition on the structure of the path-connected components.

\subsection{The Associated Monad of a Group Operad}
\begin{defn}
  Let $\ms{G}= \{G_n\}_{n\geq 0}$ be a crossed $\Delta$-group and $X$ a pointed space. define
  $$\ms{G}X= \bigsqcup_{k\geq 0} |EG_k| \times_{G_k} X^k \Big/ \sim$$
  where the equivalence relation is generated by
  $$[a_0, \ldots, a_m; t; d^ix] \sim [d_ia_0, \ldots, d_ia_m; t; x]$$
  for $[a_0, \ldots, a_m;t] \in |EG_n|$, $1\leq i\leq n$ and $x\in X^{n-1}$. Let $\ms{G}_nX$ be the image of $\bigsqcup_{k\geq 0}^n |EG_k| \times_{G_k} X^k$ in $\ms{G}X$.
\end{defn}

If $\ms{G}= \ms{J}$, the above construction is the James construction as well, so it is fine to use the notation $\ms{J}X$. If $\ms{G}$ is a group operad, $\ms{G}X$ is the same as the associated monad of the $\ms{G}$-operad $|E\ms{G}|$.

If $\ms{G}$ is a group operad, these functions $G_n\times G_m\xra{\cong} e_2 \times G_n\times G_m \into G_2\times G_n\times G_m\xra{\gamma} G_{n+m}$ induce a product on $\ms{G}X$.

\begin{prop}
  If $\ms{G}$ is a group operad, then $\ms{G}X$ is a topological monoid; if $\ms{G}$ is moreover crossed, then $\ms{G}X$ is a homotopy abelian monoid. \qed
\end{prop}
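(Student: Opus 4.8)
The plan is to check directly that the product induced by $\gamma(e_2;-,-)$ makes $\ms{G}X$ a topological monoid, and then, when $\ms{G}$ is crossed, to build an explicit homotopy between the product and its opposite out of a path in the contractible space $|EG_2|$; throughout I use that $\ms{G}X$ is the associated monad $|E\ms{G}|X$ of the $\ms{G}$-operad $|E\ms{G}|$, so that the equivariance of this $\ms{G}$-operad is available. First I would fix the multiplication $\mu\colon \ms{G}X\times \ms{G}X\to \ms{G}X$: on representatives $[\mathbf{a};u;x]$ with $\mathbf{a}$ a simplex of $EG_n$ and $x\in X^n$, and $[\mathbf{a}';u';x']$ with $\mathbf{a}'$ a simplex of $EG_m$ and $x'\in X^m$, it is the realization of the simplicial map $E\gamma(e_2;-,-)$ applied vertexwise to $(\mathbf{a},\mathbf{a}')$, carrying the label to $(x,x')\in X^{n+m}$. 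Well-definedness rests on two operad identities: to pass the diagonal $G_n\times G_m$-actions I would use the crossed-homomorphism property in the specialised form $\gamma(e_2;ba,b'a')=\gamma(e_2;b,b')\,\gamma(e_2;a,a')$ (take outer element $e_2=e_2e_2$, noting $\pi e_2=\id$) together with $\pi\gamma(e_2;b,b')=\gamma(\id;\pi b,\pi b')$, which acts blockwise on $X^{n+m}$ and absorbs the translation of $(x,x')$ exactly; to pass the operadic face relation $[\mathbf{a};u;d^ix]\sim[d_i\mathbf{a};u;x]$ I would use associativity of $\gamma$, which yields $d_i\gamma(e_2;c,c')=\gamma(e_2;d_ic,c')$ for $1\le i\le n$ and $=\gamma(e_2;c,d_{i-n}c')$ for $n<i\le n+m$. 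Continuity is routine in the weak topology on compactly generated spaces.

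Next I would verify the monoid axioms. The identity is the single point $|EG_0|\times_{G_0}X^0$, and left and right unitality come from $\gamma(e_2;a,e_0)=a=\gamma(e_2;e_0,a)$, which follows from associativity applied to the face identity $\gamma(e_2;e_1,e_0)=e_1$ and the $0$-ary unit. Associativity of $\mu$ follows from associativity of $\gamma$ together with identity-preservation $\gamma(e_2;e_2,e_1)=e_3=\gamma(e_2;e_1,e_2)$: both iterated products realize $\gamma(e_3;a,a',a'')$ on the same label $(x,x',x'')$. This establishes the first assertion.

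For the second assertion, assume $\ms{G}$ crossed, so $\pi\colon G_2\to S_2$ is an epimorphism and there is $\tau\in G_2$ with $\pi\tau=(1,2)$. As $|EG_2|$ is contractible, choose a path $\lambda\colon I\to|EG_2|$ from the vertex $e_2$ to the vertex $\tau$ (for instance the realization of the edge $(e_2,\tau)\in(EG_2)_1$), and define $H\colon \ms{G}X\times \ms{G}X\times I\to \ms{G}X$ by inserting $\lambda(s)$ in place of $e_2$ in the formula for $\mu$. The equivariance of $|E\ms{G}|$ (equivalently, the crossed-homomorphism and face computations above, now with outer term $\lambda(s)$) shows $H$ descends and is continuous, with $H(-,-,0)=\mu$; at $s=1$, equivariance gives $\gamma(\tau;a,a')=\gamma(\tau;e_n,e_m)\,\gamma(e_2;a',a)$, and absorbing $\gamma(\tau;e_n,e_m)$ into the label through its image $\pi\gamma(\tau;e_n,e_m)=\gamma((1,2);\id_n,\id_m)$, the block transposition, carries $(x,x')$ to $(x',x)$, so $H(-,-,1)=\mu\circ\mathrm{swap}$. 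Thus $\mu\simeq\mu\circ\mathrm{swap}$ and $\ms{G}X$ is homotopy abelian. I expect the main difficulty to be the index bookkeeping here: one must confirm, with the paper's right-action conventions, that $\gamma((1,2);\id_n,\id_m)$ carries $(x,x')$ precisely to $(x',x)$, and that $H$ is well-defined simultaneously across the $|EG_k|$-realization, the $G_k$-quotients, and the operadic identifications, not merely on representatives.
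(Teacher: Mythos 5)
Your proof is correct, and it follows exactly the route the paper intends: the paper states this proposition with no proof at all (the product is defined via $G_n\times G_m\cong e_2\times G_n\times G_m\into G_2\times G_n\times G_m\xra{\gamma}G_{n+m}$ in the sentence immediately preceding it, with all verifications left to the reader), and your write-up supplies precisely those verifications, using only facts the paper establishes ($\gamma(e_k;e_{m_1},\ldots,e_{m_k})=e_m$, the crossed-homomorphism identity, surjectivity of $\pi\colon G_2\to S_2$ in the crossed case, and that $\ms{C}X$ is a $\ms{C}$-space for the $\ms{G}$-operad $\ms{C}=|E\ms{G}|$). The index bookkeeping you flagged does close up under the paper's conventions: since $(\sigma\cdot\tau)(i)=\tau(\sigma(i))$ forces the right action $(z\cdot\sigma)_i=z_{\sigma^{-1}(i)}$, one gets $(x,x')\cdot\gamma((1,2);\id^{(n)},\id^{(m)})=(x',x)$, so $H(-,-,1)=\mu\circ\mathrm{swap}$ as you claimed.
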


\begin{problem}
  What is the homotopy type of $\ms{G} X$ in particular for a group operad $\ms{G}$? For group operads, this is the same to ask about the homotopy type of $\ms{C}X$ for well pointed $K(\pi,1)$ covering $\ms{G}$-operads $\ms{C}$.
\end{problem}

The homotopy type of $\ms{G}X$ is generally unknown yet, but known for a few canonical group operads.

\begin{thm}[James (1955) \cite{James:1955:RPS}]
  $\ms{J}X \simeq \lsi X$ for connected pointed CW-complexes $X$. \qed
\end{thm}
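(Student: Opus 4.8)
The plan is to deduce James' theorem as the case $n=1$ of May's approximation theorem $\ms{C}_nX \simeq \lsn X$ quoted above, by comparing the James monad $\ms{J}X$ with the little $1$-cubes monad $\ms{C}_1X$. The key observation is that $\ms{J}X$ is the monad of the trivial group operad $\ms{J}$, that $\ms{C}_1X$ is the monad of the symmetric operad $\ms{C}_1$, and that the two are linked through the path-connected suboperad $(\ms{C}_1)_0$ introduced earlier in the excerpt. So the whole difficulty will be pushed onto the two already-quoted results of May.

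First I would produce an equivalence of path-connected nonsymmetric operads. Each space $\ms{C}_1(k)_0$ is contractible, so collapsing it to a point defines a morphism of nonsymmetric (i.e.\ $\ms{J}$-)operads $\epsilon\colon (\ms{C}_1)_0 \to \ms{J}$; it commutes with $\gamma$ trivially since the target is a single point at each level, and each map $\ms{C}_1(k)_0 \to *$ is a homotopy equivalence. Hence $\epsilon$ is an equivalence of path-connected topological $\ms{J}$-operads, and the comparison lemma (May, Proposition 3.4, quoted above) gives a homotopy equivalence $(\ms{C}_1)_0 X \xra{\simeq} \ms{J}X$ for every connected pointed CW-complex $X$.

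Next I would identify $(\ms{C}_1)_0X$ with the symmetric monad $\ms{C}_1X$. The space $\ms{C}_1(k)$ has exactly $k!$ contractible path-components, one for each linear order of the $k$ little intervals, and $S_k$ permutes them simply transitively with $\ms{C}_1(k)_0$ a fundamental domain; thus $\ms{C}_1(k)\cong S_k\times \ms{C}_1(k)_0$ as $S_k$-spaces and the balanced product collapses to $\ms{C}_1(k)\times_{S_k}X^k \cong \ms{C}_1(k)_0\times X^k$. These homeomorphisms are compatible with the face identifications $[d_ic,x]\sim[c,d^ix]$ defining the two monads, so $(\ms{C}_1)_0X \cong \ms{C}_1X$. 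Concatenating with May's theorem at $n=1$ yields the chain
\[
\lsi X \;\simeq\; \ms{C}_1 X \;\cong\; (\ms{C}_1)_0 X \;\simeq\; \ms{J}X,
\]
which is the assertion.

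The genuine homotopy-theoretic content sits entirely inside the two quoted May results, so within this plan the main obstacle is bookkeeping rather than new geometry: one must verify that $\epsilon$ is a strict morphism of nonsymmetric operads and an equivalence in the precise sense used in the excerpt, and check the compatibility of $\ms{C}_1(k)\times_{S_k}X^k\cong \ms{C}_1(k)_0\times X^k$ with the relation generating $\sim$. I would also track the hypothesis that $X$ is connected, which is essential because May's theorem fails otherwise (one only obtains a group completion). Should one want a proof free of the May machinery, the classical alternative is to extend the unit $X\into \lsi X$ to a monoid map $\ms{J}X\to \lsi X$, compute both sides as the tensor algebra $T(\wt{H}_*(X))$ (via the James filtration of $\ms{J}X$ on one side and the Bott--Samelson theorem on the other), and conclude by the homology Whitehead theorem, both sides being connected $H$-spaces hence nilpotent; there the hard step is establishing the tensor-algebra structure on $H_*(\ms{J}X)$ from the James filtration.
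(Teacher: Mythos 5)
Your proposal is correct, but it is worth noting that the paper does not prove this statement at all: it is quoted as James' theorem (the \qed marks a citation to James (1955), whose original argument is essentially the ``classical alternative'' you sketch at the end --- the reduced product construction, the tensor-algebra computation of $H_*(\ms{J}X)$, and a Whitehead-theorem argument). What you do instead is derive James' theorem from the two May results quoted in Section 5.1, and this derivation is sound: $(\ms{C}_1)_0 \to \ms{J}$ is indeed an equivalence of path-connected $\ms{J}$-operads (each $\ms{C}_1(k)_0$ is convex, hence contractible, and equivariance is vacuous for the trivial group operad), so May's Proposition 3.4 gives $(\ms{C}_1)_0X \simeq \ms{J}X$; and the identification $(\ms{C}_1)_0X \cong \ms{C}_1X$ works because $S_k$ permutes the $k!$ convex components of $\ms{C}_1(k)$ simply transitively and the face maps $d_i$ preserve the ordered component $\ms{C}_1(k)_0$, so the normalization $[c,x]\mapsto[\sigma c, x\sigma^{-1}]$ (with $\sigma c\in\ms{C}_1(k)_0$ the unique ordered representative) descends to a continuous inverse of the natural map $(\ms{C}_1)_0X\to\ms{C}_1X$. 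Your route has the virtue of being exactly parallel to how the paper itself handles the harder cases: Fiedorowicz's theorem $\ms{B}X\simeq\lsii X$ is obtained there from $\ms{C}_2\sim |E\ms{B}|/\ms{P}$ together with the same comparison lemma, and in the paper's own language your first step is an instance of the lemma that any two universal $\ms{G}$-operads are equivalent, applied to $\ms{G}=\ms{J}$, since both $(\ms{C}_1)_0$ and $|E\ms{J}|=\ms{J}$ are universal $\ms{J}$-operads. The trade-off is logical economy versus self-sufficiency: your argument makes James' theorem a corollary of May's approximation theorem (which historically generalizes it), so all the homotopy-theoretic content is outsourced to the quoted results, whereas James' original proof is independent of that machinery and is what the paper actually points to.
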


\begin{thm}[Barratt and Eccles (1974) \cite{Bar-Ecc:1974:I}]
  $\ms{S}X \simeq \lsoo X$ for connected pointed CW-complexes $X$. \qed
\end{thm}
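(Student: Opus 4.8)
The plan is to identify $\ms{S}X$ with the associated monad of the universal $\ms{S}$-operad $|E\ms{S}|$ and then transport the known homotopy type of the little $\infty$-cubes monad across an equivalence of symmetric operads. By the remark following the Definition of $\ms{G}X$, for the group operad $\ms{S}$ the construction $\ms{S}X$ coincides with the associated monad $\ms{C}X$ for $\ms{C}= |E\ms{S}|$. Thus it suffices to show $|E\ms{S}|X\simeq \lsoo X$ for connected pointed CW-complexes $X$.

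First I would observe that $|E\ms{S}|$ is a universal $\ms{S}$-operad: each $|ES_k|$ is contractible and carries a free, hence covering, action of $S_k$. The little $\infty$-cubes operad $\ms{C}_{\infty}$ is likewise a universal $\ms{S}$-operad, since each $\ms{C}_{\infty}(k)$ is contractible with a free $S_k$-action. By the Lemma that any two universal $\ms{G}$-operads are equivalent, $|E\ms{S}|$ and $\ms{C}_{\infty}$ are equivalent as symmetric operads; this is exactly the equivalence $\ms{C}_{\infty}\sim |E\ms{S}|$ recorded in the Example. Next I would invoke May's Proposition 3.4 (stated above as a Lemma): since both operads are path-connected and the equivalence between them is a chain of morphisms of $\ms{S}$-operads, the induced maps on associated monads assemble into a homotopy equivalence $|E\ms{S}|X\simeq \ms{C}_{\infty}X$. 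Finally, May's Theorem 2.7 in the case $n=\infty$ gives $\ms{C}_{\infty}X\simeq \lsoo X$. Chaining these equivalences yields $\ms{S}X= |E\ms{S}|X\simeq \ms{C}_{\infty}X\simeq \lsoo X$, as desired.

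The genuine mathematical content lies in the two cited inputs rather than in the assembly. The equivalence $\ms{C}_{\infty}\sim |E\ms{S}|$ rests on the uniqueness-up-to-equivalence of universal $\ms{S}$-operads, which requires only that both sides be covering $\ms{S}$-operads with contractible pieces; this is routine but depends on the standard facts about $E_{\infty}$ operads. The more serious ingredient is the $n=\infty$ case of May's approximation theorem: for finite $n$ one has $\ms{C}_nX\simeq \lsn X$, whereas $\ms{C}_{\infty}X\simeq \lsoo X$ is obtained by passing to the colimit over $n$, using the assumed connectivity of $X$ to avoid group completion. Since both facts are imported as cited results, the remaining work is the bookkeeping verification that the identification $\ms{S}X= |E\ms{S}|X$ and each map in the chain respects both the $\ms{S}$-operad structure and the monad multiplication, so that the composite is a homotopy equivalence compatible with the $H$-space structures on either side.
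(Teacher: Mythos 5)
Your proof is correct, but note that the paper itself does not prove this statement at all: it is imported as a citation to Barratt--Eccles, whose original argument is a self-contained simplicial one (working with $\Gamma^+ X$ and establishing the approximation property directly), not a comparison of operads. Your derivation --- identify $\ms{S}X$ with the associated monad of $|E\ms{S}|$, observe that $|E\ms{S}|$ and $\ms{C}_{\infty}$ are both universal $\ms{S}$-operads, connect them by the chain $|E\ms{S}| \leftarrow |E\ms{S}| \times \ms{C}_{\infty} \to \ms{C}_{\infty}$, then apply May's Proposition 3.4 and the $n=\infty$ case of the approximation theorem --- is precisely the pattern the paper uses elsewhere: it is how the Fiedorowicz theorem $\ms{B}X \simeq \lsii X$ is presented (via $\ms{C}_2 \sim |E\ms{B}|/\ms{P}$), and the identical chain $|E\ms{S}| \leftarrow |E\ms{S}| \times \wt{\ms{C}}_{\infty} \to \wt{\ms{C}}_{\infty} = \ms{C}_{\infty}$ appears verbatim in the paper's proof of the proposition giving the homotopy commutative diagram relating $\ms{B}X$ and $\ms{S}X$ to $\lsii X$ and $\lsoo X$. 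So your route reconstructs the theorem from the paper's own machinery rather than from the cited source; what this buys is independence from the original Barratt--Eccles argument, at the price of leaning on May's approximation theorem at $n=\infty$ and on the (standard, but worth stating explicitly) fact that the free $S_k$-action on the colimit $\ms{C}_{\infty}(k)$ remains a covering action and $\ms{C}_{\infty}(k)$ remains contractible, so that $\ms{C}_{\infty}$ is genuinely a universal $\ms{S}$-operad in the paper's sense and the paper's equivalence lemma and Definition of equivalence (which requires covering actions when the maps are not equivariant homotopy equivalences) actually apply.
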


Recall that $\ms{C}_2\sim |E\ms{B}|/ \ms{P}$. Thus

\begin{thm}[Fiedorowicz \cite{Fiedorowicz:preprint:SBC}]
  $\ms{B}X \simeq \lsii X$ for connected pointed CW-complexes $X$. \qed
\end{thm}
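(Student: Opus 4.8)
The plan is to identify $\ms{B}X$ with the associated monad of a symmetric operad equivalent to the little $2$-cubes operad $\ms{C}_2$, and then invoke May's recognition machinery. By the remark preceding the statement, $\ms{B}X$ is precisely the associated monad $|E\ms{B}|X$ of the $\ms{B}$-operad $|E\ms{B}|$. Since $\ms{B}$ is crossed with $\pi:\ms{B}\onto\ms{S}$ and $\Ker\pi=\ms{P}$, the reduction recorded earlier gives levelwise $|E\ms{B}|(k)\times_{B_k}X^k=(|E\ms{B}|(k)/P_k)\times_{S_k}X^k$, so that $\ms{B}X=(|E\ms{B}|/\ms{P})X$ as functors. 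Here $|E\ms{B}|/\ms{P}$ is a path-connected symmetric operad, being the quotient of the $\ms{B}$-operad $|E\ms{B}|$ by the non-crossed normal sub group operad $\ms{P}$, hence a $\ms{B}/\ms{P}=\ms{S}$-operad.

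Next I would use the equivalence $\ms{C}_2\sim|E\ms{B}|/\ms{P}$ of symmetric operads recorded in the example following the reconstruction theorem; this rests on the facts that $\ms{C}_2$ is a $K(\pi,1)$ covering $\ms{S}$-operad with $\pi_1\ms{C}_2\cong\ms{P}$ and $\pi_1(\ms{C}_2/\ms{S})\cong\ms{B}$, together with the reconstruction $\ms{C}\sim|E\pi_1(\ms{C}/\ms{G})|/\pi_1\ms{C}$. Applying May's Proposition~3.4 (the lemma stated above) along the connecting chain of equivalences---each operad involved being path-connected---yields a homotopy equivalence $(|E\ms{B}|/\ms{P})X\simeq\ms{C}_2X$ for every connected pointed CW-complex $X$. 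Finally, May's Theorem~2.7 gives $\ms{C}_2X\simeq\lsii X$. Composing the three steps $\ms{B}X=(|E\ms{B}|/\ms{P})X\simeq\ms{C}_2X\simeq\lsii X$ then proves the claim.

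The main obstacle is the equivalence $\ms{C}_2\sim|E\ms{B}|/\ms{P}$, which is the geometric heart of the argument. It requires knowing that the spaces $\ms{C}_2(k)$ are aspherical---so that the reconstruction theorem applies---and that the fundamental groups operad of $\ms{C}_2$ is the braid groups operad $\ms{B}$ with the pure braid groups operad $\ms{P}$ appearing as $\pi_1\ms{C}_2$; the asphericity is the $K(\pi,1)$ property of configuration spaces of the plane, and the identification of $\pi_1$ uses the standard description of $B_k$ and $P_k$ as the fundamental groups of the unordered and ordered configuration spaces. Once these inputs are in place, the remaining steps are formal consequences of the operadic machinery developed above, so I would spend the bulk of the effort verifying that the operad-level equivalence is compatible with basepoints and covering actions as required by the hypotheses of May's Proposition~3.4, and checking that every operad appearing in the connecting chain is path-connected so that the lemma applies link by link.
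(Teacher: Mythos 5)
Your proposal is correct and follows essentially the same route as the paper, which derives the theorem from exactly the chain you describe: the reduction $\ms{B}X=|E\ms{B}|X=(|E\ms{B}|/\ms{P})X$ for the crossed operad $\ms{B}$ with $\Ker\pi=\ms{P}$, the recorded equivalence $\ms{C}_2\sim|E\ms{B}|/\ms{P}$ (the geometric input cited from Fiedorowicz), and May's Proposition~3.4 together with Theorem~2.7 giving $(|E\ms{B}|/\ms{P})X\simeq\ms{C}_2X\simeq\lsii X$. Your attention to path-connectivity and the covering-action hypotheses along the chain of equivalences matches the care the paper takes in its later proof of the homotopy commutative diagram relating $\ms{B}X$, $\ms{S}X$, $\lsii X$ and $\lsoo X$.
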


We next discuss the cases $\ms{G}= \ms{R}$ the ribbon braid groups operad and $\ms{G}= \wt{\ms{S}}$ the sequence of hyperoctahedral groups.

\begin{lem}
  Given a split short exact sequence of groups
  $$1\to N\to G\to H\to 1,$$
  if $X$ is a simplicial $G$-set, then
  $$EG\times_G X\cong EH\times_H (EN\times_N X);$$
  if $X$ is a pointed $G$-space, then
  $$|EG|\times_G X\cong |EH|\times_H (|EN|\times_N X).$$
  Here the product is the one in the category of compactly generated Hausdorff spaces.
\end{lem}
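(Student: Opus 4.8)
The plan is to exploit the splitting to realize $G$ as a semidirect product and then perform the Borel quotient in two stages, first modulo $N$ and then modulo $H$. Fix a section $s\colon H\to G$ of the surjection $p\colon G\to H$, so that every $g\in G$ is written uniquely as $g=n\,s(h)$ with $n\in N$, $h\in H$, and $G\cong N\rtimes H$. I would treat the simplicial case first; the topological case then follows by geometric realization.

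The heart of the argument is an isomorphism of simplicial sets $\theta\colon EG\xra{\cong} EN\times EH$ together with a computation of the $G$-action transported along $\theta$. On $k$-simplices define $\theta(g_0,\ldots,g_k)=((n_0,\ldots,n_k),(h_0,\ldots,h_k))$ with $h_i=p(g_i)$ and $n_i=g_i\,s(h_i)^{-1}\in N$; this is a levelwise bijection commuting with all $d_i$ and $s_i$ (since it acts coordinatewise), hence a simplicial isomorphism. Writing an element of $G$ as $n'\,s(h')$ and using that $s$ is a homomorphism, one computes
$$n'\,s(h')\cdot g_i=n'\,\big(s(h')\,n_i\,s(h')^{-1}\big)\,s(h'h_i),$$
so that under $\theta$ the left translation action of $G$ becomes
$$(n',h')\cdot\big((n_i)_i,(h_i)_i\big)=\big((\,n'\,(h'\!\cdot n_i)\,)_i,\;(h'h_i)_i\big),\qquad h'\!\cdot n:=s(h')\,n\,s(h')^{-1}.$$
Thus $N$ acts on $EN$ by diagonal left multiplication and trivially on $EH$, while $H$ acts on $EN$ by levelwise conjugation and on $EH$ by diagonal left multiplication. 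This step is exactly where the splitting is used: without the section $s$ there is no such product decomposition of $EG$.

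With this decomposition in hand I would compute $EG\times_G X=(EN\times EH\times X)/G$ by quotienting in stages, which is legitimate because $N$ is normal with $G/N\cong H$. Since $N$ acts diagonally on $EN\times X$ (through $N\into G$ on $X$) and trivially on $EH$, and since the diagonal $N$-action on $EN$ is free, the first quotient is
$$\big(EN\times EH\times X\big)/N\;\cong\;\big(EN\times_N X\big)\times EH.$$
The residual $H=G/N$-action is the conjugation-plus-$s(h)$ action on $EN\times_N X$ (this is precisely the $H$-structure that gives meaning to the right-hand side of the lemma) together with diagonal left multiplication on $EH$; as the latter is free, the second quotient is the Borel construction
$$\big((EN\times_N X)\times EH\big)/H\;\cong\;EH\times_H\big(EN\times_N X\big),$$
yielding the desired isomorphism.

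For the topological statement I would apply geometric realization: since we work in compactly generated Hausdorff spaces, $|-|$ preserves finite products, so $|EG|\cong|EN|\times|EH|$ equivariantly, and it commutes with the orbit quotients by the (free, hence covering) actions appearing above; repeating the two-stage argument gives $|EG|\times_G X\cong |EH|\times_H(|EN|\times_N X)$. I expect the only real obstacle to be the verification of the transported action in the middle paragraph, namely pinning down the conjugation twist $h'\!\cdot n=s(h')ns(h')^{-1}$ and checking that the staged $H$-action on $EN\times_N X$ is well defined; once this semidirect-product bookkeeping is settled, the remaining identifications are the standard ``quotient in stages'' for a free action on one factor.
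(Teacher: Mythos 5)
Your proposal is correct and follows essentially the same route as the paper: both exploit the splitting to write $G\cong N\rtimes H$, identify $EG\cong EN\times EH$ with the conjugation-twisted $N\rtimes H$-action, and then identify $EG\times_G X$ with the staged quotient. The only cosmetic difference is that you invoke quotient-in-stages ($Y/G\cong (Y/N)/(G/N)$) and track the residual $H$-action, whereas the paper writes out the two equivalence relations on $EN\times EH\times X$ and checks directly (via $ah=ha'$) that they coincide — these are the same computation.
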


\begin{proof}
  Since the short exact sequence splits, $G\cong N\rtimes H$, i.e., $G\cong N\times H$ as sets and with multiplication
  $$(a_1, h_1)* (a_2, h_2)= (a_1 (h_1a_2h_1^{-1}), h_1h_2), \quad (a_1,h_1), (a_2,h_2)\in N\times H.$$
  Thus $EG\cong E(N\times H)\cong EN\times EH$ as simplicial sets, but $N\rtimes H$ acts on $EN\times EH$ by
  $$((a,h)*(\bar{a}, \bar{h}))= (ah\bar{a}h^{-1}, h\bar{h}), \quad (a,h)\in N\rtimes H, (\bar{a}, \bar{h})\in EN\times EH.$$
  If $X$ is a simplicial $G$-set, then
  $$EG\times_G X\cong (EN\times EH) \times_{N\rtimes H} X= (EN\times EH\times X)/ \stackrel{N\rtimes H}{\sim},$$
  $$(\bar{a}, \bar{h}, x) \stackrel{N\rtimes H}{\sim} ((a,h)*(\bar{a}, \bar{h}), x \cdot (a,h)^{-1})= (ah\bar{a}h^{-1}, h\bar{h}, xh^{-1}a^{-1}),$$
  noting that $(a,h)= (a,1)* (1,h)$, $(a,h)^{-1}= (1,h)^{-1} (a,1)^{-1}$. On the other hand,
  $$EH\times_H (EN\times_N X)= EH \times (EN\times X/ \stackrel{N}{\sim})/ \stackrel{H}{\sim}$$
  where $(\bar{a},x) \stackrel{N}{\sim} (a\bar{a}, xa^{-1})$ and$(\bar{h}, [\bar{a},x]) \stackrel{H}{\sim} (h\bar{h}, [h\bar{a}h^{-1}, xh^{-1}])$. Thus $EH\times_H (EN\times_N X)= (EN\times EH\times X)/ \sim'$ where
  $$(\bar{a}, \bar{h}, x)\sim' (ha\bar{a}h^{-1}, h\bar{h}, xa^{-1} h^{-1}).$$
  Since $ah=ha'$ for some $a'\in N$, $(ah\bar{a}h^{-1}, h\bar{h}, xh^{-1}a^{-1})= (ha'\bar{a}h^{-1}, h\bar{h}, xa'^{-1} h^{-1})$. So the two equivalence relations $\stackrel{N\rtimes H}{\sim}$ and $\sim'$ are actually the same. Hence $EG\times_G X= EH\times_H (EN\times_N X)$. The second part can be proved similarly.
\end{proof}

Given a group $H$, $\{H^k\}_{k\geq 0}$ is a $\Delta$-group (and a simplicial group as well).

\begin{lem}
  Given a split short exact sequence of crossed $\Delta$-groups
  $$1\to \{H^k\}_{k\geq 0} \to \ms{G} \to \ms{G}'\to 1,$$
  if $X$ is a pointed space with an action of $H$, then
  $$\ms{G}X\cong \ms{G}' (|EH|_+ \wedge_H X).$$
\end{lem}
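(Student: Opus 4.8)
The plan is to reduce the statement to the levelwise Borel-construction splitting already established in the previous lemma, and then to check that the two defining equivalence relations correspond. Since the short exact sequence of crossed $\Delta$-groups splits, we have $\ms{G}\cong H\wr\ms{G}'$; in particular, for each $k$ there is a compatible isomorphism $G_k\cong H^k\rtimes G'_k$ in which $G'_k$ acts on $H^k$ by permuting coordinates through $\pi:G'_k\to S_k$, and the face $d_i:G_k\to G_{k-1}$ is $(\eta_1,\ldots,\eta_k;g')\mapsto(\eta_1,\ldots,\hat{\eta}_i,\ldots,\eta_k;d_ig')$, exactly as in the wreath product defined earlier. Here $G_k$ acts on $X^k$ through this wreath structure, with $H^k$ acting factorwise via the given $H$-action on $X$ and $G'_k$ permuting the factors.

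First I would apply the previous lemma levelwise, with $N=H^k$, $G=G_k$, quotient $G'_k$, and the $G_k$-space $X^k$, to obtain for each $k$ a homeomorphism
$$|EG_k|\times_{G_k}X^k\;\cong\;|EG'_k|\times_{G'_k}\bigl(|E(H^k)|\times_{H^k}X^k\bigr).$$
Using $E(H^k)\cong(EH)^k$ and the fact that $H^k$ acts diagonally — the $j$-th copy of $H$ acting on the $j$-th copies of $|EH|$ and of $X$ — the inner factor becomes $|EH|^k\times_{H^k}X^k\cong(|EH|\times_H X)^k$ as a $G'_k$-space, with $G'_k$ permuting the $k$ factors. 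Hence
$$|EG_k|\times_{G_k}X^k\;\cong\;|EG'_k|\times_{G'_k}(|EH|\times_H X)^k. \qquad(*)$$

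The crux is to assemble the isomorphisms $(*)$ over all $k$ into a homeomorphism of the two quotient constructions, matching the equivalence relations. On the left, $\ms{G}X$ identifies $[c;d^ix]$ with $[d_ic;x]$, i.e.\ it collapses precisely when the $i$-th coordinate of $X^k$ equals $*_X$; under $(*)$ this is exactly the locus where the $i$-th factor of $(|EH|\times_H X)^k$ lies in the sub-Borel-construction $|EH|\times_H\{*_X\}\cong BH$. Because the face $d_i:H^k\to H^{k-1}$ simply drops the $i$-th coordinate, the relation collapses all of this $BH$ to a single image, independently of the $|EH|$-coordinate; this is precisely the passage from the unreduced Borel construction to the reduced one, $(|EH|\times_H X)/BH=|EH|_+\wedge_H X=Y$. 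Thus, after $(*)$, the $\ms{G}X$-relation becomes exactly the defining relation $[c';d^iy]\sim[d_ic';y]$ of $\ms{G}'Y$ with $y$ ranging over $Y=|EH|_+\wedge_H X$; note that the ``$+$'' records the fact that $\ms{G}X$ collapses using only the basepoint of $X$ and never a basepoint of $|EH|$.

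What remains is routine: I would verify that the isomorphisms $(*)$ are natural with respect to the face maps $d_i$ of the crossed $\Delta$-groups $\ms{G}$ and $\ms{G}'$ — this follows from the naturality of the explicit isomorphism in the previous lemma applied to the morphism of split extensions induced by $d_i$ — so that they descend to a continuous bijection $\ms{G}X\to\ms{G}'Y$; the levelwise inverse isomorphisms furnish a continuous inverse, and the compatibility of the weak topologies on the two quotients makes this a homeomorphism. The main obstacle is the bookkeeping in the previous paragraph: one must check carefully that inserting the smash basepoint $*_Y$ of $Y$ in the $\ms{G}'Y$-construction pulls back through $(*)$ to inserting $*_X$ together with the face $d_i$ on the $EH$-factor, i.e.\ that the single collapse relation in $\ms{G}X$ simultaneously accounts for the reduction $|EH|\times_H X\onto|EH|_+\wedge_H X$ and for the face identification on the $\ms{G}'$-side. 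Everything else is formal.
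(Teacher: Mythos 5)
Your proof is correct and takes essentially the same route as the paper's: use the splitting to identify $\ms{G}\cong H\wr\ms{G}'$, apply the previous lemma levelwise to obtain $|EG_k|\times_{G_k}X^k\cong |EG'_k|\times_{G'_k}(|EH|\times_H X)^k$, and then pass to $(|EH|_+\wedge_H X)^k$. The paper compresses that last passage (and the compatibility of the defining equivalence relations across levels) into a single displayed chain of isomorphisms; your explicit bookkeeping of how the collapse relation in $\ms{G}X$ simultaneously effects the smash reduction and the face identification is precisely the content left implicit there.
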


\begin{proof}
  Since the short exact sequence splits, $\ms{G}\cong H\wr \ms{G}'$, thus $\ms{G}X\cong (H\wr \ms{G}') X$. So
  \begin{align*}
    \ms{G}X &\cong \bigsqcup_{k\geq 0} |E(H^k\rtimes G'_k)| \times_{H^k\rtimes G'_k} X^k \Big/ \sim \cong \bigsqcup_{k\geq 0} |EG'_k| \times_{G'_k} (|EH|\times_H X)^k \Big/ \sim \\
    &\cong \bigsqcup_{k\geq 0} |EG'_k| \times_{G'_k} (|EH|_+ \wedge_H X)^k \Big/ \sim = \ms{G}' (|EH|_+ \wedge_H X)
  \end{align*}
  by the previous lemma.
\end{proof}

Recall that for the ribbon braid groups operad $\ms{R}$, there is the following split short exact sequence of crossed $\Delta$-groups
$$1\to \{\Z^k\}_{k\geq 0}\to \ms{R}\to \ms{B}\to 1.$$

\begin{cor}
  $\ms{R}X \simeq \lsii (|E\Z|_+ \wedge_{\Z} X)$ for connected pointed CW-complexes $X$ with a $\Z$-action. \qed
\end{cor}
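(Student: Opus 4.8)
The plan is to deduce the corollary by combining the preceding lemma with Fiedorowicz's theorem $\ms{B}Y\simeq \lsii Y$. The ribbon braid groups operad $\ms{R}$ sits in the split short exact sequence of crossed $\Delta$-groups
$$1\to \{\Z^k\}_{k\geq 0}\to \ms{R}\to \ms{B}\to 1$$
recalled just above, so the first step is simply to apply the previous lemma with $H=\Z$, $\ms{G}=\ms{R}$ and $\ms{G}'=\ms{B}$. This produces a homeomorphism
$$\ms{R}X\cong \ms{B}\bigl(|E\Z|_+ \wedge_{\Z} X\bigr),$$
so that it remains only to identify the homotopy type of the right-hand side.

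Set $Y:= |E\Z|_+ \wedge_{\Z} X$. The second step is to verify that $Y$ is a connected pointed CW-complex, which is precisely the hypothesis needed to invoke Fiedorowicz's theorem. Here $|E\Z|$ is contractible and carries a free cellular $\Z$-action, so the diagonal $\Z$-action on $|E\Z|\times X$ is free and the orbit space $(|E\Z|\times X)/\Z$ is the Borel construction, a fibre bundle over $|E\Z|/\Z= |B\Z|\simeq S^1$ with fibre $X$. The space $Y$ is obtained from this Borel construction by collapsing the image of the basepoint section $|E\Z|\times \{*\}$ to a point. Since the base and the fibre are both connected, the Borel construction is connected, and collapsing a subspace preserves connectedness; thus $Y$ is a connected, canonically pointed CW-complex, the CW structure coming from the simplicial model of $|E\Z|$ together with the cellular $\Z$-action on $X$.

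With $Y$ exhibited as a connected pointed CW-complex, the final step is to apply Fiedorowicz's theorem to obtain $\ms{B}Y\simeq \lsii Y$, and combining this with the homeomorphism from the first step yields
$$\ms{R}X\cong \ms{B}Y\simeq \lsii\bigl(|E\Z|_+ \wedge_{\Z} X\bigr),$$
as desired. I expect the only point requiring genuine care to be the second step: confirming that the based Borel construction $Y$ really satisfies the hypotheses of Fiedorowicz's theorem. In particular one should check that the $\Z$-action on $X$ may be taken cellular (replacing $X$ by a $\Z$-equivariant CW approximation if necessary) and that connectivity survives the collapse of the basepoint section. Both are routine, but they constitute the substantive content beyond the purely formal concatenation of the two cited results.
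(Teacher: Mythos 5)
Your proposal is correct and is essentially the paper's own argument: the corollary is stated as an immediate consequence (hence no written proof) of the preceding lemma applied to the split short exact sequence $1\to \{\Z^k\}_{k\geq 0}\to \ms{R}\to \ms{B}\to 1$, followed by Fiedorowicz's theorem $\ms{B}Y\simeq \lsii Y$. Your second step, verifying that $Y=|E\Z|_+\wedge_{\Z}X$ is a connected pointed CW-complex, merely makes explicit a hypothesis check that the paper leaves to the reader.
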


Note that $R_1=\Z$ acts on each $R_k$ by $\gamma: R_1\times R_k\to R_k$. This action does not contribute to the the construction $\ms{R}X$, but to the structure on $\ms{R}X$.

If the $\Z$-action on $X$ is trivial, then $\ms{R}X \simeq \lsii (|B\Z|_+ \wedge X) \simeq \lsii (S^1_+ \wedge X)$ which is given in Wahl's Ph.D. thesis \cite{Wahl:2001:PhD}.

Also recall that for the sequence of hyperoctahedral groups $\wt{\ms{S}}= \{\wt{S}_k\}_{k\geq 0}$ there is the following split short exact sequence of crossed $\Delta$-groups
$$1\to \{(\Z/2)^k\}_{k\geq 0}\to \wt{\ms{S}}\to \ms{S}\to 1.$$

\begin{cor}
  $\wt{\ms{S}}X \simeq \lsoo (|E(\Z/2)|_+ \wedge_{\Z/2} X)$ for connected pointed CW-complexes $X$ with a $\Z/2$-action. \qed
\end{cor}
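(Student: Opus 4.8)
The plan is to deduce this corollary purely formally from the two structural results already established, in exactly the manner in which the corresponding corollary for the ribbon braid groups operad $\ms{R}$ is deduced from the same wreath-product lemma together with Fiedorowicz's theorem. The key input is the split short exact sequence of crossed $\Delta$-groups
$$1\to \{(\Z/2)^k\}_{k\geq 0}\to \wt{\ms{S}}\to \ms{S}\to 1$$
recorded just above, which identifies $\wt{\ms{S}}$ with the wreath product $(\Z/2)\wr \ms{S}$. First I would apply the preceding lemma with $H=\Z/2$, $\ms{G}=\wt{\ms{S}}$ and $\ms{G}'=\ms{S}$. The kernel $\{(\Z/2)^k\}_{k\geq 0}$ is precisely the family $\{H^k\}_{k\geq 0}$ appearing in that lemma with $H=\Z/2$, which is exactly the content of the identity $\wt{S}_n=(\Z/2)\wr S_n$; hence the lemma yields a homeomorphism
$$\wt{\ms{S}}X\cong \ms{S}\big(|E(\Z/2)|_+ \wedge_{\Z/2} X\big).$$
Writing $Y=|E(\Z/2)|_+ \wedge_{\Z/2} X$, it then suffices to invoke the Barratt--Eccles theorem $\ms{S}Y\simeq \lsoo Y$ to conclude.

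Before invoking Barratt--Eccles I would verify that $Y$ has the homotopy type of a connected pointed CW-complex, which is the only hypothesis of that theorem. Connectivity is immediate: both $|E(\Z/2)|$ and $X$ are connected, so the half-smash $|E(\Z/2)|\ltimes X$ is connected, and $Y$, being an orbit space of it, is a quotient of a connected space and hence connected. For the CW structure, $|E(\Z/2)|$ is the geometric realization of a simplicial set carrying a free cellular $\Z/2$-action, and endowing $X$ with a $\Z/2$-CW structure makes $|E(\Z/2)|\times X$ a $\Z/2$-CW complex; collapsing the subcomplex $|E(\Z/2)|\times\{*\}$ and passing to the free orbit quotient then equips $Y$ with a CW structure.

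Assembling these, the Barratt--Eccles theorem applies to $Y$ and gives
$$\wt{\ms{S}}X\cong \ms{S}Y\simeq \lsoo Y=\lsoo\big(|E(\Z/2)|_+ \wedge_{\Z/2} X\big),$$
which is the assertion. I do not expect a genuine obstacle: the argument is purely formal once the wreath-product lemma and the Barratt--Eccles model are granted, and it runs parallel to the $\ms{R}$-case verbatim, the only changes being the replacement of the fibre coefficient group $\Z$ by $\Z/2$ and of Fiedorowicz's $\lsii$-model by the Barratt--Eccles $\lsoo$-model. The single point that rewards a line of care is the matching of the lemma's hypothesis $\{H^k\}_{k\geq 0}$ with the splitting kernel for $\wt{\ms{S}}$, together with the routine verification that $Y$ is a connected CW-complex; neither is substantive.
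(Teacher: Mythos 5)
Your proof is correct and is exactly the argument the paper intends: the corollary is stated with an immediate \qed because it follows from the wreath-product lemma applied to the split short exact sequence $1\to \{(\Z/2)^k\}_{k\geq 0}\to \wt{\ms{S}}\to \ms{S}\to 1$, followed by the Barratt--Eccles theorem $\ms{S}Y\simeq \lsoo Y$, precisely as you argue. Your extra check that $|E(\Z/2)|_+ \wedge_{\Z/2} X$ is a connected pointed CW-complex is a reasonable bit of diligence that the paper leaves implicit.
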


A detailed theory of $\ms{S}X$ is developed in \cite{Bar-Ecc:1974:I, Bar-Ecc:1974:II, Bar-Ecc:1974:III}. Many parts of it actually are also valid for general group operads. Influenced by this special case, we shall investigate some more aspects of $\ms{G}X$ and their applications in the rest of this section.

\subsection{Freeness and Group Completion of $\ms{G}X$}
\begin{prop}
  For a group operad $\ms{G}$, $\ms{G}X$ is a free monoid.
\end{prop}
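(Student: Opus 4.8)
The plan is to exhibit $\ms{G}X$ as the James construction (free monoid) on an explicit generating space $Y$, by proving that every element factors uniquely as an ordered product of indecomposables, i.e. by constructing a homeomorphism of topological monoids $\ms{G}X\cong JY$. First I would put a word $[a_0,\dots,a_m;t;x_1,\dots,x_k]$ into a normal form. Using the degeneracy relations $[a_0,\dots,a_m;t;d^ix]\sim[d_ia_0,\dots,d_ia_m;t;x]$ I may assume every coordinate $x_i\neq\ast$, which is exactly the reduced-word condition in James' construction; and using the freeness of the $G_k$-action on $EG_k$ I may normalise the representative in $|EG_k|$ (for instance $a_0=e_k$), rigidifying the $G_k$-orbit and thereby fixing a genuine left-to-right order on $x_1,\dots,x_k$. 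By the definition of the multiplication, the product of two normalised words is the block composition $\gamma(e_2;-,-)$ on the $G$-coordinates together with concatenation of the $x$-coordinates.

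The heart of the argument is the algebraic claim that $\bigsqcup_{k\geq 0}G_k$ is itself a free monoid under block composition $a\ast b:=\gamma(e_2;a,b)$, with generating set the \emph{connected} (indecomposable) elements, namely those $c\in G_k$ admitting no splitting $c=\gamma(e_2;a,b)$ with $a\in G_j$, $b\in G_{k-j}$, $0<j<k$. Associativity and unitality of $\gamma$, together with $\gamma(e_k;e_{m_1},\dots,e_{m_k})=e_m$, make $\ast$ an associative monoid operation with unit $e_0\in G_0$. For $\ms{S}$ this is the classical unique factorisation of a permutation into its connected blocks under block sum, and for $\ms{B}$ it is the unique splitting of a braid into side-by-side non-interacting sub-braids; I would prove the general statement by induction on $k$ via the finest splitting. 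Granting it, the generating space is $Y=\bigsqcup_{k\geq 1}|EG_k|\times_{G_k}(C_k\times\wt X^k)$, where $C_k\subseteq G_k$ is the connected part and $\wt X^k\subseteq X^k$ the tuples with no basepoint coordinate, and unique factorisation of a word then amounts to simultaneously decomposing its group element into connected blocks and cutting $(x_1,\dots,x_k)$ at the corresponding positions.

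To upgrade this set-level bijection to a homeomorphism of topological monoids I would present $\ms{G}X$ as the geometric realisation of the simplicial (resp. $\Delta$-) space $[m]\mapsto\bigsqcup_k G_k^{m+1}\times_{G_k}X^k/\!\sim$ and use that the free-monoid functor $J$ commutes with geometric realisation; it then suffices to check that each simplicial level is free on a simplicial generating space, which is the discrete version of the preceding paragraph, and that the generators assemble into a simplicial space $Y_\bullet$ with $|Y_\bullet|\cong Y$. Combining these gives $\ms{G}X\cong J(|Y_\bullet|)$, a free monoid.

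The main obstacle is the interaction between the connected-block decomposition and the quotient $\times_{G_k}$ in the crossed case. There $\gamma(e_2;-,-)$ carries a permutation twist, so I must verify that the connected-block factorisation of $c\in G_k$ is compatible with the simultaneous permutation action of $G_k$ on $X^k$, so that the factorisation is well defined and unique \emph{after} passing to the orbit space, not merely on chosen representatives. The other delicate point is establishing the unique connected-component decomposition of $\bigsqcup_k G_k$ for an arbitrary group operad, beyond the transparent cases $\ms{S}$ and $\ms{B}$; I expect it to follow formally from the operad axioms, and this is where I anticipate the real work to lie.
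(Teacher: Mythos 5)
The central gap is a misidentification of what a point of $\ms{G}X$ is, and hence of what the irreducible elements are. Here $\ms{G}X$ is the monad of $|E\ms{G}|$, so a word is a class $[a_0,\ldots,a_m;t;x]$ carrying an \emph{entire simplex} of group elements modulo the diagonal $G_k$-translation, not a single group element. After your (legitimate) normalization $a_0=e_k$, such a word decomposes as a product if and only if \emph{all} of $a_1,\ldots,a_m$ lie in one \emph{common} block subgroup $\im(\gamma(e_2;-,-)\colon G_r\times G_{k-r}\to G_k)$; this simultaneous condition is exactly the paper's notion of irreducibility (no $b\in G_n$ with $b\bar a\in EG_r\times EG_{n-r}$). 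Your proposed generating space $Y=\bigsqcup_k|EG_k|\times_{G_k}(C_k\times\wt X^k)$, and the reduction of unique factorization to the connected-block decomposition of ``its group element'', therefore aim at the wrong target: a word has no single underlying group element. Concretely, in $\ms{S}X$ the word with simplex $(e_4,(1,2),(3,4),(2,3))$, interior $t$ and no basepoint coordinates is \emph{irreducible}, even though every coordinate is a disconnected permutation (the three transpositions admit no common block splitting), while $(e_4,(1,2),(3,4))$ is reducible. So irreducibility is a property of the tuple, invisible to the monoid $\bigl(\bigsqcup_k G_k,\ \gamma(e_2;-,-)\bigr)$. Your declared ``heart'' --- freeness of that block-composition monoid --- is in fact true (it follows from the operad axioms by applying the face maps $d_i$, much as you anticipate), but it neither implies nor is needed for freeness of $\ms{G}X$; it simply does not address the simultaneous-splitting problem across the simplex coordinates.

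What is actually needed, and what the paper proves, is the cancellation lemma: if $[\bar a;x]\times[\bar b;y]=[\bar a';x']\times[\bar b';y']$ with both left-hand factors irreducible, then the factors agree. Since representatives are defined only up to translation, the hypothesis yields merely some $c\in G_{m+p}$ with $(\bar a\times\bar b;x,y)=(c(\bar a'\times\bar b');(x',y')c^{-1})$, and one must (i) show the two irreducible left factors have the same arity --- the paper does this by comparing underlying permutations and applying the face operators $d_{c^{-1}\{n+1,\ldots,n+q\}}$ to both sides to contradict irreducibility --- and then (ii) conclude that $c$ itself splits as $c_1\times c_2$. No step of this kind appears in your proposal, and it is where the real work lies, especially in the crossed case you rightly flag. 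A minor further point: the proposition asserts only that $\ms{G}X$ is free \emph{as a monoid}, so your attempted upgrade to a homeomorphism of topological monoids $\ms{G}X\cong JY$ (via ``$J$ commutes with geometric realisation'' and levelwise freeness) is extra work resting on unsubstantiated claims; dropping it would simplify the plan but would not repair the gap above.
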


We give a proof for the simplicial version following the proof for the case $\ms{G}= \ms{S}$ (\cite{Bar-Ecc:1974:I}, Proposition 3.11). The proof for the topological version is similar but we need to deal with the geometric realization of simplicial sets. For a simplicial set $X$, let
$$E\ms{G}X= \bigsqcup_{n\geq 0} EG_n\times X^n \Big/ \sim$$
where the equivalence relation is the same as the one for the topological version. Explicitly, $\sim$ is generated by
$$(\bar{a};x) \sim (c\bar{a}; xc^{-1}), \quad (\bar{a}; d^ix) \sim (d_i\bar{a};x)$$
for $\bar{a}= (a_0, \ldots, a_l)\in (EG_n)_l$, $c\in G_n$, $x\in X^n$, where $d_i\bar{a}= (d_ia_0, \ldots, d_ia_l)$. Let
$$\times: G_n\times G_m \xra{\cong} e_2\times G_n\times G_m \into G_2\times G_n\times G_m \xra{\gamma} G_{n+m}, \quad (a,b)\mapsto a\times b.$$
This product induces a product
$$\times: EG_n\times EG_m\to EG_{n+m}, \quad \bar{a}\times \bar{b}= (a_0, \ldots, a_l)\times (b_0, \ldots, b_l) \mapsto (a_0\times b_0, \ldots, a_l\times b_l)$$
and thus a product
$$E\ms{G}X \times E\ms{G}X \to E\ms{G}X, \quad [\bar{a};x] \times [\bar{b};y] \mapsto [\bar{a} \times \bar{b}; x,y].$$
$[\bar{a};x]\in E\ms{G}X$ is \textbf{reducible}, if $[\bar{a};x]= [\bar{b};y] \times [\bar{c};z]$ for some $[\bar{b};y]$, $[\bar{c};z]$ none of which is the identity element. If no coordinate of $x\in X^n$ is the basepoint, then $[\bar{a};x]$ is \textbf{irreducible} iff there does not exist $b\in G_n$ such that $b\bar{a}\in EG_r\times EG_{n-r}$ for some $r$, $1\leq r\leq n-1$. To show that $E\ms{G}X$ is free, it is sufficient to show that each element of $\ms{G}X$ can be written in a unique way as a product of irreducible elements. This is equivalent to the following lemma.

\begin{lem}
  If $[\bar{a};x]$ and $[\bar{a}';x']$ are irreducible and $[\bar{a};x] \times [\bar{b};y]= [\bar{a}';x'] \times [\bar{b}';y']$, then $[\bar{a};x]= [\bar{a}';x']$, $[\bar{b};y]= [\bar{b}';y']$.
\end{lem}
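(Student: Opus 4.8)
The plan is to follow Barratt--Eccles (\cite{Bar-Ecc:1974:I}, Proposition 3.11), working throughout with the simplicial model $E\ms{G}X$ and reducing the statement to a combinatorial fact about the \emph{block subgroups} of $\ms{G}$. First I would put every class into normal form: using $(\bar a; d^ix)\sim(d_i\bar a;x)$ one deletes each coordinate of the $X$-part equal to the basepoint, so every class has a representative $[\bar a;x]$ with $\bar a\in EG_n$ and no coordinate of $x\in X^n$ the basepoint, and two such representatives (of lengths $n,n'$) are equal iff $n=n'$ and $\bar a'=c\bar a$, $x'=xc^{-1}$ for a single $c\in G_n$. Next I record two consequences of the group-operad axioms for the block map $\gamma(e_2;-,-)\colon G_n\times G_p\to G_{n+p}$, whose image I write $G_{n,p}$: it is a homomorphism, because $\pi(e_2)=\id$ degenerates the crossed-homomorphism identity to $\gamma(e_2;gg',hh')=\gamma(e_2;g,h)\,\gamma(e_2;g',h')$; and it is injective, since deleting the last $p$ strands via $\gamma(-;e_1^{(n)},e_0^{(p)})$ and using associativity, unitality and $\gamma(h;e_0^{(p)})\in G_0$ recovers $\gamma(e_2;g,e_0)=g$, and symmetrically $h$.

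The heart of the argument is an invariant detecting reducibility. To $\bar a=(a_0,\dots,a_l)\in EG_n$ I attach the subgroup $H(\bar a)=\langle a_0^{-1}a_1,\dots,a_0^{-1}a_l\rangle\le G_n$, which is unchanged by the diagonal action $a_i\mapsto ca_i$ and so depends only on the class. A direct check shows $[\bar a;x]$ (in normal form) is reducible at $r$, i.e.\ some $b\bar a$ lies in $EG_r\times EG_{n-r}$, exactly when $H(\bar a)\le G_{r,n-r}$; thus irreducibility means the splitting set $R(\bar a)=\{\,r: 1\le r\le n-1,\ H(\bar a)\le G_{r,n-r}\,\}$ is empty. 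Using the homomorphism property one computes $H(\bar a\times\bar b)\le G_{n,p}$ with the two projections onto $H(\bar a)$ and $H(\bar b)$, and then that $R$ is multiplicative:
$$R(\bar a\times\bar b)=R(\bar a)\ \sqcup\ \{n\}\ \sqcup\ (n+R(\bar b)).$$

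Granting these, the lemma follows quickly. Both sides of $[\bar a;x]\times[\bar b;y]=[\bar a';x']\times[\bar b';y']$ are in normal form, so $R(\bar a\times\bar b)=R(\bar a'\times\bar b')$. Since $[\bar a;x]$ and $[\bar a';x']$ are irreducible, $R(\bar a)=R(\bar a')=\emptyset$, so multiplicativity gives $R(\bar a\times\bar b)=\{n\}\cup(n+R(\bar b))$ and $R(\bar a'\times\bar b')=\{n'\}\cup(n'+R(\bar b'))$; the least elements force $n=n'$ and hence $p=p'$. The equality $\bar a'\times\bar b'=c(\bar a\times\bar b)$ read at vertex $0$ gives, by the homomorphism property, $c=\gamma(e_2;a_0'a_0^{-1},b_0'b_0^{-1})=\gamma(e_2;c_1,c_2)\in G_{n,p}$; reading it at every vertex and invoking injectivity of the block map yields $\bar a'=c_1\bar a$ and $\bar b'=c_2\bar b$, while $(x',y')=(x,y)c^{-1}$ splits as $x'=xc_1^{-1}$, $y'=yc_2^{-1}$ because $\pi(c)$ is block-diagonal. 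By the normal-form relation this is exactly $[\bar a;x]=[\bar a';x']$ and $[\bar b;y]=[\bar b';y']$.

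The main obstacle is the multiplicativity of $R$ (and with it the passage $n=n'$), both of which rest on an intersection/refinement property of the block subgroups, namely $G_{r,n+p-r}\cap G_{n,p}=G_{r,n-r,p}$ for $r<n$ together with its mirror image, expressing that the admissible splittings are by consecutive intervals. The inclusion $\supseteq$ is immediate from associativity (rewriting an $e_3$-composite as iterated $e_2$-composites), but $\subseteq$ is the genuine work: one must reconstruct a three-block element from its two two-block descriptions using the strand-deletion maps $\gamma(-;e_1^{(\cdots)},e_0^{(\cdots)},\dots)$ and associativity, and here care is required precisely because $\pi$ may have a nontrivial kernel, so block-diagonality of the underlying permutation does \emph{not} by itself exhibit an element of $G_{r,n-r}$. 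Once this interval-intersection lemma is established, everything above is routine bookkeeping.
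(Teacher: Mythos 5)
Your reduction is sound, and your route is genuinely different from the paper's, but as written the proposal has one real hole, which you have located yourself: the interval-intersection lemma $G_{r,n+p-r}\cap G_{n,p}=G_{r,n-r,p}$ for $r<n$ (and its mirror image). The multiplicativity of $R$, hence the arity equality $n=n'$, hence the lemma itself, all rest on it, and it is exactly the point where the crossed structure bites: in $\ms{B}$, for instance, a pure braid linking the two blocks has block-diagonal (indeed trivial) image in $S_{n+p}$ yet lies outside the image of $B_n\times B_p$, so no argument on underlying permutations can substitute for it. Deferring it as ``the genuine work'' therefore leaves the proof incomplete. Fortunately it closes with precisely the tools you already set up. Suppose $g=\gamma(e_2;u,v)=\gamma(e_2;w,z)$ with $u\in G_r$, $v\in G_{(n-r)+p}$, $w\in G_n$, $z\in G_p$. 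Apply $\gamma(-;e_1^{(n)},e_0^{(p)})$ (delete the last $p$ strands) and compute by associativity in both descriptions: the second gives $w$ back (using $\gamma(e_2;-,e_0)=\id$, which holds because identities compose to identities), the first gives $\gamma(e_2;u,v')$ with $v'=\gamma(v;e_1^{(n-r)},e_0^{(p)})\in G_{n-r}$; so $w=\gamma(e_2;u,v')$. Substituting back and regrouping by associativity, $g=\gamma(e_2;u,\gamma(e_2;v',z))$, and your injectivity of the block map at arities $(r,(n-r)+p)$ forces $v=\gamma(e_2;v',z)$, whence $g=\gamma(e_3;u,v',z)\in G_{r,n-r,p}$ as required. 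With this inserted, your argument is complete and correct.

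For comparison, the paper proves the arity equality with none of this apparatus, by contradiction: if $m<n$, comparing underlying permutations coordinatewise gives $c[m]\subset[n]$, hence $c^{-1}\{n+1,\dots,n+q\}\subset\{m+1,\dots,m+p\}$; applying the face maps $d_{c^{-1}\{n+1,\dots,n+q\}}$ to the single relation $\bar a\times\bar b=c(\bar a'\times\bar b')$ and using $d_i(ab)=(d_ia)(d_{a(i)}b)$ turns one side into $(d_{c^{-1}\{n+1,\dots,n+q\}}c)\,\bar a'$ and the other into $\bar a\times d_J\bar b$ for a suitable index set $J$, exhibiting a splitting of $\bar a'$ that contradicts its irreducibility. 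In other words, the paper performs your strand-deletion computation once, on the specific element at hand, rather than packaging it as a statement about block subgroups; its proof is shorter and needs no auxiliary lemma, while yours, once the intersection lemma is supplied, buys more: a class invariant $R(\bar a)$ recording all admissible splitting positions, whose multiplicativity is in effect a structural unique-factorization statement about $E\ms{G}X$. The endgame is the same in both proofs: read the relation at vertex $0$, split $c=c_1\times c_2$ by the homomorphism property, and use injectivity of the block map together with block-diagonality of $\pi(c)$ to separate the $EG$- and $X$-coordinates.
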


Thus it remains to prove this lemma.

\begin{proof}
  Suppose no coordinates of $x,x',y,y'$ are the basepoint, and $\bar{a}= (\ldots, a_i, \ldots) \in EG_m$, $\bar{a}'= (\ldots, a'_i, \ldots) \in EG_n$, $\bar{b}= (\ldots, b_i, \ldots) \in EG_p$, $\bar{b}'= (\ldots, b'_i, \ldots) \in EG_q$. Then $m+p=n+q$. $[\bar{a};x] \times [\bar{b};y]= [\bar{a}';x'] \times [\bar{b}';y']$ implies that there is $c\in G_{m+p}$ such that $(\bar{a} \times \bar{b}; x,y)= (c(\bar{a}' \times \bar{b}'); (x',y') c^{-1})$. We need to show $(\bar{a};x)= (c_1\bar{a}'; x'c_1^{-1})$ and $(\bar{b};y)= (c_2 \bar{b}; y' c_2^{-1})$ for some $c_1, c_2$.

  Suppose $m=n$. Since $a_i\times b_i= c(a'_i\times b'_i)\in G_{m+p}= G_{n+q}$, we have $c= (a_i\times b_i) (a'_i\times b'_i)^{-1}\in G_m \times G_p$. Thus $c=c_1\times c_2$ for some $c_1\in G_m$ and $c_2\in G_p$ which are the required ones.

  Next we show that we must have $m=n$ by contradiction. Without loss of generality we may suppose $m<n$. If there is some $k\in [m]$ such that $c(k)>n$, then $(a_i\times b_i)(k)= a_i(k) \in [m]$ but $(c(a'_i\times b'_i))(k)= (a'_i\times b'_i)(c(k))\in \{n+1, \ldots, n+q\}$, contradicting that $a_i\times b_i= c(a'_i\times b'_i)$. Thus $c[m]\subset [n]$ and $c^{-1} \{n+1, \ldots, n+q\} \subset \{m+1, \ldots, m+p\}$. Then
  \begin{align*}
    d_{c^{-1} \{n+1, \ldots, n+q\}} (c(\bar{a}' \times \bar{b}')) &= (d_{c^{-1} \{n+1, \ldots, n+q\}}c) d_{\{n+1, \ldots, n+q\}} (\bar{a}' \times \bar{b}') \\
    &= (d_{c^{-1} \{n+1, \ldots, n+q\}}c)\bar{a}',
  \end{align*}
  but on the other hand,
  \begin{align*}
    d_{c^{-1} \{n+1, \ldots, n+q\}} (c(\bar{a}' \times \bar{b}')) &= d_{c^{-1} \{n+1, \ldots, n+q\}} (\bar{a} \times \bar{b}) \\
    &= \bar{a} \times d_{\{c^{-1}(n+1)-m, \ldots, c^{-1}(n+q)-m\}} \bar{b},
  \end{align*}
  contradicting the irreducibility of $[\bar{a}',x']$.
\end{proof}

For a monoid $M$, recall that its universal group $UM$, cf. Proposition 4.2 of \cite{Bar-Ecc:1974:I}, is constructed as follows. Let $FM$ be the free group on the pointed (by the unit) set $M$ and denote the image of $a\in M$ in $FM$ by $[a]$. Let $N$ be the normal subgroup of $FM$ generated by all elements of the form $[a] \cdot [b] \cdot [ab]^{-1}$, $a,b\in M$. Then define $UM= FM/N$. $UM$ has the property that $UM$ is a free group if $M$ is a free monoid. Consequently $U\ms{G}X$ is a free group since $\ms{G}X$ is a free monoid.

\begin{prop}
  The inclusion $\ms{G}X\to U\ms{G}X$ is a homotopy equivalence for connected pointed CW-complexes.
\end{prop}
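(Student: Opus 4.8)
The plan is to exploit the two structural facts already in hand: $\ms{G}X$ is a free topological monoid, and $U\ms{G}X$ is by construction its universal (hence free) group. First I would isolate the generating space. Let $B\subseteq \ms{G}X$ be the pointed subspace consisting of the identity together with all irreducible elements; since every element of $\ms{G}X$ factors uniquely as a product of irreducibles, $\ms{G}X$ is the free topological monoid on $B$, so $\ms{G}X\cong \ms{J}B$, the James construction applied to $B$. Likewise $U\ms{G}X$ is the free topological group $F(B)$ on $B$, because the relations defining $UM$ from $FM$ collapse $FM$ precisely onto the free group on the indecomposables. The key point to record here is connectivity: because $X$ is a connected pointed CW-complex and each $|EG_k|$ is connected, the monoid $\ms{G}X$ is connected, and tracing this through the construction shows that $B$ is connected as well. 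This connectivity is what makes the passage from the free monoid to the free group harmless, and it is the only place the hypothesis on $X$ enters.

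With this reduction, I would compare both sides to $\lsi B$. James's theorem (the cited $\ms{J}B\simeq \lsi B$ for connected $B$) identifies $\ms{G}X\simeq \lsi B$, while the classical identification of the free topological group on a connected complex gives $F(B)\simeq \lsi B$ as well. Both equivalences are induced by multiplicatively extending the canonical map $B\to \lsi B$ adjoint to $\id_{\si B}$, and the inclusion $\ms{J}B\to F(B)=U\ms{G}X$ is compatible with these extensions; hence it is a homotopy equivalence.

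A more self-contained variant, following Barratt--Eccles (\cite{Bar-Ecc:1974:I}, \S4), is to argue homologically. By the Bott--Samelson theorem the homology of the free monoid is the tensor algebra, $H_*(\ms{G}X)\cong T(\wt H_*B)$, and for connected $B$ the free group $F(B)\simeq \lsi B$ has $H_*(U\ms{G}X)\cong T(\wt H_*B)$ too; the inclusion is the identity on the generators $\wt H_*B$ and is therefore a homology isomorphism, first with field coefficients and then integrally since these groups are free. Finally, $\ms{G}X$ is a connected topological monoid and $U\ms{G}X$ a topological group, so both are simple spaces of the homotopy type of CW-complexes, and a homology isomorphism between such spaces is a homotopy equivalence.

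I expect the main obstacle to be precisely the group-completion step, namely verifying that replacing the free monoid by its universal group does not change the homotopy type. For a disconnected generating space this fails badly --- forming the universal group genuinely adjoins inverses and alters both $\pi_0$ and the homology --- so the argument must use the connectivity of $B$ in an essential way: both to identify $H_*(U\ms{G}X)$ with the tensor algebra (equivalently, to know $F(B)\simeq \lsi B$) and to guarantee that both spaces are simple, so that a homology equivalence can be promoted to a homotopy equivalence.
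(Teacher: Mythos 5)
There is a genuine gap, and it sits earlier than the step you flagged as the main obstacle. Your entire argument runs through the identification $\ms{G}X\cong\ms{J}B$ (and $U\ms{G}X\cong F(B)$) as \emph{topological} monoids and groups, where $B$ is the subspace of irreducibles. But the paper's freeness proposition is purely algebraic: it asserts unique factorization of elements, i.e.\ an isomorphism of underlying abstract monoids. The canonical continuous bijection $\ms{J}B\to\ms{G}X$ is \emph{not} a homeomorphism, because in $\ms{G}X$ the set of irreducibles is not closed. Take $\ms{G}=\ms{S}$, fix $x_1,x_2\in X$ away from the basepoint, and let $\bar a(t)$ traverse the edge of $|ES_2|$ joining the vertices $e_2$ and $\tau=(1,2)$, ending at the vertex $e_2$ at time $t=1$. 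The decomposable points of $|ES_2|$ are exactly the two vertices (the subcomplex $|ES_1\times ES_1|$ realizes to the single vertex $e_2$, and its only translate is $\tau$), so $[\bar a(t);x_1,x_2]$ is irreducible for $0\le t<1$; but at $t=1$ it equals $[e_2;x_1,x_2]=[e_1;x_1]\cdot[e_1;x_2]$, a word of length two. Thus word length fails to be lower semicontinuous in $\ms{G}X$, whereas in $\ms{J}B$ the set of words of length at most one is closed; the two topologies genuinely differ, $B$ is not a CW-complex sitting nicely inside $\ms{G}X$, and neither James's theorem nor Bott--Samelson can be invoked for $B$. Both variants of your argument break at this point.

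Moreover, the intermediate claims are concretely false, not merely unproved. If $H_*(\ms{G}X)\cong T(\wt{H}_*B)$ held for $\ms{G}=\ms{S}$ and $X=S^1$, then, since $\ms{S}S^1\simeq\lsoo S^1$ by the Barratt--Eccles theorem quoted in the paper, the algebra $H_*(\lsoo S^1;\Q)\cong\Lambda(x_1)$ (rationally $\lsoo S^1$ is a $K(\Q,1)$, because $\pi_*^s(S^1)\otimes\Q$ is a single $\Q$ in degree $1$) would be a tensor algebra $T(V)$; this is impossible, since $T(V)$ is nonzero in arbitrarily high degrees when $V\neq 0$ and equals $\Q$ when $V=0$, while $\Lambda(x_1)$ has total dimension two. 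The same computation shows $\ms{S}X\not\simeq\lsi B$ for any $B$: the homology of $\ms{S}X$ is free graded-commutative (it carries Dyer--Lashof structure), not free associative, and the filtration of $\ms{S}X$ by closed subspaces is the weight filtration with subquotients the extended powers $|ES_k|_+\wedge_{S_k}X^{\wedge k}$, not a word-length filtration with subquotients smash powers of a basis. The paper avoids all of this by transplanting the Barratt--Eccles argument (Corollary 5.4 of their first paper), which lives in the simplicial model: there $\ms{G}X$ is a simplicial monoid that is free \emph{in each simplicial degree on a discrete basis} (these bases are not preserved by the simplicial faces and are never topologized), and $M$ is compared with $UM$ degreewise and then assembled over the simplicial direction, with connectedness of $X$ used exactly to guarantee that $\pi_0$ is already a group. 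So connectivity does enter where you said it does; but the step that actually fails in your proposal is the prior reduction to a free \emph{topological} monoid on the space of irreducibles, and it cannot be repaired, since the conclusions it is meant to deliver are themselves false.
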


\begin{proof}
  The proof for the case $\ms{G}= \ms{S}$ (\cite{Bar-Ecc:1974:I}, Corollary 5.4) applies to this general situation as well.
\end{proof}

A morphism $\psi: \ms{G}\to \ms{G}'$ of group operads naturally induces maps $\psi: \ms{G}X\to \ms{G}'X$ and $U\psi: U\ms{G}X\to U\ms{G}'X$.

\begin{prop}
  For a morphism $\psi: \ms{G}\to \ms{G}'$ of group operads, the following diagram is commutative
  $$\xymatrix{
    U\ms{G}X \ar[r]^-{U\psi} & U\ms{G}'X \\
    \ms{G}X \ar@{^(->}[u] \ar[r]^-{\psi} & \ms{G}'X \ar@{^(->}[u]   }$$
    If $\ms{G}$ is non-crossed, the obvious morphism $\ms{J}\into \ms{G}\onto \ms{J}$ is the identity and so are $\ms{J}X \into \ms{G}X \onto \ms{J}X$ and $U\ms{J}X \into U\ms{G}X \onto U\ms{J}X$, thus $\ms{J}X$ is a retract of $\ms{G}X$ and $U\ms{J}X$ is a retract of $U\ms{G}X$. \qed
\end{prop}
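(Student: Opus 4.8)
The plan is to derive everything from the functoriality of the two constructions at play: the associated-monad functor $\ms{G}\mapsto \ms{G}X$ on group operads and the universal-group functor $M\mapsto UM$ on monoids, together with the naturality of the canonical inclusion $M\into UM$.

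First I would treat the commutative square. Recall that $U$ is functorial: a monoid homomorphism $f: M\to M'$ lifts to $Ff: FM\to FM'$, $[a]\mapsto [f(a)]$, which sends each normalizing relator $[a][b][ab]^{-1}$ to $[f(a)][f(b)][f(ab)]^{-1}$ and hence descends to $Uf: UM\to UM'$. The inclusion $\eta_M: M\into UM$, $a\mapsto [a]N$, is the unit of the adjunction between groups and monoids, and its naturality is exactly $Uf\circ \eta_M= \eta_{M'}\circ f$. Taking $f=\psi: \ms{G}X\to \ms{G}'X$, which is a homomorphism of monoids by the discussion preceding the proposition, gives the commutativity of the displayed square with $U\psi:=U(\psi)$.

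For the retract assertion, suppose $\ms{G}$ is non-crossed, and exhibit the two morphisms of group operads $\ms{J}\into \ms{G}\onto \ms{J}$. The inclusion sends the unique element of $J_n$ to the identity $e_n\in G_n$; it commutes with $\gamma$ because $\gamma(e_k; e_{m_1}, \ldots, e_{m_k})= e_m$, and with $\pi$ trivially. The projection sends every element of $G_n$ to the unique element of $J_n$ and is compatible with $\gamma$ by the same identity $\gamma(e_k; e_{m_1}, \ldots)= e_m$; the one point that requires the hypothesis is compatibility with $\pi$, since it forces $\pi: \ms{G}\to \ms{S}$ to agree with the trivial composite $\ms{G}\to \ms{J}\to \ms{S}$, which holds precisely because $\ms{G}$ is non-crossed. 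The composite $\ms{J}\into \ms{G}\onto \ms{J}$ is then visibly $\id_{\ms{J}}$.

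Finally I would push this through both functors. Applying $(-)X$, the composite $\ms{J}X\into \ms{G}X\onto \ms{J}X$ is the map induced by $\id_{\ms{J}}$, hence $\id_{\ms{J}X}$; applying $U$ and using the first part for the naturality squares, the composite $U\ms{J}X\into U\ms{G}X\onto U\ms{J}X$ is $\id_{U\ms{J}X}$. This displays $\ms{J}X$ as a retract of $\ms{G}X$ and $U\ms{J}X$ as a retract of $U\ms{G}X$. The only nonformal step is checking that the trivial projection $\ms{G}\onto \ms{J}$ is a morphism of group operads at all; this is where non-crossedness is indispensable and matches the earlier observation that no morphism exists from a crossed group operad to a non-crossed one. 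Everything else is bookkeeping with functoriality, so I expect no serious obstacle.
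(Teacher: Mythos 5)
Your proposal is correct and is essentially the argument the paper intends: the paper marks this proposition with \qed as routine, and the intended content is exactly the functoriality of $M\mapsto UM$ with naturality of $M\into UM$, plus the observation that $\ms{J}\into\ms{G}\onto\ms{J}$ are morphisms of group operads (the projection requiring non-crossedness) composing to the identity. Your identification of where the non-crossed hypothesis enters --- compatibility of the projection $\ms{G}\onto\ms{J}$ with $\pi$ --- is precisely the right point.
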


\subsection{Some Applications to $\lsii X$}
Combining $\ms{B}X$, $\ms{S}X$, $U\ms{B}X$ and $U\ms{S}X$ together, we can have a free group model for the canonical inclusion $\lsii X\into \lsoo X$.

\begin{prop}
  For a connected pointed CW-complex $X$, there is the following homotopy commutative diagram
  $$\xymatrix@=1.2pc{
    \ms{B}X \ar@{->>}[rr]^-{\pi} \ar@{<->}[dd]_{\simeq}
      &  & \ms{S}X \ar@{<->}[dd]^{\simeq}        \\
    & \ms{J}X \ar@{_(->}[ul] \ar@{^(->}[ur] \ar[dd]^(.3){\simeq} \\
    \lsii X \ar@{^(->}'[r][rr]
      &  & \lsoo X                \\
    & \lsi X \ar@{_(->}[ul] \ar@{^(->}[ur]        }$$
    with each vertical arrow a homotopy equivalence.
\end{prop}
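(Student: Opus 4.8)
The three vertical maps are the homotopy equivalences of James ($\ms{J}X \simeq \lsi X$), Fiedorowicz ($\ms{B}X \simeq \lsii X$) and Barratt--Eccles ($\ms{S}X \simeq \lsoo X$), and I would realize all three uniformly through the little cubes operads: writing $\ms{J}X = |E\ms{J}|X$, $\ms{B}X = |E\ms{B}|X = (|E\ms{B}|/\ms{P})X$ and $\ms{S}X = |E\ms{S}|X$, and using that $|E\ms{J}|$, $|E\ms{B}|/\ms{P}$ and $|E\ms{S}|$ are equivalent (as nonsymmetric, resp. symmetric operads) to $\ms{C}_1$, $\ms{C}_2$, $\ms{C}_\infty$, May's theorem $\ms{C}_nX\simeq\lsn X$ together with his Proposition $3.4$ produces the equivalences. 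The content of the proposition is then the homotopy commutativity of the whole diagram, which I would split into the front (algebraic) face, the back (topological) face, and the three squares connecting them.

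The front face I would get for free from functoriality. The maps $\ms{J}X \to \ms{B}X$, $\ms{B}X \xra{\pi} \ms{S}X$ and $\ms{J}X \to \ms{S}X$ are induced by the morphisms of group operads $\ms{J}\to\ms{B}$, $\pi\colon\ms{B}\to\ms{S}$ and $\ms{J}\to\ms{S}$, and since $\ms{J}\to\ms{B}\xra{\pi}\ms{S}$ equals the unique morphism $\ms{J}\to\ms{S}$, the earlier Proposition on morphism--induced maps makes the front triangle commute strictly. For the back face I would model the bottom row by the little cubes inclusions $\ms{C}_1\into\ms{C}_2\into\ms{C}_\infty$: by the naturality of May's equivalence these induce the canonical stabilizations $\lsi X\to\lsii X\to\lsoo X$, and the back triangle commutes up to homotopy simply because the inclusions of little cubes operads compose.

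The real work is the three connecting squares. For each I would produce a homotopy-commutative comparison, at the operad level, between the classifying diagram $|E\ms{J}|,\ |E\ms{B}|,\ |E\ms{S}|$ (arrows induced by $\ms{J}\to\ms{B}\xra{\pi}\ms{S}$) and the little cubes diagram $\ms{C}_1\into\ms{C}_2\into\ms{C}_\infty$, remembering that the monads factor as $\ms{B}X=(|E\ms{B}|/\ms{P})X$ and $\ms{S}X=|E\ms{S}|X$. At the $\ms{S}$-end, $|E\ms{S}|$ and $\ms{C}_\infty$ are both universal $\ms{S}$-operads, hence equivalent. At the $\ms{B}$-end I would pass to universal covers: $\ms{C}_\infty$ is its own universal cover, while $\wt{\ms{C}}_2$ and $|E\ms{B}|$ are both universal $\ms{B}$-operads and $\wt{\ms{C}}_2/\ms{P}\cong\ms{C}_2$. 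By the naturality of the universal cover construction established earlier, the inclusion $\ms{C}_2\into\ms{C}_\infty$ lifts to a morphism $\wt{\ms{C}}_2\to\ms{C}_\infty$ of $\ms{B}$-operads covering precisely $\pi\colon\ms{B}\to\ms{S}$; quotienting by $\ms{P}$ recovers $\ms{C}_2\into\ms{C}_\infty$, and comparing with the $\ms{S}$-operad morphism $|E\ms{B}|/\ms{P}\to|E\ms{S}|$ identifies the two horizontal maps up to equivalence. Feeding these operad equivalences into May's Proposition $3.4$ and the functoriality of $(-)X$ then makes each connecting square commute up to homotopy.

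The genuine obstacle is precisely this matching of the horizontal maps over $\pi$: one needs that any two morphisms from a universal $\ms{B}$-operad into $\ms{C}_\infty$ (regarded as a $\ms{B}$-operad via $\pi$) covering the same group operad morphism are equivariantly homotopic. Since the target is contractible but carries only an $\ms{S}$-free, not $\ms{B}$-free, action, this uniqueness is not formal; I would establish it either by equivariant obstruction theory or, in the spirit of the proof that any two universal $\ms{G}$-operads are equivalent, by forming the product $|E\ms{B}|\times\wt{\ms{C}}_2$ and comparing the two morphisms through it. Once this coherence is secured, the $\ms{J}$-squares follow by the same argument in the easier non-crossed setting (the little $1$-cubes operad modeling $\ms{J}$), and assembling the front face, the back face and the three squares yields the homotopy-commutative diagram with every vertical arrow a homotopy equivalence.
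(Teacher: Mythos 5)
Your proposal is correct and follows essentially the same route as the paper: top and bottom triangles by functoriality and naturality of May's approximation, and the connecting squares via the universal covers $\wt{\ms{C}}_2$, $\wt{\ms{C}}_\infty=\ms{C}_\infty$ and the product zigzags $|E\ms{B}| \leftarrow |E\ms{B}|\times\wt{\ms{C}}_2 \to \wt{\ms{C}}_2$ (and the analogues at the $\ms{S}$- and $\ms{J}$-ends), fed into May's Proposition 3.4. The one comment worth making is that the ``genuine obstacle'' you isolate --- uniqueness, up to equivariant homotopy, of morphisms from a universal $\ms{B}$-operad into $\ms{C}_\infty$ covering $\pi$ --- never actually arises in the paper's execution: one does not compare two maps into $\ms{C}_\infty$, but simply writes down a single ladder of operads from $|E\ms{B}| \leftarrow |E\ms{B}|\times\wt{\ms{C}}_2 \to \wt{\ms{C}}_2 \to \ms{C}_2$ down to $|E\ms{S}| \leftarrow |E\ms{S}|\times\wt{\ms{C}}_\infty \to \wt{\ms{C}}_\infty = \ms{C}_\infty$, all of whose squares commute strictly (the two left ones because the middle vertical map is the product of the outer vertical maps, so it commutes with the projections; the right one because $\wt{\ms{C}}_2\to\wt{\ms{C}}_\infty$ is by definition the lift of $\ms{C}_2\into\ms{C}_\infty$ covering the projection $\wt{\ms{C}}_2\to\ms{C}_2$, by naturality of the universal cover construction). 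So your second proposed resolution (through the product) is exactly the paper's proof, and no equivariant obstruction theory or uniqueness statement is needed.
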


\begin{proof}
  Let $X$ be a connected pointed CW-complex. By definition, $\ms{B}X= |E\ms{B}|(X)$ and $\ms{S}X= |E\ms{S}|(X)$. Commutativity of the top and bottom triangles is obvious. Note that
  $$\xymatrix{
    \ms{C}_1X \ar[d]_{\simeq} \ar@{^(->}[r] & \ms{C}_2X \ar[d]^{\simeq} \ar@{^(->}[r] & \cdots \ar@{^(->}[r] & \ms{C}_{\infty} X \ar[d]^{\simeq} \\
    \lsi X \ar@{^(->}[r] & \lsii X \ar@{^(->}[r] & \cdots \ar@{^(->}[r] & \lsoo X   }$$
  is naturally commutative. Let $\wt{\ms{C}}_2$ be the universal cover of $\ms{C}_2$. Obviously the universal cover of $\ms{C}_{\infty}$ is $\wt{\ms{C}}_{\infty}= \ms{C}_{\infty}$. Note that the two projections $|E\ms{B}| \leftarrow |E\ms{B}| \times \wt{\ms{C}}_2 \to \wt{\ms{C}}_2$ are equivalences of $\ms{B}$-operads and the two projections $|E\ms{S}| \leftarrow |E\ms{S}| \times \wt{\ms{C}}_{\infty} \to \wt{\ms{C}}_{\infty}$ are equivalences of $\ms{S}$-operads. We have the following commutative diagram of $\ms{B}$-operads and $\ms{S}$-operads
  $$\xymatrix{
    |E\ms{B}| \ar@{->>}[d] & |E\ms{B}| \times \wt{\ms{C}}_2  \ar[l]_-{\sim} \ar[d] \ar[r]^-{\sim} & \wt{\ms{C}}_2 \ar@{^(->}[d] \ar[r] & \ms{C}_2 \ar@{^(->}[d] \\
    |E\ms{S}| & |E\ms{S}| \times \wt{\ms{C}}_{\infty} \ar[l]_-{\sim} \ar[r]^-{\sim} & \wt{\ms{C}}_{\infty} \ar@{=}[r] & \ms{C}_{\infty}   }$$
  thus there is the following commutative diagram
  $$\xymatrix{
    |E\ms{B}|(X) \ar@{->>}[d] & (|E\ms{B}| \times \wt{\ms{C}}_2) (X) \ar[l]_-{\simeq} \ar[d] \ar[r]^-{\simeq} & \wt{\ms{C}}_2 X \ar@{^(->}[d] \ar[r] & \ms{C}_2 X \ar@{^(->}[d] \ar[r]^-{\simeq} & \lsii X \ar@{^(->}[d] \\
    |E\ms{S}|(X) & (|E\ms{S}| \times \wt{\ms{C}}_{\infty}) (X) \ar[l]_-{\simeq} \ar[r]^-{\simeq} & \wt{\ms{C}}_{\infty}X \ar@{=}[r] & \ms{C}_{\infty}X \ar[r]^-{\simeq} & \lsoo X  }$$
  Hence the back square is commutative. Commutativity of the left square comes from the following commutative diagram
  $$\xymatrix{
    \ms{J}X \ar@{^(->}[d] \ar@{^(->}[dr] \ar@{^(->}[rrr]^-{\simeq} &&& \ms{C}_1X \ar@{^(->}[d] \ar[r]^-{\simeq} & \lsi X \ar@{^(->}[d] \\
    |E\ms{B}|(X) & (|E\ms{B}| \times \wt{\ms{C}}_2) (X) \ar[l]_-{\simeq} \ar[r]^-{\simeq} & \wt{\ms{C}}_2 (X) \ar@{=}[r] & \ms{C}_2X \ar[r]^-{\simeq} & \lsii X   }$$
  where the inclusion $\ms{J}X \into (|E\ms{B}| \times \wt{\ms{C}}_2) (X)$ is induced by the two canonical inclusions $\ms{J}X \into |E\ms{B}|(X)$ and $\ms{J}X \into \wt{\ms{C}}_2X$. Proof for the commutativity of the right square is similar.
\end{proof}

\begin{thm}
  For a connected pointed CW-complex $X$, there is the following homotopy commutative diagram
  $$\xymatrix{
    U\ms{B}X \ar@{->>}[r]^-{U\pi} & U\ms{S}X \\
    \ms{B}X \ar@{^(->}[u]^{\simeq} \ar@{->>}[r]^-{\pi} & \ms{S}X \ar@{^(->}[u]_{\simeq} \\
    \lsii X \ar@{<->}[u]^{\simeq} \ar@{^(->}[r] & \lsoo X \ar@{<->}[u]_{\simeq}   }$$
  Thus the free group bundle $U\pi: U\ms{B}X \onto U\ms{S}X$ is a model of the canonical inclusion $\lsii X \into \lsoo X$, In particular, the free group $(U\pi)^{-1}(*)$ is a model of its homotopy fibre. \qed
\end{thm}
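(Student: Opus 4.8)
The plan is to obtain the asserted diagram by vertically stacking two squares, each already established above, and then to read off the consequence about homotopy fibres. The bottom square, with vertices $\ms{B}X,\ms{S}X,\lsii X,\lsoo X$, horizontal maps $\pi$ and the canonical inclusion, and vertical homotopy equivalences $\ms{B}X\simeq\lsii X$ and $\ms{S}X\simeq\lsoo X$, is exactly the back face of the cube in the preceding Proposition. The top square, with vertices $\ms{B}X,\ms{S}X,U\ms{B}X,U\ms{S}X$, is the naturality square from the Proposition on the induced map $U\psi$, specialized to the morphism of group operads $\pi:\ms{B}\onto\ms{S}$; its two vertical inclusions $\ms{B}X\into U\ms{B}X$ and $\ms{S}X\into U\ms{S}X$ are homotopy equivalences by the group-completion Proposition, since $X$ is a connected pointed CW-complex. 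Because the top edge $\pi$ of the bottom square coincides with the bottom edge $\pi$ of the top square, the two glue into the displayed homotopy-commutative diagram.

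With the diagram in hand, I would observe that all four vertical arrows are homotopy equivalences, so the composite vertical maps present $U\pi:U\ms{B}X\to U\ms{S}X$ as homotopy-equivalent, as a map of spaces, to the canonical inclusion $\lsii X\into\lsoo X$. A homotopy-commutative square with vertical homotopy equivalences induces a homotopy equivalence on the homotopy fibres of its two horizontal maps; hence $U\pi$ and $\lsii X\into\lsoo X$ have equivalent homotopy fibres, which already justifies calling $U\pi$ a model of the stabilization.

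It then remains to identify the homotopy fibre of $U\pi$ with its strict fibre $(U\pi)^{-1}(*)$. The morphism $\pi:\ms{B}\to\ms{S}$ is level-wise surjective ($B_n\onto S_n$), so $\pi:\ms{B}X\to\ms{S}X$ and hence $U\pi:U\ms{B}X\to U\ms{S}X$ are surjective; moreover $U\ms{B}X$ and $U\ms{S}X$ are topological groups and $U\pi$ is a continuous homomorphism whose kernel is $(U\pi)^{-1}(*)$. I would argue that $U\pi$ is a principal bundle with fibre and structure group this kernel, so that, being a fibration, its strict fibre computes its homotopy fibre. Combined with the previous paragraph, this shows $(U\pi)^{-1}(*)$ is a model for the homotopy fibre of $\lsii X\into\lsoo X$. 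Finally, $U\ms{B}X$ is a free group (the universal group of the free monoid $\ms{B}X$), so its subgroup $\Ker(U\pi)=(U\pi)^{-1}(*)$ is again free by the Nielsen--Schreier theorem.

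The main obstacle is the bundle claim: one must verify that the surjective homomorphism $U\pi$ of topological groups is locally trivial (equivalently, admits local sections), so that the strict fibre genuinely computes the homotopy fibre rather than merely a quasifibre. This should follow from the CW/simplicial niceness of the constructions $U\ms{G}X$ together with a set-theoretic splitting of $B_n\onto S_n$ (for instance by positive-braid lifts of permutations), but it is the one step that needs genuine attention rather than formal bookkeeping.
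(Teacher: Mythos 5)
Your assembly of the diagram is exactly the paper's (implicit) proof: the theorem carries a \qed there precisely because it is the stacking you describe --- the naturality square for $U$ applied to $\pi\colon \ms{B}\onto \ms{S}$, placed on top of the back face of the cube from the preceding proposition, with the group-completion proposition supplying the two upper vertical equivalences. Your second paragraph (homotopy-commutative squares with vertical equivalences induce equivalences of homotopy fibres) is also the right formal glue, which the paper leaves unsaid.

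Where you go beyond the paper is the fibre statement: the paper simply calls $U\pi$ a ``free group bundle'' and asserts that $(U\pi)^{-1}(*)$ is the homotopy fibre, with no argument. Your diagnosis that local triviality is the one step needing real attention is fair, but the braid-lifting route you propose for local sections is harder than necessary and not obviously workable (a set-theoretic section of $B_n\onto S_n$ does not readily produce a continuous section of $U\pi$, whose points are words in equivalence classes $[\bar{a};x]$). The standard resolution, and surely the one intended given that the paper's own freeness proof is carried out simplicially in the Barratt--Eccles style, is to note that $U\pi$ is (the realization of) a degreewise-surjective homomorphism of simplicial groups; any such homomorphism is a Kan fibration with fibre its kernel, and its geometric realization is a Serre fibration by Quillen's theorem. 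This makes the strict kernel compute the homotopy fibre with no local sections constructed by hand, and degreewise freeness of the kernel then follows from Nielsen--Schreier exactly as you say. So your proof is correct in outline, matches the paper where the paper gives an argument, and its one flagged gap closes by a standard simplicial fact rather than by the bundle-chart construction you anticipated.
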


\begin{problem}
  Use the free group model $U\pi: U\ms{B}X \to U\ms{S}X$ and its fibre to study $\lsii X$ and its relationship with $\lsoo X$.
\end{problem}

It is common that a good filtration of a space may provide considerable information of the space. So finding good filtrations of $\lsii X$ may be helpful to understand $\lsii X$. For a $\ms{G}$-operad $\ms{C}$, $\ms{C}X$ has an obvious truncated filtration $\{F_n\ms{C}X\}_{n\geq 0}$ where $F_n\ms{C}X$ is the image of $\bigsqcup_{k=0}^n \ms{C}(k) \times_{G_k} X^k$. This filtration has been well studied, as well as the filtration $\om \ms{J}_n \si X$ for $\lsii X$, where $\ms{J}_nY$ is the truncated filtration of the James construction $\ms{J}Y$.

Using the two models $\ms{B}X$ and $\ms{S}X$, a few more canonical filtrations of $\lsii X$ can be constructed. Generally, for a $\ms{G}$-operad $\ms{C}= \{\ms{C}(k)\}_{k\geq 0}$, each $\ms{C}(k)$ canonically generates a $\ms{G}$-suboperad and thus they give a filtration of $\ms{C}$ which may be called the operadic filtration of $\ms{C}$ (as each term is a suboperad). The first stage of the operadic filtration of $\ms{B}X$ and $\ms{S}X$ is exactly $\ms{J}X$. By taking the preimage of the operadic filtration of $\ms{S}X$ under the map $\ms{B}X \to \ms{S}X$, one gets another filtration of $\ms{B}X$ with the first two stages $\ms{J}X \into \ms{P}X$. Moreover, note that Smith's models \cite{Smith:1989:SGM} for $\lsn X$ for all $n$, constitute a filtration of $\ms{S}X$ and thus their preimages constitute another filtration of $\lsii X$.

Finally we remark that three canonical self-maps of $\lsii X$ can be represented by three types of group homomorphisms between braid groups, via the model $\ms{B}X$ and $\ms{B}X \leftarrow (|E\ms{B}| \times \wt{\ms{C}}_2)(X) \to \ms{C}_2X$. F. Cohen \cite{Cohen:1976:SCSSM} constructs a self-map of $\lsn X$ from a sequence of self-maps of little $n$-cubes and deduces a splitting of $\si \lsn X$. When $n=2$, this sequence of self-maps of little $2$-cubes actually can be represented by the sequence of reflection homomorphisms $\chi_k: B_k\to B_k$ sending the standard generators to their inverse. Moreover, the degree $k$ map $\om^2 [k]: \lsii X\to \lsii X$ and the power $k$ map $k: \lsii X\to \lsii X$ can also be represented, respectively, by the following two types of homomorphism
$$\gamma (-;e_k^{(n)}): B_n\to B_{kn}, \quad \gamma (e_k; (-)^{(k)}): B_n\to B_{kn}.$$
In addition, the splitting theorem of the James construction can be generalized to non-crossed operads, and the stable splitting of $\ms{S}X$ \cite{Bar-Ecc:1974:III} can be generalized to crossed group operads.

\section*{Acknowledgements}
This paper is the first part of the author's Ph.D. thesis submitted recently to National University of Singapore (NUS). The author would like to express his sincerest gratitude to his supervisor, Professor Jie Wu, for his helpful advices, support and encouragement during the last five years. The author is grateful to Professors Jon Berrick, Fred Cohen, Muriel Livernet, Stephen Theriault and Assistant Professor Fei Han for their kind and/or helpful discussion, comments, support and/or help during recent years. Thanks should also go to NUS for providing the author a chance and scholarship to pursue a Ph.D. degree, and to the Department of Mathematics for providing a comfortable environment for study, opportunities for training his teaching ability, and financial support for his fifth year. It is the author's pleasure to thank the three examiners of his thesis for their helpful comments, suggestions and numerous minor corrections of typos, etc. In particular, defining other types of operads as an ``operad with extra structure'' is suggested by one examiner.

\bibliography{MyReference}

\providecommand{\bysame}{\leavevmode\hbox to3em{\hrulefill}\thinspace}
\providecommand{\MR}{\relax\ifhmode\unskip\space\fi MR }
\providecommand{\MRhref}[2]{%
  \href{http://www.ams.org/mathscinet-getitem?mr=#1}{#2}
}
\providecommand{\href}[2]{#2}
\begin{thebibliography}{10}

\bibitem{Bar-Ecc:1974:I}
M.~G. Barratt and Peter~J. Eccles, \emph{{$\Gamma^+$}-structures--{I}: a free
  group functor for stable homotopy theory}, Topology \textbf{13} (1974),
  25--45.

\bibitem{Bar-Ecc:1974:II}
\bysame, \emph{{$\Gamma^+$}-structures--{II}: a recognition principle for
  infinite loop spaces}, Topology \textbf{13} (1974), 113--126.

\bibitem{Bar-Ecc:1974:III}
\bysame, \emph{{$\Gamma^+$}-structures--{III}: the stable structure of
  {$\Omega^{\infty} \Sigma^{\infty} A$}}, Topology \textbf{13} (1974),
  199--207.

\bibitem{Ber-Coh-Won-Wu:2006:CBHG}
A.~J. Berrick, F.~R. Cohen, Y.~L. Wong, and J.~Wu, \emph{Configurations,
  braids, and homotopy groups}, J. Amer. Math. Soc. \textbf{19} (2006),
  265--326.

\bibitem{Cohen:1976:SCSSM}
F.~R. Cohen, \emph{Splitting certain suspensions via self-maps}, Illinois J.
  Math \textbf{20} (1976), 336--347.

\bibitem{Fiedorowicz:1999:CEnO}
Zbigniew Fiedorowicz, \emph{Constructions of {$E_n$} operads}, preprint, 1999.

\bibitem{Fiedorowicz:preprint:SBC}
\bysame, \emph{The symmetric bar construction}, preprint, early 1990's.

\bibitem{Fie-Lod:1991:CSGTAH}
Zbigniew Fiedorowicz and Jean-Louis Loday, \emph{Crossed simplicial groups and
  their associated homology}, Trans. Amer. Math. Soc. \textbf{326} (1991),
  no.~1, 57--87.

\bibitem{Hatcher:2002:AT}
Allen Hatcher, \emph{Algebraic topology}, Cambridge University Press,
  Cambridge, 2002.

\bibitem{James:1955:RPS}
I.~M. James, \emph{Reduced product spaces}, Ann. of Math. \textbf{62} (1955),
  170--197.

\bibitem{May:1972:GILS}
J.~P. May, \emph{The geometry of iterated loop spaces}, Lecture Notes in
  Mathematics, vol. 271, Springer-Verlag, 1972.

\bibitem{Smith:1989:SGM}
Jeffrey~Henderson Smith, \emph{Simplicial group models for {$\Omega^n \Sigma^n
  X$}}, Israel J. Math. \textbf{66} (1989), no.~1-3, 330--350.

\bibitem{Steenrod:1967:CCTS}
N.~E. Steenrod, \emph{A convenient category of topological spaces}, Mich. Math.
  J. \textbf{14} (1967), 133--152.

\bibitem{Tillmann:2000:HGSODILS}
Ulrike Tillmann, \emph{Higher genus surface operad detects infinite loop
  spaces}, Math. Ann. \textbf{317} (2000), 613--628.

\bibitem{Wahl:2001:PhD}
Nathalie Wahl, \emph{Ribbon braids and related operads}, Ph.D. thesis,
  University of Oxford, 2001.

\bibitem{Zhang:arXiv2011:OSOAHG}
Wenbin Zhang, \emph{Operations on spaces over operads and applications to
  homotopy groups}, \href{http://arxiv.org/abs/1111.7134v1}{arXiv:1111.7134v1}.

\end{thebibliography}
\bibliographystyle{amsplain}

\end{document}